\theoremstyle{plain}
\newtheorem{defi}{Definition}
\newtheorem{prop}{Proposition}[section]
\newtheorem{thm}{Theorem}[section]
\newtheorem{lem}{Lemma}[section]
\theoremstyle{definition}
\newtheorem{rem}{Remark}[section]
\renewcommand{\l}{\left}
\renewcommand{\r}{\right}
\newcommand{\llb}{\llbracket}
\newcommand{\rrb}{\rrbracket}
\newcommand{\lf}{\lfloor}
\newcommand{\rf}{\rfloor}
\renewcommand{\o}[1]{\overline{#1}}
\renewcommand{\epsilon}{\varepsilon}
\newcommand{\n}{{(n)}}
\newcommand{\m}{{(m)}}
\newcommand{\diag}{\mathrm{diag}}
\DeclareRobustCommand{\p}[1]{%
  \mathpalette\do@cev{#1}%
}
\newcommand{\do@cev}[2]{%
  \fix@cev{#1}{+}%
  \reflectbox{$\m@th#1\vec{\reflectbox{$\fix@cev{#1}{-}\m@th#1#2\fix@cev{#1}{+}$}}$}%
  \fix@cev{#1}{-}%
}
\newcommand{\fix@cev}[2]{%
  \ifx#1\displaystyle
    \mkern#23mu
  \else
    \ifx#1\textstyle
      \mkern#23mu
    \else
      \ifx#1\scriptstyle
        \mkern#22mu
      \else
        \mkern#22mu
      \fi
    \fi
  \fi
}
\newcommand{\R}{\mathbb{R}}
\newcommand{\Z}{\mathbb{Z}}
\newcommand{\N}{\mathbb{N}}
\newcommand{\F}{\mathcal{F}}
\newcommand{\M}{\mathcal{M}}
\newcommand{\G}{\mathcal{G}}
\renewcommand{\H}{\mathcal{H}}
\newcommand{\calP}{\mathcal{P}}
\newcommand{\calT}{\mathcal{T}}
\newcommand{\calD}{\mathcal{D}}
\newcommand{\calR}{\mathcal{R}}
\newcommand{\calJ}{\mathcal{J}}
\newcommand{\E}{\mathbb{E}}
\renewcommand{\P}{\mathbb{P}}
\newcommand{\ind}{\mathds{1}}
\newcommand{\Tt}{\tilde{T}}
\newcommand{\Gh}{\hat{G}}
\newcommand{\Gc}{\check{G}}
\newcommand{\Gt}{\tilde{G}}
\newcommand{\gc}{\check{g}}
\newcommand{\Hc}{\check{H}}
\newcommand{\Wt}{\tilde{W}}
\newcommand{\Wh}{\hat{W}}
\newcommand{\Wc}{\check{W}}
\newcommand{\Fh}{\hat{F}}
\newcommand{\Ft}{\tilde{F}}
\newcommand{\Cc}{\check{C}}
\title{Representations of the Vertex Reinforced Jump Process as a mixture of Markov processes on $\Z^d$ and infinite trees}
\date{March 23, 2019}
\author{Thomas \sc{Gerard}}
\begin{document}
\maketitle

\begin{abstract}
This paper concerns the Vertex Reinforced Jump Process (VRJP) and its representations as a Markov process in random environment. In \cite{SabTar}, it was shown that the VRJP on finite graphs, under a certain time rescaling, has the distribution of a mixture of Markov jump processes. This representation was extended to infinite graphs in \cite{SabZen}, by introducing a random potential $\beta$. In this paper, we show that all possible representations of the VRJP as a mixture of Markov processes can be expressed in a similar form as in \cite{SabZen}, using the random field $\beta$ and  harmonic functions for an associated operator $H_\beta$. This allows to show that the VRJP on $\Z^d$ (with certain initial conditions) has a unique representation, by proving that an associated Martin boundary is trivial. Moreover, on infinite trees, we construct a family of representations, that are all different when the VRJP is transient and the tree is $d$-regular (with $d\geq 3$).
\end{abstract}

\section{Introduction}

This paper concerns the Vertex Reinforced Jump Process (or VRJP) on infinite graphs and its representations as a Markov process in a random environment. In particular, we are interested in knowing if the VRJP admits several different representations, and what form they can take.

Let $\G=(V,E)$ be a non-directed locally finite graph, \textit{i.e.} each vertex $i\in V$ has finite degree. For $i,j\in V$, we write $i\sim j$ if $i$ and $j$ are neighbours, \textit{i.e.} if $\{i,j\}\in E$. We endow $\G$ with positive conductances $(W_e)_{e\in E}$, and denote $W_{i,j}=\ind_{\{i,j\}\in E}W_{\{i,j\}}$. The VRJP on $\G$, with respect to $W$, is the self-interacting random process  $(Y_s)_{s\in\R_+}$ on $V$ defined as follows: the process starts at some vertex $i_0\in V$ at time $0$, and conditionally on the past at time $s$, jumps to a neighbour $j$ of $i=Y_s$ at rate 
\[W_{i,j}L_j(s) \mbox{, where } L_j(s)=1+\int_0^s\ind_{\{Y_u=j\}}du.\]
In other words, as the local time $\int_0^s\ind_{\{Y_u=i\}}du$ spent by the process at $i$ increases, the vertex $i$ becomes more attractive. This process was introduced in \cite{DavVol}.

In \cite{SabTar}, Sabot and Tarrès introduced a time change for the VRJP, by defining the increasing function $D(s)=\sum_{i\in V}(L_i(s)^2-1)$, and taking $(Z_t)_{t\geq 0}=(Y_{D^{-1}(t)})_{t\geq 0}$. On finite graphs, this time-changed VRJP $Z$ started at a vertex $i_0$ is then a mixture of Markov processes, in the following sense: there exists a random potential $(u_{i_0}(i))_{i\in V}$, whose distribution is explicit, such that the law of $Z$ is the same as that of a Markov process in a random environment given by jump rates 
\[\frac{1}{2}W_{i,j}e^{u_{i_0}(j)-u_{i_0}(i)}\]
from $i$ to $j$. The idea behind this time change is that the VRJP $(Y_s)_{s\geq 0}$ jumps faster and faster as the vertices become more attractive, and that the time change $D$ is such that $(Z_t)_{t\geq 0}$ has more stationary jumping times, which is necessary for it to be a mixture of Markov processes.

In \cite{SabTar}, Sabot and Tarrès also showed that the VRJP was related to another self-interacting process, the Edge Reinforced Random Walk (or ERRW), introduced in \cite{CopDia} by Coppersmith and Diaconis. On finite graphs, thanks to a de Finetti type theorem for Markov chains (see \cite{DiaFre}), it can be seen as a mixture of Markov chains. This interpretation of the ERRW as a mixture of random walks was studied in \cite{Pem}, \cite{Pem2}, \cite{MerRol}, \cite{MerRol2}, \cite{AngCraKoz}. The link between VRJP and ERRW proven in \cite{SabTar} gives an explicit representation of the ERRW as a mixture of random walks on finite graphs, and has consequences for the study of ERRW on infinite graphs.

In \cite{SabTarZen}, Sabot, Tarrès and Zeng showed that the distributions of potentials $u_{i_0}$ can be coupled for $i_0\in V$, using a potential $\beta=(\beta_i)_{i\in V}$ on $V$, and a random Schrödinger operator associated with $\beta$. Let us denote by $H_\beta=2\beta-W$ the random Schrödinger operator, \textit{i.e.} the $|V|\times|V|$ symmetrical matrix such that $(H_\beta)_{i,j}=2\beta_i\ind_{i=j} -W_{i,j}$ for $i,j\in V$. Moreover, we define by $G=(H_\beta)^{-1}$ the associated Green function. Then $u_{i_0}$ can be defined by 
\[e^{u_{i_0}(i)}=\frac{G(i_0,i)}{G(i_0,i_0)}\]
for $i_0,i\in V$.

This representation using the $\beta$ field allows a generalisation to infinite graphs: in \cite{SabZen}, Sabot and Zeng used a similar potential $\beta$ on infinite graphs to show that the VRJP is still a mixture of Markov processes. If we still denote by $H_\beta=2\beta-W$ the operator associated with $\beta$, we can define the Green function $\Gh=(H_\beta)^{-1}$ in a certain sense. Moreover, there exists $\psi$, a $H_\beta$-harmonic function on $V$ (\textit{i.e.} $H_\beta\psi=0$), obtained as the limit of a martingale. Then if we define $G(i,j)=\Gh(i,j)+\frac{1}{2\gamma}\psi(i)\psi(j)$, where $\gamma$ is a random Gamma variable independent from $\beta$, the time-changed VRJP $(Z_t)$ is still a mixture of Markov processes, with jump rates from $i$ to $j$ given by
\[\frac{1}{2}W_{i,j}\frac{G(i_0,j)}{G(i_0,i)}\]
The term $\frac{1}{2\gamma}$ corresponds to a boundary term. Indeed, to show the result for infinite graphs, the VRJP is first studied on finite subgraphs, endowed with a wired boundary condition. This representation also gave results for the ERRW on infinite graphs.

In \cite{CheZen}, Chen and Zeng showed that in the case of infinite trees, there is another representation of $(Z_t)$ as a mixture of Markov processes. This representation is obtained by using free boundary conditions on restrictions of the tree, since the representation of the VRJP on finite trees has a simpler expression. The particular structure of the tree already gave results for the ERRW (see \cite{Pem}) and the VRJP (see \cite{DavVol2}, \cite{BasSin}). We show that in some cases, the representation of the VRJP obtained this way on the tree differs from the one defined in \cite{SabZen}. This raises the question of the classification of all possible representations of the VRJP as a mixture of Markov processes. In this paper, we give several partial answers to this question.

We first show that any such representation can be expressed in the same form as before, using a $\beta$ field, \textit{i.e.} the random jump rates are given by
\[\frac{1}{2}W_{i,j}\frac{G(i_0,j)}{G(i_0,i)},\]
where $G(i_0,i)=\Gh(i_0,i)+h(i)$, with $h$ a random $H_\beta$-harmonic function.

In the case where the graph is the lattice $\Z^d$, this allows us to show that for certain initial conductances $W$, there is only one representation of the VRJP as a mixture of Markov processes. This is true for strong reinforcement (\textit{i.e.} small $W$), since the VRJP is recurrent, but also for weak reinforcement (\textit{i.e.} large $W$). In this last case, we use a local limit theorem for random walks in random environment to show that the only $H_\beta$-harmonic functions are constants, by proving that the associated Martin boundary is trivial.

In the case where the graph is an infinite tree, we already know of two different representations of the VRJP. Using new boundary conditions, we construct a family of representations, that are all different if the tree is regular enough.

\section{Statement of the results}

\subsection{Previous results}\label{2.1}
Let $\G=(V,E)$ be a finite connected nondirected graph, endowed with conductances $(W_e)_{e\in E}$. We describe $(W_e)_{e\in E}$ with a matrix $(W_{i,j})_{i,j\in V}$, where
\[W_{i,j}=\begin{cases} W_{\{i,j\}} &\mbox{if } \{i,j\}\in E,\\0 &\mbox{otherwise.}\end{cases}\]
In \cite{SabTar}, Sabot and Tarrès showed that that the time-changed VRJP on $\G$ with respect to $W$ could be represented as a mixture of Markov processes, \textit{i.e.} as a random walk in random environment. In \cite{SabTarZen}, Sabot, Tarrès and Zeng showed that this environment could be related to a random Schrödinger operator $H_\beta$, constructed from a random potential $\beta=(\beta_i)_{i\in V}$, in the following way.

For $\beta\in\R^V$, we will denote by $H_\beta=2\beta-W$ the $|V|\times|V|$ symmetrical matrix such that $(H_\beta)_{i,j}=2\beta_i\ind_{i=j} -W_{i,j}$ for $i,j\in V$. Let us define the set $\calD_V^W=\{\beta\in\R^V,H_\beta>0\}$, where $H_\beta>0$ means that the matrix $H_\beta$ is positive definite. Note that if $\beta\in\calD_V^W$, then $\beta_i>0$ for all $i\in V$. The following proposition describes the probability distribution of the random potential that will be used to represent the VRJP.

\begin{prop}\label{prop:nu.dist}[Theorem 1 in \cite{SabTarZen}, Lemma 4 in \cite{SabZen}]
\begin{itemize}
\item[(i)] Let $\G=(V,E)$ be a finite connected graph, endowed with conductances $W$, and let $\eta\in\R_+^V$. We define by $\nu_V^{W,\eta}$ the measure on $(\calD_V^W,\mathcal{B}(\calD_V^W))$ such that
\[\nu_V^{W,\eta}(d\beta)=\l(\frac{2}{\pi}\r)^{\frac{|V|}{2}}e^{-\frac{1}{2}(\langle \ind,H_\beta\ind\rangle+\langle\eta,(H_\beta)^{-1}\eta\rangle)}e^{\langle\eta,\ind\rangle}\frac{\prod_{i\in V}d\beta_i}{\sqrt{\det(H_\beta)}}.\]
Then $\nu_V^{W,\eta}$ is a probability distribution. Its Laplace transform is
\[\int e^{-\langle\lambda,\beta\rangle}\nu_V^{W,\eta}(d\beta)=e^{-\sum_{i\in V}\eta_i(\sqrt{1+\lambda_i}-1)-\sum_{i\sim j}W_{i,j}(\sqrt{(1+\lambda_i)(1+\lambda_j)}-1)}\prod_{i\in V}\frac{1}{\sqrt{1+\lambda_i}},\]
for $\lambda\in\R_+^V$. When $\eta=0$, we will write $\nu_V^W=\nu_V^{W,0}$. 
\item[(ii)] Let us denote by $d_\G$ the graph distance in $\G$. Under $\nu_V^{W,\eta}(d\beta)$, if $V_1,V_2\subset V$ are such that $d_\G(V_1,V_2)\geq 2$,  then $(\beta_i)_{i\in V_1}$ and $(\beta_j)_{j\in V_2}$ are independent. We will say that the potential with distribution $\nu_V^W$ is 1-dependent.
\end{itemize}
\end{prop}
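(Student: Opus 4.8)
I would deduce both statements from the Laplace transform identity in (i), which I would establish as follows. First note that taking $\lambda=0$ there shows $\nu_V^{W,\eta}$ has total mass $1$, hence (its density being nonnegative on $\calD_V^W$) it is a probability measure; and since $\beta\in\calD_V^W$ forces $\beta_i>0$, its support lies in $\R_+^V$, where the Laplace transform determines the law — that is all part (ii) needs. I would treat $\eta=0$ first. Starting from $\frac{1}{\sqrt{\det H_\beta}}=(2\pi)^{-|V|/2}\int_{\R^V}e^{-\frac12\langle\phi,H_\beta\phi\rangle}\,d\phi$ (valid since $H_\beta>0$ on $\calD_V^W$) and the identity $\langle\ind,H_\beta\ind\rangle+\langle\phi,H_\beta\phi\rangle=2\sum_i\beta_i(1+\phi_i^2)-2\sum_{i\sim j}W_{i,j}(1+\phi_i\phi_j)$, one realizes $\nu_V^W$ as the $\beta$-marginal of the measure $\pi^{-|V|}\exp\big(-\sum_i\beta_i(1+\phi_i^2)+\sum_{i\sim j}W_{i,j}(1+\phi_i\phi_j)\big)\,d\beta\,d\phi$ on $\calD_V^W\times\R^V$. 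Inserting $e^{-\langle\lambda,\beta\rangle}$ and integrating leaves an integral over $(\beta,\phi)$, which I would evaluate by the change of variables that linearizes the $\beta$-dependence (replacing $\beta$ by $\gamma:=H_\beta\phi$, a diagonal affine bijection in $\beta$ for fixed $\phi$, with Jacobian $\prod_i 2\phi_i$) together with the characterization of $H_\beta>0$ for the Z-matrix $H_\beta$ as the existence of a positive $\phi$ with $H_\beta\phi$ positive entrywise (the M-matrix criterion), used to turn $\beta\in\calD_V^W$ into a constraint on the new variables that factorizes over vertices and edges. This is the step I expect to be the main obstacle: the determinant and the domain $\calD_V^W$ are entangled, and making one change of variables account for both (including the signs of the components of $\phi$) is the delicate part; it is carried out in \cite{SabTarZen}. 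The outcome is exactly the stated product over $i\in V$ and $i\sim j$.

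For general $\eta$, I would adjoin to $V$ a ghost vertex $\delta$ joined to each $i\in V$ by an edge of conductance $\eta_i$, let $\widetilde W$ extend $W$ by $\widetilde W_{i\delta}=\eta_i$, and identify $\nu_V^{W,\eta}$ with the conditional law of $(\beta_i)_{i\in V}$ under $\nu_{V\cup\{\delta\}}^{\widetilde W}$ given $\phi_\delta=1$, where $\phi$ is the Gaussian field of covariance $(H_\beta)^{-1}$ attached to the $\eta=0$ construction on $V\cup\{\delta\}$. Indeed, in the joint $(\beta,\phi)$-density on $V\cup\{\delta\}$ the terms carrying $\beta_\delta$ or $\phi_\delta$ are $-\beta_\delta(1+\phi_\delta^2)+\sum_{i\in V}\eta_i(1+\phi_\delta\phi_i)$; fixing $\phi_\delta=1$, these become $-2\beta_\delta+\langle\eta,\ind\rangle+\langle\eta,\phi\rangle$, and integrating $\beta_\delta$ over the range $2\beta_\delta>\langle\eta,(H_\beta)^{-1}\eta\rangle$ dictated by the Schur complement at $\delta$ — a range that is nonempty exactly when $H_\beta>0$, so the effective domain for $(\beta_i)_{i\in V}$ is $\calD_V^W$ — produces the factor $\tfrac12 e^{\langle\eta,\ind\rangle}e^{-\langle\eta,(H_\beta)^{-1}\eta\rangle}e^{\langle\eta,\phi\rangle}$. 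The remaining exponent in the $V$-variables is $-\tfrac12\langle\ind,H_\beta\ind\rangle-\tfrac12\langle\phi,H_\beta\phi\rangle$, so the residual Gaussian integral $\int_{\R^V}e^{\langle\eta,\phi\rangle-\frac12\langle\phi,H_\beta\phi\rangle}\,d\phi=(2\pi)^{|V|/2}e^{+\frac12\langle\eta,(H_\beta)^{-1}\eta\rangle}\det(H_\beta)^{-1/2}$ contributes the complementary factor; the two combine to $e^{\langle\eta,\ind\rangle}e^{-\frac12\langle\eta,(H_\beta)^{-1}\eta\rangle}\det(H_\beta)^{-1/2}$, which is precisely the $\eta$-dependent part of the density of $\nu_V^{W,\eta}$. (The minus sign in $e^{-\frac12\langle\eta,(H_\beta)^{-1}\eta\rangle}$ is exactly why one integrates the extra positive variable $\beta_\delta$ over a half-line rather than using a single Gaussian integral.) Feeding the $\eta=0$ Laplace transform, applied on $V\cup\{\delta\}$, through this identity yields the stated formula for general $\eta$; its value at $\lambda=0$ re-confirms the total mass.

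Part (ii) is then a direct consequence of the product form of the Laplace transform. Assume $d_\G(V_1,V_2)\geq 2$ and take $\lambda\in\R_+^V$ with $\lambda_i=0$ for $i\notin V_1\cup V_2$. In $\sum_{i\in V}\eta_i(\sqrt{1+\lambda_i}-1)$ and in $\prod_{i\in V}(1+\lambda_i)^{-1/2}$ only indices of $V_1\cup V_2$ contribute, so each splits as a factor depending only on $\lambda|_{V_1}$ times one depending only on $\lambda|_{V_2}$. In the edge sum $\sum_{i\sim j}W_{i,j}(\sqrt{(1+\lambda_i)(1+\lambda_j)}-1)$ a summand vanishes unless the edge has an endpoint in $V_1\cup V_2$; since $d_\G(V_1,V_2)\geq 2$ no edge joins $V_1$ to $V_2$, so every non-vanishing summand depends on $\lambda$ only through $V_1$ or only through $V_2$ (an edge from $V_1$ to the complement of $V_1\cup V_2$ has its other $\lambda$-coordinate equal to $0$), and this sum splits the same way. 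Hence $\int e^{-\langle\lambda,\beta\rangle}\nu_V^{W,\eta}(d\beta)=\Phi_1(\lambda|_{V_1})\,\Phi_2(\lambda|_{V_2})$ for all such $\lambda$, with $\Phi_1,\Phi_2$ the corresponding products. Taking $\lambda|_{V_2}=0$ and $\lambda|_{V_1}=0$ in turn, and using $\Phi_1(0)\Phi_2(0)=1$, gives $\E[e^{-\langle\lambda|_{V_1},\beta|_{V_1}\rangle-\langle\lambda|_{V_2},\beta|_{V_2}\rangle}]=\E[e^{-\langle\lambda|_{V_1},\beta|_{V_1}\rangle}]\,\E[e^{-\langle\lambda|_{V_2},\beta|_{V_2}\rangle}]$ for all $\lambda|_{V_1}\in\R_+^{V_1}$ and $\lambda|_{V_2}\in\R_+^{V_2}$. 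Since $\beta$ is supported in $(0,\infty)^V$, this factorization of the joint Laplace transform forces $(\beta_i)_{i\in V_1}$ and $(\beta_j)_{j\in V_2}$ to be independent, which is the asserted $1$-dependence.
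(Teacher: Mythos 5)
This proposition is stated in the paper as a quoted result (Theorem 1 of \cite{SabTarZen}, Lemma 4 of \cite{SabZen}) and is not proved there, so there is no internal proof to compare against; I can only judge your argument on its own terms. On those terms there is a genuine gap at the heart of part (i): the Laplace transform for $\eta=0$. Your reduction of $1/\sqrt{\det H_\beta}$ to a Gaussian integral over an auxiliary field $\phi$, and the resulting joint density $\pi^{-|V|}\exp\big(-\sum_i\beta_i(1+\phi_i^2)+\sum_{i\sim j}W_{i,j}(1+\phi_i\phi_j)\big)$, are correct. But the decisive step --- the substitution $\gamma=H_\beta\phi$ combined with the M-matrix characterization of $\calD_V^W$, which is supposed to make both the integrand and the domain factorize over vertices and edges --- is not carried out, and you yourself defer it to the reference. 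It does not appear to work as described: writing $a_i=\sqrt{1+\lambda_i}$, after the substitution the edge contribution becomes $-\sum_{i\sim j}W_{i,j}\big(\tfrac{a_i^2\phi_j}{2\phi_i}+\tfrac{a_j^2\phi_i}{2\phi_j}\big)+\sum_{i\sim j}W_{i,j}$, which still couples the $\phi$-variables across edges, and the image of the constraint $\beta\in\calD_V^W$ under $\beta\mapsto H_\beta\phi$, where $\phi$ is the integration variable rather than a chosen positive witness, is not a product set. (The reference instead integrates the $\beta_i$ out one at a time via Schur complements, proving $\int_{\calD_V^W}e^{-\frac12\langle a,H_\beta a\rangle}\det(H_\beta)^{-1/2}d\beta=(\pi/2)^{|V|/2}\prod_i a_i^{-1}$ by induction on $|V|$; the Laplace transform then follows by completing $\sum_i(1+\lambda_i)\beta_i$ to $\frac12\langle a,H_\beta a\rangle$.) As written, part (i) rests on the reference for its central identity.

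The remainder is essentially sound. The ghost-vertex reduction of general $\eta$ to $\eta=0$ is correct and your density computation checks out, but you should phrase it in terms of the \emph{marginal} of $\beta_V$ under the law on $V\cup\{\delta\}$ rather than the conditional law given $\phi_\delta=1$: the Laplace transform you want to feed through is the unconditional one, so you either need the (true, but unproved in your text) fact that the conditional law of $\beta_V$ given $\phi_\delta=t$ does not depend on $t$ --- the $t$-slice carries an extra factor $2/(1+t^2)$, which integrates to $2\pi$ and shows slice and marginal agree --- or, more simply, invoke the marginal statement, which is exactly Proposition \ref{prop:beta.restr.sotr} with $U=V$ applied to the augmented graph, and then set $\lambda_\delta=0$ in the $\eta=0$ formula. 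Part (ii) --- the factorization of the Laplace transform for $\lambda$ supported on $V_1\cup V_2$ when no edge joins $V_1$ to $V_2$, together with the fact that the Laplace transform on the positive orthant determines the law of a nonnegative vector --- is complete and is the standard argument.
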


Let $D([0,\infty),V)$ be the space of càdlàg functions from $[0,\infty)$ to $V$. This will be the space of trajectories of the random processes we will study in this paper. These processes will be described by probability distributions on $D([0,\infty),V)$, and we will denote by $(Z_t)$ the canonical process, where $Z_t(\omega)=\omega(t)$ for $\omega\in D([0,\infty),V)$. 

For $i_0\in V$, let $P^{VRJP(i_0)}$ denote the distribution of the time-changed VRJP, using the time change $D$ described in the introduction. Note that $P^{VRJP(i_0)}$ is a probability distribution on $D([0,\infty),V)$. The following theorem describes how to represent $P^{VRJP(i_0)}$ as a mixture of Markov processes, using a random environment that can be constructed from the $\beta$ field under $\nu_V^W(d\beta)$.

\begin{thm}\label{thm:beta.mix}[Theorem 2 in \cite{SabTar}, Theorem 3 in \cite{SabTarZen}]
Let $\G=(V,E)$ be a finite graph, endowed with conductances $W$. We fix a vertex $i_0\in V$. For $\beta\in\calD_V^W$, we denote by $G=(H_\beta)^{-1}$ the Green function associated with $\beta$, and by $P^{\beta,i_0}_x$ the distribution of the Markov jump process started at $x\in V$, with jump rate from $i$ to $j$ given by $\frac{1}{2}W_{i,j}\frac{G(i_0,j)}{G(i_0,i)}$.

Then for all $i_0\in V$, the law $P^{VRJP(i_0)}$ of the time-changed VRJP on $V$, with respect to $W$ and started at $i_0$, is a mixture of these Markov jump processes under the distribution $\nu_V^W(d\beta)$. In other words,
\[P^{VRJP(W,i_0)}[\cdot]=\int P^{\beta,i_0}_{i_0}[\cdot] \nu_V^W(d\beta).\]
\end{thm}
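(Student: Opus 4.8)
The plan is to reduce the statement to the finite-graph representation results already proven in the cited literature, which Proposition~\ref{prop:nu.dist} and the discussion preceding the theorem essentially set up. Concretely, I would recall that in \cite{SabTar} the time-changed VRJP $(Z_t)$ on a finite graph $\G$ is shown to be a mixture of Markov jump processes governed by a random potential $u_{i_0}=(u_{i_0}(i))_{i\in V}$, with an explicit density on $\{u\in\R^V:u_{i_0}=0\}$; the mixing measure there is the one written in Sabot--Tarrès. The task then becomes to identify this mixing measure with the pushforward of $\nu_V^W(d\beta)$ under the map $\beta\mapsto\bigl(i\mapsto \tfrac12 W_{i,j}G(i_0,j)/G(i_0,i)\bigr)$, equivalently under $\beta\mapsto u_{i_0}$ where $e^{u_{i_0}(i)}=G(i_0,i)/G(i_0,i_0)$ with $G=(H_\beta)^{-1}$.

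The key steps, in order, would be: (1) State the change of variables $\beta\mapsto(u_{i_0},\gamma)$, where $\gamma$ is an auxiliary scalar (essentially $\gamma=1/(2G(i_0,i_0))$ or a closely related positive quantity) chosen so the map is a bijection from $\calD_V^W$ onto $\{u:u_{i_0}=0\}\times\R_+^{?}$; one must check $G(i_0,i_0)>0$ and $G(i_0,\cdot)$ does not vanish, which follows from $H_\beta>0$ and an argument that $G(i_0,\cdot)$ has a sign (e.g.\ via the matrix-tree / Green-function positivity for $H_\beta=2\beta-W$ with $W\ge 0$ and $H_\beta\ind\ge 0$-type bounds). (2) Compute the Jacobian of this change of variables — this is the computational heart and is exactly the computation carried out in \cite{SabTarZen}; I would either cite it or reproduce the determinant identity $\det(H_\beta)$ in terms of the $u$-coordinates and the spanning-tree/loop expansion. (3) Substitute into the density of $\nu_V^W$ from Proposition~\ref{prop:nu.dist}(i) with $\eta=0$ and verify that the resulting density in the $u$-coordinates matches the Sabot--Tarrès mixing density, after integrating out the auxiliary variable $\gamma$. (4) Conclude that the two mixtures coincide: for every measurable event $A\subset D([0,\infty),V)$, $\int P^{\beta,i_0}_{i_0}[A]\,\nu_V^W(d\beta)=\int P^{u,i_0}_{i_0}[A]\,(\text{Sabot--Tarrès measure})(du)=P^{VRJP(i_0)}[A]$, the last equality being \cite{SabTar}. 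One also must note that $P^{\beta,i_0}_{i_0}$ depends on $\beta$ only through $u_{i_0}$ (the ratios $G(i_0,j)/G(i_0,i)=e^{u_{i_0}(j)-u_{i_0}(i)}$), so it is well-defined as a function on the $u$-coordinates and the pushforward is legitimate.

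The main obstacle is step (2), the Jacobian computation: one needs the precise identity relating $\prod_i d\beta_i/\sqrt{\det H_\beta}$ and $e^{-\frac12\langle\ind,H_\beta\ind\rangle}$ to the coordinates $(u_{i_0},\gamma)$ and the measure appearing in Sabot--Tarrès, including correctly tracking the normalization constants $(2/\pi)^{|V|/2}$ and the $\sqrt{1+\lambda_i}$ factors via the Laplace-transform characterization. A clean way to sidestep the brute-force determinant expansion is to instead verify equality of the two probability measures by matching their Laplace transforms (or characteristic functionals of the jump rates), using the explicit Laplace transform of $\nu_V^W$ given in Proposition~\ref{prop:nu.dist}(i) against the known generating-function identities for the Sabot--Tarrès potential; but since this theorem is quoted verbatim from \cite{SabTar} and \cite{SabTarZen}, the honest and shortest route is simply to assemble these two references — \cite{SabTar} for the mixture statement and \cite{SabTarZen} for the identification of the environment with the $\beta$-field — and remark that the combination yields exactly the displayed formula. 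I would therefore present the proof as: recall \cite[Theorem 2]{SabTar}, recall the coupling of \cite[Theorem 3]{SabTarZen} expressing $u_{i_0}$ via $G=(H_\beta)^{-1}$ and $\beta\sim\nu_V^W$, and observe that substituting $e^{u_{i_0}(j)-u_{i_0}(i)}=G(i_0,j)/G(i_0,i)$ into the jump rates of \cite{SabTar} gives the claimed representation.
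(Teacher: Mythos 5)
This theorem is quoted from the literature (Theorem 2 of Sabot--Tarrès and Theorem 3 of Sabot--Tarrès--Zeng) and the paper offers no proof of its own, so the correct "proof" is precisely what you describe: combine the Sabot--Tarrès mixture representation with jump rates $\tfrac12 W_{i,j}e^{u_{i_0}(j)-u_{i_0}(i)}$ with the Sabot--Tarrès--Zeng identification $e^{u_{i_0}(i)}=G(i_0,i)/G(i_0,i_0)$ for $\beta\sim\nu_V^W$, the change of variables $\beta\mapsto(u_{i_0},\gamma)$ with $\gamma=1/(2G(i_0,i_0))$ being exactly the one underlying Proposition~\ref{prop:beta.u}. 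Your proposal is correct and matches the paper's treatment.
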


An interesting property of the distribution $\nu_V^W$ is its behaviour with respect to restriction. For $\beta\in\R^V$ and $V_1,V_2\subset V$, let us denote $\beta_{V_1}=(\beta_i)_{i\in V_1}$, and $W_{V_1,V_2}=(W_{i,j})_{i\in V_1,j\in V_2}$. 

\begin{prop}\label{prop:beta.restr.sotr}[Lemma 4 in \cite{SabZen}]
Let us fix $U\subset V$, and set $\hat\eta_i=\sum_{j\in U^c}W_{i,j}$ for $i\in U$, \textit{i.e.} $\hat\eta=W_{U,U^c}\ind_{U^c}$. Then under $\nu_V^W(d\beta)$, $\beta_U$ is distributed according to $\nu_U^{W_{U,U},\hat\eta}$.
\end{prop}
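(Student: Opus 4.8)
The natural approach is through Laplace transforms, exploiting the explicit formula in Proposition~\ref{prop:nu.dist}(i); this avoids a direct (and messier) manipulation of the density via Schur complements. The plan is to compute the Laplace transform of the law of $\beta_U$ under $\nu_V^W$ and check that it coincides with the Laplace transform of $\nu_U^{W_{U,U},\hat\eta}$. To this end, fix $\lambda\in\R_+^U$ and let $\bar\lambda\in\R_+^V$ be its extension by $0$ on $U^c$; then $\langle\bar\lambda,\beta\rangle=\langle\lambda,\beta_U\rangle$, so
\[
\int e^{-\langle\lambda,\beta_U\rangle}\,\nu_V^W(d\beta)=\int e^{-\langle\bar\lambda,\beta\rangle}\,\nu_V^W(d\beta),
\]
and the right-hand side is given by Proposition~\ref{prop:nu.dist}(i) with $\eta=0$, evaluated at $\bar\lambda$.

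Now I would simplify that expression. The factor $\prod_{i\in V}(1+\bar\lambda_i)^{-1/2}$ reduces to $\prod_{i\in U}(1+\lambda_i)^{-1/2}$, since the terms with $i\in U^c$ equal $1$. For the edge sum, split $E$ into edges with both endpoints in $U$, edges with both endpoints in $U^c$, and edges joining $U$ and $U^c$. Edges inside $U^c$ contribute $0$ because $\bar\lambda$ vanishes there; an edge $\{i,j\}$ with $i\in U$, $j\in U^c$ contributes $W_{i,j}(\sqrt{1+\lambda_i}-1)$, and summing over all such edges yields $\sum_{i\in U}\hat\eta_i(\sqrt{1+\lambda_i}-1)$ by definition of $\hat\eta$; edges inside $U$ contribute $\sum_{i\sim j,\ i,j\in U}W_{i,j}(\sqrt{(1+\lambda_i)(1+\lambda_j)}-1)$. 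Collecting these,
\[
\int e^{-\langle\lambda,\beta_U\rangle}\,\nu_V^W(d\beta)=e^{-\sum_{i\in U}\hat\eta_i(\sqrt{1+\lambda_i}-1)-\sum_{i\sim j,\ i,j\in U}W_{i,j}(\sqrt{(1+\lambda_i)(1+\lambda_j)}-1)}\prod_{i\in U}\frac{1}{\sqrt{1+\lambda_i}},
\]
which is exactly the Laplace transform given by Proposition~\ref{prop:nu.dist}(i) for the graph induced on $U$, with conductances $W_{U,U}$ and parameter $\hat\eta$.

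It remains to conclude. The law of $\beta_U$ is supported on $\R_+^U$ (indeed $\beta_i>0$ $\nu_V^W$-a.s. for every $i$), and in fact on $\calD_U^{W_{U,U}}$: the block $(H_\beta)_{U,U}$ equals $2\beta_U-W_{U,U}$, a principal submatrix of the positive definite matrix $H_\beta$, hence positive definite. So both $\beta_U$ and a $\nu_U^{W_{U,U},\hat\eta}$-distributed vector take values in $\calD_U^{W_{U,U}}\subset\R_+^U$, and a probability measure on $\R_+^U$ is determined by its Laplace transform there; the computation above therefore gives $\beta_U\sim\nu_U^{W_{U,U},\hat\eta}$. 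The only steps needing care are the three-way split of the edge sum, so that the contribution of the $U$--$U^c$ edges is correctly identified with the $\hat\eta$-term, and the (standard) remark that the Laplace transform on $\R_+^U$ pins down the distribution.
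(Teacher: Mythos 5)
Your proof is correct. The Laplace-transform computation is right: with $\bar\lambda$ the extension of $\lambda$ by zero, the edges inside $U^c$ contribute nothing, the $U$--$U^c$ edges collapse to $\sum_{i\in U}\hat\eta_i(\sqrt{1+\lambda_i}-1)$, and the edges inside $U$ reproduce the $W_{U,U}$ term, so the transform of the marginal of $\beta_U$ matches that of $\nu_U^{W_{U,U},\hat\eta}$ from Proposition~\ref{prop:nu.dist}(i); since both laws live on $\R_+^U$, uniqueness of the Laplace transform on the nonnegative orthant finishes the argument, and your observation that $(H_\beta)_{U,U}$ is a positive-definite principal submatrix correctly places the support in $\calD_U^{W_{U,U}}$. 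This is, however, a different route from the one the paper relies on: the paper does not prove the statement itself but cites Lemma~4 of \cite{SabZen}, and later presents it as the $\eta=0$ case of Proposition~\ref{prop:restr.beta}(i), whose underlying proof integrates the explicit density over $\beta_{U^c}$ using a Schur-complement factorization of $\det(H_\beta)$ and of the quadratic forms $\langle\ind,H_\beta\ind\rangle$, $\langle\eta,H_\beta^{-1}\eta\rangle$. That density computation is heavier but buys more: it simultaneously identifies the conditional law of $\beta_{U^c}$ given $\beta_U$ (Proposition~\ref{prop:restr.beta}(ii)), which the paper needs to extend $\beta_{T^\n}$ to the boundary vertices; your transform argument only delivers the marginal, which is all the present statement asks for. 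Two cosmetic caveats: Proposition~\ref{prop:nu.dist} is stated for connected graphs, and the induced graph on $U$ need not be connected, so strictly you are using that the Laplace-transform formula for $\nu_U^{W_{U,U},\hat\eta}$ holds without the connectivity hypothesis (it does, the measure factorizes over components); and it is worth saying explicitly that you apply Proposition~\ref{prop:nu.dist}(i) once with $(V,W,\eta=0)$ and once with $(U,W_{U,U},\hat\eta)$, so no circularity is involved.
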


Hence under $\nu_V^W(d\beta)$, the distribution of $\beta_U$ depends only on the weights of edges inside $U$, and coming out of $U$. This is useful to define the $\beta$ field on infinite graphs.

Let now $\G=(V,E)$ be an infinite connected nondirected graph, that is locally finite, \textit{i.e.} each vertex $v\in V$ has finite degree. We endow $\G$ with conductances $W$. To study the associated VRJP, we want to define an analogue of the $\beta$ field on $\G$. In \cite{SabZen}, Sabot and Zeng did this by using a wired boundary condition, defined as follows.

Let $(V_n)_{n\in\N}$ be an increasing sequence of finite connected subsets of $V$, such that
\[\bigcup_{n\in\N}V_n=V.\]
For $n\in\N$, we introduce a new vertex $\delta_n$, and define a new graph $\G^\n=(\tilde{V}^\n,\tilde{E}^\n)$, where
\begin{align*}
\tilde{V}^\n &= V_n \cup \{\delta_n\} \\
\mbox{and } \tilde{E}^\n &= \l\{\{i,j\}\in E, i,j\in V_n\r\} \cup \l\{\{i,\delta\}, i\in V_n \mbox{ and } \exists j\notin V_n, i\sim j\r\}.
\end{align*}
The graph $\G^\n$ is called the restriction of $\G$ to $V_n$ with wired boundary condition. We endow this graph with the conductances $\Wt^\n$ defined by $\Wt^\n_{i,j}=W_{i,j}$ if $i,j\in V_n$, and $\Wt^\n_{i,\delta_n}=\sum_{j\sim i, j\notin V_n} W_{i,j}$.

For all $n\in\N$, let $(\beta^\n_i)_{i\in\tilde{V}^\n}$ be a random potential on the graph $\G^\n$ distributed according to $\nu_{\tilde{V}^\n}^{\Wt^\n}$. Then from Proposition \ref{prop:beta.restr.sotr}, we know that the restriction $\beta^\n_{V_n}$ is distributed according to $\nu_{V_n}^{W^\n,\eta^\n}$, where $W^\n=W_{V_n,V_n}$ and $\eta^\n:=\Wt^\n_{V_n,\{\delta_n\}}=W_{V_n,V_n^c}\ind_{V_n^c}$. In fact, for a fixed $n\in\N$ and any $n'\geq n$, the restrictions $\beta^{(n')}_{V_n}$ have the same distribution $\nu_{V_n}^{W^\n,\eta^{(n)}}$. By Kolmogorov extension theorem, this allows the construction of a distribution $\nu_V^W$ for infinite $V$.

For $\beta\in\R^V$, let us still denote by $H_\beta=2\beta-W$ the Schrödinger operator associated with $(\beta_i)_{i\in V}$, \textit{i.e.} for all $f\in\R^V$ and $i\in V$, $(H_\beta f)_i=2\beta_i f_i -\sum_{j\sim i}W_{i,j}f_j$. We also define $\calD_V^W=\{\beta\in\R^V,(H_\beta)_{U,U}>0 \mbox{ for all finite subset }U\mbox{ of }V\}$.

\begin{prop}\label{prop:nu.dist.inf}[Proposition 1 in \cite{SabZen}]
Let $\G=(V,E)$ be an infinite locally finite graph. There exists a unique probability distribution $\nu_V^W$ on $\calD_V^W$ such that under $\nu_V^W(d\beta)$, for all finite subset $U\subset V$, $\beta_U\sim\nu_U^{W_{U,U},\eta}$ where $\eta=W_{U,U^c}\ind_{U^c}$. Its Laplace transform is
\[\int e^{-\langle\lambda,\beta\rangle}\nu_V^W(d\beta)=e^{-\sum_{i\sim j}W_{i,j}(\sqrt{1+\lambda_i}\sqrt{1+\lambda_j}-1)}\prod_{i\in V}\frac{1}{\sqrt{1+\lambda_i}}\]
for $\lambda\in\R_+^V$ with finite support.
\end{prop}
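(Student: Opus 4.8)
The plan is to obtain $\nu_V^W$ as the projective limit of the finite-volume distributions attached to the wired restrictions $\G^\n$, and then to read off its support and Laplace transform from Propositions~\ref{prop:nu.dist} and~\ref{prop:beta.restr.sotr}. The first point is to exhibit a consistent family of finite-dimensional marginals. Fix a finite $U\subset V$ and an $n$ with $U\subseteq V_n$. Proposition~\ref{prop:beta.restr.sotr}, applied to the finite connected graph $\G^\n$ with the subset $U\subseteq\tilde{V}^\n$, gives that the law of $(\beta_i)_{i\in U}$ under $\nu_{\tilde{V}^\n}^{\Wt^\n}$ is $\nu_U^{\,W_{U,U},\,\eta^{U,n}}$ with $\eta^{U,n}_i=\sum_{j\in\tilde{V}^\n\setminus U}\Wt^\n_{i,j}$. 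Expanding the wired conductances ($\Wt^\n_{i,j}=W_{i,j}$ for $i,j\in V_n$ and $\Wt^\n_{i,\delta_n}=\sum_{j\sim i,\,j\notin V_n}W_{i,j}$) and using $U\subseteq V_n$, this sum splits into the edges from $U$ into $V_n\setminus U$ and the edges from $U$ out of $V_n$, which together are exactly all edges leaving $U$; thus
\[\eta^{U,n}_i=\sum_{j\sim i,\ j\notin U}W_{i,j}=:\eta^U_i,\]
independently of $n$. Consequently, for finite $U\subseteq U'$ the $U$-marginal of $\nu_{U'}^{\,W_{U',U'},\,\eta^{U'}}$ is $\nu_U^{\,W_{U,U},\,\eta^U}$ (both being the $U$-marginal of $\nu_{\tilde{V}^\n}^{\Wt^\n}$ for $n$ large), so the family $\{\nu_U^{\,W_{U,U},\,\eta^U}\}_{U\subset V\text{ finite}}$ is consistent.

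Kolmogorov's extension theorem then produces a unique probability measure $\nu_V^W$ on $(\R^V,\mathcal{B}(\R^V))$ whose marginal on each finite $U$ is $\nu_U^{\,W_{U,U},\,\eta^U}$, and since $\eta^U=W_{U,U^c}\ind_{U^c}$ this is precisely the asserted restriction property. I would then check that $\nu_V^W$ is carried by $\calD_V^W$: for any finite $U$ and any $n$ with $U\subseteq V_n$, the block $(H_\beta)_{U,U}$ is a principal submatrix of $2\beta_{V_n}-W_{V_n,V_n}$, which is positive definite for $\nu_V^W$-a.e.\ $\beta$ because the $V_n$-marginal $\nu_{V_n}^{\,W_{V_n,V_n},\,\eta^{V_n}}$ is supported on $\calD_{V_n}^{\,W_{V_n,V_n}}$ by Proposition~\ref{prop:nu.dist}(i); since a principal submatrix of a positive definite matrix is positive definite, intersecting over the countably many finite $U$ yields $\nu_V^W(\calD_V^W)=1$, the set $\calD_V^W$ being Borel as a countable intersection of open conditions.

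For the Laplace transform, I would take $\lambda\in\R_+^V$ with finite support and choose $n$ with $\mathrm{supp}(\lambda)\subseteq V_n$; then $\int e^{-\langle\lambda,\beta\rangle}\,\nu_V^W(d\beta)=\int e^{-\langle\lambda,\beta\rangle}\,\nu_{V_n}^{\,W_{V_n,V_n},\,\eta^{V_n}}(d\beta)$, which Proposition~\ref{prop:nu.dist}(i) evaluates in closed form. Since $\lambda_j=0$ for $j\notin V_n$, the boundary term $\sum_{i\in V_n}\eta^{V_n}_i(\sqrt{1+\lambda_i}-1)$ equals $\sum_{i\in V_n}\sum_{j\sim i,\,j\notin V_n}W_{i,j}\bigl(\sqrt{(1+\lambda_i)(1+\lambda_j)}-1\bigr)$ and merges with the sum over edges inside $V_n$ into the sum over all edges of $\G$, while $\prod_{i\in V_n}(1+\lambda_i)^{-1/2}$ extends to the product over all of $V$ because the missing factors equal $1$. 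This is the stated formula, and it is in particular independent of the chosen $n$.

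Uniqueness is then immediate: any probability measure $\tilde\nu$ on $\calD_V^W$ with the restriction property has $V_n$-marginal equal to $\nu_{V_n}^{\,W_{V_n,V_n},\,\eta^{V_n}}$ for every $n$, hence agrees with $\nu_V^W$ on all cylinder events and thus on $\mathcal{B}(\R^V)$ by a $\pi$-$\lambda$ argument. I expect the only genuine bookkeeping to be the step showing that the wired boundary weight $\eta^{U,n}$ stabilises to the true outgoing weight $\eta^U$ of $U$ (and the parallel collapse of the boundary term in the Laplace transform), which is exactly where the specific form of the wired conductances $\Wt^\n$ enters; beyond that the argument is routine, resting only on the finite-volume Propositions~\ref{prop:nu.dist} and~\ref{prop:beta.restr.sotr}, the Kolmogorov extension theorem, and the stability of positive definiteness under passing to principal submatrices.
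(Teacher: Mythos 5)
Your proposal is correct and follows essentially the same route the paper itself sketches just before the statement: consistency of the wired finite-volume marginals via Proposition~\ref{prop:beta.restr.sotr}, Kolmogorov extension, and then reading off the support and Laplace transform from Proposition~\ref{prop:nu.dist}. The paper defers the details to \cite{SabZen}, and your write-up fills them in correctly, including the key bookkeeping step that $\eta^{U,n}$ stabilises to the outgoing weight of $U$ and the matching collapse of the boundary term in the Laplace transform.
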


The wired boundary condition is not only useful to define $\nu_V^W$ on infinite graph, but also to link this distribution to representations of the VRJP, by applying Theorem \ref{thm:beta.mix} to the graph $\G^\n$. Indeed from Proposition \ref{prop:nu.dist.inf}, for any $n\in\N$, under $\nu_V^W(d\beta)$ we have $\beta_{V_n}\sim\nu_{V_n}^{W^\n,\eta^\n}$. Hence, from Proposition \ref{prop:beta.restr.sotr}, we can extend $\beta_{V_n}$ into a potential $\beta^\n\sim\nu_{\tilde{V}^\n}^{\Wt^\n}$ such that $\beta^\n_{V_n}=\beta_{V_n}$. We denote $H_\beta^\n=2\beta^\n-\Wt^\n$ and $G^\n=(H_\beta^\n)^{-1}$. From Theorem \ref{thm:beta.mix}, we know that $G^\n$ gives a representation of the VRJP on $\G^\n$.

\begin{defi}\label{defi:Gh.psi}
\begin{itemize}
\item[(i)]
For $\beta\in\calD_V^W$, let us define $\Gh^\n:V\times V\to\R_+$ by  $(\Gh^\n)_{V_n,V_n}=((H_\beta)_{V_n,V_n})^{-1}$, and $\Gh^\n(i,j)=0$ if $i\notin V_n$ or $j\notin V_n$.
\item[(ii)]
For $\beta\in\calD_V^W$, let $\psi^\n\in\R_+^{V_n}$ be defined by 
\[\begin{cases} (H_\beta \psi^\n)_{V_n}=0 \\ \psi^\n_{V_n^c}=1.\end{cases}\]
Note that $\psi^\n_{V_n}=(\Gh^\n_{V_n,V_n})\eta^\n$.
\end{itemize}
\end{defi}

It is possible, using a decomposition of the Green function as a sum over paths (see \cite{SabZen}, or Proposition \ref{prop:g.somme}), to write
\[G^\n(i,j)=\Gh^\n(i,j)+\psi^\n(i)G^\n(\delta_n,\delta_n)\psi^\n(j)\]
for $i,j\in V_n$. Under $\nu_V^W(d\beta)$, $G^\n(\delta_n,\delta_n)$ is independent of $\beta_{V_n}$, and is always distributed according to a $Gamma(1/2,1)$ distribution (see Proposition \ref{prop:beta.u} (ii)). The following theorem describes how taking $n\to\infty$ in this previous expression gives a representation of the VRJP on infinite graphs.

\begin{thm}\label{thm:Gh.psi.mix}[Theorem 1 in \cite{SabZen}]
\begin{itemize}
\item[(i)] Under $\nu_V^W(d\beta)$, for $i,j\in V$, the increasing sequence $\Gh^\n(i,j)$ converges almost surely to a finite random variable $\Gh(i,j)$.
\item[(ii)] Let $\F_n$ be the $\sigma$-field generated by $\beta_{V_n}$. Then under $\nu_V^W(d\beta)$, for all $i\in V$, $\psi^\n(i)$ is a nonnegative $(\F_n)$-martingale which converges almost surely to an integrable random variable $\psi(i)$. Moreover, $\psi$ is $H_\beta$-harmonic on $V$, \textit{i.e.} $H_\beta \psi(i)=2\beta_i\psi(i)-\sum_{j\sim i}W_{i,j}\psi(j)=0$ for $i\in V$.
\item[(iii)] From now on, we will denote $\nu_V^W(d\beta,d\gamma)=\nu_V^W(d\beta)\otimes\frac{\ind_{\{\gamma>0\}}}{\sqrt{\pi\gamma}}e^{-\gamma}d\gamma$, where $\frac{\ind_{\{\gamma>0\}}}{\sqrt{\pi\gamma}}e^{-\gamma}d\gamma$ is a $Gamma(1/2,1)$ distribution. 

Let now $i_0\in V$ be fixed. For $\beta\in\calD_V^W$ and $\gamma>0$, we define
\[G(i,j)=\Gh(i,j)+\frac{1}{2\gamma}\psi(i)\psi(j),\]
and denote by $P_x^{\beta,\gamma,i_0}$ the distribution of the Markov jump process started at $x\in V$, where the jump rate from $i$ to $j$ is $\frac{1}{2}W_{i,j}\frac{G(i_0,j)}{G(i_0,i)}$.

Then the law $P^{VRJP(i_0)}$ of the time-changed VRJP on $V$, with respect to $W$ and started at $i_0$, is a mixture of these Markov jump processes under $\nu_V^W(d\beta,d\gamma)$, \textit{i.e.}
\[P^{VRJP(i_0)}[\cdot]=\int P^{\beta,\gamma,i_0}_{i_0}[\cdot] \nu_V^W(d\beta,d\gamma).\]
\item[(iv)] For $\nu_V^W$-almost all $\beta$ and all $i_0\in V$, we have:
\begin{itemize}
\item The Markov process $P_x^{\beta,\gamma,i_0}$ is recurrent if and only if $\psi(i)=0$ for all $i\in V$.
\item The Markov process $P_x^{\beta,\gamma,i_0}$ is transient if and only if $\psi(i)>0$ for all $i\in V$.
\end{itemize}
\end{itemize}
\end{thm}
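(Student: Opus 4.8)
The plan is to reduce statement (iv), for fixed $\beta\in\calD_V^W$ and $\gamma>0$ on the almost sure event where (i)--(ii) hold, to a computation of the effective resistance of an electrical network. Write $u=G(i_0,\cdot)$, so that $u(i)=\Gh(i_0,i)+\frac1{2\gamma}\psi(i_0)\psi(i)$. Two preliminary observations are needed. First, $\psi\geq 0$ (a pointwise limit of nonnegative martingales by (ii)), and if $\psi(i_1)=0$ for some $i_1$ then $0=(H_\beta\psi)(i_1)=-\sum_{j\sim i_1}W_{i_1,j}\psi(j)$ forces $\psi\equiv 0$ by connectedness of $\G$; hence exactly one of ``$\psi\equiv 0$'' and ``$\psi(i)>0$ for all $i$'' occurs, and since an irreducible jump process on a countable state space is either recurrent or transient, it suffices to prove: \emph{$P^{\beta,\gamma,i_0}_{i_0}$ is recurrent if and only if $\psi\equiv 0$}. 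Second, $u(i)\geq\Gh(i_0,i)\geq\Gh^\n(i_0,i)>0$ for $n$ large (the inverse of the irreducible Stieltjes matrix $(H_\beta)_{V_n,V_n}$ has strictly positive entries), so $P^{\beta,\gamma,i_0}$ is a genuine irreducible jump process; and letting $n\to\infty$ in $(H_\beta\Gh^\n(i_0,\cdot))(i)=\ind_{i=i_0}$ for $i\in V_n$ (true because $\Gh^\n(i_0,\cdot)$ vanishes off $V_n$ and its restriction to $V_n$ is the $i_0$-th column of $((H_\beta)_{V_n,V_n})^{-1}$), together with $H_\beta\psi=0$, yields $(H_\beta u)(i)=\ind_{i=i_0}$ for all $i\in V$.

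Next I would pass to the electrical network. The jump rates $\frac12 W_{i,j}u(j)/u(i)$ are reversible for $m_i=u(i)^2$, with edge conductances $c_{i,j}=\frac12 W_{i,j}u(i)u(j)$, so $P^{\beta,\gamma,i_0}$ is recurrent iff the network $(V,c)$ is recurrent, i.e.\ iff $R_{\mathrm{eff}}(i_0\leftrightarrow\infty):=\lim_n R_{\mathrm{eff}}(i_0\leftrightarrow V_n^c)=\infty$. The main tool is the conjugation identity: for every $g\in\R^V$ and every $i\in V$,
\[\Delta_c\!\Big(\frac{g}{u}\Big)(i)=\frac12\,u(i)\,(H_\beta g)(i)-\frac12\,g(i_0)\,\ind_{i=i_0},\qquad \Delta_c f(i):=\sum_{j\sim i}c_{i,j}\big(f(i)-f(j)\big),\]
obtained by expanding $\Delta_c(g/u)(i)=\frac12\big(g(i)\sum_j W_{i,j}u(j)-u(i)\sum_j W_{i,j}g(j)\big)$ and using $\sum_j W_{i,j}u(j)=2\beta_i u(i)-\ind_{i=i_0}$.

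Applying this with $g=g_n:=\frac{u(i_0)}{\Gh^\n(i_0,i_0)}\,\Gh^\n(i_0,\cdot)$ (extended by $0$ off $V_n$), one checks that $v_n:=g_n/u$ satisfies $v_n(i_0)=1$, $v_n\equiv 0$ on $V_n^c$, and $\Delta_c v_n=0$ on $V_n\setminus\{i_0\}$; hence $v_n$ is the voltage function for $R_{\mathrm{eff}}(i_0\leftrightarrow V_n^c)$, and the current it sends out of $i_0$ is
\[I_n=\Delta_c v_n(i_0)=\frac12\,u(i_0)\,(H_\beta g_n)(i_0)-\frac12\,g_n(i_0)=\frac{u(i_0)\big(u(i_0)-\Gh^\n(i_0,i_0)\big)}{2\,\Gh^\n(i_0,i_0)},\]
so $R_{\mathrm{eff}}(i_0\leftrightarrow V_n^c)=1/I_n=2\,\Gh^\n(i_0,i_0)\big/\big(u(i_0)(u(i_0)-\Gh^\n(i_0,i_0))\big)$. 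Letting $n\to\infty$ and using (i), $\Gh^\n(i_0,i_0)\nearrow\Gh(i_0,i_0)<\infty$, so $u(i_0)-\Gh^\n(i_0,i_0)\searrow u(i_0)-\Gh(i_0,i_0)=\frac1{2\gamma}\psi(i_0)^2$, whence
\[R_{\mathrm{eff}}(i_0\leftrightarrow\infty)=\frac{4\gamma\,\Gh(i_0,i_0)}{u(i_0)\,\psi(i_0)^2}\,,\]
with the convention that the right-hand side is $+\infty$ when $\psi(i_0)=0$. Therefore $R_{\mathrm{eff}}(i_0\leftrightarrow\infty)=\infty$ iff $\psi(i_0)=0$ iff $\psi\equiv 0$, and it is finite iff $\psi(i)>0$ for all $i$ --- which is exactly (iv).

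The calculations are routine once the reversible-network picture is set up; the two points that need care are (a) checking that $v_n=g_n/u$ really is the voltage function, which reduces to the elementary identities $(H_\beta\Gh^\n(i_0,\cdot))|_{V_n}=\ind_{\cdot=i_0}$ and $\Gh^\n(i_0,\cdot)|_{V_n^c}=0$, and (b) the equivalence ``$(V,c)$ recurrent $\Leftrightarrow$ $P^{\beta,\gamma,i_0}$ recurrent'', for which one must rule out explosion of $P^{\beta,\gamma,i_0}$ --- this is where I would invoke (iii) together with the fact that the time-changed VRJP is defined on all of $[0,\infty)$. The monotone convergence $\Gh^\n(i_0,i_0)\uparrow\Gh(i_0,i_0)$ is (i), and $R_{\mathrm{eff}}(i_0\leftrightarrow V_n^c)\uparrow R_{\mathrm{eff}}(i_0\leftrightarrow\infty)$ (with Rayleigh monotonicity) is standard.
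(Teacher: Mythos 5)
This theorem is imported verbatim from Sabot--Zeng (Theorem 1 of \cite{SabZen}); the present paper gives no proof of it, so there is nothing internal to compare against. Measured against the full statement, your proposal has a genuine gap: it proves only part (iv), taking (i), (ii) and (iii) as inputs. Those are not minor preliminaries --- (i) needs the path-expansion $\Gh^\n(i,j)=\sum_{\sigma\in\calP^{V_n}_{i,j}}W_\sigma/(2\beta)_\sigma$ plus an integrability bound to get a finite limit, (ii) needs the restriction property of $\nu_V^W$ to see that $\psi^\n(i)$ is an $(\F_n)$-martingale, and (iii) is the main representation theorem, requiring the finite-graph result (Theorem \ref{thm:beta.mix}), the wired-boundary coupling and a uniform integrability argument. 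None of this is addressed, so as a proof of the stated theorem the proposal is incomplete.

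That said, your argument for (iv) is correct and self-contained given (i)--(iii), and it is a nice way to do it. The two identities that carry the proof both check out: $(H_\beta u)(i)=\ind_{i=i_0}$ follows by passing to the limit in $(H_\beta\Gh^\n(i_0,\cdot))(i)=\ind_{i=i_0}$ (legitimate because the sum over $j\sim i$ is finite and each $\Gh^\n(i_0,j)$ increases to $\Gh(i_0,j)$) together with $H_\beta\psi=0$; and the conjugation identity for $\Delta_c(g/u)$ is a direct expansion. The function $v_n=g_n/u$ is indeed the unique solution of the Dirichlet problem on the finite set $V_n$ with $v_n(i_0)=1$, $v_n|_{V_n^c}=0$, so $I_n=\Delta_c v_n(i_0)$ is the effective conductance, and your computation gives
$R_{\mathrm{eff}}(i_0\leftrightarrow V_n^c)=2\Gh^\n(i_0,i_0)/\bigl(u(i_0)(u(i_0)-\Gh^\n(i_0,i_0))\bigr)$,
whose limit is infinite precisely when $\psi(i_0)=0$; combined with the zero--one dichotomy for the nonnegative $H_\beta$-harmonic function $\psi$ on a connected graph, this is exactly (iv), and it even produces the explicit value $R_{\mathrm{eff}}(i_0\leftrightarrow\infty)=4\gamma\Gh(i_0,i_0)/(u(i_0)\psi(i_0)^2)$. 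The only points you should make fully explicit are the strict positivity $u(i_0)>\Gh^\n(i_0,i_0)$ (which holds since $\Gh^\n(i_0,i_0)$ is strictly increasing on an infinite connected graph) and the identification of recurrence of the jump process with recurrence of its embedded chain, i.e.\ of the network $(V,c)$. If you intend this as a contribution, present it as a proof of (iv) alone, conditional on (i)--(iii) of \cite{SabZen}.
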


Note that for $i_0\in V$ fixed, in this representation of the VRJP started at $i_0$, the $\beta$ field cannot be expressed as a function of the random jump rates $\frac{W_{i,j}}{2}\frac{G(i_0,j)}{G(i_0,i)}$ that define the environment. However, we can define the $\tilde{\beta}$ field rooted at $i_0$, where $\tilde\beta_i$ is the rate of the exponential holding time at $i$ for the associated Markov process.

\begin{prop}\label{prop:beta.tilde.dist}
For all $i\in V$, $\beta\in\calD_V^W$ and $\gamma>0$, we define
\[\tilde{\beta}_{i}=\sum_{j\sim i}\frac{W_{i,j}}{2}\frac{G(i_0,j)}{G(i_0,i)}=\beta_i-\ind_{\{i=i_0\}}\frac{1}{2G(i_0,i_0)}.\]
Then under $\nu_V^W(d\beta,d\gamma)$, $1/2G(i_0,i_0)$ has distribution $\Gamma(1/2,1)$ and is independent from $\tilde\beta$. Moreover the Laplace transform of $\tilde{\beta}$ is
\[\int e^{-\langle\lambda,\tilde\beta\rangle}\nu_V^W(d\beta,d\gamma)=e^{-\sum_{i\sim j}W_{i,j}(\sqrt{1+\lambda_i}\sqrt{1+\lambda_j}-1)}\prod_{i\neq i_0}\frac{1}{\sqrt{1+\lambda_i}}\]
for $\lambda\in\R_+^V$ with finite support.
\end{prop}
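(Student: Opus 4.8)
The plan is to prove the three assertions in turn: the deterministic identity for $\tilde\beta$, then the joint law of $\tilde\beta$ and $1/(2G(i_0,i_0))$, handled by an explicit change of variables when $V$ is finite and deduced by the wired approximation $\G^\n\uparrow\G$ otherwise. For the identity, observe that $G(i_0,\cdot)$ solves $H_\beta G(i_0,\cdot)=\delta_{i_0}$: indeed $H_\beta\psi=0$ by Theorem~\ref{thm:Gh.psi.mix}(ii), so it is enough to check $H_\beta\Gh(i_0,\cdot)=\delta_{i_0}$. For a fixed vertex $i$ and $n$ large enough that $i$ and all its neighbours lie in $V_n$, the relation $(\Gh^\n)_{V_n,V_n}=((H_\beta)_{V_n,V_n})^{-1}$ gives $2\beta_i\Gh^\n(i_0,i)-\sum_{j\sim i}W_{i,j}\Gh^\n(i_0,j)=\ind_{i=i_0}$, and letting $n\to\infty$ with Theorem~\ref{thm:Gh.psi.mix}(i) yields $(H_\beta\Gh(i_0,\cdot))_i=\ind_{i=i_0}$. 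Evaluating $H_\beta G(i_0,\cdot)=\delta_{i_0}$ at $i$ gives $\sum_{j\sim i}W_{i,j}G(i_0,j)=2\beta_iG(i_0,i)-\ind_{i=i_0}$, and dividing by $2G(i_0,i)>0$ produces exactly $\sum_{j\sim i}\frac{W_{i,j}}2\frac{G(i_0,j)}{G(i_0,i)}=\beta_i-\ind_{i=i_0}\frac1{2G(i_0,i_0)}$.

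Now assume $V$ finite. Set $U=V\setminus\{i_0\}$, $W'=W_{U,U}$, $\hat\eta_i=W_{i,i_0}$ for $i\in U$, and $q(\beta_U)=\langle\hat\eta,((H_\beta)_{U,U})^{-1}\hat\eta\rangle$. The Schur complement gives $1/G(i_0,i_0)=2\beta_{i_0}-q(\beta_U)$, so with $T:=1/(2G(i_0,i_0))$ the map $\beta\mapsto(T,\beta_U)$ is a bijection of $\calD_V^W$ onto $(0,\infty)\times\calD_U^{W'}$ of unit Jacobian (its inverse sets $\beta_{i_0}=T+\tfrac12q(\beta_U)$), and by the identity $\tilde\beta_{i_0}=\tfrac12q(\beta_U)$ while $\tilde\beta_i=\beta_i$ for $i\in U$. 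Using $\det H_\beta=2T\det(H_\beta)_{U,U}$ and $\langle\ind,H_\beta\ind\rangle=2T+q(\beta_U)-2\langle\hat\eta,\ind\rangle+\langle\ind,(H_\beta)_{U,U}\ind\rangle$, the density $\nu_V^W(d\beta)$ factorises as the $\Gamma(1/2,1)$ density in $T$ times $\nu_U^{W',\hat\eta}(d\beta_U)$ (Proposition~\ref{prop:nu.dist}(i)); since $\tilde\beta$ is a function of $\beta_U$ alone, this already shows that $1/(2G(i_0,i_0))\sim\Gamma(1/2,1)$ is independent of $\tilde\beta$ (and recovers Proposition~\ref{prop:beta.restr.sotr} for $\beta_U$). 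For the Laplace transform, $\langle\lambda,\tilde\beta\rangle=\tfrac{\lambda_{i_0}}2q(\beta_U)+\langle\lambda_U,\beta_U\rangle$; the substitution $s=\sqrt{1+\lambda_{i_0}}$ converts $\tfrac{1+\lambda_{i_0}}2q(\beta_U)$ into $\tfrac12\langle s\hat\eta,((H_\beta)_{U,U})^{-1}s\hat\eta\rangle$, whence
\[\int e^{-\langle\lambda,\tilde\beta\rangle}\,\nu_V^W(d\beta)=e^{(1-s)\langle\hat\eta,\ind\rangle}\int e^{-\langle\lambda_U,\beta_U\rangle}\,\nu_U^{W',s\hat\eta}(d\beta_U),\]
and applying Proposition~\ref{prop:nu.dist}(i) on the right, together with the elementary identity $(1-s)W_{i,i_0}-sW_{i,i_0}(\sqrt{1+\lambda_i}-1)=-W_{i,i_0}(\sqrt{(1+\lambda_{i_0})(1+\lambda_i)}-1)$, recombines the edge terms over $U$ and at $i_0$ into $-\sum_{i\sim j}W_{i,j}(\sqrt{(1+\lambda_i)(1+\lambda_j)}-1)$ and the vertex product into $\prod_{i\neq i_0}(1+\lambda_i)^{-1/2}$, as claimed.

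For infinite $V$, apply the finite case to $\G^\n$ rooted at $i_0$: the rooted field $\tilde\beta^\n$ on $\tilde V^\n$ has the associated Laplace transform, and $T_n:=1/(2G^\n(i_0,i_0))\sim\Gamma(1/2,1)$ is independent of $\tilde\beta^\n$. In the coupling $\beta^\n_{V_n}=\beta_{V_n}$ one has $\tilde\beta^\n_i=\tilde\beta_i$ for $i\in V_n\setminus\{i_0\}$, while $\tilde\beta^\n_{i_0}=\beta_{i_0}-T_n$ and $\tilde\beta_{i_0}=\beta_{i_0}-T$ with $T=1/(2G(i_0,i_0))$. Fix a finite $S\subset V$; for $\lambda$ supported in $S$ and $n$ large (so $S\subset V_n$ with no vertex of $S$ adjacent to $V_n^c$), the boundary edges and the vertex $\delta_n$ contribute trivially, so the $\G^\n$-Laplace transform of $\tilde\beta^\n|_S$ equals the target expression and the law of $\tilde\beta^\n|_S$ stabilises. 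To pass to the limit in the $i_0$-coordinate, $\Gh^\n(i_0,i_0)\to\Gh(i_0,i_0)$ and $\psi^\n(i_0)\to\psi(i_0)$ almost surely by Theorem~\ref{thm:Gh.psi.mix}(i)--(ii), whereas $G^\n(\delta_n,\delta_n)$ has a distribution not depending on $n$ — namely that of $\tfrac1{2\gamma}$, $\gamma\sim\Gamma(1/2,1)$ (the finite case applied to $\G^\n$ rooted at $\delta_n$) — and is independent of $\beta_{V_n}$; hence $(\Gh^\n(i_0,i_0),\psi^\n(i_0),\beta|_S,G^\n(\delta_n,\delta_n))$ converges in law to $(\Gh(i_0,i_0),\psi(i_0),\beta|_S,\tfrac1{2\gamma})$ with $\gamma$ independent of $\beta$, and the map $(a,b,c)\mapsto(2(a+b^2c))^{-1}$, continuous at the limit since $G(i_0,i_0)>0$, carries $(T_n,\beta|_S)$ to $(T,\beta|_S)$. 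Thus $\tilde\beta^\n|_S\to\tilde\beta|_S$ in law for every finite $S$, so $\tilde\beta$ has the announced Laplace transform, and passing to the limit in $T_n\perp\tilde\beta^\n|_S$ and in $T_n\sim\Gamma(1/2,1)$ gives $T\perp\tilde\beta$ and $T\sim\Gamma(1/2,1)$.

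The main obstacle I anticipate is this last step: checking that the fixed, $\beta_{V_n}$-independent law of the wired boundary entry $G^\n(\delta_n,\delta_n)$ is exactly the one used to build $G$ in Theorem~\ref{thm:Gh.psi.mix}(iii), so that the continuous-mapping identification of the distributional limit of $1/(2G^\n(i_0,i_0))$ with $1/(2G(i_0,i_0))$ is legitimate. The clean computational point in the finite case is the substitution $s=\sqrt{1+\lambda_{i_0}}$, which absorbs the quadratic form $q(\beta_U)$ hidden inside $\tilde\beta_{i_0}$ into a rescaling of the boundary source $\hat\eta$.
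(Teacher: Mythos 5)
Your proof is correct, and it reaches the paper's conclusion by a partly different route. The overall skeleton (finite wired graphs $\G^\n$, then a limit $n\to\infty$) is the same, but where the paper simply invokes Proposition~\ref{prop:beta.u} (imported from \cite{SabTarZen}) to get that $1/(2G^\n(i_0,i_0))\sim\Gamma(1/2,1)$ is independent of $(\beta^\n_i)_{i\neq i_0}$ and that $\tilde\beta^\n$ is measurable with respect to the latter, you re-derive this finite-graph input from scratch via the change of variables $\beta\mapsto(T,\beta_U)$ and the Schur-complement factorisation $\det H_\beta=2T\det(H_\beta)_{U,U}$, which splits $\nu_V^W$ into a $\Gamma(1/2,1)$ factor in $T$ times $\nu_U^{W',\hat\eta}(d\beta_U)$; and where the paper obtains the Laplace transform of $\tilde\beta$ by dividing the known transform of $\beta$ by that of the independent Gamma variable, you compute it directly via the rescaled boundary source $s\hat\eta$ with $s=\sqrt{1+\lambda_{i_0}}$. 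Your version is more self-contained (it effectively reproves the cited lemma and Proposition~\ref{prop:beta.restr.sotr} along the way) at the cost of length. Concerning the obstacle you flag at the end: it dissolves if you use the coupling the paper sets up in (the proof of) Theorem~\ref{thm:Gh.psi.mix}(iii), in which $G^\n(\delta_n,\delta_n)=1/(2\gamma)$ for a single Gamma variable $\gamma$ and all $n$ simultaneously, so that $T_n\to T$ holds almost surely rather than only in distribution and no identification of a distributional limit is needed; your weaker convergence-in-law argument with the continuous-mapping step does also go through, since $G^\n(\delta_n,\delta_n)$ is independent of the $\beta_{V_n}$-measurable triple and its law is fixed, but it is the more laborious of the two options.
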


\subsection{A common form for all representations}

We still consider $\G=(V,E)$ to be an infinite connected graph, locally finite and endowed with conductances $(W_{i,j})_{i,j\in V}$. Thanks to Theorem \ref{thm:Gh.psi.mix}, we already know that the law $P^{VRJP(i_0)}$ of the time-changed VRJP can be written as a mixture of Markov jump processes, using the distribution $\nu_V^W$. We will refer to this as the standard representation. We are now interested in other possible random environments, that would represent the VRJP in the same sense, and whether they can be expressed in a form similar to the standard representation.

We will denote by $\mathcal{J}_V^E=\{(r_{i,j})_{i\sim j}\in\R_+^E\}$ the set of jump rates on $\G$.

\begin{defi}\label{def:rep}
Let $\calR(dr)$ be a probability distribution on $\calJ_V^E$. For $i_0\in V$ fixed, we will say that $\calR(dr)$ is the distribution of a random environment representing $P^{VRJP(i_0)}$ if
\[P^{VRJP(i_0)}[\cdot]=\int P^{r}_{i_0}[\cdot] \calR(dr),\]
where for $r\in\calJ_V^E$, $P^{r}$ is the distribution of the Markov jump process with jump rate from $i$ to $j$ given by $r_{i,j}$.
\end{defi}

The following result tells us that in fact, any representation of the VRJP can be expressed in a similar form as the standard representation, using a $\beta$ field as well as $H_\beta$-harmonic functions.

For $i\in V$ and $r\in\calJ_V^E$, we define $r_i=\sum_{j\sim i} r_{i,j}$.

\begin{thm}\label{thm:beta.env}
Let $i_0\in V$ be fixed, and let $\calR(dr)$ be the distribution of a random environment representing $P^{VRJP(i_0)}$. We write $\calR(dr,d\gamma)=\calR(dr)\otimes\frac{\ind_{\{\gamma>0\}}}{\sqrt{\pi\gamma}}e^{-\gamma}d\gamma$.

For $r\in\calJ_V^E$ and $\gamma>0$, we define $\beta\in(\R_+)^V$ by $\beta_i=r_i+\ind_{\{i=i_0\}}\gamma$ for $i\in V$. Then under $\calR(dr,d\gamma)$, $\beta\sim\nu_V^W$, and there exists a random $H_\beta$-harmonic function $h:V\to\R_+$, such that for all $i\sim j$,
\[r_{i,j}=\frac{W_{i,j}}{2}\frac{G(i_0,j)}{G(i_0,i)},\]
where  $G(i_0,i)=\Gh(i_0,i)+h(i)$ for $i\in V$, and $\Gh$ is the function of $\beta$ defined in Theorem \ref{thm:Gh.psi.mix}.
\end{thm}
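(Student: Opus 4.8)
The plan is to recover the $\beta$-field and the harmonic function from the jump rates $r$ by reversing the construction behind the standard representation, using the exit procedure on the wired restrictions $\G^\n$. First I would argue that $\beta$, as defined by $\beta_i = r_i + \ind_{\{i=i_0\}}\gamma$, has distribution $\nu_V^W$. The natural route is via Laplace transforms: since $\calR$ represents $P^{VRJP(i_0)}$, the holding rate $r_i$ at a vertex $i$ (for $i\neq i_0$) must, as a random variable, coincide in law with the field $\tilde\beta_i$ of Proposition \ref{prop:beta.tilde.dist}, because the holding rate at $i$ is a measurable function of the trajectory of $Z$ (it is determined by the exponential holding times, which under any Markov representation have parameter $r_i$, hence can be read off from the path). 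More precisely, the whole collection $(r_{i,j})_{i\sim j}$ is a deterministic function of the law of $(Z_t)$ only through the mixing measure, but each individual $r_i$ is recoverable path-wise; so the joint law of $(r_i)_{i\in V}$ under $\calR$ equals the joint law of $(\tilde\beta_i)_{i\in V}$ under $\nu_V^W(d\beta,d\gamma)$. Adding back the independent $\Gamma(1/2,1)$ variable $\gamma$ at $i_0$ and comparing with Proposition \ref{prop:beta.tilde.dist} (where $1/2G(i_0,i_0)$ plays exactly that role), I get that $\beta$ defined above has Laplace transform equal to that of $\nu_V^W$ in Proposition \ref{prop:nu.dist.inf}, hence $\beta\sim\nu_V^W$. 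A subtlety here is joint measurability: one must check that not just marginals but the joint distribution of $(r_{i,j})$ together with the recovered $\beta$ matches; I would handle this by noting that $r_{i,j}/r_{i,k}$ ratios and the $r_i$'s together determine all $r_{i,j}$, and that these ratios are also path-measurable (via the jump chain transition probabilities of $Z$).

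Next, with $\beta\sim\nu_V^W$ in hand, I would \emph{define} $h$ by the forced relation
\[
h(i) := \frac{2 r_{i,i_0} }{W_{i,i_0}}\,G(i_0,i_0) \cdot \big(\text{appropriate normalization}\big) - \Gh(i_0,i),
\]
or more cleanly: set $G(i_0,i_0) := 1/(2\gamma)$ — wait, one needs to pin down $G(i_0,i_0)$ from the data. Since $\beta_{i_0} = r_{i_0} + \gamma$ and, for the standard representation, $\beta_{i_0} = \tilde\beta_{i_0} + 1/(2G(i_0,i_0))$, the natural choice is $G(i_0,i_0) = 1/(2\gamma)$. Then define $G(i_0,i)$ for all $i$ by integrating the rate identities along a path from $i_0$: the constraint $r_{i,j} = \frac{W_{i,j}}{2} G(i_0,j)/G(i_0,i)$ forces $G(i_0,j)/G(i_0,i) = 2 r_{i,j}/W_{i,j}$, and this is consistent around cycles precisely because $r$ comes from a genuine Markov representation of the reversible-type VRJP (the product of ratios around any cycle is $1$, since $W$ is symmetric and the VRJP time-change produces a mixture of Markov processes that are reversible in the appropriate generalized sense — this is where one invokes that $P^r$ must match $P^{VRJP(i_0)}$, forcing $r_{i,j}W$-antisymmetry in log). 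So $G(i_0,\cdot)$ is well-defined up to the overall scale fixed by $G(i_0,i_0)=1/(2\gamma)$, and then $h := G(i_0,\cdot) - \Gh(i_0,\cdot)$.

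It then remains to prove (a) $h \geq 0$ and (b) $H_\beta h = 0$. For harmonicity: $\Gh$ restricted to any $V_n$ satisfies $(H_\beta \Gh^\n)_{V_n,V_n} = \mathrm{Id}$, so $(H_\beta \Gh(i_0,\cdot))(i) = \ind_{\{i=i_0\}} + (\text{error terms from }V_n^c)$; in the limit the boundary contributions are governed by $\psi$. Meanwhile, the full $G(i_0,\cdot)$ must satisfy $(H_\beta G(i_0,\cdot))(i) = \ind_{\{i=i_0\}}$ as well — this is forced by the fact that $\tilde\beta_i = \sum_j \frac{W_{i,j}}{2}\frac{G(i_0,j)}{G(i_0,i)}$ rearranges to $2\beta_i G(i_0,i) - \sum_j W_{i,j} G(i_0,j) = \ind_{\{i=i_0\}}$ once we use $\beta_i = r_i + \ind_{\{i=i_0\}}\gamma = \tilde\beta_i + \ind_{\{i=i_0\}}\gamma$ and $G(i_0,i_0) = 1/(2\gamma)$. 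Subtracting the two identities gives $H_\beta h = 0$. For nonnegativity: $h(i) = G(i_0,i) - \Gh(i_0,i)$, and $G(i_0,i) = G(i_0,i_0)\prod(\text{ratios}) > 0$ while one shows $\Gh \leq G$ pointwise — since $\Gh^\n$ is the Green function of $H_\beta$ killed outside $V_n$, it is dominated by any $G$ solving the same equation with a nonnegative harmonic correction; concretely $G(i_0,i) - \Gh^\n(i_0,i)$ is $H_\beta$-harmonic on $V_n$ with nonnegative boundary values (equal to $G(i_0,\cdot)>0$ on $\partial V_n$), hence nonnegative by the maximum principle for $H_\beta$ (valid since $(H_\beta)_{V_n,V_n} > 0$), and letting $n\to\infty$ preserves this.

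\textbf{Main obstacle.} The delicate point is the \emph{identification} $\beta \sim \nu_V^W$ together with the cycle-consistency of the reconstructed $G(i_0,\cdot)$: one must show that an arbitrary mixing measure $\calR$ representing the VRJP necessarily has its holding rates $r_i$ jointly distributed as $\tilde\beta$, and that its rates $r_{i,j}$ necessarily satisfy $r_{i,j}/r_{j,i} = W$-symmetric ratios arranged so the logarithmic $1$-form $\log(2r_{i,j}/W_{i,j})$ is exact. The first part requires a genuine argument that the holding-time parameters of a Markov mixture are determined by (and determine, jointly) the path law — essentially a uniqueness statement for de Finetti-type decompositions restricted to what is path-measurable — and the second part is where the specific structure of the VRJP (its reversibility) must be exploited, presumably by comparing with the already-known standard representation on each finite $\G^\n$ and using that representations localize well under the wired boundary conditions. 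I would expect most of the work to go into making these two claims precise.
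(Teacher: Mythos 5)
Your skeleton matches the paper's: establish that the ratios $t_{i,j}=\tfrac{2}{W_{i,j}}r_{i,j}$ are a.s.\ cycle-consistent so that $r_{i,j}=\tfrac{W_{i,j}}{2}e^{u_j-u_i}$, identify the law of $(r_i)$ with $\tilde\beta$, append an independent $\Gamma(1/2,1)$ at $i_0$ to get $\beta\sim\nu_V^W$, set $G(i_0,i)=\tfrac{1}{2\gamma}e^{u_i}$, and check $H_\beta h=0$ by subtracting the two identities $H_\beta G(i_0,\cdot)=\delta_{i_0}=H_\beta\Gh(i_0,\cdot)$ (this last computation you do correctly, and your maximum-principle argument for $h\geq 0$ is fine). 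But the two steps you lean on most are not actually proved, and one of them rests on a false claim.

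First, you assert that $r_i$ is ``recoverable path-wise'' from the exponential holding times, hence that the law of $(r_i)_{i\in V}$ is forced to equal that of $\tilde\beta$. This is only true when the process visits every vertex infinitely often — it is precisely the law-of-large-numbers argument the paper uses in Proposition \ref{prop:repr.rec} for the \emph{recurrent} case. In the transient case (which is the interesting one here, and the one where non-uniqueness of representations is at stake), a vertex is visited finitely often and $r_i$ is \emph{not} a measurable function of the trajectory; indeed the whole point of the theorem is that the mixing measure need not be path-determined. The paper instead computes the Radon--Nikodym derivative of $P^{VRJP(i_0)}$ with respect to the rate-$\tfrac12 W_{i,j}$ Markov jump process on \emph{cyclic} trajectories: once $t_\sigma=1$ a.s., this density equals $\int e^{-\sum_i r_i l_i}/e^{-\sum_i\frac12 W_i l_i}\,\calR(dr)$, and equating it with the explicit VRJP density from \cite{ST16} yields the Laplace transform of $(r_i)$ evaluated at arbitrary finitely supported local-time vectors. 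Second, the cycle-consistency $t_\sigma=1$ a.s.\ — which you correctly flag as the main obstacle — is left as an appeal to ``reversibility in the appropriate generalized sense,'' which is not an argument: a priori $\calR$ could charge environments violating the cocycle condition as long as the mixture still reproduces the VRJP law. The paper proves it by comparing $P^{VRJP(i_0)}[(Z_t)_{t\le T}\sim\sigma^n]$ (the $n$-fold iterate of a cycle $\sigma$) computed two ways: under the mixture it carries a factor $(t_\sigma)^n$, while the explicit density gives bounds independent of $n$ up to fixed exponentials in $T$; letting $n\to\infty$ kills both events $\{t_\sigma\geq 1+\epsilon\}$ and $\{t_\sigma\leq 1-\epsilon\}$. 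Without these two arguments the proof does not close.
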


In order to try and classify all representations of the VRJP, we now need to identify $H_\beta$-harmonic functions, and to determine which ones can appear in the expression of a representation, as in Theorem \ref{thm:beta.env}. Two interesting cases arise, depending on $(\G,W)$: when the VRJP is almost surely recurrent, or almost surely transient.

In the first case, we can use the law of large numbers to show that the representation of $P^{VRJP(i_0)}$ as a mixture of Markov processes is unique.

\begin{prop}\label{prop:repr.rec}
If $(\G,W)$ is such that the VRJP is almost surely recurrent, then the representation of $P^{VRJP(i_0)}$ as a mixture of Markov processes is unique, \textit{i.e.} if $\calR(dr)$ and $\calR'(dr)$ are the distributions of random environments representing $P^{VRJP(i_0)}$, then $\calR(dr)=\calR'(dr)$.
\end{prop}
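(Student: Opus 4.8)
The plan is to exploit the fact that when the VRJP is recurrent, the random environment can be recovered (almost surely) as a measurable function of the trajectory, so that the mixing measure is uniquely determined. Concretely, suppose $\calR(dr)$ is the distribution of a random environment representing $P^{VRJP(i_0)}$. Under $\calR$, conditionally on $r$, the canonical process $(Z_t)$ is an irreducible Markov jump process on $V$ with jump rates $r_{i,j}$; since it represents the VRJP, which is almost surely recurrent, the law $P^r_{i_0}$ must be that of a recurrent Markov process for $\calR$-almost every $r$ (otherwise a transient trajectory would occur with positive probability under $P^{VRJP(i_0)}$, contradicting recurrence of the VRJP, which holds almost surely). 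Actually one only needs that $(Z_t)$ visits every vertex infinitely often $P^{VRJP(i_0)}$-a.s., which is precisely recurrence of the (time-changed) VRJP.

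First I would show that for a recurrent Markov jump process with jump rates $r$, the rates are recovered from the trajectory by a law of large numbers: for $i\sim j$, let $N_{i,j}(t)$ be the number of jumps from $i$ to $j$ before time $t$ and $\ell_i(t)=\int_0^t \ind_{\{Z_s=i\}}\,ds$ the local time at $i$; then $P^r_{i_0}$-a.s. one has $N_{i,j}(t)/\ell_i(t)\to r_{i,j}$ as $t\to\infty$. This is standard: between successive visits to $i$ the process waits an $\mathrm{Exp}(r_i)$ time and then jumps to $j$ with probability $r_{i,j}/r_i$, these choices being i.i.d.; recurrence guarantees $\ell_i(t)\to\infty$ a.s., and the ergodic/renewal argument gives the limit. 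Hence the functional
\[
\Phi(\omega)=\l(\lim_{t\to\infty}\frac{N_{i,j}(t)(\omega)}{\ell_i(t)(\omega)}\r)_{i\sim j}
\]
is defined $P^{VRJP(i_0)}$-almost everywhere, and under $\calR$ we have $\Phi(Z)=r$ almost surely (first condition on $r$, then integrate).

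It then follows that for any bounded measurable $F:\calJ_V^E\to\R$,
\[
\int F(r)\,\calR(dr)=\int F(\Phi(\omega))\,P^{VRJP(i_0)}(d\omega),
\]
because $\int F(r)\calR(dr)=\int\int F(r)P^r_{i_0}(d\omega)\calR(dr)=\int\int F(\Phi(\omega))P^r_{i_0}(d\omega)\calR(dr)=\int F(\Phi(\omega))P^{VRJP(i_0)}(d\omega)$, where the middle equality uses $\Phi=r$ $P^r_{i_0}$-a.s. The right-hand side does not depend on $\calR$ but only on $P^{VRJP(i_0)}$; since $F$ is arbitrary, $\calR$ is the pushforward of $P^{VRJP(i_0)}$ under $\Phi$, hence unique. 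The same argument applied to $\calR'$ gives $\calR'=\calR$.

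The main obstacle, and the only place requiring care, is the measure-theoretic setup of the law of large numbers: one must check that recurrence of the VRJP (equivalently, of the time-changed VRJP $(Z_t)$) indeed forces $P^r_{i_0}$ to be recurrent for $\calR$-a.e.\ $r$, so that $\ell_i(t)\to\infty$ holds $P^{VRJP(i_0)}$-a.s. and $\Phi$ is everywhere defined; and that $\Phi$ is a genuine measurable function on $D([0,\infty),V)$ (the limit exists along all $t$, not merely along a subsequence, which follows from monotonicity of $N_{i,j}$ and $\ell_i$ together with convergence along jump times). Once this is in place, the identification of $\calR$ as a pushforward is immediate and forces uniqueness; no properties of the $\beta$-field are needed for this proposition.
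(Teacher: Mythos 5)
Your proposal is correct and follows essentially the same route as the paper: recurrence guarantees infinitely many visits to each vertex, the law of large numbers applied to the i.i.d.\ holding times and jump choices recovers the rates $r_{i,j}$ as an a.s.\ defined measurable functional of the trajectory, and hence $\calR$ is the pushforward of $P^{VRJP(i_0)}$ under that functional and is uniquely determined. The paper uses the empirical mean holding time $\o{\delta t}_i\to 1/r_i$ and empirical jump frequencies $\o{p}_{i,j}\to r_{i,j}/r_i$ rather than your ratio $N_{i,j}(t)/\ell_i(t)$, but this is only a cosmetic difference.
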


Note that in this case, according to Theorem \ref{thm:Gh.psi.mix} (iv), under $\nu_V^W(d\beta)$, we have a.s. $\psi(i)=0$ for all $i\in V$, and the jump rates in the standard representation are given by $\frac{W_{i,j}}{2}\frac{\Gh(i_0,j)}{\Gh(i_0,i)}$. Therefore, the $H_\beta$-harmonic function associated with the representation (by Theorem \ref{thm:beta.env}) is $h\equiv 0$.

In the second case, \textit{i.e.} when the VRJP is almost surely transient, we can introduce a random conductance model, associated with $\psi$.

\begin{prop}\label{prop:trans.delta.psi}
If $(\G,W)$ is such that the VRJP is almost surely transient, then under $\nu_V^W(d\beta)$:
\begin{itemize}
\item[(i)] We have a.s. $\psi(i)>0$ for all $i\in V$, where $\psi$ is defined in Theorem \ref{thm:Gh.psi.mix}.
\item[(ii)] We define the random conductances $c^\psi_{i,j}=W_{i,j}\psi(i)\psi(j)$ for all $i,j\in V$. Then the associated reversible random walk is a.s. transient.
\item[(iii)] Let $\Delta^\psi$ be the discrete Laplacian associated with the random conductances $c^\psi_{i,j}$. Then a function $\varphi:V\to\R$ is $\Delta^\psi$-harmonic if and only if $i\mapsto\psi(i)\varphi(i)$ is $H_\beta$-harmonic.
\end{itemize}
\end{prop}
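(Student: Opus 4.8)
The plan is to obtain (i) from the recurrence/transience dichotomy already recorded in Theorem~\ref{thm:Gh.psi.mix}(iv), and then to set up a ground-state (Doob) transform relating the operator $H_\beta$ to the Laplacian $\Delta^\psi$; this transform gives (iii) immediately and, combined with a classical potential-theoretic criterion, yields (ii). For (i): by Theorem~\ref{thm:Gh.psi.mix}(iv), for $\nu_V^W$-almost every $\beta$ exactly one of the two alternatives $\psi\equiv 0$ or $\psi>0$ on all of $V$ holds, corresponding respectively to $P^{\beta,\gamma,i_0}$ being recurrent or transient; so it suffices to show $\nu_V^W(\psi\equiv 0)=0$. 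I would argue by contradiction: if $\nu_V^W(\psi\equiv 0)>0$, then writing $\mathcal{T}\subset D([0,\infty),V)$ for the measurable event that the trajectory visits some vertex infinitely often, one has $\int P^{\beta,\gamma,i_0}_{i_0}(\mathcal{T})\,\nu_V^W(d\beta,d\gamma)\ge\nu_V^W(\psi\equiv 0)>0$ (the integrand equals $1$ on $\{\psi\equiv 0\}$, where $P^{\beta,\gamma,i_0}$ is recurrent), hence $P^{VRJP(i_0)}(\mathcal{T})>0$ by the mixture formula of Theorem~\ref{thm:Gh.psi.mix}(iii), i.e.\ the VRJP is recurrent with positive probability, contradicting the hypothesis. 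Here one uses that $P^{\beta,\gamma,i_0}$ is an \emph{irreducible} Markov jump process: $\Gh(i_0,\cdot)>0$ since it dominates $\Gh^\n(i_0,\cdot)>0$ for $n$ large (Theorem~\ref{thm:Gh.psi.mix}(i)), whence $G(i_0,\cdot)>0$ and jumps occur along every edge of $\G$, so that $\mathcal{T}$ has $P^{\beta,\gamma,i_0}_{i_0}$-probability $0$ or $1$. Thus $\psi>0$ everywhere, $\nu_V^W$-a.s.

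For the common core of (ii) and (iii), fix $\beta$ with $\psi>0$ everywhere (legitimate by (i)) and, for $\varphi:V\to\R$, put $g=\psi\varphi$. Using $H_\beta\psi=0$, that is $2\beta_i\psi(i)=\sum_{j\sim i}W_{i,j}\psi(j)$, one finds
\[(H_\beta g)(i)=2\beta_i\psi(i)\varphi(i)-\sum_{j\sim i}W_{i,j}\psi(j)\varphi(j)=\sum_{j\sim i}W_{i,j}\psi(j)\bigl(\varphi(i)-\varphi(j)\bigr),\]
hence, with $c^\psi_{i,j}=W_{i,j}\psi(i)\psi(j)$ and $(\Delta^\psi\varphi)(i)=\sum_{j\sim i}c^\psi_{i,j}(\varphi(j)-\varphi(i))$,
\[(\Delta^\psi\varphi)(i)=-\psi(i)\,(H_\beta g)(i).\]
As $\psi(i)>0$, this yields $\Delta^\psi\varphi=0\iff H_\beta(\psi\varphi)=0$, which is (iii), and pointwise $\Delta^\psi\varphi\le 0\iff H_\beta(\psi\varphi)\ge 0$, which I use in (ii). (A positive speed-measure normalization of $\Delta^\psi$ merely factors out, so this is immaterial.)

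For (ii), the network $(\G,c^\psi)$ is connected (it carries the edge set of $\G$) and locally finite, and by the classical criterion it is transient if and only if it admits a non-constant non-negative superharmonic function. I would produce one as $\varphi_0:=\Gh(i_0,\cdot)/\psi$: it is finite by Theorem~\ref{thm:Gh.psi.mix}(i) and non-negative, and since $H_\beta\Gh(i_0,\cdot)=\delta_{i_0}\ge 0$ (obtained by passing to the limit in $(\Gh^\n)_{V_n,V_n}=((H_\beta)_{V_n,V_n})^{-1}$, as in \cite{SabZen}), the identity above gives $\Delta^\psi\varphi_0\le 0$; moreover $\varphi_0$ is non-constant, for $\varphi_0\equiv c$ would force $\Gh(i_0,\cdot)=c\psi$ and hence $\delta_{i_0}=H_\beta\Gh(i_0,\cdot)=cH_\beta\psi=0$, which is absurd. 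Therefore $(\G,c^\psi)$ is transient.

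The computations behind the transform are routine, so the points that need real care are: (a) the irreducibility and $0$--$1$-law argument in (i), which is what transfers a.s.\ transience of the VRJP into $\nu_V^W$-a.s.\ transience of $P^{\beta,\gamma,i_0}$; and (b) in (ii), invoking the correct potential-theoretic characterization of transience via non-negative superharmonic functions together with the identity $H_\beta\Gh(i_0,\cdot)=\delta_{i_0}$ on the infinite graph. Neither is deep. Should one wish to avoid the abstract criterion in (ii), the conclusion also follows by running the $c^\psi$-walk $X$ and noting that $\varphi_0(X_n)$ is a non-negative supermartingale; under recurrence it would visit $i_0$ and its neighbours infinitely often and hence be constant, again contradicting $H_\beta\Gh(i_0,\cdot)=\delta_{i_0}\neq 0$.
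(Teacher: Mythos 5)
Your proof is correct. Parts (i) and (iii) follow essentially the paper's own route: (i) is the observation that a.s.\ transience of the VRJP forces the mixture components $P^{\beta,\gamma,i_0}$ to be a.s.\ transient, which via Theorem \ref{thm:Gh.psi.mix}(iv) gives $\psi>0$ a.s.\ (your contradiction argument with the event $\mathcal{T}$ and the irreducibility check is just a more explicit version of the paper's one-line deduction), and (iii) is the same ground-state transform computation. Where you genuinely diverge is (ii): the paper uses the sum-over-paths identity of Proposition \ref{prop:g.somme} to compute the Green kernel of the $c^\psi$-walk explicitly, $g^\psi(i,j)=\frac{\psi(j)}{\psi(i)}2\beta_j\Gh(i,j)$, and concludes transience from the a.s.\ finiteness of $\Gh$; you instead exhibit the non-constant non-negative superharmonic function $\varphi_0=\Gh(i_0,\cdot)/\psi$ and invoke the classical criterion, which requires only the relation $H_\beta\Gh(i_0,\cdot)=\delta_{i_0}$ on the infinite graph (valid by monotone convergence in $(H_\beta)_{V_n,V_n}\Gh^\n(i_0,\cdot)=\delta_{i_0}$, or from the path expansion). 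Both arguments are sound and rest on the same input ($\Gh<\infty$ and $H_\beta\psi=0$); the paper's version has the added benefit that the explicit formula for $g^\psi$ is reused later (in the Martin-kernel analysis on $\Z^d$ and on trees), whereas yours is softer and avoids the path-sum bookkeeping. Your remark that the normalization of $\Delta^\psi$ (you use the unnormalized conductance Laplacian, the paper the $\pi^\psi$-normalized one) is immaterial for harmonicity and superharmonicity is correct.
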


\begin{rem}\label{rem:rep.fonc.harm}
This allows a more convenient expression of representation in the transient case. Indeed, if $\mathcal{R}(dr)$ is the distribution of a random environment representing $P^{VRJP(i_0)}$, Theorem \ref{thm:beta.env} allows us to construct a $\beta$ field distributed according to $\nu_V^W$, and to express the jump rates $r_{i,j}$ using $\beta$ and a $H_\beta$-harmonic function $h$. According to Proposition \ref{prop:trans.delta.psi} (iii), we have $h=\psi\varphi$, where $\varphi$ is a $\Delta^\psi$-harmonic function, \textit{i.e.} harmonic for a transient random walk.
\end{rem}

The notion of Martin boundary is a useful tool to represent harmonic functions with respect to a transient random walk on a graph $\G=(V,E)$. Indeed, $V$ admits a boundary $\M$ so that $V\cup\M$ is compact for a certain topology, and there is a kernel $K:V\times\M$ so that any positive harmonic function $h$ can be written as
\[h(x)=\int_\M K(x,\alpha)\mu^h(d\alpha)\]
for $x\in V$, where $\mu^h$ is a positive measure on $(\M,\mathcal{B}(\M))$. $\M$ is called the Martin boundary of $V$ with respect to the random walk, and $K$ is the Martin kernel, which is defined using the Green kernel associated with the random walk. For more details on Martin boundaries, see Section \ref{3.3}.

In order to study representations of the VRJP in the transient case, we want to describe $\Delta^\psi$-harmonic functions, according to Remark \ref{rem:rep.fonc.harm}. We will therefore need to identify the Martin boundary $\mathcal{M}^\psi$ associated with $\Delta^\psi$. This will be possible when $\G$ is $\Z^d$, or an infinite tree.

\subsection{Main results}

\subsubsection{Representations of the VRJP on $\Z^d$}

Let us consider the case where $\G$ is the lattice $\Z^d$, \textit{i.e.} $\G(V,E)$ with 
\[V=\Z^d \mbox{ and } E=E_d:=\l\{\{x,y\},|x-y|=1\r\}\]
where $|x|$ is the Euclidean norm of $x$. Let us endow $\G$ with constant initial conductances $W$. We can identify several situations in which the representation is unique. For $d=2$, or if $W$ is small enough, the VRJP is almost surely recurrent (see \cite{BHS18}, and Corollary 1 in \cite{SabTar}), so that the representation of $P^{VRJP(i_0)}$ is unique according to Proposition \ref{prop:repr.rec}. For $d\geq 3$ and $W$ large enough, the VRJP is almost surely transient (see Corollary 3 in \cite{SabTar}), hence we can introduce $\Delta^\psi$ defined in Proposition \ref{prop:trans.delta.psi}. Since $(\G,W)$ is vertex transitive, from Proposition 3 of \cite{SabZen}, under $\nu_V^W(d\beta)$, $\psi$ is stationary and ergodic. This allows us to apply a local limit theorem for random walks in random conductances (from \cite{ADS16}), and show that the Martin boundary $\M^\psi$ associated with $\Delta^\psi$ is almost surely trivial for $W$ large enough. These cases are regrouped in the following result.

\begin{thm}\label{thm:zd.rep}
Let $\G$ be the $\Z^d$ lattice, endowed with constant edge weights, \textit{i.e.} $W_{i,j}=W>0$ for all $i\sim j$. We consider representations of $P^{VRJP(0)}$ as a mixture of Markov processes.

Then:
\begin{itemize}
\item If $d\in\{1,2\}$, there is a unique representation of $P^{VRJP(0)}$.
\item If $d\geq 3$, there are constants $\underline{W}$ and $\overline{W}$ such that for $0<W<\underline{W}$ or for $W>\overline{W}$, there is a unique representation of $P^{VRJP(0)}$.
\end{itemize}
\end{thm}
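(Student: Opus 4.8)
The plan is to split the argument according to whether the VRJP on $\Z^d$ is almost surely recurrent or almost surely transient, reducing the first situation to Proposition \ref{prop:repr.rec} and the second to showing that a Martin boundary is trivial.

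For $d\in\{1,2\}$, and for $d\geq 3$ with $0<W<\underline W$, the VRJP on $\Z^d$ with constant conductance $W$ is almost surely recurrent: on $\Z^2$ by \cite{BHS18}, on $\Z$ for every $W>0$ by the one‑dimensional nature of the environment, and on $\Z^d$ for $W$ small by Corollary 1 in \cite{SabTar} (which defines $\underline W$). In all of these cases Proposition \ref{prop:repr.rec} immediately gives that $P^{VRJP(0)}$ has a unique representation, and there is nothing more to do. So from now on assume $d\geq 3$ and $W$ large, so that the VRJP on $\Z^d$ is almost surely transient by Corollary 3 in \cite{SabTar}. Let $\calR$ be the law of any representing environment. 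Applying Theorem \ref{thm:beta.env}: after tensoring an independent $\gamma\sim\Gamma(1/2,1)$ and setting $\beta_i=r_i+\ind_{\{i=0\}}\gamma$, we obtain $\beta\sim\nu_V^W$ together with a random $H_\beta$-harmonic $h\geq 0$ with $r_{i,j}=\tfrac{W}{2}\,G(0,j)/G(0,i)$ and $G(0,i)=\Gh(0,i)+h(i)$. By Proposition \ref{prop:trans.delta.psi}, in the transient case $\psi>0$ almost surely and $\varphi:=h/\psi$ is a nonnegative function harmonic for $\Delta^\psi$, the Laplacian of the conductances $c^\psi_{i,j}=W\psi(i)\psi(j)$. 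Thus everything reduces to describing the nonnegative $\Delta^\psi$-harmonic functions.

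The crux is to show that the Martin boundary $\M^\psi$ of $\Delta^\psi$ is almost surely a single point, so that $\varphi$ is constant and $h=c\psi$ for a random scalar $c\geq 0$. Since $(\G,W)$ is vertex-transitive, Proposition 3 of \cite{SabZen} gives that $\psi$ is stationary and ergodic under $\nu_V^W$; one then checks that for $W$ large the integrability bounds $\E_{\nu_V^W}[\psi(0)^p]<\infty$ and $\E_{\nu_V^W}[\psi(0)^{-p}]<\infty$ hold for $p$ large enough to satisfy the hypotheses of the quenched local limit theorem of \cite{ADS16} for the random walk among the conductances $c^\psi$ (this is what fixes $\overline W$, taken also at least as large as the transience threshold). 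The local limit theorem yields Gaussian heat-kernel asymptotics for this walk, and integrating in time gives that the associated Green function satisfies $g^\psi(x,y)/g^\psi(x_0,y)\to 1$ as $|y|\to\infty$ for any fixed $x,x_0\in\Z^d$; hence the Martin kernel $K^\psi(\cdot,y)$ converges pointwise to the constant function $1$, the Martin compactification of $\Z^d$ for $\Delta^\psi$ has a single boundary point, and every nonnegative $\Delta^\psi$-harmonic function is constant.

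It remains to pin down $c$, hence $\calR$. From $G(0,\cdot)=\Gh(0,\cdot)+h$ with $h$ harmonic one has $(H_\beta G(0,\cdot))_i=\ind_{\{i=0\}}$, so, exactly as in Proposition \ref{prop:beta.tilde.dist}, $\tilde\beta_i:=\sum_{j\sim i}\tfrac{W_{i,j}}{2}\,G(0,j)/G(0,i)=\beta_i-\ind_{\{i=0\}}/(2G(0,0))$; combined with $\tilde\beta_i=r_i=\sum_{j\sim i}r_{i,j}$ and $\beta_0=r_0+\gamma$ this forces $\gamma=1/(2G(0,0))$, hence $c=\bigl(1/(2\gamma)-\Gh(0,0)\bigr)/\psi(0)$. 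Now $\gamma$ is independent of $r$, so independent of $\tilde\beta$, and $\beta\sim\nu_V^W$; therefore the law of $\tilde\beta$ is the (unique) deconvolution of $\nu_V^W$ by $\Gamma(1/2,1)$ in the coordinate $0$, and $(\tilde\beta,\gamma)$ has the corresponding product law — both are determined. In the transient case $r$ is then a deterministic function of $(\tilde\beta,\gamma)$ through the formulas above (via $\beta=\tilde\beta+\gamma\ind_{\{\cdot=0\}}$, the functions $\Gh,\psi$ of $\beta$, then $c$, then $G(0,\cdot)$, then $r$), so the law of $r$ is determined and coincides with that of the standard representation of Theorem \ref{thm:Gh.psi.mix}. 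The \emph{main obstacle} is the previous paragraph: establishing the quenched moment bounds on $\psi$ and $\psi^{-1}$ that make \cite{ADS16} applicable (this is precisely what forces $W$ large and produces $\overline W$), and upgrading the local limit theorem to the uniform Green-function ratio estimate collapsing the Martin boundary; the recurrent cases and the final identification of $c$ are comparatively routine.
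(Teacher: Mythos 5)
Your proposal is correct and follows essentially the same route as the paper: recurrence plus Proposition \ref{prop:repr.rec} for $d\in\{1,2\}$ and small $W$, and for large $W$ in $d\geq 3$ the reduction via Theorem \ref{thm:beta.env} and Proposition \ref{prop:trans.delta.psi} to $\Delta^\psi$-harmonic functions, triviality of the Martin boundary through the moment bounds on $\psi,\psi^{-1}$ and the Green-kernel local limit theorem of \cite{ADS16}, and the final identification of the constant $c$ (the paper's $g$) as a function of $((r_i),\gamma)$, whose law is representation-independent by Proposition \ref{prop:beta.tilde.env}. No gaps.
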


\subsubsection{A family of representations on infinite trees}\label{2.4}

Let us now consider the case where the graph is an infinite tree $\mathcal{T}=(T,E)$, that we assume to be locally finite, and endow with conductances $W$. In \cite{CheZen}, Chen and Zeng described a representation of the time-changed VRJP with a different expression than the standard representation. Indeed, if $(T_n)_{n\in\N}$ is an increasing and exhausting sequence of finite connected subsets of $T$, the subgraphs $\mathcal{T}^\n=(V_n,E_n)$ of $\G$ are finite trees (where $E_n=\l\{\{i,j\}\in E,i,j\in V_n\r\}$). These are called restrictions of $\G$ with free boundary conditions. 

Moreover, on finite trees, Theorem \ref{thm:beta.mix} gives a representation of the VRJP where jump rates are independent. Therefore, a representation of the VRJP on $\mathcal{T}$ can be obtained from representations on $\mathcal{T}^\n$, using independent jump rates.

\begin{thm}\label{thm:mix.tree}[Theorem 3 in \cite{CheZen}]
Let $\phi$ be an arbitrary root for $\calT$. For all $i\in T\backslash\{\phi\}$, we denote by $\p{i}$ the parent of $i$. Let also $(A_i)_{i\in T\backslash\{\phi\}}$ be independent random variables where $A_i$ is an inverse Gaussian random variable with parameter $(W_{\p{i},i},1)$, \textit{i.e.}
\[\P[A_i\in ds]=\ind_{s\geq 0}\sqrt{\frac{W_{\p{i},i}}{2\pi s^3}}e^{-W_{\p{i},i}\frac{(s-1)^2}{2s}}ds.\]
Then the law $P^{VRJP(\phi)}$ on $\calT$ is a mixture of Markov jump processes, in which the jump rate from $\p{i}$ to $i$ is $\frac{1}{2}W_{\p{i},i}A_i$, and the jump rate from $i$ to $\p{i}$ is $\frac{1}{2}\frac{W_{\p{i},i}}{A_i}$, for all $i\in T\backslash\{\phi\}$.
\end{thm}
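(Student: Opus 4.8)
The plan is to prove the statement first on the finite trees $\calT^\n=(V_n,E_n)$ obtained by restricting $\calT$ with free boundary conditions, and then to pass to the limit $n\to\infty$. On a finite tree, Theorem~\ref{thm:beta.mix} already gives a representation of $P^{VRJP(\phi)}$ as a mixture of Markov jump processes: under $\nu_{V_n}^W(d\beta)$, the jump rate from $i$ to $j$ is $\frac12 W_{i,j}\frac{G(\phi,j)}{G(\phi,i)}$ with $G=(H_\beta)^{-1}$. For $i\in V_n\setminus\{\phi\}$ set $A_i:=\frac{G(\phi,i)}{G(\phi,\p{i})}$, so the rate from $\p{i}$ to $i$ is $\frac12 W_{\p{i},i}A_i$ and the rate from $i$ to $\p{i}$ is $\frac12\frac{W_{\p{i},i}}{A_i}$. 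It therefore suffices to show that, under $\nu_{V_n}^W(d\beta)$, the family $(A_i)_{i\in V_n\setminus\{\phi\}}$ is independent with $A_i$ of inverse Gaussian law with parameter $(W_{\p{i},i},1)$ as in the statement. Writing the $i$-th line of $H_\beta G=\mathrm{Id}$ for $i\neq\phi$, and using that on a tree the only neighbours of $i$ are $\p{i}$ and the children of $i$, one gets the recursion $A_i=\frac{W_{\p{i},i}}{2\beta_i-\sum_{c:\,\p{c}=i}W_{i,c}A_c}$; equivalently $A_i=W_{\p{i},i}\,\Gh^{T_i}(i,i)$, where $T_i$ is the subtree of descendants of $i$ and $\Gh^{T_i}=((H_\beta)_{T_i,T_i})^{-1}$, so that $A_i$ is a measurable function of $\beta_{T_i}$ alone.

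For the distributional statement I would argue by induction on $|V_n|$, in the slightly more general situation where $\beta\sim\nu_{T'}^{W,w\ind_{\{r\}}}$ for a finite tree $T'$ rooted at a vertex $r$ and a weight $w\geq0$; this is the law of $\beta_{T_i}$ by the restriction property (Proposition~\ref{prop:beta.restr.sotr}), with $w=W_{\p{i},i}$. The two algebraic inputs are that, eliminating vertices from the leaves upwards, $\det H_\beta=\prod_{j\in T'}(2\hat\beta_j)$ where $\hat\beta_j:=\beta_j-\sum_{c:\,\p{c}=j}\frac{W_{j,c}^2}{4\hat\beta_c}$, and that $(H_\beta)^{-1}(r,r)=\frac{1}{2\hat\beta_r}$. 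Inserting these into the density of $\nu_{T'}^{W,w\ind_{\{r\}}}$ from Proposition~\ref{prop:nu.dist}(i), one finds that, conditionally on $\beta$ on the subtrees hanging below the children of $r$, the variable $\hat\beta_r$ has density proportional to $\hat\beta_r^{-1/2}e^{-\hat\beta_r-w^2/(4\hat\beta_r)}$, which does not depend on this conditioning; a change of variables then identifies the law of $A_r=\frac{w}{2\hat\beta_r}$ as inverse Gaussian $(w,1)$, independent of everything below $r$. Since these subtrees are at graph distance $\geq2$ from one another, the $1$-dependence of Proposition~\ref{prop:nu.dist}(ii) makes their $\beta$'s independent, so combining with the induction hypothesis applied to each of them yields the statement for $T'$, hence for $\calT^\n$.

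It remains to pass to $\calT$. Because the joint law of $(A_i)_{i\in V_n\setminus\{\phi\}}$ is the same for every $n$ for which the vertices concerned lie in $V_n$, Kolmogorov's extension theorem produces a probability distribution $\calR$ on $\calJ_T^E$ under which the edge rates are exactly those in the statement, and which coincides, on the edges of any $E_m$, with the finite-tree mixing measure $\calR^\n$ obtained above. To see that $\int P^r_\phi\,\calR(dr)=P^{VRJP(\phi)}$ on $\calT$, fix $m<n$ such that the closed neighbourhood of $V_m$ is contained in $V_n$, and compare both laws stopped at the first exit time $\tau$ of $V_m$. On one side, the law of a Markov jump process stopped at $\tau$ depends only on the jump rates out of the vertices of $V_m$, which have the same law under $\calR$ and under $\calR^\n$. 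On the other side, the self-interacting dynamics of the VRJP and the time change $D$ up to $\tau$ only involve the local times at vertices of $V_m$ and their neighbours, which all lie in $V_n$, so $P^{VRJP(\phi)}$ on $\calT$ and on $\calT^\n$ induce the same law on the trajectory stopped at $\tau$. Since $\calR^\n$ represents $P^{VRJP(\phi)}$ on $\calT^\n$ by the finite-tree case, the two laws on $\calT$ agree up to time $\tau$; letting $m,n\to\infty$, and using that the time-changed VRJP does not explode, gives equality of the two laws on $D([0,\infty),T)$.

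I expect the main obstacle to be this infinite-volume step. The point to watch is precisely that \emph{free} boundary conditions are used: unlike the wired restriction of \cite{SabZen}, the graph $\calT^\n$ creates no spurious exit rates at the boundary, so the VRJP on $\calT$ and on $\calT^\n$ genuinely agree before exiting $V_m$; one also needs that the (possibly transient) time-changed VRJP reaches infinity only in infinite time. By contrast, the finite-tree computation, although it requires the determinant and Schur-complement bookkeeping above, reduces to the same one-dimensional integral carried along each edge of the tree, so it should be comparatively routine.
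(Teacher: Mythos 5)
Your proposal is correct, but note that the paper does not actually prove this statement: Theorem \ref{thm:mix.tree} is imported verbatim from \cite{CheZen} (Theorem 3 there), so there is no internal proof to compare against. What you have written is essentially a reconstruction of the Chen--Zeng argument: the finite-tree step (free-boundary restrictions, the identification $A_i=W_{\p{i},i}\Gh^{T_i}(i,i)$ via the Schur-complement recursion, the conditional density of $\hat\beta_r$ giving an inverse Gaussian law independent of the subtrees below, and $1$-dependence to decouple the branches) is exactly the known computation, and the passage to the infinite tree by matching the laws of trajectories stopped at the exit of $V_m$ is the standard finite-to-infinite argument used both in \cite{SabZen} and \cite{CheZen}. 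The only points left implicit are routine: non-explosion transfers from $P^{VRJP(\phi)}$ (which is by definition a law on $D([0,\infty),T)$) to the mixture because the stopped laws agree, and a monotone-class argument upgrades agreement on the stopped $\sigma$-fields to agreement on the whole path space. I find no gap.
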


In some cases, this representation is different from the standard representation.

\begin{prop}\label{prop:2.rep.reg.tree}
Let $\calT=(T,E)$ be an infinite $d$-regular tree with $d\geq 3$, \textit{i.e.} such that each vertex in $T$ has exactly $d$ neighbours. We endow $\calT$ with constant conductances $W$. Then for $W$ large enough, the distribution of the random environment described in Theorem \ref{thm:mix.tree} is different from the distribution of the standard representation.
\end{prop}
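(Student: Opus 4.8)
The plan is to isolate one real functional of the random jump rates, evaluate its law (at least asymptotically) under each of the two representations, and check that the two laws disagree for $W$ large.

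One only needs to argue in a regime where the VRJP on $\calT$ is transient: when it is recurrent, Proposition~\ref{prop:repr.rec} already forces uniqueness of the representation. For $d\geq 3$ and $W$ large the VRJP on the $d$-regular tree is transient (cite the relevant transience result on regular trees, or derive it from Theorem~\ref{thm:Gh.psi.mix}(iv) and transience of the conductance walk for large $W$), so $\psi>0$ $\nu_V^W$-a.s.\ and Theorem~\ref{thm:Gh.psi.mix}(iii) carries a genuine boundary term. Fix the root $\phi$ and a neighbour $j$ of $\phi$, and set $X:=\tfrac{2}{W}r_{\phi,j}=\tfrac{G(\phi,j)}{G(\phi,\phi)}$; since $r_{\phi,j}r_{j,\phi}=W^2/4$ in both representations, $X^2$ is also the ratio $r_{\phi,j}/r_{j,\phi}$. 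A direct computation gives $\E[X]=1$ in \emph{any} representation. Under the Chen–Zeng representation of Theorem~\ref{thm:mix.tree} one has moreover $X=A_j$, inverse Gaussian with parameter $(W,1)$ (mean $1$, variance $1/W$), so $\E[X^2]=1+\tfrac1W$ and $\E[1/X]=1+\tfrac1W$; and the rates $(r_{\phi,j})_{j\sim\phi}$ are \emph{independent}.

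Under the standard representation, Theorem~\ref{thm:Gh.psi.mix}(iii) gives
\[
X=\frac{\Gh(\phi,j)+\frac{1}{2\gamma}\psi(\phi)\psi(j)}{\Gh(\phi,\phi)+\frac{1}{2\gamma}\psi(\phi)^2},\qquad \gamma\sim\Gamma(1/2,1)\ \text{independent of}\ \beta\sim\nu_V^W .
\]
On a tree all the ingredients satisfy one–vertex recursions: writing $T_v$ for the branch below $v$ (with $\p{v}\notin T_v$), the branch Green functions $\hat g_v:=\Gh^{(T_v)}(v,v)$ satisfy $\hat g_v^{-1}=2\beta_v-W^2\sum_{c\in T_v,\,c\sim v}\hat g_c$, depend only on $\beta|_{T_v}$, and $\Gh(\phi,j)=\Gh(\phi,\phi)\,W\hat g_j$; the ratios $\chi_v:=\psi(v)/\psi(\p{v})$ of the martingale limit obey $\chi_v=W\big(2\beta_v-W\sum_{c\in T_v,\,c\sim v}\chi_c\big)^{-1}$. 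I would use these to compute a moment of $X$ — for instance $\E[X^2]$, or $\E[1/X]=\tfrac2W\E[r_{j,\phi}]$ after integrating out $\gamma$ (this introduces an $\mathrm{erfc}$) — and show it differs from the Chen–Zeng value $1+\tfrac1W$. Because the law of $\sum_{j\sim\phi}X_j=\tfrac2W r_\phi$ is common to both representations, this is equivalent to showing $\operatorname{Cov}(X_j,X_{j'})\neq 0$ for distinct neighbours $j,j'$ of $\phi$, which is to be expected: in the standard representation all rates at $\phi$ share the boundary term $\tfrac1{2\gamma}\psi(\phi)^2$ and the common factors $\Gh(\phi,\phi),\psi(\phi)$, whereas in the Chen–Zeng representation they are independent.

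The hard part is this estimate: the law of $\psi$ on the regular tree under $\nu_V^W$ is available only implicitly, through the recursion for $\chi_v$ and the martingale construction, and the boundary term couples $\psi(\phi)$ to the whole field. I expect the cleanest route is a $W\to\infty$ analysis: there $H_\beta=dW(I-P)+\diag(2\beta_v-dW)$ is a relative $O(W^{-1/2})$ perturbation of $dW(I-P)$, with $P$ the simple random walk kernel on $\calT$, so $\psi\to1$, $W\hat g_v$ concentrates near the minimal root $\tfrac1{d-1}$ of $(d-1)c^2-dc+1=0$, and $\gamma\,\Gh(\phi,\phi)=O(W^{-1})$; writing $X=\tfrac{\psi(j)}{\psi(\phi)}\big(1+O(W^{-1})\big)$ and expanding to the order where the Green correction and the fluctuations of $\psi(j)/\psi(\phi)$ enter should give $\E[X^2]=1+\tfrac{c_d}{W}+o(W^{-1})$ with $c_d\neq1$ for $d\geq3$, contradicting the exact Chen–Zeng value and giving the claim for $W$ large (one must also check the relevant moments are finite in that regime). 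A softer packaging is available too: the standard representation is the ``wired'' one, for which by Remark~\ref{rem:rep.fonc.harm} the associated $\Delta^\psi$-harmonic function $\varphi=h/\psi$ is the random constant $\psi(\phi)/(2\gamma)$, while the Chen–Zeng representation, built from free boundary conditions, yields a non-constant $\varphi$ on the transient regular tree (whose Martin boundary $\M^\psi$ is non-trivial) — but turning ``non-constant'' into a proof returns to the same estimate.
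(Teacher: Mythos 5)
There is a genuine gap. Your setup is sound — reduction to the transient regime, the identity $r_{\phi,j}r_{j,\phi}=W^2/4$, the common law of $r_\phi$ across representations, the branch recursions for $\hat g_v$ and $\psi(v)/\psi(\p{v})$, and the observation that distinguishing the two laws via $\E[X^2]$ is equivalent to showing $\operatorname{Cov}(X_j,X_{j'})\neq 0$ under the standard representation — but the one step that actually proves the proposition is missing. You assert that a $W\to\infty$ expansion ``should give'' $\E[X^2]=1+c_d/W+o(W^{-1})$ with $c_d\neq 1$, and that the covariance of the rates at $\phi$ is nonzero ``as is to be expected''; neither is established. Carrying out that expansion is a real piece of work: one must control the fluctuations of $\psi(j)/\psi(\phi)$ and of $\gamma\,\Gh(\phi,\phi)$ to order $W^{-1}$, verify integrability of the relevant moments (note $1/(2\gamma)$ has no finite mean, so the boundary term must be handled with care before taking second moments), and rule out an accidental cancellation making $c_d=1$. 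As written, the proof reduces the proposition to an unproved estimate.

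For contrast, the paper avoids all moment computations by looking at the product of normalized rates along a single ray $(i_k)$: $S_n=\prod_{k=1}^n \frac{2}{W}r_{i_{k-1},i_k}$. Under the Chen--Zeng representation $S_n=\prod A_{i_k}$ with i.i.d.\ mean-one inverse Gaussians, so $\E[\log A]<0$ by Jensen and $S_n\to 0$ a.s.\ by the law of large numbers. Under the standard representation $S_n=G(\phi,i_n)/G(\phi,\phi)\geq \frac{1}{2\gamma}\psi(\phi)\psi(i_n)/G(\phi,\phi)$, and since $\psi(i_n)$ is equidistributed with $\psi(\phi)>0$ by vertex transitivity, $S_n$ cannot tend to $0$ a.s. This qualitative dichotomy (a.s.\ convergence to $0$ versus not) distinguishes the two laws with no asymptotic expansion at all. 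If you want to salvage your approach, I would recommend either completing the $W\to\infty$ expansion in full (including the integrability checks and the proof that $c_d\neq 1$), or switching to an almost-sure functional of the environment along a ray in the spirit above, which exploits the independence structure of the Chen--Zeng rates much more cheaply than a covariance computation.
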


We now know two ways of constructing a representation of the VRJP on $\calT$, that are associated with different boundary conditions on restrictions to finite graphs, and can have distinct distributions. This leads us to introduce new boundary conditions in order to construct a family of different representations of the VRJP, following the same method as for the standard representation. 

Let us start by giving a few notations on trees. For all $x,y\in T$, we denote by $d(x,y)$ the graph distance between $x$ and $y$, and by $[x,y]$ the unique shortest path between $x$ and $y$:
\[[x,y]=\l(x=[x,y]_0,[x,y]_1,...,[x,y]_{d(x,y)-1},y=[x,y]_{d(x,y)}\r).\]
Note that any path $\sigma$ from $x$ to $y$ necessarily crosses all vertices $[x,y]_k$ for $0\leq k\leq d(x,y)$.

Let us fix an arbitrary root $\phi$ in $\mathcal{T}$. Then, for all $x\in T$, we denote by $|x|=d(\phi,x)$ the depth of the vertex $x\in T$. If $x\neq\phi$, we also denote by $\p{x}=[\phi,x]_{|x|-1}$ the parent of $x$. Finally, for any $x\in T$, we define the set $S(x)=\{y\in T, x=\p{y}\}$ of $x$'s children, and the set $T_x=\{y\in T,\exists k\geq 0, [\phi,y]_k=x\}$ of its descendants.

For $x,y\in T$, we denote by $x\wedge y$ the "closest common ancestor" of $x$ and $y$, \textit{i.e.} $x\wedge y=[\phi,x]_{K_{x,y}}$ where $K_{x,y}=\max\{k\geq 0, [\phi,x]_k=[\phi,y]_k\}$. Note that we also have $x\wedge y=[x,y]_{k_0}$, where $k_0$ is such that $|[x,y]_{k_0}|=\min\{|[x,y]_k|,0\leq k\leq d(x,y)\}$.

For $n\in\N$, we denote by $D^\n=\{x\in T, |x|=n\}$ the tree's $n$th generation. Let us then define $T^\n=\bigcup_{0\leq k\leq n}D^{(k)}$, as well as $E^\n=\l\{\{i,j\}\in E, i,j\in T^\n\r\}$. We also denote $T_x^\n=T_x\cap T^\n$ for $x\in T$ and $n\geq |x|$. The restriction of the tree to the first $n$ generations is the graph $(T^\n, E^\n)$, that we endow with the induced conductances $W^\n=W_{T^\n,T^\n}$.

Finally, we define the set $\Omega$ of ends of $\mathcal{T}$, \textit{i.e.} the set of infinite self-avoiding paths (or rays) in $\mathcal{T}$ starting at $\phi$. For $x\in T$, we denote by $\Omega_x$ the subset of $\Omega$ corresponding to the branch $T_x$, \textit{i.e.} the set of rays in $\mathcal{T}$ that cross $x$. Note that the Martin boundary associated with a transient walk on a tree is always $\Omega$, which depends only on the geometry of the tree. This will be convenient to express $\Delta^\psi$-harmonic functions, where $\Delta^\psi$ is the random Laplace operator introduced in Proposition \ref{prop:trans.delta.psi}.

In the construction of the standard representation, the wired boundary condition was defined by adding a single boundary point $\delta$ to a finite graph, where $\delta$ could be interpreted as a point at infinity for the graph. We will now introduce a variant of this boundary condition, by adding multiple boundary points, each being a point at infinity for a different branch of the tree.

Let us first fix a generation $m\geq 0$, and to each vertex $x\in D^{(m)}$, we associate a boundary point $\delta_x$, that will be the point at infinity for $T_x$. We denote by $B_m=\{\delta_x, x\in D^{(m)}\}$ the boundary set associated to this generation. For all $n\geq m$, let us then define the graph
\begin{align*}
\G_m^\n=\l(\Tt_m^\n, \tilde{E}_m^\n\r) \mbox{, where } &\Tt_m^\n=T^\n\cup B_m\\
\mbox{and } &\tilde{E}_m^\n=E^\n\cup\bigcup_{x\in D^{(m)}}\l\{\{y,\delta_x\},y\in T_x\cap D^\n\r\}.
\end{align*}

\begin{center}
\begin{tikzpicture}[level distance=2cm,
level 1/.style={sibling distance=3cm},
level 2/.style={sibling distance=1.5cm},
level 3/.style={sibling distance=0.5cm},
snode/.style={circle,draw,inner sep=1,fill=black},
every child node/.style={snode}
]

\node[snode][label=below:{$\phi$}] (Root) {} [grow'=up]
    child { node[label=left:{$x_1$}](x) {} 
    	child { node {} 
    		child{ node(x1) {} }
    		child{ node(x2) {} }
    	}
	        	child { node {} 
    		child{node(x3){}}
    		child{node(x4){}}
    	}
	}
	child{ node[label=left:{$x_2$}](y){}
		child{ node{}
			child{node(y1){}}
			child{node(y2){}}
		}
		child{ node{}
			child{node(y3){}}
			child{node(y4){}}
		}
	}
	child { node(z)[label=left:{$x_3$}] {}
    	child { node {} 
    		child{node(z1){}}
    		child{node(z2){}}
    	}
    	child { node {} 
    		child{node(z3){}}
    		child{node(z4){}}
    	}
    };
\node[snode,above = 5 cm of x,label=above:{$\delta_{x_1}$}](dx){};
	\draw (x1) -- (dx);
	\draw (x2) -- (dx);
	\draw (x3) -- (dx);
	\draw (x4) -- (dx);
\node[snode][label=above:{$\delta_{x_2}$}][above = 5 cm of y](dy){};
	\draw (y1) -- (dy);
	\draw (y2) -- (dy);	
	\draw (y3) -- (dy);
	\draw (y4) -- (dy);
\node[snode][label=above:{$\delta_{x_3}$}][above = 5 cm of z](dz){};
	\draw (z1) -- (dz);
	\draw (z2) -- (dz);
	\draw (z3) -- (dz);
	\draw (z4) -- (dz);
\node[left= 2.5cm of x](m){m=1};
\node[above = 3.5 cm of m]{n=3};
\node[above right= 0.5cm and 0.5cm of z4](a){};
\node[below= 7cm of a](b){};
\draw [decorate,decoration={brace,amplitude=10pt}]
(a) -- (b) node [black,midway,xshift=0.8cm] {
$T^{(n)}$};
\node[above right= 0.5cm and 3cm of dz](c){};
\node[below= 8.5cm of c](d){};
\draw [decorate,decoration={brace,amplitude=10pt},xshift=-4pt,yshift=0pt]
(c) -- (d) node [black,midway,xshift=0.8cm] {
$\tilde{T}_m^{(n)}$};
\end{tikzpicture}
\end{center}

This graph is the restriction of $T$ to $T^\n$ with a variant of the wired boundary condition. Note that we get the standard wired boundary condition by taking $m=0$. We endow $\G_m^\n$ with the conductances $\Wt_m^\n$, defined for $e\in\tilde{E}^\n$ by
\[\l(\Wt_m^\n\r)_e = \begin{cases} W^\n_e =W_e &\mbox{if } e\in E^\n \\ \sum_{j\in S(i)} W_{i,j} &\mbox{if } e=\{i,\delta_x\} \mbox{, where } i\in T_x\cap D^\n.\end{cases}\]
As with the wired boundary condition, these weights are defined so that for $n\geq m$, the weights coming out of $T^\n$ are given by $W_{T^\n,(T^\n)^c}\ind_{(T^\n)^c}=\eta^\n$. This will allow for the compatibility of $\beta_m^\n$ fields defined on $\G_m^\n$ for $n\geq m$. Note that these weights do not depend on $m$, \textit{i.e.} do not depend on the choice of the boundary condition.

From Proposition \ref{prop:nu.dist.inf}, under $\nu_V^W(d\beta)$, for all $n\geq m$ we have $\beta_{T^\n}\sim\nu_{T^\n}^{W^\n,\eta^\n}$. Hence, from Proposition \ref{prop:beta.restr.sotr}, we can extend $\beta_{T^\n}$ into a potential $\beta_m^\n\sim\nu_{\Tt_m^\n}^{\Wt_m^\n}$ on $\G_m^\n$ such that $(\beta_m^\n)_{T^\n}=\beta_{T^\n}$. Let us then denote by $G_m^\n=(2\beta_m^\n-\Wt_m^\n)^{-1}$ the Green function associated with $\beta_m^\n$. From Theorem \ref{thm:beta.mix}, we know how to represent the time-changed VRJP on $\G_m^\n$ using $G_m^\n$. In order to obtain a result on the infinite tree $\mathcal{T}$, we will see that $G_m^\n$ converges when $n\to\infty$.

For $\beta\in\calD_V^W$, we still define $H_\beta=2\beta-W$ and take $V_n=T^\n$ for all $n\in\N$ in Definition \ref{defi:Gh.psi}, we get $\Gh^\n=((H_\beta)_{T^\n,T^\n})^{-1}$ and $\psi^\n=\Gh^\n\eta^\n$, which converge $\nu_V^W$-a.s. to $\Gh$ and $\psi$ respectively, according to Theorem \ref{thm:Gh.psi.mix}.

\begin{defi}\label{defi:chi}
For $n\geq m\geq 0$, let $\chi_m^\n\in\R_+^{T^\n}\times B_m$ be defined by
\[\begin{cases} (H_\beta\chi_m^\n(\cdot,\delta_x))_{T^\n}=0,\\
\chi_m^\n(i,\delta_x)=1 &\mbox{ if } i\in T_x\backslash T_x^\n,\\
\chi_m^\n(i,\delta_x)=0 &\mbox{ if } i\in T\backslash (T_x\cup T^\n).\end{cases}\]
for $x\in D^\m$. Note that $(\chi_m^\n(\cdot,\delta_x))_{T^\n}=(\Gh^\n)_{T^\n,T^\n} W_{T^\n,(T^\n)^c}\ind_{T_x\backslash T_x^\n}$.
\end{defi}

\begin{rem}\label{rem:decoup.chi}
For $n\geq m$, $\chi_m^\n$ is $\beta_{T^\n}$-measurable, and for $x\in D^\m$ and $y\in T^\n$,
\begin{align*}
\sum_{b\in B_m} \chi_m^\n(y,b) &= \sum_{x\in D^\m}\chi_m^\n(y,\delta_x)=\Gh^\n(y,\cdot) W_{T^\n,(T^\n)^c} \ind_{(T^\n)^c}\\
&= \Gh^\n(y,\cdot)\eta^\n= \psi^\n(y).
\end{align*}
\end{rem}

It is possible to decompose $G_m^\n$ as a sum over paths in $\G_m^\n$, which gives, for $i,j\in T^\n$,
\[G_m^\n(i,j)=\Gh^\n(i,j)+\sum_{x,x'\in D^\m}\chi_m^\n(i,\delta_x)G_m^\n(\delta_x,\delta_{x'})\chi_m^\n(j,\delta_{x'}).\]
Once again, we will study the limit of this expression when $n\to\infty$, to obtain a representation of the VRJP on $T$. However under $\nu_V^W(d\beta)$, contrary to $\psi^\n$, $\chi_m^\n(\cdot,\delta_x)$ is not a martingale when $m\neq 0$. Moreover, the term  $(G_m^\n)_{B_m,B_m}$ is not independent of $\beta_{T^\n}$ for $m\neq 0$. Therefore, we cannot use the same argument as in the proof of Theorem \ref{thm:Gh.psi.mix}.

As with $\psi$, we expect $\chi_m^\n(\cdot,\delta_x)$ to converge to a $H_\beta$-harmonic function on $T$, for all $x\in D^\m$ and $\nu_V^W$-almost all $\beta$. When $\psi>0$, we can once again introduce the operator $\Delta^\psi$ in order to study $H_\beta$-harmonic functions (see Proposition \ref{prop:trans.delta.psi}). We can characterise $\Delta^\psi$-harmonic functions with the corresponding Martin boundary $\M^\psi$ and the Martin kernel $K^\psi$. Since the graph is a tree, the Martin boundary is equal to the set $\Omega$ of ends of $\mathcal{T}$, which is deterministic. Note that the boundary condition used to define $\G_m^\n$ corresponds to the identification of $\Omega_x$ to a single point $\delta_x$, for all $x\in D^\m$. We will see that the limit of $\chi_m^\n(\cdot,\delta_x)$ can be expressed with the family of harmonic measures associated $\Delta^\psi$, defined as follows.

For a transient random walk $(X_k)_{k\in\N}$ on a graph $\G=(V,E)$, it is possible to define the limit $X_\infty$ of the trajectory as a $\M$-valued random variable, where $\M$ is the Martin boundary associated with the random walk. Then the family of harmonic measures is defined as $(\mu_x)_{x\in V}$, where $\mu_x$ is the distribution of $X_\infty$ when the walk starts at $x$. For all $x\in V$, $\mu_x$ is a probability measure on $\M$, and for $A\in\mathcal{B}(\M)$, $x\mapsto\mu_x(A)$ is harmonic for the random walk.

We denote by $(\mu^\psi_x)_{x\in T}$ the harmonic measures associated with $\Delta^\psi$. For $\beta\in\calD_V^W$ such that $\psi\equiv 0$, we adopt the convention that $\mu^\psi_x$ is the null measure on $\Omega$ for all $x\in T$. We will now see how under $\nu_V^W(d\beta)$, for all $x\in D^\m$, $\chi_m^\n(\cdot,\delta_x)$ converges to a $H_\beta$-harmonic function related to the harmonic measures $(\mu^\psi_y)_{y\in T}$, and how this gives us a representation of the VRJP on $\mathcal{T}$ for each $m\geq 0$.

\begin{thm}\label{thm:chi.mix}
\begin{itemize}
\item[(i)] For all $m\geq 0$, for $y\in T$ and $x\in D^\m$, we have $\nu_T^W$-almost surely $\chi_m^\n(y,\delta_x)\to\psi(y)\mu_y^\psi(\Omega_x)$. For all $y\in T$, we define the measure $\chi(y,\cdot)=\psi(y)\mu_y^\psi(\cdot)$ on $\Omega$.
\item[(ii)] Let $m\geq 0$ be fixed. For $\nu_T^W$-almost all $\beta$, we define the $|B_m|\times|B_m|$ matrix $\Cc_m$ by
\[(\Cc_m)_{\delta_x,\delta_{x'}}=\begin{cases} 0 &\mbox{ if }x=x', \\ \frac{\chi(x\wedge x',\Omega_x)\chi(x\wedge x',\Omega_{x'})}{\Gh(x\wedge x',x\wedge x')} &\mbox{ otherwise.}\end{cases}\]
From now on, let us write: $\nu_{T,B_m}^W(d\beta,d\rho_m)=\nu_T^W(d\beta)\nu_{B_m}^{\Cc_m}(d\rho_m)$.

For $\nu_T^W$-almost all $\beta$ and for $\rho_m\in\calD_{B_m}^{\Cc_m}$, we define ${\Gc_m=(2\rho_m-\Cc_m)^{-1}}$, as well as $\gc_m:\Omega^2\to\R_+$ a locally constant function, such that for $x,x'\in D^\m$ and $\omega\in\Omega_x$, $\tau\in\Omega_{x'}$, we have $\gc_m(\omega,\tau)=\Gc_m(\delta_x,\delta_{x'})$. Finally, for $\nu_T^W$-almost all $\beta$ and for $\rho_m\in\calD_{B_m}^{\Cc_m}$, for $i,j\in T$, we define
\[G_m(i,j)=\Gh(i,j)+\int_{\Omega^2}\chi(i,d\omega)\chi(j,d\tau)\gc_m(\omega,\tau),\]
and denote by $P^{\beta,\rho_m,i_0}_x$ the distribution  of the Markov jump process started at $x\in V$, where the jump rate from $i$ to $j$ is $\frac{1}{2}W_{i,j}\frac{G_m(i_0,j)}{G_m(i_0,i)}$.

Then the law $P^{VRJP(i_0)}$ is a mixture of these Markov jump processes under $\nu_{T,B_m}^W(d\beta,d\rho_m)$, \textit{i.e.}
\[P^{VRJP(i_0)}[\cdot]=\int P^{\beta,\rho_m,i_0}_{i_0}[\cdot]\nu_{T,B_m}^W(d\beta,d\rho_m).\]
\item[(iii)] The distribution under $\nu_{T,B_m}^W(d\beta,d\rho_m)$ of the jump rates $(\frac{1}{2}W_{i,j}\frac{G_m(i_0,j)}{G_m(i_0,i)})_{i\sim j}$ converges weakly to the distribution of jump rates in the representation described in Theorem \ref{thm:mix.tree}.
\end{itemize}
\end{thm}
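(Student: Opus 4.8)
The plan is to prove the three parts in turn, in each case passing to the limit in the corresponding finite‑graph identity.

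For (i), I would first dispose of the recurrent case: when $\psi\equiv 0$, Remark \ref{rem:decoup.chi} together with positivity gives $0\le\chi_m^\n(y,\delta_x)\le\psi^\n(y)\to\psi(y)=0$, which matches the convention $\mu_y^\psi\equiv 0$. In the transient case, where $\psi>0$ everywhere by Proposition \ref{prop:trans.delta.psi}(i), the tree structure reduces matters to finitely many scalars: the killed Green function $\Gh^\n$ factorizes along paths in a tree, so for $y\notin T_x$ one has $\chi_m^\n(y,\delta_x)=\frac{\Gh^\n(y,x)}{\Gh^\n(x,x)}\chi_m^\n(x,\delta_x)$, with an analogous relation when $y\in T_x$; hence everything is controlled by the matrix $(\chi_m^\n(x,\delta_{x'}))_{x,x'\in D^\m}$. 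Substituting these relations into $\sum_{x\in D^\m}\chi_m^\n(\cdot,\delta_x)=\psi^\n$ (Remark \ref{rem:decoup.chi}) yields a linear system whose matrix, by $\Gh^\n\to\Gh$ and $\psi^\n\to\psi$ (Theorem \ref{thm:Gh.psi.mix}(i)–(ii)), converges to a limit that is invertible — the system is equivalent to one governed by the Green matrix $(\Gh(x,x'))_{x,x'\in D^\m}$, which is positive definite. This gives convergence of $\chi_m^\n(y,\delta_x)$ for every $y$. To identify the limit with $\psi(y)\mu_y^\psi(\Omega_x)$, I would check that $y\mapsto\psi(y)\mu_y^\psi(\Omega_x)$ obeys the same relations: summing over $x$ gives $\psi$ because $\{\Omega_x\}_{x\in D^\m}$ partitions $\Omega$, and the factorization relations follow from the ground‑state transform of Proposition \ref{prop:trans.delta.psi}(iii) — which identifies $\Gh(i,j)/(\psi(i)\psi(j))$ with the Green function of $\Delta^\psi$ — combined with the strong Markov property of the $\Delta^\psi$‑walk, giving $\mu_y^\psi(\Omega_x)=\P_y^{\Delta^\psi}[\text{hit }x]\,\mu_x^\psi(\Omega_x)$ for $y\notin T_x$.

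For (ii), I would apply Theorem \ref{thm:beta.mix} to each finite graph $\G_m^\n$ with $\beta_m^\n\sim\nu_{\Tt_m^\n}^{\Wt_m^\n}$, use the path decomposition $G_m^\n(i,j)=\Gh^\n(i,j)+\sum_{x,x'\in D^\m}\chi_m^\n(i,\delta_x)(G_m^\n)_{\delta_x,\delta_{x'}}\chi_m^\n(j,\delta_{x'})$ (Proposition \ref{prop:g.somme}), and let $n\to\infty$. The three ingredients are: $\Gh^\n\to\Gh$ (Theorem \ref{thm:Gh.psi.mix}(i)); $\chi_m^\n(\cdot,\delta_x)\to\chi(\cdot,\Omega_x)$ (part (i)); and the convergence in law of the boundary block $(G_m^\n)_{B_m,B_m}$. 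For the last, compute this block by a Schur complement: $(G_m^\n)_{B_m,B_m}=(2\,\mathrm{diag}((\beta_m^\n)_{\delta_x})-A^\n)^{-1}$ where, by the tree factorization of $\Gh^\n$ and the fact that $x\wedge x'$ separates $T_x$ from $T_{x'}$, $A^\n_{x,x'}=\chi_m^\n(x\wedge x',\delta_x)\chi_m^\n(x\wedge x',\delta_{x'})/\Gh^\n(x\wedge x',x\wedge x')$ for $x\ne x'$. Absorbing the diagonal of $A^\n$ into an effective field $\rho^\n_{\delta_x}:=(\beta_m^\n)_{\delta_x}-\tfrac12 A^\n_{x,x}$ gives $(G_m^\n)_{B_m,B_m}=(2\rho^\n-\Cc^\n)^{-1}$ on $B_m$, with $\Cc^\n$ the off‑diagonal part of $A^\n$, and $\Cc^\n\to\Cc_m$ by part (i) and $\Gh^\n\to\Gh$. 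The \emph{crucial point} is then a direct density computation: since $\langle\ind,H_\beta\ind\rangle$ is affine in $\beta$ and $\det H_{\beta_m^\n}=\det((H_\beta)_{T^\n,T^\n})\det(2\rho^\n-\Cc^\n)$, the conditional law of $\rho^\n$ given $\beta_{T^\n}$ is exactly $\nu_{B_m}^{\Cc^\n}$ — the analogue, for a boundary set of size $\ge 1$, of the single‑point boundary fact used in \cite{SabZen}. As $\nu_{B_m}^{\Cc^\n}\Rightarrow\nu_{B_m}^{\Cc_m}$ when $\Cc^\n\to\Cc_m$ (continuity in the conductances, visible from the Laplace transform in Proposition \ref{prop:nu.dist}), the joint law of $(\beta,(G_m^\n)_{B_m,B_m})$ converges to $\nu_T^W(d\beta)\,\nu_{B_m}^{\Cc_m}(d\rho_m)=\nu_{T,B_m}^W$, so the jump rates converge in law; together with the compatibility of the VRJP on $(\G_m^\n)_n$ with the VRJP on $\calT$ (as in \cite{SabZen}) this yields the mixture identity.

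For (iii), observe that the jump rates of the $m$‑representation rooted at $\phi$ are, up to the factor $\tfrac12 W_{\p v,v}$, the ratios $A^{(m)}_v:=G_m(\phi,v)/G_m(\phi,\p v)$, and that $(H_\beta G_m(\phi,\cdot))_v=0$ for $v\ne\phi$ gives the continued‑fraction recursion $A^{(m)}_v=W_{\p v,v}\big/\big(2\beta_v-\sum_{w\in S(v)}W_{v,w}A^{(m)}_w\big)$, which is the same for every $m$, the only $m$‑dependence entering through the boundary data $\{A^{(m)}_x\}_{x\in D^\m}$ determined by $(\rho_m,\Cc_m,\chi,\Gh)$. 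Coupling all representations through the common field $\beta\sim\nu_T^W$, the difference $A^{(m')}_v-A^{(m)}_v$ for $m'>m\ge|v|$ is the discrepancy of the generation‑$m$ boundary data propagated up through the recursion over $m-|v|$ steps; I would show this tends to $0$ in probability, jointly over any finite set of vertices, either by a contraction/monotone‑coupling estimate for the recursion or by an argument parallel to the $n\to\infty$ analysis of part (ii) applied to the boundary matrices. The limiting jump rates then form a representation of the VRJP on $\calT$ whose holding‑rate field is $\beta$ (as in Proposition \ref{prop:beta.tilde.dist}) and whose edge ratios decouple across distinct edges — because the generation‑$m$ boundary separates the subtrees ever more finely — so they are forced to coincide in law with the independent inverse‑Gaussian ratios of Theorem \ref{thm:mix.tree}; equivalently, the limiting recursion is solved directly by those inverse‑Gaussian variables, recovering the representation of \cite{CheZen}.

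The main obstacle is the analysis of the boundary term. In (ii) it is the exact identification of the conditional law of $(G_m^\n)_{B_m,B_m}$ given $\beta_{T^\n}$ — pinning down both the effective conductances $\Cc^\n$ (from the Schur complement and the tree geometry) and the effective field $\rho^\n$, so that the conditional law is $(2\rho^\n-\Cc^\n)^{-1}$ with $\rho^\n\sim\nu_{B_m}^{\Cc^\n}$; this is the genuinely new ingredient beyond the single‑point boundary of \cite{SabZen}. In (iii) it is quantifying the loss of memory of the generation‑$m$ boundary in the continued‑fraction recursion: the transient regime is exactly the one in which different boundary conditions produce genuinely different representations, so this convergence is not formal and requires either a quantitative contraction estimate or a monotonicity argument along $m$.
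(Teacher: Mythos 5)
Your parts (i) and (ii) are essentially sound. For (i) you take a somewhat different route from the paper: instead of computing $\chi_m^\n(i,\delta_x)$ in closed form by counting crossings of the directed edge $(x,\p{x})$ — which is what the paper does, arriving at $\chi_m^\n(i,\delta_x)=\psi^\n(i)\bigl(\ind_{\{i\in U_x\}}(1-f^\n(i,x))+f^\n(i,x)\tfrac{1-f^\n(x,\p{x})}{1-f^\n(x,\p{x})f^\n(\p{x},x)}\bigr)$ and matching the limit against Proposition \ref{prop:mes.harm.arbre} — you reduce everything to the diagonal values $\chi_m^\n(x,\delta_x)$, $x\in D^\m$, via the factorization $\chi_m^\n(y,\delta_x)=\Fh^\n(y,x)\chi_m^\n(x,\delta_x)$ and solve the linear system coming from $\sum_{x}\chi_m^\n(\cdot,\delta_x)=\psi^\n$. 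This can be made to work, but you must justify that the limiting matrix $(\Fh(x',x))_{x,x'\in D^\m}$ is invertible, equivalently that $\Gh_{D^\m,D^\m}$ is positive definite; this follows from the finiteness of the increasing limit of $\Gh^\n_{D^\m,D^\m}$ (a singular limiting Schur complement would force the inverses to blow up), but it is not automatic from positive semi-definiteness. Your (ii) is the paper's argument: the same Schur complement, the same change of variables $\rho_m^\n=(\beta_m^\n)_{B_m}-\tfrac12\diag(\Wc_m^\n)$, the same identification of the conditional law of $\rho_m^\n$ as $\nu_{B_m}^{\Cc_m^\n}$, and the same deferral to \cite{SabZen} (with the uniform integrability input) for passing to the limit in the mixture identity.

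Part (iii) has a genuine gap. You propose to couple all representations through the common field $\beta$, show $A^{(m')}_v-A^{(m)}_v\to 0$ in probability by a contraction or loss-of-memory estimate for the recursion, and then argue that the limiting ratios ``decouple'' and are ``forced'' to be inverse Gaussian. Neither step is carried out, you yourself flag the first as the main obstacle, and in the transient regime there is no reason for the coupled variables $A^{(m)}_v$ to converge along $m$ — the paper never claims they do. What the paper actually proves (Lemma \ref{lem:dist.rates.tree}) is an exact distributional identity: for every fixed $m$ and every $n\geq m$, the ratios $G_m^\n(\phi,i)/G_m^\n(\phi,\p{i})$ for $i\in T^\m\setminus\{\phi\}$ are \emph{already} independent inverse Gaussian $(W_{\p{i},i},1)$ variables. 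This is obtained by writing each ratio as $W_{\p{i},i}/\Hc_\beta^{\{i\}}$ via a Schur complement over the sub-branch $\Tt_i$, reading off the conditional density from Proposition \ref{prop:restr.beta}, and using $1$-dependence of $\beta_m^\n$ for independence across siblings. Consequently the $m$-th representation has exactly the Chen--Zeng marginals on the first $m$ generations, so the finite-dimensional distributions of the jump rates are eventually constant in $m$; the paper then concludes with a tightness and subsequence argument based on Lemma \ref{lem:dist.xGx}. Without this exact identity, or a fully worked-out substitute for it, your argument for (iii) does not go through.
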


Let us now consider the case where $\mathcal{T}$ is a $d$-regular tree, with $d\geq 3$, endowed with constant conductances, \textit{i.e.} $W_e=W>0$ for  all $e\in E$. Then $(\mathcal{T},W)$ is vertex transitive, and from Proposition 3 of \cite{SabZen}, we know that under $\nu_T^W(d\beta)$, $\psi$ is stationary and ergodic. Therefore, depending on $d$ and $W$, we either have $\P[\forall i\in T, \psi(i)=0]=1$, or $\P[\forall i\in T, \psi(i)>0]=1$.

In the first case, from Theorem \ref{thm:Gh.psi.mix} (iv), this means that the VRJP is a.s. recurrent, and therefore admits a unique representation (see Proposition \ref{prop:repr.rec}). Note that in Theorem \ref{thm:chi.mix}, we have a.s. $G_m=\Gh$ for all $m\in\N$, so that all the corresponding representations are indeed equal. The following proposition describes the second case, \textit{i.e.} when the VRJP is a.s. transient. 

\begin{prop}\label{prop:diff.rep.reg.tree}
Let $\mathcal{T}$ be a $d$-regular tree, with $d\geq 3$, endowed with constant conductances $W$ such that $\P[\forall i\in T, \psi(i)>0]=1$. Then the representations of the VRJP given in Theorem \ref{thm:chi.mix} are different for distinct values of $m$, \textit{i.e.} if $m\neq m'$, the distributions of jump rates $(\frac{1}{2}W_{i,j}\frac{G_m(i_0,j)}{G_m(i_0,i)})_{i\sim j}$ under $\nu_{T,B_m}^W(d\beta,d\rho_m)$ and $(\frac{1}{2}W_{i,j}\frac{G_{m'}(i_0,j)}{G_{m'}(i_0,i)})_{i\sim j}$ under $\nu_{T,B_{m'}}^W(d\beta,d\rho_{m'})$ are different for all $i_0\in T$.
\end{prop}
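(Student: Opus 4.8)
The plan is to distinguish the representations for different values of $m$ by looking at a quantity that is invariant under the choice of starting point $i_0$ but that genuinely depends on $m$. The natural candidate is the law of the $\tilde\beta$ field rooted at $i_0$: by Proposition \ref{prop:beta.tilde.dist} and Theorem \ref{thm:beta.env}, any representation yields a $\beta$ field with $\beta_i = r_i + \gamma\ind_{\{i=i_0\}}$ distributed as $\nu_T^W$, together with an $H_\beta$-harmonic function $h$ with $G_m(i_0,\cdot)=\Gh(i_0,\cdot)+h$; equivalently, by Remark \ref{rem:rep.fonc.harm}, $h=\psi\varphi_m$ where $\varphi_m$ is $\Delta^\psi$-harmonic. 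So the representation is determined (given $i_0$) by the joint law of $(\beta,\varphi_m)$ under $\nu_{T,B_m}^W$, and it suffices to show that this joint law — or some functional of it that does not see the root — changes with $m$. Concretely, I would compute the conditional law of $\varphi_m$ given $\psi$ (equivalently given $\beta$) and show it depends on $m$. From Theorem \ref{thm:chi.mix}, $h(i)=\int_{\Omega^2}\chi(i,d\omega)\chi(i,d\tau)$ is replaced in general by $\int \chi(i,d\omega)\chi(i,d\tau)\,\gc_m(\omega,\tau)$ ... more precisely $h_m(i) = \int_{\Omega}\chi(i,d\omega)\,(\text{something})$; dividing by $\psi(i)$ gives $\varphi_m(i)=\int_\Omega \mu_i^\psi(d\omega)\,g_m(\omega)$ where $g_m$ is built from $\Gc_m=(2\rho_m-\Cc_m)^{-1}$ with $\rho_m\sim\nu_{B_m}^{\Cc_m}$. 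Thus $\varphi_m$ is the harmonic extension of a random boundary function $g_m$ on $\Omega$ which is $\sigma(\Omega_x: x\in D^{(m)})$-measurable (locally constant at level $m$), and whose conditional law given $\psi$ is prescribed by $\nu_{B_m}^{\Cc_m}$ via the explicit Laplace transform in Proposition \ref{prop:nu.dist.inf}.

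The key step is then to show that these conditional laws, indexed by $m$, are pairwise distinct for the $d$-regular tree with $\psi>0$ a.s. I would do this by fixing two generations $m<m'$ and exhibiting a boundary test: for a fixed vertex $x\in D^{(m)}$, under the $m'$-representation the restriction of $g_{m'}$ to $\Omega_x$ is non-constant (it varies across the $\geq 2$ sub-branches $\Omega_y$, $y\in D^{(m')}\cap T_x$) with positive probability, whereas under the $m$-representation $g_m$ is, by construction, constant on each $\Omega_x$. This non-degeneracy for the $m'$-construction follows because the conductances $(\Cc_{m'})_{\delta_y,\delta_{y'}}$ between distinct boundary points $\delta_y$ inside the same branch $T_x$ are a.s. strictly positive (they equal $\chi(y\wedge y',\Omega_y)\chi(y\wedge y',\Omega_{y'})/\Gh(y\wedge y',y\wedge y')$ with all factors positive when $\psi>0$, using Proposition \ref{prop:trans.delta.psi}(i) and the fact that harmonic measures of non-trivial sub-branches are strictly positive for a transient tree walk), so $\Gc_{m'}$ has no reason to take equal values on the diagonal blocks and, being the inverse of $2\rho_{m'}-\Cc_{m'}$ with $\rho_{m'}$ a continuous random vector, gives distinct entries with full probability. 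This produces a feature of the law of the jump rates $(\tfrac12 W_{i,j}G_{m'}(i_0,j)/G_{m'}(i_0,i))$ — e.g. the law of a suitable ratio of an increment of $\varphi_{m'}$ across edges deep in $T_x$ — which is non-atomic at $0$, while the corresponding feature for the $m$-representation is identically $0$; since these quantities are expressible through the jump rates alone (via $\Gh$ and $\psi$, which by Theorem \ref{thm:beta.env} and Proposition \ref{prop:beta.tilde.dist} are measurable functions of the jump rates and the independent $\gamma$), the two distributions of jump rates cannot coincide.

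A complementary, perhaps cleaner route is to use part (iii) of Theorem \ref{thm:chi.mix} together with Proposition \ref{prop:2.rep.reg.tree}: the $m$-representation's jump-rate law converges weakly, as $m\to\infty$, to the free-boundary (Chen–Zeng) representation, which by Proposition \ref{prop:2.rep.reg.tree} differs from the standard $m=0$ representation for $W$ large. This already separates $m=0$ from "$m$ large." To upgrade to all pairs $m\neq m'$ I would combine this with the monotone/nesting structure of the boundary conditions: the $\sigma$-algebra of boundary events resolved by $B_m$ is strictly coarser than that resolved by $B_{m'}$ when $m<m'$, and the harmonic extension map is injective on boundary functions, so a representation that "uses" only level-$m$ boundary data cannot match one genuinely using level-$m'$ data unless the extra data is degenerate — which the strict positivity of the $\Cc_{m'}$ off-diagonal entries rules out.

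The main obstacle I anticipate is making rigorous the claim that the conditional law of the boundary function $g_m$ given $\psi$ is "genuinely finer" for larger $m$ — i.e. controlling the law $\nu_{B_{m'}}^{\Cc_{m'}}$ well enough to certify that $\Gc_{m'}$ assigns distinct values to boundary points lying in a common branch $T_x$ with positive probability, and translating this into a property of the jump rates that is manifestly root-independent. This requires (a) the positivity and non-degeneracy of the random matrix $\Cc_{m'}$ on each diagonal block, which I would get from Proposition \ref{prop:trans.delta.psi}(i) plus basic properties of harmonic measure and the Green function on transient trees, and (b) checking that the resulting distinguishing functional really is measurable with respect to the jump rates alone (using that $\beta$, hence $\psi$ and $\Gh$, are recovered from the jump rates and $\gamma$ via Theorem \ref{thm:beta.env} and Proposition \ref{prop:beta.tilde.dist}), so that equality of jump-rate laws would force equality of the distinguishing functional's law, a contradiction.
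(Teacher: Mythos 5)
Your strategy is essentially the paper's: for $m<m'$, distinguish the two representations by testing whether the boundary data is constant on the sub-branches sitting strictly below the coarser level inside a single branch. The paper implements exactly this by comparing, for $x$ at the deeper generation, the ratio $\mu(\Omega_x)/\mu(\Omega_{\p{x}})$ of exit measures of the Markov process with the ratio $\chi(\phi,\Omega_x)/\chi(\phi,\Omega_{\p{x}})$, showing the identity holds a.s.\ for the coarser representation (because $\gc$ is then constant on $\Omega_{\p{x}}$) and fails a.s.\ for the finer one; and it checks root-independence by noting that this event is measurable with respect to the jump rates, since $\beta_i=r_i$ for $i\neq\phi$ (no appeal to the auxiliary $\gamma$ is needed). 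So the mechanism you propose is the right one.

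However, two steps you leave as assertions are genuine gaps, and the first is the heart of the proof. (1) You write that $\Gc_{m'}$ ``has no reason to take equal values'' and ``gives distinct entries with full probability'' because $\rho_{m'}$ is a continuous random vector — but a smooth function of a random vector with a density can perfectly well satisfy an algebraic identity a.s., so this needs an argument. The paper proves it by showing that, conditionally on $\beta$, the map $\rho_m\mapsto u=\Gc_m\chi_m(\phi,\cdot)$ is a local diffeomorphism (its differential $v\mapsto -2\Gc_m\diag(v)u$ is invertible precisely because each $u_y>0$), hence $u$ has a density on $\R^{|D^{(m)}|}$ and a.s.\ avoids the hyperplane expressing the constancy condition. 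Something of this kind is indispensable. (2) Your claim that the strict positivity $\chi(\phi,\Omega_x)>0$ follows from ``basic properties of harmonic measure on transient trees'' is too quick: transience does not force every sub-branch to carry positive exit mass. The paper needs a separate argument (Lemma \ref{lem:chi.nonzero}) exploiting the automorphism-invariance and $1$-dependence of $\nu_T^W$ to get $\P[\chi(\phi,\Omega_x)=0]^{|D^{(m)}|}\le\P[\psi(\phi)=0]=0$; and this positivity is also what makes the differential above invertible, so the two gaps are linked. Finally, your ``complementary route'' via Theorem \ref{thm:chi.mix}(iii) and Proposition \ref{prop:2.rep.reg.tree} only separates $m=0$ from large $m$ and, as you acknowledge, does not by itself treat arbitrary pairs $m\neq m'$.
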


\subsection{Open questions}

A first question concerns the case of $\Z^d$ with constant conductances $W$: is it possible to show that the Martin boundary associated with $\Delta^\psi$ is a.s. trivial for any $W$ such that the VRJP is transient ? In this case, it would prove the unicity of the representation of the VRJP on $\Z^d$ for any constant initial conductances $W$.

Another questions concerns a possible classification of all representations on trees using partitions of the Martin boundary. We have constructed a family of representations from different boundary conditions on the tree, corresponding to some finite partitions of the Martin boundary $\Omega$, more precisely the partition $\Omega=\bigcup_{x\in D^\m}\Omega_x$ for $m\in\N$. It should be possible to define more representations using the same method, with boundary conditions associated with other finite partitions of $\Omega$, where each set in the partition can be written as a finite union of sets $\Omega_x$. To generalise this, we can ask if it is possible to determine which partitions give us a valid representation, and whether all representations can be written in this form, or as a limit of such representations, as in Theorem \ref{thm:chi.mix} (iii).

\subsection{Organisation of the paper}

In Section 3, we expose some useful results concerning the $\beta$ field, as well as basic definitions and properties of the Martin boundary. In Section 4, we prove how all representations of the VRJP have a common form, \textit{i.e.} Theorem \ref{thm:beta.env}. We use these results in Section 5 to study the case of the graph $\Z^d$, and show Theorem \ref{thm:zd.rep} using a local limit theorem in random environment. In Sections 6 and 7, we study the $\beta$ field on trees with our new boundary condition, and show the convergence of the associated Green function. We use this in Section 8, to show that this provides representations of the VRJP (Theorem \ref{thm:chi.mix}), and that they are different in the case of a regular tree (Proposition \ref{prop:diff.rep.reg.tree}).

\section{Technical prerequisite}

\subsection{The random potential $\beta$ on finite graphs}

Let $\G=(V,E)$ be a finite connected nondirected graph, endowed with conductances $(W_e)_{e\in E}$. Let us give some useful properties on the distribution $\nu_V^W$.


\begin{prop}\label{prop:beta.u}[Proposition 2, Theorem 3 in \cite{SabTarZen}]
For $\beta\in\calD_V^W$, let $G=(H_\beta)^{-1}$ be the Green function associated with $\beta$. We define $F:V\times V\to\R$ by
\[F(i,j)=\frac{G(i,j)}{G(j,j)}.\]
Then under $\nu_V^W(d\beta)$, for all $i_0\in V$, we have the following properties:
\begin{itemize}
\item[(i)] $(F(i,i_0))_{i\in V}$ is $(\beta_i)_{i\in V\backslash\{i_0\}}$-measurable.
\item[(ii)] If we denote $\gamma=\frac{1}{2G(i_0,i_0)}$, then $\gamma$ is a Gamma random variable with parameter $(1/2,1)$. Moreover, $\gamma$ is independent of $(\beta_i)_{i\neq i_0}$, and therefore independent of $(F(i,i_0))_{i\in V}$.
\end{itemize}
\end{prop}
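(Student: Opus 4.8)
The plan is to use a block decomposition of $H_\beta$ with respect to the splitting $V=\{i_0\}\cup V'$, $V':=V\setminus\{i_0\}$, and then to change variables by replacing the coordinate $\beta_{i_0}$ with $\gamma$. Write $H_\beta$ in block form with blocks indexed by $\{i_0\}$ and $V'$: the $(i_0,i_0)$ block is the scalar $2\beta_{i_0}$, the off-diagonal blocks are $-W_{i_0,V'}$ and $-W_{V',i_0}$, and the $(V',V')$ block is $H':=(H_\beta)_{V',V'}=2\beta_{V'}-W_{V',V'}$, which depends only on $(\beta_i)_{i\in V'}$. Let $S=2\beta_{i_0}-W_{i_0,V'}(H')^{-1}W_{V',i_0}$ be the Schur complement of the $V'$-block. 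The standard block-inversion formulas give $G(i_0,i_0)=S^{-1}$ and $G(j,i_0)=S^{-1}\big((H')^{-1}W_{V',i_0}\big)_j$ for $j\in V'$, hence
\[
F(j,i_0)=\frac{G(j,i_0)}{G(i_0,i_0)}=\big((H')^{-1}W_{V',i_0}\big)_j \quad (j\in V'),\qquad F(i_0,i_0)=1.
\]
Since $H'$ and $W_{V',i_0}$ involve only $(\beta_i)_{i\in V'}$, this already proves (i).

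For (ii), note first that $\gamma=1/(2G(i_0,i_0))=S/2=\beta_{i_0}-c(\beta_{V'})$, where $c(\beta_{V'}):=\tfrac12 W_{i_0,V'}(H')^{-1}W_{V',i_0}\ge 0$ depends only on $(\beta_i)_{i\in V'}$. The Schur complement positivity criterion shows $\beta\in\calD_V^W$ iff $H'>0$ and $\gamma>0$, so the map $(\beta_i)_{i\in V}\mapsto\big(\gamma,(\beta_i)_{i\in V'}\big)$ is a bijection from $\calD_V^W$ onto $\{\gamma>0\}\times\calD_{V'}^{W_{V',V'}}$, with Jacobian $1$ because at fixed $\beta_{V'}$ the map $\beta_{i_0}\mapsto\gamma$ is a translation. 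I would then substitute this change of variables into the explicit density of $\nu_V^W$ given in Proposition \ref{prop:nu.dist} (with $\eta=0$), using two elementary identities: the quadratic form is actually affine in $\beta$, namely $\langle\ind,H_\beta\ind\rangle=2\sum_{i\in V}\beta_i-2\sum_{\{i,j\}\in E}W_{i,j}$ with $\sum_{i\in V}\beta_i=\gamma+c(\beta_{V'})+\sum_{i\in V'}\beta_i$, and the determinant splits as $\det H_\beta=S\,\det H'=2\gamma\,\det H'$.

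After substitution the density factorizes as a product of a factor depending only on $\gamma$, proportional to $\gamma^{-1/2}e^{-\gamma}\,d\gamma$ on $\{\gamma>0\}$, and a factor depending only on $(\beta_i)_{i\in V'}$. This simultaneously yields: $\gamma\sim\Gamma(1/2,1)$; $\gamma$ independent of $(\beta_i)_{i\in V'}$, hence (by part (i)) independent of $(F(i,i_0))_{i\in V}$; and, as a byproduct, that $(\beta_i)_{i\in V'}$ is distributed as $\nu_{V'}^{W_{V',V'},\,W_{V',i_0}}$, which is the statement of Proposition \ref{prop:beta.restr.sotr} for $U=V'$.

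All the computations are routine linear algebra. The only points that need a little attention are (a) verifying that the domain factorizes, i.e. that $H_\beta>0 \iff \{H'>0,\ \gamma>0\}$, which is precisely the Schur complement criterion, and (b) bookkeeping of the normalizing constants — but these need not be evaluated, since knowing that $\Gamma(1/2,1)$ and $\nu_{V'}^{W_{V',V'},W_{V',i_0}}$ are probability measures pins them down automatically. I do not anticipate any genuine obstacle; the argument is essentially a one-vertex Gaussian-type marginalization made explicit.
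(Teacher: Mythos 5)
Your proof is correct. Note that the paper does not actually prove Proposition \ref{prop:beta.u}; it is imported verbatim from \cite{SabTarZen}, and your argument --- block inversion with respect to $\{i_0\}\cup V'$, the Schur-complement identities $G(i_0,i_0)=S^{-1}$, $F(\cdot,i_0)=(H')^{-1}W_{V',i_0}$, $\det H_\beta=S\det H'$, and the measure-preserving change of variables $\beta_{i_0}\mapsto\gamma=S/2$ that factorizes $\nu_V^W$ into $\Gamma(1/2,1)\otimes\nu_{V'}^{W_{V',V'},W_{V',i_0}}$ --- is exactly the standard proof from that reference. The byproduct you obtain (the restriction property of Proposition \ref{prop:beta.restr.sotr} for $U=V'$) is also consistent with the paper, so there is nothing to fix.
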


This proposition explains the presence of $\gamma$ in the expression on $G$ in Theorem \ref{thm:Gh.psi.mix}. Moreover, it allows us to prove Proposition \ref{prop:beta.tilde.dist}, describing the distribution of the $\tilde\beta$ field.

\begin{proof}[Proof of Proposition \ref{prop:beta.tilde.dist}]
Let $\G=(V,E)$ be an infinite connected nondirected graph, and $(V_n)_{n\in\N}$ an increasing exhausting sequence of finite connected subsets of $V$. For $n\in\N$, let $\G^\n=(\tilde V^\n,\tilde E^\n)$ be the restriction of $\G$ to $V_n$ with wired boundary condition, endowed with conductances $\tilde W^\n$, defined as in section \ref{2.1}. Moreover, for $n\in\N$, we still define $\Gh^\n$ and $\psi^\n$ as in Definition \ref{defi:Gh.psi}.

The proof of Theorem \ref{thm:Gh.psi.mix} (iii) uses the fact that under $\nu_V^W(d\beta,d\gamma)$, there exists a coupling of random fields $(\beta^\n)_{n\in\N}$, such that for all $n\in\N$: $\beta^\n\sim\nu_{\tilde V^\n}^{\tilde W^\n}$ ; $\beta^\n_{V_n}=\beta_{V_n}$ ; and $G^\n=(2\beta^\n-{\tilde W^\n})^{-1}$, where for $i,j\in\tilde V^\n$,
\[G^\n(i,j)=\Gh^\n(i,j)+\frac{1}{2\gamma}\psi^\n(i)\psi^\n(j).\]
We can then apply Proposition \ref{prop:beta.u} to $\beta^\n$ at $i_0$: since $G^\n=(2\beta^\n-{\tilde W^\n})^{-1}$, we have 
\[\beta^\n_i=\tilde\beta^\n_i+\ind_{\{i=i_0\}}\frac{1}{2G^\n(i_0,i_0)}\]
for all $i\in \tilde V^\n$, where
\[\tilde\beta^\n_i=\sum_{j\sim i}\frac{\Wt^\n_{i,j}}{2}\frac{G^\n(i_0,j)}{G^\n(i_0,i)}.\]
According to Proposition \ref{prop:beta.u}, $1/2G^\n(i_0,i_0)$ is a random $\Gamma(1/2,1)$ variable, which is independent of $(\beta^\n_i)_{i\in\tilde V^\n\backslash\{i_0\}}$. Moreover, for $i\neq i_0$, $\tilde\beta^\n_i=\beta^\n_i$, and 
\[\tilde\beta^\n_{i_0}=\sum_{j\sim i_0}\frac{\Wt^\n_{i,j}}{2}\frac{G^\n(i_0,j)}{G^\n(i_0,i_0)}=\sum_{j\sim i_0}\frac{\Wt^\n_{i,j}}{2}F^\n(j,i_0),\]
so that $\tilde\beta^\n$ is $(\beta^\n_i)_{i\in\tilde V^\n\backslash\{i_0\}}$-measurable, and therefore independent of $G^\n(i_0,i_0)$.

Taking the limit when $n\to\infty$, we deduce that $1/2G(i_0,i_0)$ is a random $\Gamma(1/2,1)$ variable, independent from the $\tilde\beta$ field, where for $i\in V$,
\[\tilde\beta_i=\sum_{j\sim i}\frac{W_{i,j}}{2}\frac{G(i,j)}{G(i,i)}=\beta_i-\ind_{\{i=i_0\}}\frac{1}{2G(i_0,i_0)}.\]
Since the Laplace transform of a $\Gamma(1/2,1)$ variable is, for $t\geq 0$,
\[\int e^{-t\gamma}\frac{\ind_{\{\gamma>0\}}}{\sqrt{\pi\gamma}}e^{-\gamma}d\gamma=\frac{1}{\sqrt{1+t}},\]
and given the Laplce transform of $\nu_V^W$ in Proposition \ref{prop:nu.dist.inf}, we now know that the Laplace transform of $\tilde\beta$ is, for $\lambda\in\R_+^V$ with finite support,
\[\E[e^{-\langle\lambda,\tilde\beta\rangle}]=\frac{1}{\sqrt{1+\lambda_{i_0}}} e^{-\sum_{i\sim j}W_{i,j}(\sqrt{1+l_i}\sqrt{1+l_j}-1)}\prod_{i\neq i_0}\frac{1}{\sqrt{1+l_i}}.\]
\end{proof}

On finite graphs, the distribution $\nu_V^W$, and more generally $\nu_V^{W,\eta}$ for $\eta\in\R_+^V$, behaves well with respect to restriction, as shown in the next proposition, which is a generalization of Proposition \ref{prop:beta.restr.sotr}.
\begin{prop}\label{prop:restr.beta}[Lemma 4 in \cite{SabZen}]
Let us fix $U\subset V$ and $\eta\in(\R_+)^V$. Then, under $\nu_V^{W,\eta}(d\beta)$, we have:
\begin{itemize}
\item[(i)] $\beta_U$ is distributed according to $\nu_U^{W_{U,U},\hat{\eta}}$, where
\[\hat{\eta}=\eta_U+W_{U,U^c}\ind_{U^c}.\]
\item[(ii)] Conditionally on $\beta_U$, $\beta_{U^c}$ is distributed according to $\nu_{U^c}^{\Wc,\check{\eta}}$, where
\[\Wc=W_{U^c,U^c}+W_{U^c,U}((H_\beta)_{U,U})^{-1}W_{U,U^c} \mbox{ and } \check{\eta}=\eta_{U^c}+W_{U^c,U}((H_\beta)_{U,U})^{-1}\eta_U.\]
\end{itemize}
\end{prop}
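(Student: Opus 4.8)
The plan is to reduce both statements to computations with the Laplace transform of $\nu_V^{W,\eta}$ given in Proposition \ref{prop:nu.dist}(i), together with the Schur-complement identity for the inverse of a block matrix. First I would recall that for a symmetric positive definite matrix written in block form with blocks indexed by $U$ and $U^c$, the $(U,U)$-block of $(H_\beta)^{-1}$ is $((H_\beta)_{U,U} - (H_\beta)_{U,U^c}((H_\beta)_{U^c,U^c})^{-1}(H_\beta)_{U^c,U})^{-1}$, and symmetrically; since $(H_\beta)_{U,U^c} = -W_{U,U^c}$, the relevant Schur complement is exactly $H^{\Wc}_{\beta_{U^c}} := 2\beta_{U^c} - \Wc$ with $\Wc = W_{U^c,U^c} + W_{U^c,U}((H_\beta)_{U,U})^{-1}W_{U,U^c}$ as in the statement. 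This is the algebraic backbone of part (ii): conditionally on $\beta_U$, the operator governing $\beta_{U^c}$ is the Schur complement $H^{\Wc}_{\beta_{U^c}}$, and one checks $\det H_\beta = \det (H_\beta)_{U,U}\cdot\det H^{\Wc}_{\beta_{U^c}}$, which lets the density $\nu_V^{W,\eta}(d\beta)$ factor as a density in $\beta_U$ times a conditional density in $\beta_{U^c}$ of the claimed form $\nu_{U^c}^{\Wc,\check\eta}(d\beta_{U^c})$.

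For part (i), the cleanest route is via Laplace transforms: take $\lambda\in\R_+^V$ supported in $U$, so that $\langle\lambda,\beta\rangle = \langle\lambda_U,\beta_U\rangle$, and evaluate $\int e^{-\langle\lambda,\beta\rangle}\nu_V^{W,\eta}(d\beta)$ using the formula in Proposition \ref{prop:nu.dist}(i). With $\lambda_{U^c}=0$ the terms $\sqrt{1+\lambda_j}$ for $j\in U^c$ are all $1$, so the exponent collapses to $-\sum_{i\in U}\eta_i(\sqrt{1+\lambda_i}-1) - \sum_{i\sim j,\, i,j\in U}W_{i,j}(\sqrt{(1+\lambda_i)(1+\lambda_j)}-1) - \sum_{i\in U,\, j\in U^c,\, i\sim j}W_{i,j}(\sqrt{1+\lambda_i}-1)$, times $\prod_{i\in U}(1+\lambda_i)^{-1/2}$. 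Collecting the first and third sums identifies the effective single-site weights as $\hat\eta_i = \eta_i + \sum_{j\in U^c}W_{i,j} = (\eta_U + W_{U,U^c}\ind_{U^c})_i$, so the result is exactly the Laplace transform of $\nu_U^{W_{U,U},\hat\eta}$; since a probability measure on $\calD_U^{W_{U,U}}\subset\R_+^U$ is determined by its Laplace transform on $\R_+^U$, part (i) follows. (Alternatively, (i) can be deduced from (ii) by integrating out $\beta_{U^c}$ once (ii) is established, using that $\nu_{U^c}^{\Wc,\check\eta}$ integrates to $1$ and tracking the $e^{\langle\eta,\ind\rangle}$ normalizations, but the Laplace-transform argument is shorter and self-contained.)

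The step I expect to require the most care is the bookkeeping in part (ii): one must verify that after substituting $\det H_\beta = \det(H_\beta)_{U,U}\cdot\det H^{\Wc}_{\beta_{U^c}}$ and expanding $\langle\ind,H_\beta\ind\rangle$ and $\langle\eta,(H_\beta)^{-1}\eta\rangle$ into their $U$/$U^c$ block contributions (using the block-inverse formula for the latter), the $\beta_{U^c}$-dependent part of $\nu_V^{W,\eta}(d\beta)$ is precisely $\nu_{U^c}^{\Wc,\check\eta}(d\beta_{U^c})$ up to a factor depending only on $\beta_U$ — in particular the normalizing constants $(2/\pi)^{|U^c|/2}$ and the quadratic term $\langle\check\eta,(H^{\Wc}_{\beta_{U^c}})^{-1}\check\eta\rangle$ and linear term $e^{\langle\check\eta,\ind\rangle}$ must come out exactly right, and one has to be attentive to the cross terms $\langle\ind_U, H_\beta \ind_{U^c}\rangle = -\langle\ind_U, W_{U,U^c}\ind_{U^c}\rangle$ and to how $W_{U^c,U}((H_\beta)_{U,U})^{-1}\eta_U$ enters $\check\eta$. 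Since this is exactly Lemma 4 of \cite{SabZen}, one may alternatively just cite that reference and record only the block-matrix identities needed elsewhere in the paper; but the computation above shows the argument is entirely mechanical once the Schur complement is in hand.
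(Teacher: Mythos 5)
The paper does not prove this proposition: it is quoted verbatim as Lemma 4 of \cite{SabZen} and used as a black box, so there is no in-paper argument to compare against. Your plan is nevertheless the standard (and, up to presentation, the original) proof, and it is correct in outline. Part (i) via the Laplace transform is clean and complete: with $\lambda$ supported in $U$ the cross-edge terms collapse to $\sum_{i\in U}\bigl(\sum_{j\in U^c}W_{i,j}\bigr)(\sqrt{1+\lambda_i}-1)$, which merges with the $\eta_U$ term to give $\hat\eta$, and a probability measure on $\R_+^U$ is indeed determined by its Laplace transform there. Part (ii) via the Schur complement is the right mechanism, and your identification of the key identities ($\det H_\beta=\det(H_\beta)_{U,U}\det H^{\Wc}_{\beta_{U^c}}$, the block-inverse formula yielding $\langle\eta,(H_\beta)^{-1}\eta\rangle=\langle\eta_U,((H_\beta)_{U,U})^{-1}\eta_U\rangle+\langle\check\eta,(H^{\Wc}_{\beta_{U^c}})^{-1}\check\eta\rangle$) is what makes the bookkeeping close.

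Two small points you should make explicit if you write this out in full. First, the domain must factor along with the density: $\beta\in\calD_V^W$ if and only if $(H_\beta)_{U,U}>0$ and the Schur complement $H^{\Wc}_{\beta_{U^c}}>0$, i.e.\ $\beta_U\in\calD_U^{W_{U,U}}$ and $\beta_{U^c}\in\calD_{U^c}^{\Wc}$; this is needed to conclude that the conditional density, once normalized, is exactly $\nu_{U^c}^{\Wc,\check\eta}$ on its natural domain (and the normalization is automatic from Proposition \ref{prop:nu.dist}(i) applied to $(U^c,\Wc,\check\eta)$). Second, $\Wc=W_{U^c,U^c}+W_{U^c,U}((H_\beta)_{U,U})^{-1}W_{U,U^c}$ generically has strictly positive diagonal entries (the sum-over-paths expansion of $((H_\beta)_{U,U})^{-1}$ shows all its entries are nonnegative), so $\nu_{U^c}^{\Wc,\check\eta}$ must be read as the same density formula for a ``conductance matrix with self-loops''; this is harmless because $2\beta-\Wc=2(\beta-\tfrac12\diag(\Wc))-(\Wc-\diag(\Wc))$ reduces it to the loopless case by a deterministic shift, and the paper itself performs exactly this shift later (the change of variables $(\rho_m^\n)_b=(\beta_m^\n)_b-\tfrac12(\Wc_m^\n)_{b,b}$ in Section 7). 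Neither point is a gap in your argument, but both are where an unwary write-up could go wrong.
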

Proposition \ref{prop:beta.restr.sotr} is a direct consequence of (i), in the case where $\eta=0$. Moreover, (ii) is useful to extend a potential $\beta_U\sim\nu_U^{W_{U,U},\hat\eta}$ where $\hat\eta=W_{U,U^c}\ind_{U^c}$ into a potential $\beta_V\sim\nu_V^W$, using the distribution of $\beta_{U^c}$ conditionally on $\beta_U$.

\subsection{Green function and sums over paths}\label{3.2}

Let us still consider a finite connected nondirected graph $\G=(V,E)$ endowed with conductances $W$. For $\beta\in\calD_V^W$, it will be useful to express the Green function $G=H_\beta^{-1}$ as a sum over paths in $\G$. We first introduce some notations for sets of paths.

\begin{defi}\label{defi:sets.paths}
\begin{itemize}
\item[(i)] For $i,j\in V$, we denote by $\calP^V_{i,j}$ the set of paths $\sigma$ from $i$ to $j$ in $V$, \textit{i.e.} the set of finite sequences $\sigma=(\sigma_0,...\sigma_l)$ in $V$, where $\sigma_0=i$, $\sigma_l=j$ and $\sigma_k\sim\sigma_{k+1}$ for $0\leq k\leq l-1$. We denote by $|\sigma|=l$ the length of the path $\sigma$.
\item[(ii)] For $U\subset V$, $i\in U$ and $j\notin U$, we denote by $\o{\calP}^U_{i,j}$ the set of paths $\sigma\in\calP^V_{i,j}$ such that $\sigma_k\in U$ for $0\leq k\leq |\sigma|-1$.
\item[(iii)] For $U\subset V$ and $i,j\in V$, we denote by $\calP^V_{i,U,j}$ the set of paths $\sigma\in\calP^V_{i,j}$ such that $\sigma_k\in U$ for some $k\in\llb 0,|\sigma|\rrb$.
\item[(iv)] For $i,j\in V$ and $\sigma\in\calP^V_{i,j}$, we define the following notations:
\[W_\sigma=\prod_{k=0}^{|\sigma|-1} W_{\sigma_k,\sigma_{k+1}}, (2\beta)_\sigma=\prod_{k=0}^{|\sigma|}2\beta_{\sigma_k} \mbox{ and } (2\beta)_\sigma^-=\prod_{k=0}^{|\sigma|-1}2\beta_{\sigma_k}.\]
\end{itemize}
\end{defi}

We get the following expressions, in terms of sums over paths, for $G$ and related quantities.

\begin{prop}\label{prop:g.somme}[Proposition 6 in \cite{SabTarZen}]
Let $\beta\in\calD_V^W$. Then:
\begin{itemize}
\item[(i)] For $i,j\in V$,
\[G(i,j)=\sum_{\sigma\in\calP^V_{i,j}} \frac{W_\sigma}{(2\beta)_\sigma}.\]
In particular, for $U\subset V$ we denote $\Gh^U=((H_\beta)_{U,U})^{-1}$, then for $i,j\in U$, we obtain
\[\Gh^U(i,j)=\sum_{\sigma\in\calP^U_{i,j}} \frac{W_\sigma}{(2\beta)_\sigma}.\]
\item[(ii)] For $i,j\in V$,
\[F(i,j)=\sum_{\sigma\in\o\calP^{V\backslash\{j\}}_{i,j}}\frac{W_\sigma}{(2\beta)_\sigma^-}=\sum_{z\sim j} \Gh^{V\backslash\{j\}}(i,z)W_{z,j}.\]
\item[(iii)] For $U\subset V$ and $i,j\in U^c$,
\[\sum_{\sigma\in\calP^V_{i,U,j}}\frac{W_\sigma}{(2\beta)_\sigma^-}=\sum_{z_1,z_2\in U}\l(\sum_{\sigma\in\o\calP^{U^c}_{i,z_1}}\frac{W_\sigma}{(2\beta)_\sigma^-}\r)G(z_1,z_2)\l(\sum_{\sigma\in\o\calP^{U^c}_{j,z_2}}\frac{W_\sigma}{(2\beta)_\sigma^-}\r).\]
In particular, if $U=\{z\}$, this becomes
\[\sum_{\sigma\in\calP^V_{i,\{z\},j}}\frac{W_\sigma}{(2\beta)_\sigma}=F(i,z)G(z,z)F(j,z)=F(i,z)G(z,j).\]
\end{itemize}
\end{prop}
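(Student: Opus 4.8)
The plan is to reduce everything to one analytic input — convergence of a Neumann series, which gives (i) — and then derive (ii) and (iii) by purely combinatorial surgery on paths.

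\emph{Part (i).} Since $\beta\in\calD_V^W$ forces $\beta_i>0$ for all $i$, the diagonal matrix $2\beta$ is invertible and $H_\beta=(2\beta)\bigl(I-(2\beta)^{-1}W\bigr)$. First I would check that $M:=(2\beta)^{-1}W$ has spectral radius $<1$: $M$ is similar (via $(2\beta)^{1/2}$) to the symmetric nonnegative matrix $S:=(2\beta)^{-1/2}W(2\beta)^{-1/2}$, and $H_\beta=(2\beta)^{1/2}(I-S)(2\beta)^{1/2}>0$ forces $I-S>0$ by congruence, so every eigenvalue of $S$ is $<1$; as $\G$ is connected, $S$ is irreducible, so by Perron--Frobenius $\rho(S)$ is itself an eigenvalue of $S$, whence $\rho(M)=\rho(S)<1$. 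Consequently $(I-M)^{-1}=\sum_{k\ge0}M^k$ converges (all entries nonnegative) and $G=H_\beta^{-1}=\bigl(\sum_{k\ge0}((2\beta)^{-1}W)^k\bigr)(2\beta)^{-1}$. Expanding the $(i,j)$-entry of $((2\beta)^{-1}W)^k(2\beta)^{-1}$ as a sum over vertex sequences $(\sigma_0,\dots,\sigma_k)$ with $\sigma_0=i,\sigma_k=j$ produces exactly $\sum_{|\sigma|=k}W_\sigma/(2\beta)_\sigma$, and summing over $k$ gives the formula. For $\Gh^U$, note that $(H_\beta)_{U,U}=2\beta_U-W_{U,U}$ is the Schrödinger operator of the subgraph induced on $U$ and is positive definite (a principal submatrix of $H_\beta>0$), so the same formula applied to that subgraph yields $\Gh^U(i,j)=\sum_{\sigma\in\calP^U_{i,j}}W_\sigma/(2\beta)_\sigma$.

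\emph{Part (ii).} Assume $i\ne j$ (the case $i=j$ gives $F(j,j)=1$ trivially). I would cut each $\sigma\in\calP^V_{i,j}$ at its first visit to $j$, $m=\min\{k:\sigma_k=j\}$: the prefix $\tau=(\sigma_0,\dots,\sigma_m)$ lies in $\o\calP^{V\backslash\{j\}}_{i,j}$ and the suffix $\rho=(\sigma_m,\dots,\sigma_{|\sigma|})$ in $\calP^V_{j,j}$, bijectively. The two pieces share only the vertex $j$, and a one-line weight count gives $W_\sigma/(2\beta)_\sigma=(W_\tau/(2\beta)_\tau^-)(W_\rho/(2\beta)_\rho)$; hence $G(i,j)=\bigl(\sum_{\tau\in\o\calP^{V\backslash\{j\}}_{i,j}}W_\tau/(2\beta)_\tau^-\bigr)G(j,j)$, and dividing by $G(j,j)$ gives the first equality. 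For the second, write a generic $\tau\in\o\calP^{V\backslash\{j\}}_{i,j}$ as a path $\tau'\in\calP^{V\backslash\{j\}}_{i,z}$ followed by an edge $z\sim j$; then $W_\tau=W_{\tau'}W_{z,j}$ and $(2\beta)_\tau^-=(2\beta)_{\tau'}$, so summing and invoking the $\Gh^{V\backslash\{j\}}$ formula from (i) gives $\sum_{z\sim j}\Gh^{V\backslash\{j\}}(i,z)W_{z,j}$.

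\emph{Part (iii).} Here I would cut each $\sigma\in\calP^V_{i,U,j}$ at its first and last visits to $U$, $m_1=\min\{k:\sigma_k\in U\}$ and $m_2=\max\{k:\sigma_k\in U\}$, with $z_1=\sigma_{m_1},\ z_2=\sigma_{m_2}$: the prefix $(\sigma_0,\dots,\sigma_{m_1})$ lies in $\o\calP^{U^c}_{i,z_1}$, the middle $(\sigma_{m_1},\dots,\sigma_{m_2})$ is an arbitrary element of $\calP^V_{z_1,z_2}$, and the reversed suffix $(\sigma_{|\sigma|},\dots,\sigma_{m_2})$ lies in $\o\calP^{U^c}_{j,z_2}$; since $W_\cdot$ and $(2\beta)_\cdot$ are reversal-invariant this is a bijection onto $\bigcup_{z_1,z_2\in U}\o\calP^{U^c}_{i,z_1}\times\calP^V_{z_1,z_2}\times\o\calP^{U^c}_{j,z_2}$. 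Distributing the $2\beta$-factors so that each vertex of $\sigma$ is counted exactly once — the two tails carrying $(2\beta)_\cdot^-$ weights and the middle segment a full $(2\beta)_\cdot$ weight, which sums to $G(z_1,z_2)$ by (i) — yields the identity; for $U=\{z\}$ one has $z_1=z_2=z$, the two tail sums are $F(i,z)$ and $F(j,z)$ by (ii), and $G(z,z)F(j,z)=G(z,j)$ by symmetry of $G$, giving the last display.

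\emph{Main obstacle.} The only genuinely non-routine step is the convergence of the Neumann series in (i): converting positive-definiteness of $H_\beta$ into the spectral-radius bound $\rho((2\beta)^{-1}W)<1$ requires the symmetrization together with Perron--Frobenius (equivalently, the theory of symmetric $M$-matrices). Everything after that is bookkeeping of $2\beta$-weights at the vertices glued between consecutive path-segments, where the only care needed is to decide, segment by segment, whether each shared endpoint carries a full factor $2\beta_{\sigma_k}$ or not, so that every vertex of the reassembled path contributes exactly once.
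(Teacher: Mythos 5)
The paper does not actually prove this proposition — it is quoted verbatim from \cite{SabTarZen} (Proposition 6 there) — so there is no internal proof to compare against; judged on its own, your argument is correct and is the standard one. Part (i) is fine: the congruence $H_\beta=(2\beta)^{1/2}(I-S)(2\beta)^{1/2}$ plus Perron--Frobenius does give $\rho((2\beta)^{-1}W)<1$, and the Neumann expansion is exactly the path sum. (Two trivial patches: for $\Gh^U$ the induced subgraph on $U$ need not be connected, so either argue component by component or replace Perron--Frobenius by the observation that for a symmetric nonnegative matrix $\rho(S)=\lambda_{\max}(S)$ since $|x^tSx|\le |x|^tS|x|$; and in (ii) the second displayed formula only makes sense for $i\neq j$.) The path decompositions in (ii) and (iii) are the right ones and your weight bookkeeping is correct. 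One point you should have flagged explicitly in (iii): your accounting, in which every vertex of the reassembled path is counted exactly once, proves the identity with the \emph{full} product $(2\beta)_\sigma$ in the left-hand denominator. That is the correct statement — it is what the special case $U=\{z\}$ displays, and what is used in Section 6.2 of the paper — but the general display in the proposition carries a spurious superscript minus, $(2\beta)_\sigma^-$, which would introduce an extra factor $2\beta_j$ and is inconsistent with the $U=\{z\}$ case. You have silently derived the consistent version rather than the one printed; since all terms are nonnegative the rearrangements are justified and the proof stands, but the discrepancy with the printed formula deserves a remark rather than the claim that the display follows as written.
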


\begin{rem}\label{rem:gh.fh.sum}
If $\G=(V,E)$ is now an infinite graph, let $(V_n)_{n\in\N}$ be an increasing sequence of finite connected subsets of $V$ such that $V=\cup_{n\in\N}V_n$. For $\beta\in\calD_V^W$ and $n\geq 0$, we define $\Gh^\n=\Gh^{V_n}=((H_\beta)_{V_n,V_n})^{-1}$ as in Definition \ref{defi:Gh.psi}. Then from Proposition \ref{prop:g.somme} (i), we get
\[\Gh^\n(i,j)=\sum_{\sigma\in\calP^{V_n}_{i,j}}\frac{W_\sigma}{(2\beta)_\sigma}\]
for $n\geq 0$ and $i,j\in V_n$. From Theorem \ref{thm:Gh.psi.mix} (i), under $\nu_V^W(d\beta)$ the increasing sequence $(\Gh^\n(i,j))_{n\in\N}$ converges almost surely to $\Gh(i,j)$. Hence, we get
\[\Gh(i,j)=\sum_{\sigma\in\calP^{V}_{i,j}}\frac{W_\sigma}{(2\beta)_\sigma}\]
for $i,j\in V$.

Let us also define $\Fh^\n(i,j)=\frac{\Gh^\n(i,j)}{\Gh^\n(j,j)}$ and $\Fh(i,j)=\frac{\Gh(i,j)}{\Gh(j,j)}$, for all $i,j\in V$ and $n\geq\max(|i|,|j|)$. Then, from Proposition \ref{prop:g.somme} (ii) we have
\[\Fh^\n(i,j)=\sum_{\sigma\in\o\calP^{V_n\backslash\{j\}}_{i,j}}\frac{W_\sigma}{(2\beta)_\sigma^-}\xrightarrow[n\to\infty]{}\Fh(i,j)=\sum_{\sigma\in\o\calP^{V\backslash\{j\}}_{i,j}}\frac{W_\sigma}{(2\beta)_\sigma^-},\]
where the convergence is true $\nu_V^W$-almost surely.
\end{rem}

\subsection{Martin boundary and harmonic functions}\label{3.3}

Let us give more details about the theory of Martin boundaries. The following results can be found in \cite{Woe}.

Let $\G=(V,E)$ be an infinite graph, we consider an irreducible random walk $(X_n)_{n\in \N}$ on $\G$, whose transition matrix is $P$, where $P_{i,j}=0$ if $\{i,j\}\notin E$ (\textit{i.e.} we assume that $(X_n)$ is a nearest-neighbour random walk). Moreover, we assume that $(X_n)$ is transient.

Let us denote by $\P_x$ the distribution of the random walk started at $x\in V$, and by $g$ the associated Green function, \textit{i.e.}
\[g(x,y)=\sum_{n\in\N}\P_x[X_n=y].\]
We also denote
\[f(x,y)=\P_x[\exists n\in\N, X_n=y]=\frac{g(x,y)}{g(y,y)}.\]
For all $y\in V$, $g(\cdot,y)$ is harmonic at any $x\in V\backslash\{y\}$, \textit{i.e.} for $x\neq y$, $g(x,y)=\sum_{z\sim x}P_{x,z} g(z,y)$. This is still true for $f(\cdot,y)$. The Martin Kernel, defined below using $f$, as well as the Martin boundary, will allow us to represent all positive harmonic functions for the random walk.

\begin{defi}\label{defi:Martin}
Let us fix a reference point $\phi\in V$.
\begin{itemize}
\item[(i)] The Martin kernel is the function $K:V^2\to\R_+$ defined by
\[K(x,y)=\frac{f(x,y)}{f(\phi,y)}=\frac{g(x,y)}{g(\phi,y)}.\]
\item[(ii)] The Martin compactification is the smallest compactification $\hat{V}$ of $V$, which is unique up to a homeomorphism, so that $K(\cdot,\cdot)$ extends continuously to $V\times\hat{V}$. The Martin boundary is defined as $\M=\hat{V}\backslash V$.
\end{itemize}
\end{defi}

\begin{thm}\label{thm:repr.Martin}
Let us denote by $\H^+$ the set of positive harmonic functions on $V$. Then for all $h\in\H^+$, there is a Borel measure $\mu^h$ on $\M$ such that for all $x\in V$,
\[h(x)=\int_\M K(x,\alpha) \mu^h(d\alpha).\]
\end{thm}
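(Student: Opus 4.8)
The plan is to establish the classical Poisson--Martin integral representation by combining a Green-function balayage identity on a finite exhaustion with a weak-$*$ compactness argument on the Martin compactification $\hat V$; this is the route taken in \cite{Woe}, and I would reproduce its main steps.

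First I would check that the Martin kernel produces harmonic functions on the boundary. For each $y\in V$, $K(\cdot,y)=g(\cdot,y)/g(\phi,y)$ is nonnegative, equals $1$ at $\phi$, and is harmonic at every $x\neq y$. Given $\alpha\in\M$ and any sequence $y_n\to\alpha$ in $\hat V$, the points $y_n$ must eventually leave every finite subset of $V$ (since $\alpha\notin V$); by continuity of the extended kernel, $K(x,y_n)\to K(x,\alpha)$ for each $x$, and for $n$ large $y_n\neq x$, so passing to the limit in the finite identity $K(x,y_n)=\sum_{z\sim x}P_{x,z}K(z,y_n)$ yields $K(\cdot,\alpha)\in\H^+$ with $K(\phi,\alpha)=1$. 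Since $\alpha\mapsto K(x,\alpha)$ is then continuous, hence bounded, on the compact set $\M$, it follows (by Fubini applied to the finite neighbour sum) that $x\mapsto\int_\M K(x,\alpha)\mu(d\alpha)$ is harmonic for every finite Borel measure $\mu$ on $\M$, so the right-hand side of the asserted formula is always well defined.

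Next I would produce approximating measures. Fix $h\in\H^+$ and, rescaling, assume $h(\phi)=1$; fix a finite exhaustion $V_1\subset V_2\subset\cdots$ with union $V$ and $\phi\in V_1$, each $V_n$ connected, and let $\sigma_n$ be the first hitting time of $V_n^c$ (a.s.\ finite from $V_n$). Optional stopping on the bounded martingale $h(X_{k\wedge\sigma_n})$ gives $h(x)=\E_x[h(X_{\sigma_n})]$ for $x\in V_n$, and decomposing according to the last vertex of $V_n$ visited before $\sigma_n$ yields $h(x)=\sum_{z\in V_n}G_{V_n}(x,z)\rho_n(z)$, where $G_{V_n}$ is the Green function of the walk killed on leaving $V_n$ and $\rho_n(z)=\sum_{y\sim z,\,y\notin V_n}P_{z,y}h(y)$ is supported on the inner boundary $\partial V_n$. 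Setting $\lambda_n(z)=G_{V_n}(\phi,z)\rho_n(z)$ gives a probability measure on $\partial V_n\subset V\subset\hat V$ (total mass $h(\phi)=1$) with $h(x)=\sum_z K_n(x,z)\lambda_n(z)$, where $K_n(x,z)=G_{V_n}(x,z)/G_{V_n}(\phi,z)$. By weak-$*$ compactness of probability measures on the compact space $\hat V$, a subsequence of $(\lambda_n)$ converges weakly to some $\mu^h$, necessarily supported on $\M$ because $\partial V_n$ leaves every finite set. Using that $K_n(x,\cdot)\to K(x,\cdot)$ and that $K(x,\cdot)$ extends continuously to $\hat V$, one then passes to the limit in $h(x)=\sum_z K_n(x,z)\lambda_n(z)$ to obtain $h(x)=\int_\M K(x,\alpha)\mu^h(d\alpha)$; undoing the normalisation handles general $h$.

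The delicate point, and the step I expect to be the main obstacle, is exactly this passage to the limit: the kernel $K_n$ is built from the \emph{killed} Green function $G_{V_n}$, whereas the limiting kernel $K$ uses the \emph{full} Green function $g$, and the vertices $z\in\partial V_n$ against which we test lie precisely where the killing is felt. What is needed is $\sup_{z\in\partial V_n}|K_n(x,z)-K(x,z)|\to0$, so that $\sum_z K_n(x,z)\lambda_n(z)=\sum_z K(x,z)\lambda_n(z)+o(1)$ and weak convergence plus continuity of $K(x,\cdot)$ on $\hat V$ may be invoked. Writing $K_n(x,z)=\bigl(f(x,z)-\E_x[f(X_{\sigma_n},z)]\bigr)\big/\bigl(f(\phi,z)-\E_\phi[f(X_{\sigma_n},z)]\bigr)$ in terms of the hitting-probability kernel $f$, this uniform estimate is essentially the content of the construction of the Martin compactification (which is engineered so that $K(x,\cdot)$ is uniformly approximated along escaping sequences) together with a Harnack-type comparison; I would cite \cite{Woe} for these details rather than reprove them.
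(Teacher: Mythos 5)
The paper itself gives no proof of this statement: Section \ref{3.3} opens by saying these results ``can be found in \cite{Woe}'', so there is no in-paper argument to compare against, and your proposal has to stand on its own. Its general strategy (exhaustion, a balayage identity, weak-$*$ compactness on the compact space $\hat V$) is the right one, and the first two paragraphs are sound: the harmonicity of $K(\cdot,\alpha)$ for $\alpha\in\M$, the last-exit decomposition $h(x)=\sum_{z}G_{V_n}(x,z)\rho_n(z)$, and the normalisation to a probability measure $\lambda_n$ supported on the inner boundary are all correct.

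The genuine gap is exactly the step you flag, and it is not repaired by the reference you give for it. You need $\sup_{z\in\partial V_n}\lvert K_n(x,z)-K(x,z)\rvert\to 0$ with $K_n$ built from the \emph{killed} Green function; but the construction of the Martin compactification only guarantees that the \emph{full} kernel $K(x,\cdot)$ extends continuously to $\hat V$ — it says nothing about the killed kernels $G_{V_n}(x,\cdot)/G_{V_n}(\phi,\cdot)$. The monotone convergence $G_{V_n}(x,z)\uparrow g(x,z)$ holds for each fixed $z$, but your test points $z\in\partial V_n$ move with $n$ and sit precisely where the killing is felt, so no uniform comparison follows; this is not a detail one can wave at \cite{Woe}, since the proof there is organised specifically to avoid it. The standard argument replaces your exit-measure decomposition by a potential-theoretic one: approximate $h$ from below by the reduced functions $R_{V_n}h$ \emph{on} the finite sets $V_n$. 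Each $R_{V_n}h$ equals $h$ on $V_n$ (so $R_{V_n}h\uparrow h$ pointwise), is harmonic off $V_n$, and is dominated by $\max_{V_n}h\cdot\sum_{b\in V_n}g(\cdot,b)/g(b,b)$, hence is a potential $Gf_n=\sum_y g(\cdot,y)f_n(y)$ with $f_n\ge 0$ finitely supported. The approximating identity $R_{V_n}h(x)=\sum_y K(x,y)\lambda_n(y)$, with $\lambda_n(y)=g(\phi,y)f_n(y)$ of total mass at most $h(\phi)$, therefore already involves the true Martin kernel; weak-$*$ compactness on $\hat V$ together with continuity and boundedness of $K(x,\cdot)$ on $\hat V$ (boundedness via a Harnack chain from $\phi$ to $x$) gives $h(x)=\int_{\hat V}K(x,\cdot)\,d\mu$, and the possible mass of $\mu$ on $V$ is eliminated by uniqueness in the Riesz decomposition, since $\int_V K(x,y)\,\mu(dy)$ is a potential while $h$ is harmonic. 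To make your version work you would have to prove the uniform killed-versus-full kernel estimate, which is a nontrivial boundary Harnack statement and not available in this generality.
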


\begin{rem}\label{rem:Mart.triv}
If, for all $x\in V$ and for all sequences $(y_n)_{n\geq 1}$ going to infinity, we have $K(x,y_n)\to 1$, then the Martin boundary is trivial, \textit{i.e.} reduced to a single point. According to Theorem \ref{thm:repr.Martin}, in this case, all positive harmonic functions are constant.
\end{rem}

Since $(X_n)$ is transient, we almost surely have $X_n\to\infty$, in the sense that for all finite subset $U\subset V$, $\{n\in\N,X_n\in U\}$ is almost surely finite. Thanks to the Martin boundary, we can now describe this convergence more precisely.

\begin{thm}\label{thm:mes.harm}
For all $x\in V$, $(X_n)$ converges $\P_x$-a.s. to a $\M$-valued random variable $X_\infty$. The distribution of $X_\infty$ under $\P_x$, denoted by $\mu_x$, verifies
\[\mu_x(B)=\int_B K(x,\alpha)\mu_\phi(d\alpha)\]
for all $B\subset\M$ and $x\in V$. 
\end{thm}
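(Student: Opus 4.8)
The plan is to establish the two assertions separately: first the almost-sure convergence $X_n\to X_\infty$ with $X_\infty\in\M$, which is the classical convergence theorem to the Martin boundary, and then the identification of the law $\mu_x$ of $X_\infty$ through the Martin kernel $K$. For the convergence, I would argue as follows. The Martin compactification $\hat V$ is compact and metrizable, so for $\P_x$-almost every trajectory the sequence $(X_n)$ has subsequential limits in $\hat V$; since the walk is transient, $\{n:X_n\in U\}$ is almost surely finite for every finite $U\subset V$, so each such limit point lies in $\M$. To upgrade this to genuine convergence it suffices to show that $\P_x$-almost surely, for every $v\in V$ the real sequence $\big(K(v,X_n)\big)_n$ converges in $[0,+\infty]$: by the very definition of the Martin compactification the family $\{K(v,\cdot)\}_{v\in V}$ separates the points of $\hat V$, so two distinct subsequential limits of $(X_n)$ would force $K(v,X_n)$ to oscillate for some $v$. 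This last almost-sure convergence is the technical core of the theorem; it is obtained from a supermartingale argument for the time-reversed (Green) chain, and I expect this to be the main obstacle — I would simply invoke it from \cite{Woe}.

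For the identification of $\mu_x$, fix an increasing exhaustion $V_0\ni\phi$, $V_n\uparrow V$ by finite connected subsets, and set $\tau_n=\sup\{k\ge 0:X_k\in V_n\}$, the last-exit time from $V_n$. By transience $\tau_n<\infty$ almost surely, and $\tau_n\to\infty$ (otherwise the walk would remain outside some fixed $V_n$ from a finite time onward, contradicting $X_k\in V_{n'}$ for $n'$ large). Hence $X_{\tau_n}\to X_\infty$ almost surely in $\hat V$, and therefore the law of $X_{\tau_n}$ under $\P_x$ converges weakly on $\hat V$ to $\mu_x$, for every $x$. A last-exit decomposition gives, for $z\in V_n$,
\[ \P_x[X_{\tau_n}=z]=g(x,z)\,h_n(z),\qquad h_n(z):=\P_z\big[X_k\notin V_n\ \text{for all }k\ge 1\big], \]
obtained by splitting $\{X_{\tau_n}=z\}$ according to the value $\tau_n=k$, using $g(x,z)=\sum_k\P_x[X_k=z]$ and the Markov property at time $k$. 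Since $h_n(z)$ does not depend on the starting point, dividing the identity at $x$ by the one at $\phi$ cancels it and yields
\[ \mathrm{law}(X_{\tau_n}\mid\P_x)=K(x,\cdot)\,\mathrm{law}(X_{\tau_n}\mid\P_\phi) \]
as measures on $V_n$, hence on $\hat V$.

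Finally I would pass to the limit $n\to\infty$. For $F\in C(\hat V)$ the product $F\cdot K(x,\cdot)$ is again continuous on the compact space $\hat V$, since $K(x,\cdot)$ extends continuously to $\hat V$ by definition of the Martin compactification; hence the identity $\int F\,d\,\mathrm{law}(X_{\tau_n}\mid\P_x)=\int F\,K(x,\cdot)\,d\,\mathrm{law}(X_{\tau_n}\mid\P_\phi)$ passes to the limit to give $\int F\,d\mu_x=\int F\,K(x,\cdot)\,d\mu_\phi$. As $\mu_x$ and $\mu_\phi$ are carried by $\M$, this is exactly $\mu_x(B)=\int_B K(x,\alpha)\,\mu_\phi(d\alpha)$ for all Borel $B\subset\M$. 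As a consistency check, applying the Markov property at time $1$ to $\mu_x(B)=\P_x[X_\infty\in B]$ shows that $x\mapsto\mu_x(B)$ is harmonic, in agreement with the representation in Theorem \ref{thm:repr.Martin}.
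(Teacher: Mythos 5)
The paper does not prove this statement: it is quoted as standard background from \cite{Woe} (Section \ref{3.3} opens by saying the results ``can be found in \cite{Woe}''), so there is no in-paper proof to compare against. Your sketch is the standard argument from that reference and is correct where it is detailed: the last-exit decomposition $\P_x[X_{\tau_n}=z]=g(x,z)h_n(z)$, the cancellation of $h_n$ upon dividing by the same identity at $\phi$, and the passage to the limit using continuity of $K(x,\cdot)$ on the compact $\hat V$ together with $X_{\tau_n}\to X_\infty$ a.s.\ give a complete identification of $\mu_x$. The one genuinely hard ingredient --- the almost-sure convergence of $K(v,X_n)$ for every $v$, which upgrades subsequential convergence to actual convergence to the boundary --- you correctly isolate but then import from \cite{Woe} rather than prove; that is consistent with the paper's own treatment, but be aware it is not a routine supermartingale convergence (both $g(v,X_n)$ and $g(\phi,X_n)$ tend to $0$, so the ratio needs a separate argument).
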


The space $(\M,\mathcal{B}(\M),(\mu_x)_{x\in V})$ is called Poisson boundary. Moreover, we call harmonic measures, or exiting measures, the family $(\mu_x)_{x\in V}$.

In the case where $\mathcal{T}=(T,E)$ is an infinite tree, the Martin compactification will coincide with another, which does not depend on the random walk defined by $P$, but simply on the geometry of the tree $\mathcal{T}$.

\begin{defi}\label{defi:extremite}
Let us fix an arbitrary root $\phi$ for $\mathcal{T}$.
\begin{itemize}
\item[(i)]
We call infinite ray in $T$ an infinite self-avoiding path starting at $\phi$, \textit{i.e.} a sequence $\omega=(\omega_k)_{k\in\N}$ of distinct vertices in $T$, such that $\omega_k\sim \omega_{k+1}$ for $k\in\N$ and $\omega_0=\phi$. The set of infinite rays, also called the set of ends of $T$, is denoted by $\Omega$.
\item[(ii)]
If $\omega,\xi\in\Omega$, we denote $K_{\omega,\xi}=\max\{k\in\N,\omega_k=\xi_k\}$. We can also define, if $x\in T$, $K_{\omega,x}=\max\{k\leq|x|,\omega_k=[\phi,x]_k\}$. We then set $O_\omega^k=\{\xi\in\Omega,K_{\omega,\xi}\geq k\}\cup\{x\in T,K_{\omega,x}\geq k\}$.
\item[(iii)]
We define the end topology on $T\cup\Omega$, which is discrete on $T$, and such that $(O_\omega^k)_{k\in\N}$ is a basis of neighbourhoods at $\omega\in\Omega$ .
\end{itemize}
\end{defi}

\begin{prop}\label{prop:comp.ext}
The end topology on $T\cup\Omega$ does not depend on the choice of $\phi$, and is induced by the following metric:
\[d(x,y)=\begin{cases}0 &\mbox{if } x=y\\ e^{-N_{x,y}} &\mbox{otherwise},\end{cases}\]
for $x,y\in T\cup\Omega$. Moreover $T\cup\Omega$ is compact, and called the end compactification.
\end{prop}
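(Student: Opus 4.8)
The plan is to verify the three assertions separately: independence of the end topology from the root $\phi$, the fact that the given function $d$ is a metric inducing this topology, and compactness of $T\cup\Omega$. I would begin with the independence statement, which is really a statement about the confluent structure of the tree. If $\phi'$ is another root, any infinite ray $\omega$ starting at $\phi$ eventually agrees with a unique ray $\omega'$ starting at $\phi'$ (they merge after finitely many steps since $[\phi,\phi']$ is finite), and this gives a canonical bijection between the two versions of $\Omega$; under this identification the basic neighbourhoods $O_\omega^k$ defined via $\phi$ and via $\phi'$ are cofinal in each other, because for $k$ large enough the initial segments relevant to $K_{\omega,\xi}$ lie beyond $\phi\wedge\phi'$, where the two rooted pictures coincide. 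So the generated topologies agree. The same bookkeeping shows the discrete part on $T$ is unaffected.

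Next I would turn the metric claim into a triangle-inequality check. Write $N_{x,y}$ for the confluence index (the largest $k$ with $[\phi,x]_k=[\phi,y]_k$, extended to ends as in Definition \ref{defi:extremite}); the key combinatorial fact is the ultrametric-type inequality $N_{x,z}\geq\min(N_{x,y},N_{y,z})$ for all $x,y,z\in T\cup\Omega$, which holds because the common prefix of the geodesics to $x$ and to $z$ contains the shorter of the two common prefixes shared with $y$. From this, $d(x,z)=e^{-N_{x,z}}\leq\max(e^{-N_{x,y}},e^{-N_{y,z}})\leq d(x,y)+d(y,z)$, so $d$ is in fact an ultrametric; symmetry and positivity are immediate from the definition (with the convention $N_{x,x}=\infty$, $e^{-\infty}=0$). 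To see $d$ induces the end topology: the ball of radius $e^{-k}$ around $\omega\in\Omega$ is exactly $\{y : N_{\omega,y}> k-1\}$, which is $O_\omega^{k}$ (up to an index shift), and on $T$ small enough balls are singletons, matching the discreteness; conversely every $O_\omega^k$ is such a ball. Hence the metric topology and the end topology coincide.

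Finally, compactness. Since we now have a metric, it suffices to prove sequential compactness, and here I would use a diagonal / König-type argument exploiting local finiteness. Given a sequence $(z_n)$ in $T\cup\Omega$, either some vertex or end is hit infinitely often (done, constant subsequence), or the $z_n$ escape every finite ball. In the latter case, since $\mathcal{T}$ is locally finite, $D^{(0)}$ is finite, so infinitely many of the geodesics $[\phi,z_n]$ (or the rays, for $z_n\in\Omega$) pass through a common vertex $\phi=\omega_0$; among those, infinitely many pass through a common $\omega_1\in S(\omega_0)$; iterating, König's lemma produces a ray $\omega=(\omega_k)_{k\in\N}\in\Omega$ and a subsequence $(z_{n_j})$ with $N_{\omega,z_{n_j}}\to\infty$, i.e. $z_{n_j}\to\omega$ in $d$. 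This proves $T\cup\Omega$ is compact.

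\smallskip
The routine parts are symmetry/positivity of $d$ and the index-shift bookkeeping relating balls to the sets $O_\omega^k$. \textbf{The main obstacle} is organizing the root-independence argument cleanly: one must be careful that the identification of ends across two roots is genuinely canonical and that the neighbourhood bases are mutually cofinal, rather than merely that the two metrics are bi-Lipschitz-equivalent after the shift. Once the ultrametric inequality $N_{x,z}\geq\min(N_{x,y},N_{y,z})$ is in hand, everything else — metric axioms, topology identification, and the König-lemma compactness argument — follows with only combinatorial care about finite initial segments.
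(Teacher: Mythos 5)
Your argument is correct and is the standard one: the ultrametric inequality $N_{x,z}\geq\min(N_{x,y},N_{y,z})$ for the confluence index, the identification of the balls $\{y:d(\omega,y)<e^{-(k-1)}\}$ with the sets $O_\omega^k$, and the K\"onig's-lemma extraction using local finiteness for sequential compactness. The paper does not prove this proposition at all — it is quoted from \cite{Woe} as background — so there is nothing to diverge from; your sketch fills in exactly the textbook proof (the only cosmetic point being that the paper's $N_{x,y}$ is the quantity it earlier denotes $K_{x,y}$, which you have interpreted correctly).
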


\begin{thm}\label{thm:Martin.arbre}
\begin{itemize}
\item[(i)]
Let $(X_n)$ be a nearest-neighbour random walk on $\mathcal{T}$, that we assume to be transient. Then the Martin compactification coincides with the end compactification, and we can identify $\M$ to $\Omega$, and set $\hat{T}=T\cup\Omega$.
\item[(ii)]
The Martin kernel on $T\times\hat{T}$ is locally constant, with
\[K(x,\omega)=K(x,x\wedge\omega), \mbox{ where } x\wedge\omega=\omega_{N_{\omega,x}}\]
for $x\in T, \omega\in\Omega$.
\end{itemize}
\end{thm}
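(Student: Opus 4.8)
\emph{Proof proposal.} The whole statement rests on one elementary feature of nearest-neighbour walks on a tree, already recorded in the excerpt: to pass from $x$ to $y$ the walk must visit every vertex of the geodesic $[x,y]$. The plan is to turn this into a multiplicativity property of hitting probabilities, read the Martin kernel off from it, and then identify the two compactifications. Concretely, I would first prove that if $z\in[x,y]$ then $f(x,y)=f(x,z)\,f(z,y)$, equivalently $g(x,y)\,g(z,z)=g(x,z)\,g(z,y)$. Writing $T_u=\inf\{n\geq 0:X_n=u\}$, every trajectory from $x$ to $y$ crosses $z$, so on $\{T_y<\infty\}$ one has $T_z\leq T_y<\infty$; hence $\{T_y<\infty\}$ is exactly the event that $T_z<\infty$ and that the trajectory shifted by $T_z$ subsequently reaches $y$, and the strong Markov property at $T_z$ factorises $\P_x[T_y<\infty]=\P_x[T_z<\infty]\,\P_z[T_y<\infty]$. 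Since the walk is transient and irreducible, $f$ is everywhere finite and strictly positive, so this identity applies in particular to $z=x\wedge y$, which lies both on $[\phi,y]$ and on $[x,y]$.

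Next I would compute the Martin kernel on $T$. Writing $c=x\wedge y$ and applying the previous step once to $[\phi,y]$ and once to $[x,y]$,
\[
K(x,y)=\frac{f(x,y)}{f(\phi,y)}=\frac{f(x,c)\,f(c,y)}{f(\phi,c)\,f(c,y)}=\frac{f(x,c)}{f(\phi,c)}=K(x,c)=K(x,x\wedge y).
\]
Thus, for fixed $x$, the map $y\mapsto K(x,y)$ depends only on the confluent $x\wedge y\in[\phi,x]$ and so takes finitely many values. If $(y_n)$ converges to an end $\omega$ in the end topology of Proposition~\ref{prop:comp.ext}, then for $n$ large $[\phi,y_n]$ agrees with $\omega$ up to level $|x|$, so $x\wedge y_n$ is eventually equal to $\omega_{N_{\omega,x}}=x\wedge\omega$; hence $K(x,y_n)$ stabilises to $K(x,x\wedge\omega)$. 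Setting $K(x,\omega):=K(x,x\wedge\omega)$ then shows that each $K(x,\cdot)$ extends continuously to $T\cup\Omega$, and gives the formula in (ii).

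To conclude I would invoke the characterisation (see \cite{Woe}) of the Martin compactification as the unique compactification of $T$ to which all $K(x,\cdot)$ extend continuously and on whose boundary these extensions separate points: it then suffices to check the separation for the end compactification. Let $\omega\neq\omega'$, let $N$ be the last level at which they agree, put $c=\omega_N=\omega'_N$, and pick $x=\omega_M$ with $M>N$. Then $x\wedge\omega=x$, so $K(x,\omega)=K(x,x)=1/f(\phi,x)$, whereas $x\wedge\omega'=c$, so $K(x,\omega')=f(x,c)/f(\phi,c)$. Since $c\in[\phi,x]$, the first step gives $f(\phi,x)=f(\phi,c)\,f(c,x)$, so equality $K(x,\omega)=K(x,\omega')$ would force $f(x,c)\,f(c,x)=1$; but $f(x,c)\,f(c,x)\leq \P_x[X_n=x\text{ for some }n\geq 1]<1$ because the walk is transient and $c\neq x$. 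Hence $K(x,\omega)\neq K(x,\omega')$, the end compactification satisfies both defining properties, so it is the Martin compactification. This yields $\hat T=T\cup\Omega$, the identification $\M\cong\Omega$ of (i), and, from the previous paragraph, the kernel formula of (ii).

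\textbf{Main obstacle.} None of the steps is deep, and the result is essentially classical. The two points that need genuine care are the passage‑through‑geodesics argument in the first step — one must verify that $\{T_y<\infty\}$ really does split at $T_z$ so that the strong Markov property may be applied cleanly — and, in the last step, the strict inequality $f(x,c)\,f(c,x)<1$, which is precisely where transience enters: without it the kernel would not separate ends and the boundary could be strictly smaller than $\Omega$. Everything else is bookkeeping with the confluent map $y\mapsto x\wedge y$ and the end topology.
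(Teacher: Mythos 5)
Your proof is correct. The paper does not actually prove this statement — it is quoted as background from \cite{Woe} — so there is no in-paper argument to compare against; your argument (multiplicativity of $f$ along geodesics via the strong Markov property, hence $K(x,y)=K(x,x\wedge y)$ and continuity in the end topology, then identification of the two compactifications by checking that the extended kernels separate distinct ends, with transience supplying the strict inequality $f(x,c)f(c,x)<1$) is precisely the classical proof found there.
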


We also have an expression of harmonic measures $\mu_x$ on the tree. For $x\in T$, we denote by $\Omega_x$ the set of ends for the subtree $T_x$, \textit{i.e.} $\Omega_x=\{\omega\in\Omega,\exists k\in\N,\omega_k=x\}$. Moreover, we denote by $U_x=T_x\backslash\{x\}$. Then:

\begin{prop}\label{prop:mes.harm.arbre}
For $x\neq\phi$ and $i\in T$,
\[\mu_i(\Omega_x)=\ind_{\{i\in U_x\}}(1-f(i,x))) + f(i,x)\frac{1-f(x,\p{x})}{1-f(\p{x},x)f(x,\p{x})}.\]
\end{prop}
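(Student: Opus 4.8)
The plan is to compute $\mu_i(\Omega_x)$ by a first-step / last-exit decomposition of the trajectory of the transient walk $(X_n)$ started at $i$, using the tree structure to control which vertices the walk must cross. Recall that $\Omega_x$ consists of the rays through $x$, so $\{X_\infty\in\Omega_x\}$ is, up to a null set, the event that the walk eventually stays in $T_x$, equivalently the event that $x$ is visited and, after the last visit to $\p{x}$, the walk never returns to $\p{x}$. Since $\mathcal{T}$ is a tree, any trajectory from $i$ to an end of $\Omega_x$ must pass through $x$ (and if $i\notin T_x$, also through $\p{x}$), and once the walk makes a step from $x$ into $T_x\setminus\{x\}=U_x$ without having first returned to $\p{x}$, the only way to leave $T_x$ is back through $x$ then $\p{x}$.

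First I would treat the case $i\in U_x$. Starting inside $U_x$, the walk converges to a point of $\Omega_x$ unless it ever exits $T_x$, which forces a visit to $x$ followed eventually by a visit to $\p{x}$; but actually exiting $\Omega_x$ requires escaping $T_x$ through $x$, so I claim $\mu_i(\Omega_x)=\P_i[X_\infty\in\Omega_x]=1-\P_i[\text{walk ever reaches }x\text{ and then never returns to }T_x\dots]$. More cleanly: decompose on whether the walk ever visits $x$. With probability $1-f(i,x)$ it never visits $x$, hence stays forever in $U_x\subset T_x$ and $X_\infty\in\Omega_x$; with probability $f(i,x)$ it reaches $x$, and then by the strong Markov property the conditional probability of $X_\infty\in\Omega_x$ equals $\mu_x(\Omega_x)$. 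For the case $i\notin U_x$ (including $i=x$ and $i$ outside $T_x$), the walk must reach $\p{x}$ then $x$ to have any chance of converging into $\Omega_x$; by the strong Markov property $\mu_i(\Omega_x)=f(i,x)\,\mu_x(\Omega_x)$ once one checks that $i\notin U_x$ forces passage through $\p{x}$ before $x$ when $i\neq x$, and trivially when $i=x$. So in all cases it remains to compute $q:=\mu_x(\Omega_x)=\P_x[X_\infty\in\Omega_x]$.

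To compute $q$, I would set up a renewal equation using the successive excursions from $x$. From $x$, the walk either steps into $U_x$ or into $\p{x}$ (or equals $x$ at time $0$, but we look at the first step). Condition on the first return structure: let $r=f(x,\p{x})$ be the probability the walk from $x$ ever hits $\p{x}$, and $s=f(\p{x},x)$ the probability the walk from $\p{x}$ ever hits $x$. One shows $\P_x[X_\infty\in\Omega_x]$ satisfies: the walk converges into $\Omega_x$ iff after the last visit to $x$ it goes into $U_x$ and never returns to $x$ — but cleaner is to condition on whether the walk ever visits $\p{x}$. If it never visits $\p{x}$ (probability $1-r$) then it stays in $T_x$ forever, so $X_\infty\in\Omega_x$. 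If it does visit $\p{x}$, then from $\p{x}$ it returns to $x$ with probability $s$ and otherwise escapes (and then $X_\infty\notin\Omega_x$ since it's trapped on the $\p{x}$ side, in the sense that to re-enter $T_x$ it must pass $x$ again, contradiction with "last visit"); iterating, $q = (1-r) + r\,s\,q$, which is the geometric series giving $q = \dfrac{1-r}{1-rs} = \dfrac{1-f(x,\p{x})}{1-f(\p{x},x)f(x,\p{x})}$. Substituting $q$ into the two cases above yields exactly the claimed formula $\mu_i(\Omega_x)=\ind_{\{i\in U_x\}}(1-f(i,x)) + f(i,x)\,q$.

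The main obstacle, and the part requiring care rather than cleverness, is justifying the renewal equation $q=(1-r)+rsq$ rigorously: one must argue that conditionally on the walk reaching $\p{x}$ and then returning to $x$, the future is an independent copy of the walk from $x$ (strong Markov property at the successive hitting times of $x$), and that on the event of eventually escaping through $\p{x}$ without returning to $x$, the limit lies outside $\Omega_x$ — this uses that $\Omega_x$-convergence is equivalent to eventual confinement in $T_x$, which is where transience and the tree geometry (every exit of $T_x$ passes through $x$) are used. Once this decomposition is set up, the algebra is routine. I would also note that the formula is consistent: taking $i=x$ gives $\mu_x(\Omega_x)=q$ since $f(x,x)=1$ and $x\notin U_x$, and taking $i\in U_x$ with $f(i,x)\to 1$ recovers $q$ as well, matching the heuristic that a vertex deep in $T_x$ behaves like $x$ for this event.
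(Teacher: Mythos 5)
Your proof is correct. The paper states this proposition without proof (citing Woess's book), and your first-passage/renewal decomposition is the standard derivation: the identification of $\{X_\infty\in\Omega_x\}$ with eventual confinement in $T_x$, the strong Markov property at the successive hitting times of $x$ and $\p{x}$, and the bound $f(x,\p{x})f(\p{x},x)<1$ guaranteed by transience are all exactly the ingredients needed, and the resulting algebra $q=(1-r)+rsq$ gives the stated formula.
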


\begin{rem}
From Carathéodory's extension theorem, this entirely describes the expression of $\mu_\phi$. From Theorem \ref{thm:mes.harm}, we can then describe all harmonic measures using $f$.
\end{rem}

\section{Distributions of arbitrary representations}

\subsection{A common expression for jump rates: Proof of Theorem \ref{thm:beta.env}}

Let $\G=(V,E)$ be a locally finite connected graph, endowed with conductances $(W_{i,j})_{i,j\in V}$ such that $W_{i,j}=W_{j,i}>0$ if $\{i,j\}\in E$, and $W_{i,j}=0$ otherwise. We still denote by $P^{VRJP(i_0)}$ the law of the time-changed VRJP on $(\G,W)$, started at $i_0\in V$. Let us first show that the distribution of the $\tilde\beta$ field (see Proposition \ref{prop:beta.tilde.dist}) appears in all representations of the VRJP.

Recall that for all $r\in\calJ_V^E$ and $i\in V$, we define $r_i=\sum_{j\sim i} r_{i,j}$.

\begin{prop}\label{prop:beta.tilde.env}
Let $i_0\in V$ be fixed, and let $\calR(dr)$ be the distribution of a random environment representing $P^{VRJP(i_0)}$, in the sense of Definition \ref{def:rep}.

Then under $\calR(dr)$, $(r_i)_{i\in V}$ has the same distribution as the field $\tilde{\beta}$ rooted at $i_0$, \textit{i.e.} its Laplace transform is
\[\int e^{-\langle\lambda,r\rangle}\mathcal{R}(dr)=e^{-\sum_{i\sim j}W_{i,j}(\sqrt{1+\lambda_i}\sqrt{1+\lambda_j}-1)}\prod_{i\neq i_0}\frac{1}{\sqrt{1+\lambda_i}},\]
for $\lambda\in\R_+^V$ with finite support.
\end{prop}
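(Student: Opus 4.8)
The plan is to exploit the fact that $(r_i)_{i\in V}$ is a \emph{deterministic} functional of the jump rates $r$, hence a functional of the random environment, and that the environment is linked to the VRJP by the mixing identity of Definition \ref{def:rep}. The key observation is that the holding-time rates of a Markov jump process are recoverable from the trajectory of the process itself: if $(Z_t)$ has law $P^r$, then at a vertex $i$ the process waits an exponential time of parameter $r_i=\sum_{j\sim i}r_{i,j}$ before jumping. More precisely, for any finite subset $U\subset V$ and reals $\lambda_i\geq 0$, the quantity $\sum_{i\in U}\lambda_i r_i$ can be written as a limit of measurable functionals of the path $(Z_t)_{t\le T}$ — for instance, by considering, over a long time window, the total weighted local time versus the number of jumps out of each vertex, and using the law of large numbers / ergodic behaviour of the Markov process to identify $r_i$ from the ratio (number of visits to $i$)/(local time at $i$). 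Because this reconstruction map is $\calR$-almost surely well defined and the same map applies under the mixture, the distribution of $(r_i)_{i\in V}$ under $\calR(dr)$ is determined by the law $P^{VRJP(i_0)}$ alone; it does not depend on which representation $\calR$ we picked.

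The second step is then to compute this common distribution by evaluating it on the one representation we already understand, namely the standard representation of Theorem \ref{thm:Gh.psi.mix}. There, the jump rate from $i$ to $j$ is $\frac12 W_{i,j}\frac{G(i_0,j)}{G(i_0,i)}$, so
\[
r_i=\sum_{j\sim i}\frac12 W_{i,j}\frac{G(i_0,j)}{G(i_0,i)}=\tilde\beta_i,
\]
which is exactly the $\tilde\beta$ field rooted at $i_0$ introduced in Proposition \ref{prop:beta.tilde.dist} (using the identity $(H_\beta G(i_0,\cdot))_i=0$ for $i\ne i_0$ and $=1$ for $i=i_0$, valid for the extended Green function). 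Proposition \ref{prop:beta.tilde.dist} gives the Laplace transform of $\tilde\beta$ under $\nu_V^W(d\beta,d\gamma)$, namely
\[
e^{-\sum_{i\sim j}W_{i,j}(\sqrt{1+\lambda_i}\sqrt{1+\lambda_j}-1)}\prod_{i\neq i_0}\frac{1}{\sqrt{1+\lambda_i}},
\]
and combining with the first step yields the claimed formula for $\int e^{-\langle\lambda,r\rangle}\calR(dr)$ for every $\lambda\in\R_+^V$ with finite support.

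The main obstacle is making the first step rigorous: one must argue that the map $r\mapsto (r_i)_i$ is recoverable from the trajectory law $P^r$, uniformly enough that it passes through the mixture integral. The cleanest way is probably a measure-theoretic argument rather than an ergodic one — show that there is a measurable functional $\Phi$ on $D([0,\infty),V)$ such that $\Phi(Z)=(r_i)_{i\in V}$ holds $P^r$-almost surely, simultaneously for $\calR$-almost every $r$ (e.g. $\Phi$ given by $\limsup$ over rational times of visit-count over local-time ratios, which is $P^r$-a.s. equal to $r_i$ because the Markov jump process visits every vertex infinitely often when recurrent, and because on the transient part one can localize to an excursion argument, or one may simply condition on the finitely many visited vertices). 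Then for any bounded measurable test function $F$,
\[
\int F\big((r_i)_i\big)\,\calR(dr)=\int \E_{i_0}^{r}\!\big[F(\Phi(Z))\big]\,\calR(dr)=\E^{VRJP(i_0)}\!\big[F(\Phi(Z))\big],
\]
which is independent of $\calR$; specializing to $F(\cdot)=e^{-\langle\lambda,\cdot\rangle}$ and re-reading the right-hand side through the standard representation finishes the proof. One subtlety to handle is vertices that are never visited by $Z$ (possible when the environment makes the walk transient), for which the ratio argument fails; this is dealt with by noting that the argument localizes to any finite connected subset containing $i_0$ via the Markov property of $P^r$ on stopping times, or by first restricting $\lambda$ to be supported on a neighbourhood of $i_0$ and then using spatial homogeneity of the reconstruction. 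Once the reconstruction lemma is in place, the remainder is the short computation above.
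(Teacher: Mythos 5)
Your second step (evaluating the common law on the standard representation, where $r_i=\tilde\beta_i$ and Proposition \ref{prop:beta.tilde.dist} gives the Laplace transform) is fine, but your first step contains a genuine gap: the reconstruction functional $\Phi$ with $\Phi(Z)=(r_i)_{i\in V}$ holding $P^r$-a.s. does \emph{not} exist when the environment makes the walk transient. In that case each vertex $i$ is visited only finitely many times, so the trajectory only exposes finitely many i.i.d.\ $\mathrm{Exp}(r_i)$ holding times at $i$; two environments differing only in $r_i$ then produce mutually absolutely continuous path laws (the Radon--Nikodym derivative is a finite, a.s.\ positive product over the visits to $i$), so no measurable functional of the path can a.s.\ equal $r_i$ under both. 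The law-of-large-numbers ratio argument is exactly what the paper uses for Proposition \ref{prop:repr.rec}, and it is restricted there to the recurrent case for precisely this reason. Your proposed fixes (localizing to a finite subset, an unspecified ``excursion argument'', restricting $\lambda$ near $i_0$) do not address this: localizing does not make the chain return, and even vertices adjacent to $i_0$ are visited finitely often in the transient regime, which is the regime where the proposition is actually non-trivial (it is where distinct representations of the full environment exist).

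The paper's proof avoids reconstruction entirely. It first proves a cycle condition (Lemma \ref{lem:r=exp(u)}): writing $t_{i,j}=\frac{2}{W_{i,j}}r_{i,j}$, one shows $t_\sigma=1$ $\calR$-a.s.\ for every cycle $\sigma$, by comparing the probability of following $\sigma^n$ up to time $T$ under the mixture with the explicit Radon--Nikodym density of $P^{VRJP(i_0)}$ with respect to the reference jump process $P^{MJP}_{i_0}$ (Theorem 3 of \cite{ST16}), and letting $n\to\infty$. This makes $\prod_k r_{\sigma_k,\sigma_{k+1}}$ deterministic along cyclic paths, so that on cyclic trajectories the mixture density reduces to $\int e^{-\sum_i r_i l_i}\calR(dr)$; equating this with the explicit VRJP density as a function of the local times $(l_i)$ yields the Laplace transform of $(r_i)_{i\in V}$ directly, with no need to recover $r$ from a single trajectory. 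If you want to salvage your plan, you need to replace the reconstruction step by an argument of this finite-time, density-comparison type.
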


\begin{proof}
Let $i_0\in V$ be fixed, let $\calR(dr)$ be the distribution of a random environment representing $P^{VRJP(i_0)}$, \textit{i.e.}
\[P^{VRJP(i_0)}[\cdot]=\int P^{r}_{i_0}[\cdot] \mathcal{R}(dr),\]
where $P^{r}$ is the
distribution of the Markov jump process with jump rate from $i$ to $j$ given by $r_{i,j}$. 

Let us prove that under $\calR(dr)$, $(r_i)_{i\in V}$ has the same distribution as the $\tilde\beta$ field from the standard representation.

\begin{lem}\label{lem:r=exp(u)}
There exists a random field $(u_i)_{i\in V}\in\R^V$ such that $\calR$-almost surely, $r_{i,j}=\frac{W_{i,j}}{2}e^{u_j-u_i}$ for $i\sim j$.
\end{lem}

\begin{rem}
Since the random field $(u_i)_{i\in V}$ is defined up to an additive constant, we can set $u_{i_0}=0$ a.s. without loss of generality.
\end{rem}

\begin{proof}[Proof of Lemma \ref{lem:r=exp(u)}]
For $r\in\calJ_V^E$, let us define $t_{i,j}=\frac{2}{W_{i,j}}r_{i,j}$ for all $i\sim j$. Then to prove this lemma, it is enough to show that for any cycle $\sigma=(\sigma_0,...,\sigma_n)$, we have $\calR$-a.s. $t_\sigma:=\prod_{k=0}^{n-1} t_{\sigma_k,\sigma_{k+1}}=1$. Since $\G$ is connected, we only need to prove this for cycles $\sigma$ such that $\sigma_0=i_0$.

Recall that we denote by $(Z_t)_{t\geq 0}$ the canonical process on $D([0,\infty),V)$. Let $P^{MJP}$ be the distribution of the Markov jump process with jump rates $\frac{1}{2}W_{i,j}$. Then, according to Theorem 3 from \cite{ST16}, for all $T\geq 0$ the law of $(Z_t)_{t\leq T}$ under $P^{VRJP(i_0)}$ is absolutely continuous with respect to its law under $P^{MJP}_{i_0}$, and its Radon-Nykodim derivative is 
\[\frac{e^{-\sum_{i\sim j}W_{i,j}(\sqrt{1+l_i}\sqrt{1+l_j}-1)}}{e^{-\sum_{i\in V}\frac{1}{2}W_i l_i}}\prod_{i\neq i_0}\frac{1}{\sqrt{1+l_i}},\]
where $W_i=\sum_{j\sim i}W_{i,j}$, and $l_i=\int_0^T \ind_{\{Z_t=i\}}dt$ is the local time at $i$.

Let $\sigma$ be a cycle such that $\sigma_0=\sigma_{|\sigma|}=i_0$. We denote by $\sigma^n$ the $n$-th concatenation of $\sigma$, and for $T\geq 0$ and $\tau$ a path in $\G$, by $\l\{(Z_t)_{t\leq T}\sim\tau\r\}$ the event where the discrete trajectory of $(Z_t)_{t\geq T}$ follows the path $\tau$. Then we have, for $n\geq 1$ and $T\geq 0$,
\[P^{VRJP(i_0)}[(Z_t)_{t\leq T}\sim\sigma^n]=\int \ind_{\{(z_t)_{t\leq T}\sim\sigma^n\}} \frac{e^{-\sum_{i\sim j}W_{i,j}(\sqrt{1+l_i}\sqrt{1+l_j}-1)}}{e^{-\sum_{i\in V}\frac{1}{2}W_i l_i}}\prod_{i\neq i_0}\frac{1}{\sqrt{1+l_i}}  P^{MJP}_{i_0}(dz).\] 
However, since the random environment $(r_{i,j})_{i\sim j}$ gives a representation of the VRJP as a mixture of Markov processes, we also have
\begin{align*}
P^{VRJP(i_0)}&[(Z_t)_{t\leq T}\sim\sigma^n] = \int P^r_{i_0}[(Z_t)_{t\leq T}\sim\sigma^n] \mathcal{R}(dr) \\
&= \int \l(\int\ind_{\{(z_t)_{t\leq T}\sim\sigma^n\}} \frac{e^{-\sum_{i\in V}r_i l_i}\l(\prod_{k=0}^{|\sigma|-1}r_{\sigma_k,\sigma_{k+1}}\r)^n}{e^{-\sum_{i\in V}\frac{1}{2}W_i l_i}\l(\prod_{k=0}^{|\sigma|-1}\frac{1}{2}W_{\sigma_k,\sigma_{k+1}}\r)^n}P^{MJP}_{i_0}(dz)\r)\mathcal{R}(dr) \\
&= \int\ind_{\{(z_t)_{t\leq T}\sim\sigma^n\}} \int \frac{e^{-\sum_{i\in V}r_i l_i}}{e^{-\sum_{i\in V}\frac{1}{2}W_i l_i}}(t_\sigma)^n\mathcal{R}(dr) P^{MJP}_{i_0}(dz)
\end{align*}

Let us fix $\epsilon>0$, and define the event $A_{\sigma,\epsilon}=\l\{t_\sigma\geq 1+\epsilon\r\}$. Then we get
\[P^{VRJP(i_0)}[(Z_t)_{t\leq T}\sim\sigma^n] \geq \int\ind_{\{(z_t)_{t\leq T}\sim\sigma^n\}} \int \ind_{A_{\sigma,\epsilon}}e^{-\sum_{i\in V}r_i l_i}(1+\epsilon)^n\mathcal{R}(dr) P^{MJP}_{i_0}(dz).\]
Let $M>0$ be such that under $\calR(dr)$, ${\P[A_{\sigma,\epsilon}\cap B_M]\geq \P[A_{\sigma,\epsilon}]/2}$, where $B_M=\{\forall i\in V, r_i\leq M\}$. Note that $T=\sum_{i\in V} l_i$, so that
\begin{align*}
P^{VRJP(i_0)}[(Z_t)_{t\leq T}\sim\sigma^n] &\geq \int\ind_{\{(z_t)_{t\leq T}\sim\sigma^n\}} \int \ind_{A_{\sigma,\epsilon}\cap B_M} e^{-MT}(1+\epsilon)^n\mathcal{R}(dr) P^{MJP}_{i_0}(dz) \\
&\geq \frac{e^{-MT}}{2}(1+\epsilon)^n \P[A_{\sigma,\epsilon}] P^{MJP}_{i_0}[(Z_t)_{t\leq T}\sim\sigma^n].
\end{align*}
On the other hand, we also have
\begin{align*}
P^{VRJP(i_0)}[(Z_t)_{t\leq T}\sim\sigma^n] &= \int\ind_{\{(z_t)_{t\leq T}\sim\sigma^n\}} \frac{e^{-\sum_{i\sim j}W_{i,j}(\sqrt{1+l_i}\sqrt{1+l_j}-1)}}{e^{-\sum_{i\in V}\frac{1}{2}W_i l_i}}\prod_{i\neq i_0}\frac{1}{\sqrt{1+l_i}}  P^{MJP}_{i_0}(dz)\\
&\leq  e^{M'T}P^{MJP}_{i_0}[(Z_t)_{t\leq T}\sim\sigma^n],
\end{align*}
where $M'=\max\{\frac{1}{2}W_{\sigma_k},0\leq k<|\sigma|\}$. Since $P^{MJP}_{i_0}[(Z_t)_{t\leq T}\sim\sigma^n]>0$ for all $T>0$ and $n\in\N$, we get
\[\frac{e^{-MT}}{2}(1+\epsilon)^n \P[A_{\sigma,\epsilon}]\leq e^{M'T}.\]
Taking $n\to\infty$ for fixed $T>0$ shows that $\P[A_{\sigma,\epsilon}]=0$. As a result, we have almost surely $t_\sigma\leq 1$.

For $\epsilon>0$, we now set $A'_{\sigma,\epsilon}=\{t_\sigma\leq 1-\epsilon\}$. Using the same notations as before, and the fact that a.s. $t_\sigma\leq 1$, we get
\begin{align*}
P^{VRJP(i_0)}[(Z_t)_{t\leq T}\sim\sigma^n] &\leq \int\ind_{\{(z_t)_{t\leq T}\sim\sigma^n\}} \int e^{M'T}\l(\ind_{{A'_{\sigma,\epsilon}}^c}+ \ind_{A'_{\sigma,\epsilon}}(1-\epsilon)^n\r)\mathcal{R}(dr) P^{MJP}_{i_0}(dz) \\
&\leq e^{M'T}\l(\P[{A'_{\sigma,\epsilon}}^c]+\P[A'_{\sigma,\epsilon}](1-\epsilon)^n\r)P^{MJP}_{i_0}[(Z_t)_{t\leq T}\sim\sigma^n].
\end{align*}
On the other hand, on the event $\{(Z_t)_{t\leq T}\sim\sigma^n\}$, we have $l_i\leq T$ for all $i\in\{\sigma_k,0\leq k<|\sigma|\}$ and $l_i=0$ for all other $i\in V$. As a result, for such trajectories,
\[\frac{e^{-\sum_{i\sim j}W_{i,j}(\sqrt{1+l_i}\sqrt{1+l_j}-1)}}{e^{-\sum_{i\in V}\frac{1}{2}W_i l_i}}\prod_{i\neq i_0}\frac{1}{\sqrt{1+l_i}} \geq \frac{e^{-M''T}}{(1+T)^{\frac{|\sigma|-1}{2}}},\]
where $M''=\sum_{i,j\in\{\sigma_k\}}W_{i,j}$, so that,
\[P^{VRJP(i_0)}[(Z_t)_{t\leq T}\sim\sigma^n]\geq \frac{e^{-M''T}}{(1+T)^{\frac{|\sigma|-1}{2}}}P^{MJP}_{i_0}[(Z_t)_{t\leq T}\sim\sigma^n].\]
As before, this yields
\[e^{M'T}\l(\P[{A'_{\sigma,\epsilon}}^c]+\P[A'_{\sigma,\epsilon}](1-\epsilon)^n\r)\geq \frac{e^{-M''T}}{(1+T)^{\frac{|\sigma|-1}{2}}}\]
for all $T>0$ and $n\in\N$. Taking first $n\to\infty$, then $T\to 0$, we get that uner $\calR(dr)$, $\P[{A'_{\sigma,\epsilon}}^c]=1$. Therefore, we can conclude that $t_\sigma=1$ $\calR$-almost surely.
\end{proof}

In order to identify the distribution of $(r_i)_{i\in V}$ under $\calR(dr)$, we obtain their Laplace transform as the density of cyclic trajectories of $(Z_t)_{t\geq 0}$ under $P^{VRJP(i_0)}$ with respect to $P^{MJP}_{i_0}$. Indeed, given a cyclic trajectory $(z_t)_{t\geq 0}$ in $\G$, started at $i_0$, we denote by $\sigma$ the associated cyclic path in $\G$, and $(l_i)_{i\in V}$ the local times, so that $T=\sum_{i\in V} l_i$, and $l_i>0$ if and only if $i\in\{\sigma_k,0\leq k<|\sigma|\}$. Then the Radon-Nykodim derivative at $(z_t)_{t\geq 0}$ of $P^{VRJP(i_0)}$ with respect to $P^{MJP}_{i_0}$ is almost surely
\[\frac{e^{-\sum_{i\sim j}W_{i,j}(\sqrt{1+l_i}\sqrt{1+l_j}-1)}}{e^{-\sum_{i\in V}\frac{1}{2}W_i l_i}}\prod_{i\neq i_0}\frac{1}{\sqrt{1+l_i}},\]
but also
\[\int \frac{e^{-\sum_{i\in V}r_i l_i}}{e^{-\sum_{i\in V}\frac{1}{2}W_i l_i}}\mathcal{R}(dr)\]
since $t_\sigma=1$ $\calR$-almost surely. Therefore, for all finite connected subset $U$ of $V$, and almost all $(l_i)_{i\in V}\in (\R_+^*)^U\times\{0\}^{V\backslash U}$, we have
\[e^{-\sum_{i\sim j}W_{i,j}(\sqrt{1+l_i}\sqrt{1+l_j}-1)}\prod_{i\neq i_0}\frac{1}{\sqrt{1+l_i}}=\int e^{-\sum_{i\in V}r_i l_i}\mathcal{R}(dr)=\E[e^{-\sum_{i\in V}r_i l_i}].\]
Since these are continuous functions of $(l_i)_{i\in V}$, this equality is true for all $(l_i)_{i\in V}\in\R_+^V$ with finite support. As a result, under $\calR(dr)$, $(r_i)_{i\in V}$ has the same Laplace transform as the field $\tilde\beta$, associated with the standard representation of the VRJP started at $i_0$ (see Proposition \ref{prop:beta.tilde.dist}), and therefore the same distribution.
\end{proof}

\begin{proof}[Proof of Theorem \ref{thm:beta.env}]
Let $i_0\in V$ be fixed, and $\calR(dr)$ be the distribution of a random environment representing $P^{VRJP(i_0)}$. Thanks to Proposition \ref{prop:beta.tilde.env}, we know the distribution of $(r_i)_{i\in V}$ under $\calR(dr)$. Note that the distribution of a $\Gamma(1/2,1)$ variable is $\frac{\ind_{\{\gamma>0\}}}{\sqrt{\pi\gamma}}e^{-\gamma}d\gamma$, and that its Laplace transfrom is given by
\[\int e^{-t\gamma}\frac{\ind_{\{\gamma>0\}}}{\sqrt{\pi\gamma}}e^{-\gamma}d\gamma=\frac{1}{\sqrt{1+t}}\]
for $t\geq 0$. From now on, we denote $\calR(dr,d\gamma)=\calR(dr)\otimes \frac{\ind_{\{\gamma>0\}}}{\sqrt{\pi\gamma}}e^{-\gamma}d\gamma$, which is a distribution on $\calJ_V^E\times\R$.

For $r\in\calJ_V^E$ and $\gamma>0$, we now define $(\beta_i)_{i\in V}$ by $\beta_i=r_i + \ind_{\{i=i_0\}}\gamma$ for $i\in V$. Then under $\calR(dr,d\gamma)$, the Laplace transform of $\beta$ is, for $\lambda\in\R_+^V$ with finite support,
\begin{align*}
\int e^{-\langle\lambda,\beta\rangle} \mathcal{R}(dr)\frac{\ind_{\{\gamma>0\}}}{\sqrt{\pi\gamma}}e^{-\gamma}d\gamma &=\int e^{-\sum_{i\in V}\lambda_i r_i} \mathcal{R}(dr)\int e^{-\lambda_{i_0}\gamma}\frac{\ind_{\{\gamma>0\}}}{\sqrt{\pi\gamma}}e^{-\gamma}d\gamma  \\
& =\frac{1}{\sqrt{1+\lambda_{i_0}}} e^{-\sum_{i\sim j}W_{i,j}(\sqrt{1+l_i}\sqrt{1+l_j}-1)}\prod_{i\neq i_0}\frac{1}{\sqrt{1+l_i}},
\end{align*}
\textit{i.e.} $\beta$ is distributed according to $\nu_V^W$ (see Proposition \ref{prop:nu.dist.inf}). We can then define $\calR(dr,d\gamma)$-a.s. $\Gh:V\times V\to\R_+$ and $\psi:V\to\R_+$ thanks to Theorem \ref{thm:Gh.psi.mix}. Moreover, by analogy with the standard representation, let $G(i_0,\cdot):V\to\R_+$ be defined by:
\[G(i_0,i)=\frac{1}{2\gamma}e^{u_i},\]
where $(u_i)_{i\in V}$ was introduced in Lemma \ref{lem:r=exp(u)}. This way, under $\calR(dr,d\gamma)$, for all $i\neq j\in V$ the jump rate $r_{i,j}$ can be written as 
\[r_{i,j}=\frac{W_{i,j}}{2}\frac{G(i_0,j)}{G(i_0,i)}.\]

Let us set $h(i)=G(i_0,i)-\Gh(i_0,i)$ for all $i\in V$. Then $H_\beta h=0$. Indeed, for $i\neq i_0$, we have
\begin{align*}
2\beta_i G(i_0,i)-\sum_{j\sim i} W_{i,j}G(i_0,j)&=2r_i G(i_0,i)-\sum_{j\sim i}2r_{i,j}G(i_0,i)\\
&=0=2\beta_i\Gh(i_0,i)-\sum_{j\sim i}W_{i,j}\Gh(i_0,j),
\end{align*}
and for $i=i_0$,
\begin{align*}
2\beta_{i_0} G(i_0,i_0)-\sum_{j\sim i_0} W_{i_0,j}G(i_0,j)&=\frac{r_{i_0}+\gamma}{\gamma} -\sum_{j\sim i_0}\frac{r_{i_0,j}}{\gamma}\\
&=1=2\beta_{i_0}\Gh(i_0,i_0)-\sum_{j\sim i_0}W_{i_0,j}\Gh(i_0,j).
\end{align*}
As a result, $G(i_0,\cdot)$ can be written, for all $i\in V$, as 
\[G(i_0,i)=\Gh(i_0,i)+h(i),\]
where $h:V\to\R_+$ is a non-negative $H_\beta$-harmonic function.
\end{proof}

\subsection{The recurrent and transient cases: Proofs of Propositions \ref{prop:repr.rec} and \ref{prop:trans.delta.psi}}\label{4.2}

\begin{proof}[Proof of Proposition \ref{prop:repr.rec}]
We assume that $(\G,W)$ is such that the VRJP is almost surely recurrent.

Let $(r_{i,j})_{i\sim j}$ be fixed jump rates on $V$, such that the associated Markov chain is recurrent. We denote by $P^r_{i_0}$ its distribution when started at $i_0$. Note that under $P^r_{i_0}$, the time spent at a vertex $i$ before jumping is an exponential variable with parameter $r_i$, and the probability to then jump to a specific neighbour $j$ is $\frac{r_{i,j}}{r_i}$.

Let us then define the following functions of the trajectory $(Z_t)$: for $i\in V$ and $n\geq 1$, we define $\delta t^\n_i$ as the time spent by $(Z_t)$ at the vertex $i$ during its $n$th visit to $i$, and $v^\n_i$ the neighbour of $i$ towards which the process jumps after its $n$th visit to $i$. Under $P^r_{i_0}$, since the process is recurrent, these random variables are well-defined for all $i\in V$ and $n\geq 1$. Moreover, the sequences $(\delta t^\n_i)_{n\geq 1}$ and $(v^\n_i)_{n\geq 1}$ are independent, so thanks to the law of large numbers, we have almost surely
\[\o{\delta t}_i:=\lim_{n\to\infty}\frac{\sum_{k=1}^n \delta t^{(k)}_i}{n}=\frac{1}{r_i} \mbox{  and  } \o{p}_{i,j}:=\lim_{n\to\infty}\frac{\sum_{k=1}^n \ind_{\{ v^{(k)}_i=j\}}}{n}=\frac{r_{i,j}}{r_i},\]
for all $i,j\in V$.

Let now $\mathcal{R}(dr)$ be the distribution of a random environment representing the VRJP on $(\G,W)$, \textit{i.e.} $P^{VRJP(i_0)}[\cdot]=\int P^r_{i_0}[\cdot]\mathcal{R}(dr)$. Since the VRJP is a.s. recurrent, then under $\mathcal{R}(dr)$, $P^r_{i_0}$ is a.s. the distribution of a recurrent Markov chain. Moreover, under $P^{VRJP(i_0)}$, $\o{\delta t}_i$ and $\o{p}_{i,j}$ are a.s. well-defined for all $i,j\in V$, and $\l(\frac{\o{p}_{i,j}}{\o{\delta t}_i}\r)_{i\sim j}$ is distributed according to $\mathcal{R}$. Since these functions of the trajectory do not depend on the chosen representation, the distribution $\mathcal{R}$ is uniquely determined.
\end{proof}

\begin{proof}[Proof of Proposition \ref{prop:trans.delta.psi}]
Let $(\G,W)$ be such that the VRJP is almost surely recurrent. Since, according to Theorem \ref{thm:Gh.psi.mix} (iii), we have $P^{VRJP(i_0)}[\cdot]=\int P^{\beta,\gamma,i_0}_{i_0}[\cdot]\nu_V^W(d\beta,d\gamma)$, then under $\nu_V^W(d\beta,d\gamma)$, the Markov process with distribution $P^{\beta,\gamma,i_0}_{i_0}$ is a.s. transient. From Theorem \ref{thm:Gh.psi.mix} (iv), this means that under $\nu_V^W(d\beta)$, we have a.s. $\psi(i)>0$ for all $i\in V$, which proves \textit{(i)}.

Let us now consider the random conductance model with conductances $c^\psi_{i,j}=W_{i,j}\psi(i)\psi(j)$. We denote by $\pi^\psi_i=\sum_{j\sim i}c^\psi_{i,j}$ the corresponding invariant measure, where $\pi^\psi_i=\psi(i)\sum_{j\sim i}W_{i,j}\psi(j)=2\beta_i\psi(i)^2$ since $\psi$ is $H_\beta$-harmonic. Let $P^\psi$ be the distribution of the associated random walk, whose transition probability from $i$ to $j$ is
\[p^\psi_{i,j}=\frac{c_{i,j}^\psi}{\pi^\psi_i}=\frac{W_{i,j}\psi(j)}{2\beta_i\psi(i)}\]
for $i,j\in V$. Moreover, let us denote by $g^\psi$ the Green kernel associated with $P^\psi$, defined for $i,j\in V$ as $g^\psi(i,j)=\sum_{k\in\N}P^\psi_i[X_k=j]$, where $(X_k)_{k\in\N}$ denotes the canonical process on $V^\N$. Then we have
\begin{align*}
g^\psi(i,j)&=\sum_{k\in\N}\sum_{\sigma\in\calP^T_{i,j} , |\sigma|=k}P^\psi_i[(X_0,...,X_k)=\sigma] = \sum_{\sigma\in\calP^T_{i,j}} \prod_{k=0}^{|\sigma|-1} \frac{W_{\sigma_k,\sigma_{k+1}}\psi(\sigma_{k+1})}{2\beta_{\sigma_k}\psi(\sigma_k)}\\
&= \frac{\psi(j)}{\psi(i)}\sum_{\sigma\in\calP^T_{i,j}}\frac{W_\sigma}{(2\beta)_\sigma^-} = \frac{\psi(j)}{\psi(i)}2\beta_j\Gh(i,j),
\end{align*}
where under $\nu_V^W(d\beta)$, $\Gh(i,j)$ is a.s. finite for all $i,j\in V$, from Theorem \ref{thm:Gh.psi.mix} (i). As a result, we have almost surely $g^\psi(i,j)<\infty$, therefore the random walk $P^\psi$ is transient almost surely, proving \textit{(ii)}.

Let $\Delta^\psi=(p^\psi_{i,j}-\ind_{\{i=j\}})_{i,j\in V}$ be the discrete Laplacian associated with $P^\psi$. We will say that a function $\varphi:V\to\R$ is $\Delta^\psi$-harmonic if $(\Delta^\psi \varphi)(i)=\l(\sum_{j\sim i} p^\psi_{i,j} \varphi(j)\r) - \varphi(i)=0$ for all $i\in V$. Therefore, a function $\varphi$ is $\Delta^\psi$-harmonic if and only if for any $i\in V$, 
\[2\beta_i\psi(i)\varphi(i)-\sum_{j\sim i}W_{i,j}\psi(j)\varphi(j)=0,\]
\textit{i.e.} if and only if $\psi \varphi$ is $H_\beta$-harmonic, which concludes the proof of \textit{(iii)}.
\end{proof}

\section{Representations of the VRJP on $\Z^d$: Proof of Theorem \ref{thm:zd.rep}}

Let us now consider the case where $\G=(V,E)$ is the $\Z^d$ lattice, endowed with constant edge weights, \textit{i.e.} $W_{i,j}=W>0$ for all $i\sim j$. For $x\in\R^d$, we will denote by $|x|$ its Euclidean norm. We fix $i_0=0$. 

\subsection{Recurrence and transience of the VRJP on $\Z^d$}

For $d=2$, the VRJP on $(\G,W)$ is a.s. recurrent for all $W>0$, according to Theorem 1.1 in \cite{BHS18}. Therefore, the representation of $P^{VRJP(0)}$ as a mixture of Markov jump processes is unique (see Proposition \ref{prop:repr.rec}). If $d\geq 3$, Corollary 1 in \cite{SabTar} tells us that for small enough $W$, the VRJP is a.s. recurrent, in which case the representation of $P^{VRJP(0)}$ is once again unique. Let us now show that for large enough $W$, even though the VRJP is almost surely transient, the representation is still unique.

From Corollary 3 in \cite{SabTar}, we know that for $W$ large enough, the VRJP is a.s. transient. From now on, we consider such $W$. Then thanks to Proposition \ref{prop:trans.delta.psi}, under $\nu_V^W(d\beta)$ we have a.s. $\psi(i)>0$ for all $i\in V$. Moreover, we can define the Markov operator $\Delta^\psi$ and, for $h:V\to\R_+$, $h$ is $H_\beta$-harmonic if and only if $h/\psi$ is $\Delta^\psi$-harmonic. In light of Remark \ref{rem:rep.fonc.harm}, in order to show that the representation of the VRJP is unique, we need to show that the only positive $\Delta^\psi$-harmonic functions are constants, \textit{i.e.} that the Martin boundary $\M^\psi$ associated with $\Delta^\psi$ is almost surely trivial. To do this, we will need a local limit theorem in random environment, found in \cite{ADS16}.

\subsection{Local limit theorem for random walk in random conductances}

Let us consider the random conductances model on $\G=(\Z^d,E_d)$, with $d\geq 2$. Let $\P$ be a distribution on the set of conductances $(\R_+^*)^{E_d}$, such that under $\P(d\omega)$, we have a.s. $0<\omega_{i,j}<\infty$ for all $i\sim j$. For $\omega\in(\R_+^*)^{E_d}$, let $P^\omega$ be the distribution of the continuous-time constant speed random walk associated with $\omega$. This is the Markov jump process with jump rate from $i$ to $j$ given by $\frac{\omega_{i,j}}{\pi^\omega_i}$, where $\pi^\omega_i=\sum_{j\sim i}\omega_{i,j}$. This way under $P^\omega$, the holding time of $(Z_t)_{t\geq 0}$ at each point is an exponential variable of parameter $1$, which justifies the term "constant speed". Finally, we denote by $q^\omega$ the heat kernel, \textit{i.e.} the transition density of the walk with respect to $\pi^\omega$: for $x,y\in \Z^d$ and $t\geq 0$,
\[q^\omega(t,x,y)=\frac{P^\omega_x[Z_t=y]}{\pi^\omega_y}.\]
The following theorem from \cite{ADS16} is a local limit theorem for $q^\omega$, under ergodicity and integrability assumptions.

\begin{thm}\label{thm:llthk}[Theorem 1.11 in \cite{ADS16}]
Let us assume that $\P(d\omega)$ is stationary and ergodic with respect to translations of $\Z^d$, and that there exist $p,q\in(1,\infty]$ satisfying $1/p+1/q<2/d$ such that $\E[\omega_{i,j}^p]<\infty$ and $\E[\omega_{i,j}^{-q}]<\infty$ for all $i\sim j$.

Then for $0<T_1<T_2$ and $K>0$, we have $\P$-a.s.
\[\lim_{n\to\infty}\sup_{|x|\leq K}\sup_{t\in[T_1,T_2]}\l| n^d q^\omega(n^2 t,0,\lf nx\rf) - ak_t(x) \r|=0,\]
where $\lf nx\rf=(\lf nx_1\rf,...,\lf nx_d\rf)$, $a=1/\E[\pi^\omega_0]$ and $k_t$ is the Gaussian heat kernel with some deterministic covariance matrix $\Sigma^2$, \textit{i.e.}
\[k_t(x)=\frac{1}{\sqrt{(2\pi t)^d\det(\Sigma^2)}}e^{-\frac{x^t(\Sigma^2)^{-1}x}{2t}}.\]
\end{thm}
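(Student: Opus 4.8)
The plan is to follow the two-step route that is by now standard for local limit theorems in the random conductance model: first establish a quenched functional central limit theorem, namely that under $P^\omega_0$ the rescaled path $(n^{-1}Z_{n^2 t})_{t\ge 0}$ converges for $\P$-a.e.\ $\omega$ to a Brownian motion with the deterministic covariance $\Sigma^2$, and then upgrade this weak convergence of laws to locally uniform convergence of the densities $y\mapsto n^d q^\omega(n^2 t,0,\lfloor n y\rfloor)$ by means of a priori heat kernel bounds and Hölder regularity for $q^\omega$. The stationarity/ergodicity hypothesis feeds the first step, while the moment condition $1/p+1/q<2/d$ is what makes the regularity estimates in the second step go through in the absence of uniform ellipticity.

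For the quenched invariance principle I would use the corrector method. Working in the Hilbert space $L^2(\Omega,\P)$ of the environment seen from the particle, one constructs the corrector $\chi(\cdot,\omega):\Z^d\to\R^d$ so that $x\mapsto x+\chi(x,\omega)$ is harmonic for the generator of the constant-speed walk; this is the classical Kipnis--Varadhan / $H^{-1}$ construction, and it produces a martingale $M_t=Z_t+\chi(Z_t,\omega)$ to which a martingale FCLT applies, giving Brownian motion with a covariance identified by an ergodic-theorem computation. The only nontrivial point is sublinearity of the corrector, $\max_{|x|\le n}|\chi(x,\omega)|=o(n)$ $\P$-a.s.; this is where $\E[\omega_{i,j}^p]<\infty$ and $\E[\omega_{i,j}^{-q}]<\infty$ with $1/p+1/q<2/d$ are used, through a weighted Sobolev/Poincaré inequality and a Moser iteration controlling $\chi$ on large boxes.

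The technical heart is the second step: near-diagonal upper bounds and a quantitative Hölder modulus for $q^\omega$. The weighted Sobolev inequality supplied by the moment assumptions drives a Moser iteration yielding an on-diagonal bound $q^\omega(t,x,y)\le C(\omega)\,t^{-d/2}$ valid for all $t$ beyond some a.s.\ finite (but $\omega$-dependent) scale, with the constant and the scale having enough integrability/stationarity to be harmless after rescaling; a parabolic De Giorgi oscillation estimate then gives a Hölder modulus for $(t,y)\mapsto q^\omega(t,x,y)$ on parabolic cylinders. Rescaling, these statements say precisely that, for $\P$-a.e.\ $\omega$ and all large $n$, the family $\{\,y\mapsto n^d q^\omega(n^2 t,0,\lfloor n y\rfloor):t\in[T_1,T_2]\,\}$ is uniformly bounded and equicontinuous on $\{|y|\le K\}$.

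Finally I would combine the ingredients. By Arzelà--Ascoli, along any subsequence the rescaled densities converge locally uniformly to some continuous $u_t(y)$; the functional CLT forces the associated measures on $n^{-1}\Z^d$ to converge weakly to the law of $\Sigma B_t$, and the constant $a=1/\E[\pi^\omega_0]$ appears as the normalization of the invariant density of the environment process (again by the ergodic theorem, since $\pi^\omega$ is the reversing measure of the constant-speed walk), so necessarily $u_t=a\,k_t$. Uniqueness of this limit promotes subsequential convergence to convergence of the whole sequence, and the equicontinuity makes the convergence uniform in $|x|\le K$ and $t\in[T_1,T_2]$, which is the assertion. I expect the main obstacle to be closing the Moser iteration in the degenerate setting: the Sobolev constant depends on local averages of $\omega$ and $\omega^{-1}$, which must be controlled on all large scales by the moment bounds together with a maximal ergodic inequality, and threading the exponent balance $1/p+1/q<2/d$ through the iteration while keeping every estimate uniform in the scaling parameter $n$ is the delicate point.
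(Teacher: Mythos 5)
This statement is not proved in the paper at all: it is imported verbatim as Theorem 1.11 of \cite{ADS16} and used as a black box (the only argument the paper supplies in this vicinity is for the Green-kernel variant, Theorem \ref{thm:lltgk}, obtained by integrating the present heat-kernel statement). So there is no internal proof to compare yours against. That said, your sketch is a faithful outline of how the result is actually established in the cited reference and its companion papers: the quenched invariance principle under $1/p+1/q<2/d$ via the corrector/harmonic-coordinates construction with sublinearity obtained from a weighted Sobolev inequality and Moser iteration; the upgrade from weak convergence of laws to locally uniform convergence of densities via on-diagonal upper bounds and the parabolic Harnack inequality (hence a Hölder modulus for $q^\omega$), following the Barlow--Hambly scheme; and the normalisation $a=1/\E[\pi^\omega_0]$ coming from the ergodic theorem applied to the reversing measure of the constant-speed walk. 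You correctly identify the delicate point (closing the Moser iteration in the degenerate setting with scale-uniform constants), but that work is only named, not carried out, so your text is a correct roadmap rather than a proof --- which is exactly the status the statement has in this paper as well.
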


\begin{rem}\label{rem:isom.covar}
If $\P(d\omega)$ is also stationary with respect to all isometries of $\Z^d$, then the limiting Brownian motion must be as well, therefore its deterministic covariance matrix has the form $\Sigma^2=\sigma^2I_d$, where $\sigma^2>0$.
\end{rem}

This also provides a local limit theorem for the Green kernel $g^\omega$, defined for $\omega\in(\R_+^*)^{E_d}$ and $x,y\in Z^d$ by
\[g^\omega(x,y)=\int_0^\infty q^\omega(t,x,y)dt.\]
This result was also mentioned in \cite{ADS16}, we give here the details for the proof of a slightly stronger result\footnote{I would like to thank Sebastian Andres for his help regarding the details of this proof.}, that insures the uniform convergence for $x$ in an annulus.

\begin{thm}[Variant of Theorem 1.14 in \cite{ADS16}]\label{thm:lltgk}
For $d\geq 3$, under the assumptions of Theorem \ref{thm:llthk}, we have $\P$-a.s.
\[\lim_{n\to\infty}\sup_{1\leq|x|\leq 2}|n^{d-2}g^\omega(0,\lf nx\rf) - a g_{BM}(0,x)|=0,\]
where $g_{BM}$ is the Green kernel associated with the Brownian motion with covariance matrix $\Sigma^2$, \textit{i.e.}
\[g_{BM}(0,x)=\int_0^\infty k_t(x)dt=\frac{\Gamma(d/2-1)}{2\pi^{d/2}\det(\Sigma^2)^{1/2}}(x^t(\Sigma^2)^{-1}x)^{1-d/2}.\]
\end{thm}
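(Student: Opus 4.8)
The plan is to obtain the Green kernel limit by integrating the heat kernel local limit theorem (Theorem \ref{thm:llthk}) in the time variable, after the parabolic rescaling $t=n^2s$. By the substitution $t\mapsto n^2 s$ in $g^\omega(0,\lf nx\rf)=\int_0^\infty q^\omega(t,0,\lf nx\rf)\,dt$ one gets
\[n^{d-2}g^\omega(0,\lf nx\rf)=\int_0^\infty n^d q^\omega(n^2 s,0,\lf nx\rf)\,ds,\]
and likewise $a\,g_{BM}(0,x)=\int_0^\infty a\,k_s(x)\,ds$. So it suffices to show, $\P$-a.s., that $\int_0^\infty n^d q^\omega(n^2 s,0,\lf nx\rf)\,ds\to\int_0^\infty a\,k_s(x)\,ds$ uniformly over $1\leq|x|\leq 2$. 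I would fix two thresholds $0<\epsilon<M<\infty$, split the integral into $(0,\epsilon]$, $[\epsilon,M]$, $[M,\infty)$, and treat the pieces separately, sending $\epsilon\to 0$ and $M\to\infty$ at the end via an $\eta/3$ argument.

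On the bulk $s\in[\epsilon,M]$, Theorem \ref{thm:llthk} applied with $K=2$, $T_1=\epsilon$, $T_2=M$ gives $\P$-a.s. that $\sup_{|x|\leq 2}\sup_{s\in[\epsilon,M]}\bigl|n^d q^\omega(n^2 s,0,\lf nx\rf)-a\,k_s(x)\bigr|\to 0$. Since $[\epsilon,M]$ is a bounded interval, integrating this uniform estimate yields $\sup_{1\leq|x|\leq 2}\bigl|\int_\epsilon^M n^d q^\omega(n^2 s,0,\lf nx\rf)\,ds-\int_\epsilon^M a\,k_s(x)\,ds\bigr|\to 0$. The two tails must then be shown to be small, uniformly in $n$, once $\epsilon$ is small and $M$ large. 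For this I would use the quenched heat kernel upper bounds available under the assumptions of Theorem \ref{thm:llthk} (see \cite{ADS16}): $\P$-a.s. there is a random field $N_1(\theta_y\omega)$, a.s. finite, and constants $C,c$ such that for $t\geq N_1(\theta_y\omega)\vee|x-y|$ one has $q^\omega(t,y,y)\leq C\,t^{-d/2}$ and $q^\omega(t,x,y)\leq C\,t^{-d/2}\exp(-c\,|x-y|^2/t)$. Writing $y_n=\lf nx\rf$, one has $n/2\leq|y_n|\leq 2\sqrt d\,n$ for $n$ large (using $1\leq|x|\leq 2$), and a maximal-inequality/Borel--Cantelli argument based on the moment assumptions gives $\max_{|y|\leq 2\sqrt d\,n}N_1(\theta_y\omega)=o(n^2)$ $\P$-a.s., so for each fixed $s>0$ and $n$ large the bounds apply with $t=n^2 s$.

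Granting this, for $s\leq\epsilon$ one gets $n^d q^\omega(n^2 s,0,y_n)\leq C\,s^{-d/2}\exp(-c'/s)$, which is integrable at $0$ with $\int_0^\epsilon\to 0$ as $\epsilon\to 0$, uniformly in $n$ and in $1\leq|x|\leq 2$; for $s\geq M$, using $q^\omega(n^2 s,0,y_n)\leq\sqrt{q^\omega(n^2 s,0,0)\,q^\omega(n^2 s,y_n,y_n)}\leq C\,(n^2 s)^{-d/2}$ one gets $n^d q^\omega(n^2 s,0,y_n)\leq C\,s^{-d/2}$, whose integral over $[M,\infty)$ is $C\,M^{1-d/2}\to 0$ as $M\to\infty$ — this is exactly where $d\geq 3$ enters. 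The same elementary Gaussian estimates (with $a\,k_s(x)$ in place of $n^d q^\omega(n^2 s,0,y_n)$, using $x^t(\Sigma^2)^{-1}x\geq c|x|^2\geq c$ on $1\leq|x|\leq 2$) show that $\int_0^\epsilon a\,k_s(x)\,ds$ and $\int_M^\infty a\,k_s(x)\,ds$ are small uniformly over the annulus. Combining the three contributions finishes the proof. The main obstacle is the last point: securing the quenched heat kernel upper bounds with enough uniformity in the space variable, i.e.\ controlling $\max_{|y|\leq Cn}N_1(\theta_y\omega)$, which rests on the integrability built into the hypotheses of Theorem \ref{thm:llthk}; once that technical input is in place, the rest is a routine splitting and dominated-convergence argument.
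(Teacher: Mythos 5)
Your outline --- rescale $t=n^2s$, split the time integral into a bulk handled by Theorem \ref{thm:llthk} and two tails handled by quenched heat-kernel upper bounds, with $d\geq 3$ entering only through the integrability of $s^{-d/2}$ at infinity --- is exactly the strategy of the paper's proof, and the bulk and the large-time tail are handled the same way there.

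There is, however, a genuine gap in your small-time tail. The Gaussian-type quenched bound you invoke is only valid for $t\geq N_1(\theta_y\omega)\vee|x-y|$ (in the form the paper cites from Theorem 1.6 of \cite{ADS16-EJP}, for $t\geq Cn|x|$). With $t=n^2s$ and $|\lf nx\rf|\asymp n$ on the annulus $1\leq|x|\leq 2$, this \emph{excludes}, for every $n$, the range $s\lesssim 1/n$ (and also $s\lesssim N_1/n^2$, which your claim $\max_{|y|\leq Cn}N_1(\theta_y\omega)=o(n^2)$ does not quantify well enough to dismiss). Your phrase ``for each fixed $s>0$ and $n$ large the bounds apply'' therefore does not yield an estimate valid for all $s\in(0,\epsilon]$ simultaneously, which is what $\int_0^\epsilon$ requires uniformly in $n$. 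On the excluded interval the only unconditional bound is $q^\omega\leq 1/\pi^\omega_{\lf nx\rf}$, and $n^d\cdot(1/n)\cdot(1/\pi^\omega_{\lf nx\rf})$ does not vanish, so the contribution cannot be absorbed crudely. The paper closes exactly this hole: it splits off the interval $[0,2C/n]$ and uses there a long-range (Davies-type) bound from \cite{Dav93}, $q^\omega(t,0,y)\leq(\pi^\omega_0\pi^\omega_y)^{-1/2}e^{-c_3|y|}$, valid for \emph{all} $t\geq 0$, combined with the ergodic-theorem estimate $1/\pi^\omega_{\lf nx\rf}\leq\sum_{|y|\leq 2n}\rho^\omega_y\leq c_4(2n)^d\E[\rho^\omega_0]$ to get $n^dq^\omega(t,0,\lf nx\rf)\leq c_5n^{2d}e^{-c_3n|x|}$; the resulting contribution $C'n^{2d-1}e^{-c_3n}$ is negligible. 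You need this (or an equivalent Carne--Varopoulos-type off-diagonal estimate for sub-ballistic times) to make the $s\to 0$ tail rigorous; once it is added, the rest of your argument goes through.
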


\begin{proof}
This result is obtained by integrating in Theorem \ref{thm:llthk}. Moreover, we will need the following bounds on $q^\omega$, which are true almost surely.

Firstly, Theorem 1.6 in \cite{ADS16-EJP} gives a short-range bound, which also applies to $k_t$: $\P$-a.s. there are constants $C, c_1,c_2>0$ such that for $t\geq Cn|x|$,
\[q^\omega(t,0,\lf nx\rf)\leq c_1 t^{-d/2}e^{-\frac{c_2 (n|x|)^2}{t}},\]
and for all $t\geq 0$,
\[k_t(x)\leq c_1 t^{-d/2}e^{-\frac{c_2 |x|^2}{t}}.\]

Now, for a long-range bound: using Corollaries 11 and 12 from \cite{Dav93}, there exists $\P$-a.s. a constant $c_3>0$ such that for all $t\geq 0$, we have
\[q^\omega(t,0,\lf nx\rf)\leq \frac{1}{\sqrt{\pi^\omega_0\pi^\omega_{\lf nx\rf}}}e^{-c_3 n|x|} .\]
Note that the integrability assumption implies that $\E[\rho^\omega_0]<\infty$, where $\rho^\omega_i=\sum_{l\sim i}\frac{1}{\omega_{i,l}}$. Therefore, for $|x|\leq 2$,
\[\frac{1}{\pi^\omega_{\lf nx\rf}}\leq\rho^\omega_{\lf nx\rf}\leq\sum_{|y|\leq 2n}\rho^\omega_y \mbox{, as well as } \frac{1}{\pi^\omega_0}\leq\sum_{|y|\leq 2n}\rho^\omega_y,\]
and thanks to the ergodic theorem, $\P$-a.s. there exist $c_4>0$ and $N_0\leq 1$ such that for $n\geq N_0$,
$\sum_{|y|\leq 2n}\rho^\omega_y\leq c_4(2n)^d\E[\rho^\omega_0]$.  For such $n$, we get
\[q^\omega(t,0,\lf nx\rf)\leq c_5 n^d e^{-c_3 n|x|} .\]

Using these bounds, we now know that for $n\geq N_0$ and $1\leq|x|\leq 2$, we have $\P$-a.s.
\begin{align*}
|n^{d-2}&g^\omega(0,\lf nx\rf) - a g_{BM}(0,x)|= \l| n^d\int_0^\infty q^\omega(n^2t,0,\lf nx\rf)dt - a\int_0^\infty k_t(x)dt \r|\\
&\leq n^d\int_0^{2C/n}q^\omega(n^2t,0,\lf nx\rf)dt + n^d\int_{2C/n}^{T_1}q^\omega(n^2 t,0,\lf nx\rf)dt + a\int_{0}^{T_1}k_t(x)dt\\
&\quad+ \int_{T_1}^{T_2}|n^d q^\omega(n^2 t,0,\lf nx\rf) - ak_t(x)|dt +n^d\int_{T_2}^\infty q^\omega(n^2 t,0,\lf nx\rf)dt + a\int_{T_2}^\infty k_t(x)dt\\
&\leq C'n^{2d-1}e^{-c_3 n} + (1+a)\int_0^{T_1}c_1t^{-d/2}e^{-c_2/t}dt  + (1+a)\int_{T_2}^\infty c_1t^{-d/2}e^{-c_2/t}dt \\
&\quad+ (T_2-T_1)\sup_{|x|\leq 2}\sup_{t\in[T_1,T_2]}|n^d q^\omega(n^2 t,0,\lf nx\rf) - ak_t(x)|.
\end{align*}
Let $\epsilon>0$. Since $t\mapsto c_1t^{-d/2}e^{-c_2/t}$ is integrable on $(0,\infty)$, we can fix $T_1,T_2>0$ independently of $x$ such that
\[\int_0^{T_1}c_1t^{-d/2}e^{-c_2/t}dt +\int_{T_2}^\infty c_1t^{-d/2}e^{-c_2/t}dt <\frac{\epsilon}{2(1+a)}.\]
Then
\begin{align*}
\sup_{1\leq|x|\leq 2}|n^{d-2}g^\omega(0,\lf nx\rf) - a g_{BM}(0,x)|&\leq (T_2-T_1)\sup_{|x|\leq 2}\sup_{t\in[T_1,T_2]}|n^d q^\omega(n^2 t,0,\lf nx\rf) - ak_t(x)| \\
&\quad + C'n^{2d-1}e^{-c_3 n} + \frac{\epsilon}{2},
\end{align*}
so that from Theorem \ref{thm:llthk}, there exists $N\geq N_0$ independent of $x$ such that for $n\geq N$,
\[\sup_{1\leq|x|\leq 2}|n^{d-2}g^\omega(0,\lf nx\rf) - a g_{BM}(0,x)|\leq \epsilon,\]
which is true $\P$-almost surely.
\end{proof}

\begin{rem}\label{rem:green.dis.con}
Let us fix conductances $\omega\in(\R_+^*)^{E_d}$. We denote by $(\tilde Z_n)_{n\in\N}$ the discrete version of $(Z_t)_{t\geq 0}$. Then, for $x,y\in\Z^d$,
\begin{align*}
g^\omega(x,y)&=\int_0^\infty \frac{P^\omega_x[Z_t=y]}{\pi^\omega_y}dt=\frac{1}{\pi^\omega_y} E^\omega_x\l[\int_0^\infty \ind_{\{Z_t=y\}}dt\r]\\
&=\frac{1}{\pi^\omega_y} E^\omega_x\l[\sum_{n=0}^\infty \ind_{\{\tilde Z_n=y\}}\r]=\frac{1}{\pi_y^\omega}\sum_{n=0}^\infty P^\omega_x[\tilde Z_n=y],
\end{align*}
where $\sum_{n=0}^\infty P^\omega_x[\tilde Z_n=\cdot]$ is the Green kernel associated with $(\tilde Z_n)_{n\in\N}$ under $P^\omega_x$. Indeed, since under $P^\omega_x$ the holding time of $Z$ at each point is an exponential variable of parameter $1$, the expected time spent by $(Z_t)_{t\geq 0}$ at $y$ is exactly the expected number of visits of $y$ by $(\tilde{Z}_n)_{n\in\N}$.
\end{rem}

\subsection{Martin boundary associated with $\Delta^\psi$}

We return to the VRJP on $\Z^d$, $d\geq 3$, with constant initial conductances $W$ large enough so that the VRJP is almost surely transient. From Proposition \ref{prop:trans.delta.psi}, under $\nu_V^W(d\beta)$, we then have a.s. $\psi(i)>0$ for all $i\in V$. Moreover, the random conductance model associated with conductances $c^\psi_{i,j}=W_{i,j}\psi(i)\psi(j)$ defines almost surely a transient random walk. We still denote by $\Delta^\psi$ the discrete Laplacian, and define $\pi^\psi_i=\sum_{j\sim i}c^\psi_{i,j}=2\beta_i\psi(i)^2$, as well as $g^\psi$ the corresponding Green kernel:
\begin{align*}
g^\psi(x,y)=\sum_{k=0}^\infty P^\psi_x[X_k=y]=\frac{\psi(j)}{\psi(i)}2\beta_j\Gh(i,j).
\end{align*}
We want to identify the Martin boundary $\M^\psi$ associated with $\Delta^\psi$, by studying the behaviour at infinity of the Martin kernel $K^\psi$, defined by
\[K^\psi(x,y)=\frac{g^\psi(x,y)}{g^\psi(0,y)}\]
for all $x,y\in\Z^d$. In order to do this, we will use Theorem \ref{thm:lltgk}.

\begin{prop}\label{prop:mart.triv}
There exists $\o{W}>0$ such that for $W>\o{W}$, under $\nu_V^W(d\beta)$, the Martin boundary $\M^\psi$ is almost surely trivial.
\end{prop}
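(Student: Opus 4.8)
The plan is to show that the Martin kernel $K^\psi(x,y)$ tends to $1$ as $y\to\infty$, for every fixed $x$; by Remark \ref{rem:Mart.triv} this forces $\M^\psi$ to be trivial, hence all positive $\Delta^\psi$-harmonic functions are constant, and then by Remark \ref{rem:rep.fonc.harm} and Proposition \ref{prop:trans.delta.psi}~(iii) the only $H_\beta$-harmonic function that can appear in a representation is a multiple of $\psi$, which pins down the representation to the standard one.

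First I would check that the random conductance field $\omega_{i,j}=c^\psi_{i,j}=W_{i,j}\psi(i)\psi(j)$ satisfies the hypotheses of Theorem \ref{thm:lltgk}. Stationarity and ergodicity of $\psi$ under $\nu_V^W(d\beta)$ come from vertex transitivity and Proposition 3 of \cite{SabZen} (as already invoked in the statement of Theorem \ref{thm:zd.rep}); stationarity under all isometries of $\Z^d$ follows from the symmetry of the lattice with constant $W$, giving the isotropic covariance $\Sigma^2=\sigma^2 I_d$ of Remark \ref{rem:isom.covar}. The moment conditions $\E[\omega_{i,j}^p]<\infty$ and $\E[\omega_{i,j}^{-q}]<\infty$ with $1/p+1/q<2/d$ are where $W$ large enters: I expect that as $W\to\infty$ the field $\psi$ concentrates near a constant (the VRJP becomes weakly reinforced, close to a simple random walk), so the moments of $\psi(i)$ and of $1/\psi(i)$ become arbitrarily well-behaved; one should quote the relevant estimate on the fluctuations of $\psi$ (or of $\beta$) at large $W$ — presumably available from the earlier analysis or from \cite{SabTar}/\cite{SabZen} — to get all moments finite, which in particular allows taking $p=q=\infty$ and trivially satisfies $1/p+1/q<2/d$.

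Next I would translate the discrete Green kernel $g^\psi$ into the constant-speed-walk Green kernel $g^\omega$ of Section 5.2. Using Remark \ref{rem:green.dis.con}, $g^\omega(x,y)=\frac{1}{\pi^\omega_y}\sum_n P^\omega_x[\tilde Z_n=y]$, and the discrete walk $\tilde Z$ driven by $\omega=c^\psi$ has exactly the transition probabilities $p^\psi_{x,y}$, so $\sum_n P^\omega_x[\tilde Z_n=y]=g^\psi(x,y)$ and hence $g^\psi(x,y)=\pi^\omega_y\, g^\omega(x,y)$. Therefore
\[
K^\psi(x,y)=\frac{g^\psi(x,y)}{g^\psi(0,y)}=\frac{g^\omega(x,y)}{g^\omega(0,y)}.
\]
Now apply Theorem \ref{thm:lltgk}: writing $y=\lfloor n z\rfloor$ with $1\le |z|\le 2$, we have $n^{d-2}g^\omega(0,\lfloor nz\rfloor)\to a\,g_{BM}(0,z)$ uniformly, and similarly $n^{d-2}g^\omega(x,\lfloor nz\rfloor)=n^{d-2}g^\omega(0,\lfloor nz\rfloor-x)\to a\,g_{BM}(0,z)$ (the shift by the fixed vector $x$ is absorbed since $\lfloor nz\rfloor-x$ still ranges, after rescaling, over the annulus, and $g_{BM}(0,\cdot)$ is continuous). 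Taking the ratio, the common factor $a\,g_{BM}(0,z)$ cancels and $K^\psi(x,\lfloor nz\rfloor)\to 1$ uniformly over $1\le|z|\le 2$. Since every sequence $y_k\to\infty$ can be written as $y_k=\lfloor n_k z_k\rfloor$ with $|y_k|/n_k\in[1,2]$ (choosing $n_k\sim|y_k|$) and $n_k\to\infty$, this gives $K^\psi(x,y_k)\to 1$ for all fixed $x$, which is exactly the hypothesis of Remark \ref{rem:Mart.triv}.

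The main obstacle I anticipate is the moment bound: one needs, for $W$ above some threshold $\o W$, that $\E_{\nu_V^W}[\psi(0)^p]<\infty$ and $\E_{\nu_V^W}[\psi(0)^{-q}]<\infty$ for suitable $p,q$ (ideally all moments). The upper moments should be manageable since $\psi(0)$ is an $L^1$ martingale limit and one can try to propagate higher-moment bounds from the martingale increments or from known tail estimates on $\beta$; the negative moments — i.e. control of $\P[\psi(0)<\epsilon]$ — are the genuinely delicate part, since they require showing $\psi$ stays bounded away from $0$ with good probability, and this is precisely where transience ($\psi>0$ a.s.) must be upgraded to a quantitative statement, using that large $W$ makes the model close to the deterministic constant solution. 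Everything else — the ergodic/isometry hypotheses, the identification $g^\psi=\pi^\omega g^\omega$, the cancellation in the ratio, and the reduction to constant harmonic functions — is routine given the results already stated.
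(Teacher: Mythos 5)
Your proposal follows the paper's proof almost verbatim: stationarity and ergodicity of $c^\psi$ from Proposition 3 of \cite{SabZen}, the moment hypotheses of Theorem \ref{thm:lltgk} for large $W$, the identification $g^\psi(x,y)=\pi^\psi_y\,g^\omega(x,y)$ via Remark \ref{rem:green.dis.con}, the rescaling $y=\lf nz\rf$ with $z$ in the annulus $1\leq|z|\leq 2$, and the conclusion through Remark \ref{rem:Mart.triv}. Two points, however, need repair. First, the identity $g^\omega(x,y)=g^\omega(0,y-x)$ is false for a fixed realization of the environment: the Green function of a walk in an inhomogeneous environment is not translation invariant pointwise, only in distribution. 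The correct statement is $g^\psi(x,y)=g^{\psi^x}(0,y-x)$ where $\psi^x(\cdot)=\psi(\cdot-x)$ is the shifted field, which has the same law as $\psi$; one applies the local limit theorem to $\psi^x$ for each fixed $x$, obtaining a full-measure event $A_x$, and then intersects over the countably many $x\in\Z^d$. This is exactly what the paper does, and it is what legitimizes the cancellation $K^\psi(x,y_n)\sim |y_n-x|^{2-d}/|y_n|^{2-d}\to 1$.

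Second, the moment bounds cannot be left as ``presumably available'': they are the content of Lemma \ref{lem:intgb.cond}, which asserts that for every $p\geq 1$ there is a threshold $W_p$ such that for $W>W_p$ both $\E[(\psi(i)\psi(j))^p]$ and $\E[(\psi(i)\psi(j))^{-p}]$ are finite; its proof is that of Lemma 9(i) in \cite{SabZen} and rests on Theorem 1 of \cite{DisSpeZir}, which provides control of moments of arbitrarily large (but finite) order at weak disorder. You do not need, and should not claim, $p=q=\infty$: the paper takes $p=q=d+1$, which satisfies $1/p+1/q=2/(d+1)<2/d$, and sets $\o{W}=\max(W^t,W_{d+1})$. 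Your diagnosis that the negative moments, i.e. the control of $\P[\psi(0)<\epsilon]$, are the genuinely delicate part is accurate --- that is precisely what the cited estimate supplies, and it is the only place where the quantitative largeness of $W$ (beyond mere transience) enters.
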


\begin{proof}
Note that under $\nu_V^W(d\beta)$, the distribution of the random conductances $c^\psi_{i,j}$ is stationary and ergodic with respect to all isometries of $\Z^d$, thanks to Proposition 3 of \cite{SabZen}. Moreover, for $W$ large enough, the integrability assumption of Theorem \ref{thm:lltgk} will be verified.

\begin{lem}\label{lem:intgb.cond}
Consider the graph $\G=(V=\Z^d,E=E_d)$, with $d\geq 3$, with constant initial conductances $W$. Then for all $p\geq 1$, there exists $W_p>0$ such that for $W>W_p$, for all $i\sim j$, under $\nu_{V}^W(d\beta)$ we have
\[\E[(\psi(i)\psi(j))^p]<\infty \mbox{ and } \E[(\psi(i)\psi(j))^{-p}]<\infty.\]
\end{lem}

\begin{proof}
The proof is the same as for Lemma 9 (i) in \cite{SabZen}, and uses Theorem 1 of \cite{DisSpeZir}, which gives a control on moments of arbitrarily large order.
\end{proof}

Let $W^t>0$ be such that for $W>W^t$, the VRJP on $(\Z^d,W)$ is almost surely transient. Moreover, we define $\o{W}=\max(W^t,W_{d+1})$. From now on, we assume that $W>\o{W}$, so that thanks to Lemma \ref{lem:intgb.cond}, under $\nu_{V}^W(d\beta)$, for all $i\sim j$ we have
\[\E[(c_{i,j}^\psi)^{d+1}]<\infty \mbox{ and } \E[(c_{i,j}^\psi)^{-(d+1)}]<\infty,\]
Therefore, according to Theorem \ref{thm:lltgk} and Remarks \ref{rem:isom.covar} and \ref{rem:green.dis.con}, there exists $\sigma^2>0$ such that $\nu_{V}^W$-almost surely,
\[\sup_{1\leq|z|\leq 2}|n^{d-2}(\pi^\psi_{\lf nz\rf})^{-1}g^\psi(0,\lf n z\rf) - a g_{BM}(0,z)|\xrightarrow[n\to\infty]{} 0,\]
where $a=1/\E[\pi^\psi_0]$ and $g_{BM}$ is the Green kernel associated with a Brownian motion with covariance matrix $\sigma^2 I_d$, \textit{i.e.}
\[g_{BM}(0,z)=\frac{\Gamma(d/2-1)}{2\pi^{d/2}\sigma^2}|z|^{2-d}.\]

Using this result, we have $\nu_{V}^W$-almost surely, for any sequence $(y_n)_{n\geq 1}$ such that $|y_n|\to\infty$, $g^\psi(0,y_n)\sim_{n\to\infty}a \pi^\psi_{y_n}g_{BM}(0,y_n)$ . Indeed, for such a sequence $(y_n)$, let us define $m_n=\lf|y_n|\rf$ and $z_n=y_n/m_n$. Then, since $1\leq|z_n|\leq 2$ for all $n\geq 1$, we have
\begin{align*}
\l|\frac{g^\psi(0,y_n)}{a \pi^\psi_{y_n} g_{BM}(0,y_n)}-1\r|&=\l|\frac{(\pi^\psi_{\lf m_n z_n\rf})^{-1}g^\psi(0,m_n z_n)}{ m_n^{2-d}ag_{BM}(0,z_n)}-1\r|\\
&=\l|\frac{m_n^{d-2}(\pi^\psi_{\lf m_n z_n\rf})^{-1}g^\psi(0,\lf m_n z_n\rf)-ag_{BM}(0,z_n)}{ag_{BM}(0,z_n)}\r|\\
&\leq \frac{\sup_{1\leq|z|\leq 2}|m_n^{d-2}(\pi^\psi_{\lf m_n z\rf})^{-1}g^\psi(0,\lf m_n z\rf) - a g_{BM}(0,z)|}{a\inf_{1\leq|z|\leq 2}g_{BM}(0,z)}\xrightarrow[n\to\infty]{}0
\end{align*}
$\nu_{V}^W$-almost surely.

Moreover, for $x\in\Z^d$ fixed, let $\psi^x$ be the translated function defined by $\psi^x(y)=\psi(y-x)$. Then $\psi^x$ and $\psi$ have the same distribution under $\nu_V^W(d\beta)$, therefore we have $\nu_{V}^W$-a.s., for all $(y_n)_{n\geq 1}$ such that $|y_n|\to\infty$,
\[g^\psi(x,y_n)=g^{\psi^x}(0,y_n-x)\sim_{n\to\infty}a\pi^\psi_{y_n}g_{BM}(0,y_n-x),\]
since $|y_n-x|\to\infty$ and $\pi^{\psi^x}_{y_n-x}=\pi^\psi_{y_n}$. Let us denote by $A_x$ the $\nu_{V}^W$-almost sure event where this is true. Since $\Z^d$ is denumerable, $\bigcap_{x\in \Z^d}A_x$ is still $\nu_{V}^W$-almost sure. Therefore, we have $\nu_{V}^W$-a.s. that for all $x\in\Z^d$, for all $(y_n)_{n\geq 1}$ such that $|y_n|\to\infty$,
\[K^\psi(x,y_n)\sim_{n\to\infty}\frac{a\pi^\psi_{y_n}g_{BM}(0,y_n-x)}{a\pi^\psi_{y_n}g_{BM}(0,y_n)}=\frac{|y_n-x|^{2-d}}{|y_n|^{2-d}}\xrightarrow[n\to\infty]{} 1.\]
As a result, from Remark \ref{rem:Mart.triv}, the Martin boundary associated with $\Delta^\psi$ is $\nu_{V}^W$-a.s. trivial.
\end{proof}

Let $\calR(dr)$ be the distribution of an environment representing $P^{VRJP(0)}$ on $\Z^d$ endowed with constant initial conductances $W>\o{W}$.

For $r\in\calJ_{V}^E$ and $\gamma>0$, we define $\beta$ by $\beta_i=r_i+\ind_{\{i=0\}}\gamma$. According to Theorem \ref{thm:beta.env}, under $\calR(dr,d\gamma)$ we then have $\beta\sim\nu_V^W$. We define $\Gh$ and $\psi$ as functions of $\beta$, as in  Theorem \ref{thm:Gh.psi.mix}, and we can write
\[r_{i,j}=\frac{W_{i,j}}{2}\frac{G(0,j)}{G(0,i)},\]
where $G(0,i)=\Gh(0,i)+h(i)$ for all $i\in\Z^d$, with $h$ a $H_\beta$-harmonic function. Since $W$ is large enough so that under $\nu_V^W(d\beta)$, $\psi(i)>0$ for all $i\in\Z^d$ almost surely, the operator $\Delta^\psi$ is well-defined, and $h/\psi$ is $\Delta^\psi$-harmonic. However, according to Proposition \ref{prop:mart.triv} the Martin boundary associated with $\Delta^\psi$ is $\nu_V^W$-a.s. trivial, therefore positive $\Delta^\psi$-harmonic functions are almost surely constant. As a result, there is a nonnegative random variable $g$ such that for all $i\in\Z^d$, we have $\calR$-a.s.
\[G(0,i)=\Gh(0,i)+g\psi(i).\]

In particular, $g=(G(0,0)-\Gh(0,0))/\psi(0)$, so $g$ can be written as a function of $(\beta,\frac{1}{2G(0,0)})$, and therefore has a function of $((r_i)_{i\in\Z^d},\gamma)$. Since according to Proposition \ref{prop:beta.tilde.env}, under $\calR(dr,d\gamma)$ the distribution of $((r_i)_{i\in\Z^d},\gamma)$ does not depend on the chosen representation $\calR$, this shows that the distribution of the jump rates $r_{i,j}=\frac{W_{i,j}}{2}\frac{G(0,j)}{G(0,i)}$ is uniquely determined, \textit{i.e.} that the representation is unique.

\begin{rem}
Note that we can identify the distribution of $g$ using the standard representation. This shows that under $\calR(dr),d\gamma$, we have $g=\psi(0)/2\gamma'$, where $\gamma'$ is a $\Gamma(1/2,1)$ random variable independent from $(\beta_i)_{i\in\Z^d}$.
\end{rem}

\section{The potential $\beta_m^\n$ on trees}

Let $T$ be an infinite tree, that is locally finite. We still fix an arbitrary root $\phi$. In this section, we define a $\beta$ field on the restrictions of $T$ with the boundary conditions introduced in Section \ref{2.4}. This way, we can apply Theorem \ref{thm:beta.mix} to these finite restrictions.

\subsection{Construction of $\beta_m^\n$ on $\G_m^\n$}

For all $n\in\N$, under $\nu_T^W(d\beta)$, $\beta_{T^\n}\sim\nu_{T^\n}^{W^\n,\eta^\n}$ from Proposition \ref{prop:nu.dist.inf}, where $W^\n=W_{T^\n,T^\n}$ and $\eta^\n=W_{T^\n,(T^\n)^c}\ind_{(T^\n)^c}$. As usual, for $\beta\in\calD_T^W$, we define $H_\beta=2\beta-W$. For $n\in\N$, let us take $V_n=T^\n$ in Definition \ref{defi:Gh.psi}, so that we get ${\Gh^\n=\Gh^{T^\n}=((H_\beta)_{T^\n,T^\n})^{-1}}$ and $\psi^\n=\Gh^\n\eta^\n$.

Let us fix $n\geq m\geq 0$. To represent the VRJP on $\G_m^\n$ using Theorem \ref{thm:beta.mix}, we need to introduce a potential $\beta_m^\n$ on $\Tt_m^\n=T^\n\cup B_m$, distributed according to $\nu_{\Tt_m^\n}^{\Wt_m^\n}$. According to Proposition \ref{prop:restr.beta}, this is true if and only if: $(\beta_m^\n)_{T^\n}\sim\nu_{T^\n}^{\Wh_m^\n,\hat{\eta}_m^\n}$, and conditionally on $(\beta_m^\n)_{T^\n}$, $(\beta_m^\n)_{B_m}\sim\nu_{B_m}^{\Wc_m^\n}$, where
\begin{align*}
&\Wh_m^\n=(\Wt_m^\n)_{T^\n,T^\n} \mbox{, } \hat{\eta}_m^\n=(\Wt_m^\n)_{T^\n,B_m}\ind_{B_m} \\
\mbox{ and } &\Wc_m^\n=(\Wt_m^\n)_{B_m,B_m}+(\Wt_m^\n)_{B_m,T^\n}((H_\beta^\n)_{T^\n,T^\n})^{-1}(\Wt_m^\n)_{T^\n,B_m}
\end{align*}
and where we denote by $H_\beta^\n=2\beta_m^\n-\Wt_m^\n$ the Schrödinger operator associated with $\beta_m^\n$. Note that thanks to the way $\Wt_m^\n$ was defined, we have $\Wh_m^\n=(\Wt_m^\n)_{T^\n,T^\n}=W^\n$. Moreover, for $i\in T^\n$,
\[(\hat\eta_m^\n)_i=\sum_{b\in B}(\Wt_m^\n)_{i,b}=\ind_{i\in D^\n}\sum_{j\in S(i)}W_{i,j}=W_{\{i\},D^{(n+1)}}\ind_{D^{(n+1)}}=\eta^\n_i.\]

Therefore, let us define $(\beta_m^\n)_{T^\n}=\beta_{T^\n}$. Hence, under $\nu_T^W(d\beta)$ we have $(\beta_m^\n)_{T^\n}\sim\nu_{T^\n}^{W^\n,\eta^\n}$, where $\nu_{T^\n}^{W^\n,\eta^\n}=\nu_{T^\n}^{\Wh_m^\n,\hat\eta_m^\n}$. It remains to extend $(\beta_m^\n)_{T^\n}$ into a field $\beta_m^\n$ over all of $\Tt_m^\n$. To do this, we define $(\beta_m^\n)_{B_m}$ through its distribution conditionally on $(\beta_m^\n)_{T^\n}$, which we set to be $\nu_{B_m}^{\Wc_m^\n}$. Note that $(\Wt_m^\n)_{B_m,B_m}=0$ and $(H_\beta^\n)_{T^\n,T^\n}=2(\beta_m^\n)_{T^\n}-(\Wt_m^\n)_{T^\n,T^\n}=2\beta_{T^\n}-W^\n=(H_\beta)_{T^\n,T^\n}$, therefore
\begin{align*}
\Wc_m^\n&=(\Wt_m^\n)_{B_m,B_m}+(\Wt_m^\n)_{B_m,T^\n}((H_\beta^\n)_{T^\n,T^\n})^{-1}(\Wt_m^\n)_{T^\n,B_m}\\
&=(\Wt_m^\n)_{B_m,T^\n}\Gh^\n(\Wt_m^\n)_{T^\n,B_m}.
\end{align*}
From Proposition \ref{prop:restr.beta}, the field $\beta_m^\n$ constructed this way on $\Tt_m^\n$ is distributed according to $\nu_{\Tt_m^\n}^{\Wt_m^\n}$ under $\nu_T^W(d\beta)\nu_{B_m}^{\Wc_m^\n}(d(\beta_m^\n)_{B_m})$.

\begin{rem}\label{rem:aj.couplage}
Since $(\beta_m^\n)_{B_m}$ has only been defined through its distribution conditionally on $\beta_{T^\n}$, we will later have some freedom to choose a coupling of the sequence $((\beta_m^\n)_{B_m})_{n\geq m}$ such that the matrix sequence $\Hc_\beta^\n=2(\beta_m^\n)_{B_m}-\Wc_m^\n$ converges almost surely.
\end{rem}

For $n\geq m$, let us denote by $G_m^\n=(H_\beta^\n)^{-1}=(2\beta_m^\n-\Wt_m^\n)^{-1}$ the Green function associated with $\beta_m^\n$ on $\G_m^\n$. Then, from Theorem \ref{thm:beta.mix}, we know that the law of the time-changed VRJP on $\G_m^\n$ with respect to $\Wt_m^\n$, started at $i_0\in T^\n$, is a mixture of Markov jump processes under $\nu_T^W(d\beta)\nu_{B_m}^{\Wc_m^\n}(d(\beta_m^\n)_{B_m})$, where the jump rate from $i$ to $j$ is $\frac{1}{2}(\Wt_m^\n)_{i,j}\frac{G_m^\n(i_0,j)}{G_m^\n(i_0,i)}$. In order to obtain a result on the infinite tree $\mathcal{T}$, we will show that for all $i,j\in T$, $G_m^\n(i,j)$ converges almost surely when $n\to\infty$.

\subsection{Expression of $G_m^\n(i,j)$ as a sum over paths}

To show the convergence of $G_m^\n$, it will be useful to express it as a sum over paths in $\calT$. We will use notations and results presented in Section \ref{3.2}.

Let $\beta\in\calD_T^W$ and $(\beta_m^\n)_{B_m}\in\calD_{B_m}^{\Wc_m^\n}$. For $i,j\in T$, let us fix some $n_0>m$ such that $i,j\in T^{(n_0)}$. For $n\geq n_0$, by applying Proposition \ref{prop:g.somme} (i) to $\beta_m^\n$ in the graph $\G_m^\n$, we get
\[G_m^\n(i,j)=\sum_{\sigma\in\calP^{\Tt_m^\n}_{i,j}} \frac{(\Wt_m^\n)_\sigma}{(2\beta_m^\n)_\sigma}.\]
This sum over paths can be decomposed as follows: a path $\sigma\in\calP^{\Tt_m^\n}_{i,j}$ can either cross some vertex in $B_m$, in which case $\sigma\in\calP^{\Tt_m^\n}_{i,B_m,j}$, or never cross any vertex in $B_m$, in which case $\sigma\in\calP^{T^\n}_{i,j}$. As a result, we have $\calP^{\Tt_m^\n}_{i,j}=\calP^{T^\n}_{i,j}\cup\calP^{\Tt_m^\n}_{i,B_m,j}$, where $\calP^{T^\n}_{i,j}\cap\calP^{\Tt^\n}_{i,B,j}=\emptyset$, so
\begin{align*}
G_m^\n(i,j)&= \sum_{\sigma\in\calP^{T^\n}_{i,j}} \frac{(\Wt_m^\n)_\sigma}{(2\beta_m^\n)_\sigma}+\sum_{\sigma\in\calP^{\Tt_m^\n}_{i,B_m,j}} \frac{(\Wt_m^\n)_\sigma}{(2\beta_m^\n)_\sigma} \\
&=\sum_{\sigma\in\calP^{T^\n}_{i,j}} \frac{(\Wt_m^\n)_\sigma}{(2\beta_m^\n)_\sigma} + \sum_{b,b'\in B_m} \l(\sum_{\sigma\in\o{\calP}^{T^\n}_{i,b}}\frac{(\Wt_m^\n)_\sigma}{(2\beta_m^\n)_\sigma^-}\r) G_m^\n(b,b') \l(\sum_{\sigma\in\o{\calP}^{T^\n}_{j,b'}}\frac{(\Wt_m^\n)_\sigma}{(2\beta_m^\n)_\sigma^-}\r),
\end{align*}
from Proposition \ref{prop:g.somme}. Note that for any $\sigma\in\calP^{T^\n}_{i,j}$, $\frac{(\Wt_m^\n)_\sigma}{(2\beta_m^\n)_\sigma}=\frac{W_\sigma}{(2\beta)_\sigma}$, since $(\Wt_m^\n)_{T^\n,T^\n}=W_{T^\n,T^\n}$ and $(\beta_m^\n)_{T^\n}=\beta_{T^\n}$. As a result,
\[\sum_{\sigma\in\calP^{T^\n}_{i,j}} \frac{(\Wt_m^\n)_\sigma}{(2\beta_m^\n)_\sigma}=\sum_{\sigma\in\calP^{T^\n}_{i,j}} \frac{W_\sigma}{(2\beta)_\sigma}=\Gh^\n(i,j).\]

Moreover, note that for $x\in D^\m$ and $y\in T^\n$, 
\begin{align*}
\sum_{\sigma\in\o{\calP}^{T^\n}_{y,\delta_x}}\frac{(\Wt_m^\n)_\sigma}{(2\beta_m^\n)_\sigma^-}&=\sum_{z\sim\delta_x}\l(\sum_{\sigma_1\in\calP^{T^\n}_{y,z}}\frac{(\Wt_m^\n)_{\sigma_1}}{(2\beta_m^\n)_{\sigma_1}}\r)(\Wt_m^\n)_{z,\delta_x}
=\sum_{z\in T_x\cap D^\n}\Gh^\n(y,z)\sum_{z'\in S(z)}W_{z,z'}\\
&=\Gh^\n(y,\cdot)W_{T^\n,(T^\n)^c}\ind_{T_x\backslash T_x^\n} = \chi_m^\n(y,\delta_x),
\end{align*}
from Definition \ref{defi:chi}. As a result, we have
\[G_m^\n(i,j)= \Gh^\n(i,j)+\sum_{b,b'\in B_m}\chi_m^\n(i,b)G_m^\n(b,b')\chi_m^\n(j,b').\]
We will show that under $\nu_T^W(d\beta)$, this expression converges almost surely when $n\to\infty$. From Theorem \ref{thm:Gh.psi.mix} (i), we already know that $\Gh^\n(i,j)$ converges to $\Gh(i,j)$, let us now study the respective limits of $\chi_m^\n$ and $(G_m^\n)_{B_m,B_m}$.

\section{Convergence of $G_m^\n$}

The goal of this section is to prove the convergence of $\chi_m^\n$ and $(G_m^\n)_{B_m,B_m}$. We will first describe the Martin boundary associated with $\Delta^\psi$, and the harmonic measures $(\mu_i^\psi)_{i\in T}$.

\subsection{Process associated with $\Delta^\psi$ and Martin boundary}

First, note that for $\beta\in\calD_T^W$, we have either $\psi(i)>0$ for all $i\in T$, or $\psi\equiv 0$. In the first case, we can do as in Proposition \ref{prop:trans.delta.psi}: the random walk associated with the conductances $(c^\psi_{i,j})_{i\sim j}$ is transient for $\nu_T^W$-almost all $\beta$, since the associated Green function $g=g^\psi$ is given by
\[g^\psi(i,j)=\frac{\psi(j)}{\psi(i)}2\beta_j\Gh(i,j),\]
for $i,j\in T$. Moreover, we define the Markov operator $\Delta^\psi$ by
\[(\Delta^\psi h)(i)=\l(\sum_{j\sim i} \frac{W_{i,j}\psi(j)}{2\beta_i\psi(i)} h(j)\r) -h(i)=0\] for $h:T\to\R$ and $i\in T$, and a function $h$ is $\Delta^\psi$-harmonic if and only if $\psi h$ is $H_\beta$-harmonic. 

Let us fix $\beta\in\calD_T^W$ such that $\psi(i)>0$ for all $i\in T$, and such that the random walk associated with $(c^\psi_{i,j})_{i\sim j}$ is transient. This allows us to apply results regarding the Martin boundary of a tree. From Theorem \ref{thm:Martin.arbre}, the Martin boundary $\M^\psi$ associated with $\Delta^\psi$ is the set $\Omega$ of ends of $T$. Note that it does not depend on $\beta$. We also get the Martin kernel $K=K^\psi$: for $x\in T$ and $\omega\in\Omega$,
\[K(x,\omega)=K(x,x\wedge\omega)=\frac{f(x,x\wedge\omega)}{f(\phi,x\wedge\omega)}=\frac{\psi(\phi)\Fh(x,x\wedge\omega)}{\psi(x)\Fh(\phi,x\wedge\omega)},\]
where $f(i,j)=\frac{g(i,j)}{g(j,j)}=\frac{\psi(j)}{\psi(i)}\Fh(i,j)$ for $i,j\in T$. Moreover, we denote by $(\mu^\psi_i)_{i\in T}$ the associated family of harmonic measures on $\Omega$. From Proposition \ref{prop:mes.harm.arbre}, we have, for $i,x\in T$,
\[\mu^\psi_i(\Omega_x)=\ind_{\{i\in U_x\}}(1-f(i,x)) + f(i,x)\frac{1-f(x,\p{x})}{1-f(x,\p{x})f(\p{x},x)}.\]

Note that we have only defined $(\mu^\psi_y)_{y\in T}$  for $\beta\in\calD_T^W$ such that $\psi>0$ and the walk in conductances $c^\psi$ is transient. In remaining cases, we adopt the convention that $\mu^\psi_y$ is the null measure on $\Omega$ for all $y\in T$.

\subsection{Convergence of $\chi_m^\n$: Proof of Theorem \ref{thm:chi.mix} (i)}

From Theorem \ref{thm:Gh.psi.mix}, we know that $\nu_T^W$-almost surely, for all $i,j\in T$, $\Gh^\n(i,j)$ converges to $\Gh(i,j)$ and $\psi^\n(i)$ converges to $\psi(i)$. Let $\beta\in\calD_T^W$ be such that these convergences hold. Let us show that for such $\beta$, for all $m\in\N$, $x\in D^\m$ and for all $i\in T$, $\chi_m^\n(i,\delta_x)$ converges to $\psi(i)\mu^\psi_i(\Omega_x)$, and we will have shown that this convergence holds $\nu_T^W$-almost surely.

If $\beta$ is such that $\psi\equiv 0$, we know that for all $i\in T$, $x\in D^\m$, $0\leq\chi_m^\n(i,\delta_x)\leq\psi^\n(i)$ from Remark \ref{rem:decoup.chi}, so $\chi_m^\n(i,\delta_x)\xrightarrow[n\to\infty]{}0=\psi(i)\mu^\psi_i(\Omega_x)$. We now suppose that $\beta$ is such that $\psi(i)>0$ for all $i\in T$.

Let us fix $i\in T$ and $x\in D^\m$. Recall that for $n\geq \max(|i|,m)$,
\begin{align*}
\chi_m^\n(i,\delta_x)&=\sum_{y\in T_x\cap D^\n}\Gh^\n(i,y)\eta^\n_y=\sum_{y\in T_x\cap D^\n}\l(\sum_{\sigma\in\calP_{i,y}^{T^\n}}\frac{W_\sigma}{(2\beta)_\sigma}\r)\eta^\n_y.
\end{align*}
Let us decompose the paths $\sigma\in\calP_{i,y}^{T^\n}$, in order to write $\chi_m^\n(i,\delta_x)$ as a function of $\Fh^\n$ and $\psi^\n$. We will distinguish two cases.

First, if $i\notin U_x=T_x\backslash\{x\}$, then for all $y\in T_x\cap D^\n$, any path from $i$ to $y$ in $T^\n$ necessarily visits $x$, \textit{i.e.} $\calP_{i,y}^{T^\n}=\calP_{i,\{x\},y}^{T^\n}$. Therefore, from Proposition \ref{prop:g.somme} (iii), $\Gh^\n(i,y)=\Fh^\n(i,x)\Gh^\n(x,y)$. In order to decompose $\Gh^\n(x,y)$, let us introduce the quantity $c_x(\sigma)$, defined as the number of times the path $\sigma$ crosses the directed edge $(x,\p{x})$, \textit{i.e.}
\[c_x(\sigma)=\#\{k\in\llb 0,|\sigma|-1\rrb,(\sigma_k,\sigma_{k+1})=(x,\p{x})\}.\]
Then we have
\[\Gh^\n(x,y)=\sum_{C\in\N}\sum_{\substack{\sigma\in\calP_{x,y}^{T^\n}\\c_x(\sigma)=C}}\frac{W_\sigma}{(2\beta)_\sigma}.\]
If $\sigma\in\calP_{x,y}^{T^\n}$ is such that $c_x(\sigma)=C\geq 1$, then it has to visit $\p{x}$ at least once. As a result, $\sigma$ can be written as the concatenation of a path $\sigma_1\in\o\calP^{T^\n\backslash\{\p{x}\}}_{x,\p{x}}$ with a path $\sigma'_1\in\calP^{T^\n}_{\p{x},y}$ such that $c_x(\sigma'_1)=C-1$. Since $\p{x}\notin U_x$, the path $\sigma'_1$ has to visit $x$, so it can be written as the concatenation of a path $\sigma_2\in\o\calP^{T^\n\backslash\{x\}}_{\p{x},x}$ with a path $\sigma_3\in\calP^{T^\n}_{x,y}$ such that $c_x(\sigma_3)=C-1$. Therefore, for all $C\geq 1$,
\begin{align*}
\sum_{\substack{\sigma\in\calP_{x,y}^{T^\n}\\c_x(\sigma)=C}}\frac{W_\sigma}{(2\beta)_\sigma} &= \l(\sum_{\sigma_1\in\o\calP^{T^\n\backslash\{\p{x}\}}_{x,\p{x}}}\frac{W_{\sigma_1}}{(2\beta)_{\sigma_1}}\r)\l(\sum_{\sigma_2\in\o\calP^{T^\n\backslash\{x\}}_{\p{x},x}}\frac{W_{\sigma_2}}{(2\beta)_{\sigma_2}}\r)\l(\sum_{\substack{\sigma_3\in\calP_{x,y}^{T^\n}\\c_x(\sigma_3)=C-1}}\frac{W_{\sigma_3}}{(2\beta)_{\sigma_3}}\r)\\
&=\Fh^\n(x,\p{x})\Fh^\n(\p{x},x)\sum_{\substack{\sigma\in\calP_{x,y}^{T^\n}\\c_x(\sigma)=C-1}}\frac{W_\sigma}{(2\beta)_\sigma}.
\end{align*}
Moreover, note that the paths $\sigma\in\calP^{T^\n}_{x,y}$ such that $c_x(\sigma)=0$ are those that stay in the subtree $T^\n_x$, \textit{i.e.} the set $\calP^{T^\n_x}_{x,y}$. By induction, we get:
\[\Gh^\n(x,y)=\sum_{C\in\N}\l(\Fh^\n(x,\p{x})\Fh^\n(\p{x},x)\r)^C \sum_{\sigma\in\calP^{T^\n_x}_{x,y}}\frac{W_\sigma}{(2\beta)_\sigma}.\]
Since $\Gh^\n(x,y)<\infty$, we have $\Fh^\n(x,\p{x})\Fh^\n(\p{x},x)<1$, which gives
\begin{align*}
\chi_m^\n(i,\delta_x)&= \sum_{y\in T_x\cap D^\n}\Fh^\n(i,x)\Gh^\n(x,y)\eta^\n_y\\
&=\Fh^\n(i,x)\frac{1}{1-\Fh^\n(x,\p{x})\Fh^\n(\p{x},x)}\sum_{y\in T_x\cap D^\n}\l(\sum_{\sigma\in\calP^{T^\n_x}_{x,y}}\frac{W_\sigma}{(2\beta)_\sigma}\r)\eta^\n_y.
\end{align*}
In order to express this last sum, recall that
\begin{align*}
\psi^\n(x)&=\sum_{y\in D^\n}\Gh^\n(x,y)\eta^\n_y\\
&= \sum_{y\in D^\n}\l(\sum_{\sigma\in\calP^{T^\n}_{x,\{\p{x}\},y}}\frac{W_\sigma}{(2\beta)_\sigma} + \sum_{\sigma\in\calP^{T^\n_x}_{x,y}}\frac{W_\sigma}{(2\beta)_\sigma}\r)\eta^\n_y,
\end{align*}
where we have separated the paths that go from $x$ to $y$ by visiting $\p{x}$, and those that stay in $T_x^\n$, since $\calP_{x,y}^{T^\n}=\calP_{x,\{\p{x}\},y}^{T^\n}\cup\calP_{x,y}^{T_x^\n}$. From Proposition \ref{prop:g.somme} (iii), we have
\[\sum_{\sigma\in\calP^{T^\n}_{x,\{\p{x}\},y}}\frac{W_\sigma}{(2\beta)_\sigma}=\Fh^\n(x,\p{x})\Gh^\n(\p{x},y).\]
Moreover, if $y\in D^\n\backslash T_x$, then $\calP_{x,y}^{T_x^\n}$ is empty. As a result, we get
\[\sum_{y\in T_x\cap D^\n}\l(\sum_{\sigma\in\calP^{T^\n_x}_{x,y}}\frac{W_\sigma}{(2\beta)_\sigma}\r)\eta^\n_y=\psi^\n(x)-\Fh^\n(x,\p{x})\psi^\n(\p{x}),\]
which finally gives
\[\chi_m^\n(i,\delta_x)=\Fh^\n(i,x)\frac{\psi^\n(x)-\Fh^\n(x,\p{x})\psi^\n(\p{x})}{1-\Fh^\n(x,\p{x})\Fh^\n(\p{x},x)}.\]

In the second case, \textit{i.e.} if $i\in U_x$, then for $y\in T_x\cap D^\n$, there are paths from $i$ to $y$ in $T^\n$ that do not visit $x$. More precisely, we have the following partition: $\calP_{i,y}^{T^\n}=\calP_{i,\{x\},y}^{T^\n}\cup\calP_{i,y}^{U_x^\n}$, where $U_x^\n=U_x\cap T^\n$. As a result, we have
\[\chi_m^\n(i,\delta_x)=\Fh^\n(i,x)\frac{\psi^\n(x)-\Fh^\n(x,\p{x})\psi^\n(\p{x})}{1-\Fh^\n(x,\p{x})\Fh^\n(\p{x},x)}+\sum_{y\in T_x\cap D^\n}\l(\sum_{\sigma\in\cup\calP_{i,y}^{U_x^\n}}\frac{W_\sigma}{(2\beta)_\sigma}\r)\eta^\n_y.\]
In the same way we did above, we can show that
\[\sum_{y\in T_x\cap D^\n}\l(\sum_{\sigma\in\cup\calP_{i,y}^{U_x^\n}}\frac{W_\sigma}{(2\beta)_\sigma}\r)\eta^\n_y=\psi^\n(i)-\Fh^\n(i,x)\psi^\n(x).\]

In conclusion, we have established the following:
\begin{align*}
\chi_m^\n(i,\delta_x) &=\ind_{\{i\in U_x\}}(\psi^\n(i)-\Fh^\n(i,x)\psi^\n(x))+\Fh^\n(i,x)\frac{\psi^\n(x)-\Fh^\n(x,\p{x})\psi^\n(\p{x})}{1-\Fh^\n(x,\p{x})\Fh^\n(\p{x},x)}\\
&=\psi^\n(i)\l(\ind_{\{i\in U_x\}}(1-f^\n(i,x))+f^\n(i,x)\frac{1-f^\n(x,\p{x})}{1-f^\n(x,\p{x})f^\n(\p{x},x)}\r),
\end{align*}
where
\[f^\n(i,j)=\frac{\psi^\n(j)}{\psi^\n(i)}\Fh^\n(i,j)\xrightarrow[n\to\infty]{}\frac{\psi(j)}{\psi(i)}\Fh(i,j)=f(i,j)\]
for all $i,j\in T$. As a result, we finally have
\[\chi_m^\n(i,\delta_x)\xrightarrow[n\to\infty]{}\psi(i)\l(\ind_{\{i\in U_x\}}(1-f(i,x))+f(i,x)\frac{1-f(x,\p{x})}{1-f(x,\p{x})f(\p{x},x)}\r)=\psi(i)\mu^\psi_i(\Omega_x).\]

We can now define, for all $i\in T$, the measure $\chi(i,\cdot)=\psi(i)\mu^\psi_i$. Note that $\chi(i,\cdot)$ is absolutely continuous with respect to $\chi(\phi,\cdot)$, and its Radon-Nikodym derivative is $\omega\mapsto\frac{\Fh(i,i\wedge\omega)}{\Fh(\phi,i\wedge\omega)}$. Moreover, for all $A\in\mathcal{B}(\Omega)$, $i\mapsto\mu_i^\psi(A)$ is $\Delta^\psi$-harmonic, so $\chi(\cdot,A):i\mapsto\psi(i)\mu_i^\psi(A)$ is $H_\beta$-harmonic.

\subsection{Convergence of $(G_m^\n)_{B_m,B_m}$}

Recall that for $n\geq m$, we have defined $(\beta_m^\n)_{B_m}$ only by its distribution conditionally on $\beta$, which is $\nu_{B_m}^{\Wc_m^\n}$. Let us show that there is a coupling of these distributions such that the matrix $(G_m^\n)_{B_m,B_m}$ converges $\nu_T^W$-almost surely.

We can write $(G_m^\n)_{B_m,B_m}$ as the inverse of a Schur complement. Indeed,
\begin{align*}
(G_m^\n)_{B_m,B_m}&=(H_\beta^\n)^{-1}_{B_m,B_m}=\l((H_\beta^\n)_{B_m,B_m}-(\Wt_m^\n)_{B_m,T^\n}((H_\beta^\n)_{T^\n,T^\n})^{-1}(\Wt_m^\n)_{T^\n,B_m}\r)^{-1}\\
&= \l((2\beta_m^\n)_{B_m}-\Wc_m^\n\r)^{-1}=(\Hc_\beta^\n)^{-1},
\end{align*}
where $\Hc_\beta^\n=2(\beta_m^\n)_{B_m}-\Wc_m^\n$. We apply the following change of variables: for $\beta\in\calD_T^W$ and $b\in B_m$, let us define $(\rho_m^\n)_b=(\beta_m^\n)_b-\frac{1}{2}(\Wc_m^\n)_{b,b}$. Then $\Hc_\beta^\n=2\rho_m^\n-\Cc_m^\n$, where if $b,b'\in B$,
\[(\Cc_m^\n)_{b,b'}=\begin{cases}(\Wc_m^\n)_{b,b'} &\mbox{if } b\neq b' \\0 &\mbox{if } b=b'. \end{cases}\]
The vector $\rho_m^\n$ is then distributed according to $\nu_{B_m}^{\Cc_m^\n}$ conditionally on $\beta_T$. Let us show that the matrix $\Cc_m^\n$ converges $\nu_T^W$-almost surely, to prove that $\rho_m^\n$ converges in distribution.

Let us fix $\beta\in\calD_T^W$, as well as $x\neq y\in D^{(m)}$, and $i\sim\delta_x$, $j\sim\delta_y$. A path from $i$ to $j$ in $T^\n$ necessarily crosses $x\wedge y$, since $i\in T_x$ and $j\in T_y$. Therefore, $\calP^{T^\n}_{i,j}=\calP^{T^\n}_{i,\{x\wedge y\},j}$, so
\begin{align*}
(\Cc_m^\n)_{\delta_x,\delta_y} &=\sum_{i\sim\delta_x,j\sim\delta_y} (\Wt_m^\n)_{\delta_x,i} \Gh^\n(i,j) (\Wt_m^\n)_{j,\delta_y}\\
&= \sum_{i\sim\delta_x,j\sim\delta_y} (\Wt_m^\n)_{\delta_x,i}\l(\sum_{\sigma\in\calP^{T^\n}_{i,\{x\wedge y\},j}}\frac{W_\sigma}{(2\beta)_\sigma}\r)(\Wt_m^\n)_{j,\delta_y} \\
&= \sum_{i\sim\delta_x,j\sim\delta_y} (\Wt_m^\n)_{\delta_x,i}\Fh^\n(i,x\wedge y) (\Wt_m^\n)_{\delta_y,j}\Fh^\n(j,x\wedge y) \Gh^\n(x\wedge y,x\wedge y)\\
&= \frac{1}{\Gh^\n(x\wedge y,x\wedge y)} \l(\sum_{i\sim\delta_x} (\Wt_m^\n)_{\delta_x,i}\Gh^\n(i,x\wedge y)\r) \l(\sum_{j\sim\delta_y} (\Wt_m^\n)_{\delta_y,j}\Gh^\n(j,x\wedge y)\r) \\
&= \frac{\chi^\n(x\wedge y,\delta_x)\chi^\n(x\wedge y,\delta_y)}{\Gh^\n(x\wedge y,x\wedge y)},
\end{align*}
and $(\Cc_m^\n)_{\delta_x,\delta_x}=0$. Since $\chi^\n$ converges $\nu_T^W$-almost surely, the matrix $\Cc_m^\n$ also converges to a matrix $\Cc_m$, where for $x,x'\in D^\m$,
\[(\Cc_m)_{\delta_x,\delta_{x'}}=\begin{cases}\frac{\chi(x\wedge y,\Omega_x)\chi(x\wedge y,\Omega_{x'})}{\Gh(x\wedge y,x\wedge y)} &\mbox{if } x\neq x' \\0 &\mbox{if } x=x'. \end{cases}\]
Conditionally on $\beta_T$, $\rho_m^\n$ is distributed according to $\nu_{B_m}^{\Cc_m^\n}$, which almost surely converges weakly to $\nu_{B_m}^{\Cc_m}$  by Lévy's theorem. Since the random variables $(\beta_m^\n)_{B_m}$ (and consequently $\rho_m^\n$) have only been defined through their distribution conditionally on $\beta_T$ (see Remark \ref{rem:aj.couplage}), we can define a sequence $(\rho_m^\n)_{n\geq m}$ which converges almost surely to a potential $\rho_m$ on $B_m$, whose distribution conditionally on $\beta_T$ is $\nu_{B_m}^{\Cc_m}$. The matrix $\Hc_\beta^\n=2\rho_m^\n-\Cc_m^\n$ then converges almost surely to $\Hc_\beta=2\rho_m-\Cc_m$, which is inversible given the distribution $\nu_{B_m}^{\Cc_m}$. Therefore, $(G_m^\n)_{B_m,B_m}$ converges $\nu_T^W$-almost surely to $\Gc_m:=(\Hc_\beta)^{-1}$.

\section{Representations of the VRJP on infinite trees}

\subsection{Representation of the VRJP using $G_m$: Proof of Theorem \ref{thm:chi.mix} (ii)}

Recall that for $0\leq m\leq n$ and $i,j\in T^\n$,
\[G_m^\n(i,j)= \Gh^\n(i,j)+\sum_{b,b'\in B_m}\chi_m^\n(i,b)G_m^\n(b,b')\chi_m^\n(j,b'),\]
where under $\nu_T^W$, $G_m^\n=(2\beta_m^\n-\Wt_m^\n)^{-1}$ gives a representation on the VRJP on $\G_m^\n$, from Theorem \ref{thm:beta.mix}. We have shown that when $n\to\infty$, $\chi_m^\n$ converges almost surely, as well as $(\Gc_m^\n)_{B_m,B_m}$. As a result for all $i,j\in T$, $G_m^\n(i,j)$ converges almost surely to a limit $G_m(i,j)$, where
\[G_m(i,j)=\Gh(i,j)+\sum_{x,x'\in D^\m}\chi(i,\Omega_x)\chi(j,\Omega_y)\Gc_m(\delta_x,\delta_y).\]

The second term in $G_m(i,j)$ can be rewritten as an integral on $\Omega^2$. Indeed, let us define $\gc_m:\Omega^2\to\R$ in the following way: for $\omega,\tau\in\Omega$, if $x,y\in D^{(m)}$ are such that $\omega\in\Omega_x$ and $\tau\in\Omega_y$, we set $\gc_m(\omega,\tau)=\Gc_m(\delta_x,\delta_y)$. With these new notations, we can now write, for $i,j\in T$,
\begin{align*}
G_m(i,j)&=\Gh(i,j)+\chi_m(i,\cdot)\Gc_m {^t}\l(\chi_m(j,\cdot)\r)\\
&=\Gh(i,j)+\int_{\Omega^2} \chi(i,d\omega)\chi(j,d\tau)\gc_m(\omega,\tau).
\end{align*}

For $\beta\in\calD_T^W$, $\rho_m\in\calD_{B_m}^{\Cc_m}$, and all $i_0,i,j\in T$, we denote $r^{\beta,\rho_m,i_0}_{i,j}=\frac{W_{i,j}}{2}\frac{G_m(i_0,j)}{G_m(i_0,i)}$. To prove Theorem \ref{thm:chi.mix} (ii), we have to see that that $P^{VRJP(i_0)}$ is a mixture of Markov processes $P^{\beta,\rho_m,i_0}_{i_0}$ under $\nu_{T,B_m}^{W}(d\beta,d\rho_m)$. The proof is the same than that of Theorem \ref{thm:Gh.psi.mix} (iii) (see \cite{SabZen}). It consists in studying trajectories of the time-changed VRJP, stopped when they leave a finite subgraph included in $T^\n$. They can be considered as trajectories of the time-changed VRJP on $\G_m^\n$, and represented using $G_m^\n$ thanks to Theorem \ref{thm:beta.mix}. Taking $n\to\infty$ then gives the result. Note that the proof needs an argument of uniform integrability on the family $\l(\frac{G_m^\n(i_0,j)}{G_m^\n(i_0,i)}\r)_{n\geq m}$ for all $i,j\in T$, which is given by Proposition 7 and Corollary 2 from \cite{SabZen}.

\subsection{Convergence to another representation: Proof of Theorem \ref{thm:chi.mix} (iii)}

Let us show that the representations of the VRJP built with $G_m$ converge in distribution when $m\to\infty$ to the representation described in Theorem \ref{thm:mix.tree}. To show this, we use a tightness argument, based on the following lemma regarding the distribution $\nu_V^W$.

\begin{lem}\label{lem:dist.xGx}
Let $\G=(V,E)$ be a finite graph, endowed with conductances $W$. We denote $G=(H_\beta)^{-1}$ for $\beta\in\calD_V^W$. Then for all $\eta\in\R_+^V$, under $\nu_V^W(d\beta)$, $\langle\eta,G\eta\rangle$ has the same distribution as $\frac{\langle \eta,\ind\rangle^2}{2\gamma}$, where $\gamma$ is a Gamma random variable with parameter $(1/2,1)$.
\end{lem}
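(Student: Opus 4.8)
The plan is to extract the law of $\langle\eta,G\eta\rangle$ directly from the \emph{explicit density} of $\nu_V^{W,\eta}$ given in Proposition \ref{prop:nu.dist}(i), and then to match the resulting Laplace transform with that of $\frac{\langle\eta,\ind\rangle^2}{2\gamma}$. The starting observation is that, by Proposition \ref{prop:nu.dist}(i), the two probability measures $\nu_V^{W,\eta}$ and $\nu_V^W=\nu_V^{W,0}$ have densities with respect to the common reference measure $\frac{\prod_i d\beta_i}{\sqrt{\det H_\beta}}$ on $\calD_V^W$ that differ precisely by the factor $e^{\langle\eta,\ind\rangle}e^{-\frac12\langle\eta,G\eta\rangle}$; that is, $\frac{d\nu_V^{W,\eta}}{d\nu_V^W}(\beta)=e^{\langle\eta,\ind\rangle}e^{-\frac12\langle\eta,G\eta\rangle}$. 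Integrating this Radon--Nikodym derivative against $\nu_V^W$ and using that both measures have total mass $1$ yields, for every $\eta\in\R_+^V$,
\[\int e^{-\frac12\langle\eta,G\eta\rangle}\,\nu_V^W(d\beta)=e^{-\langle\eta,\ind\rangle}.\]

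Next I would recover the full Laplace transform of $\langle\eta,G\eta\rangle$ by a scaling trick: for $s\ge 0$ the vector $\sqrt{2s}\,\eta$ still lies in $\R_+^V$, and $\langle\sqrt{2s}\,\eta,G\sqrt{2s}\,\eta\rangle=2s\langle\eta,G\eta\rangle$, so applying the displayed identity to $\sqrt{2s}\,\eta$ gives
\[\int e^{-s\langle\eta,G\eta\rangle}\,\nu_V^W(d\beta)=e^{-\sqrt{2s}\,\langle\eta,\ind\rangle},\qquad s\ge 0.\]

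It then remains to compute the Laplace transform of $\frac{a^2}{2\gamma}$ with $a=\langle\eta,\ind\rangle$ and $\gamma\sim\Gamma(1/2,1)$. Using the density $\frac{\ind_{\{\gamma>0\}}}{\sqrt{\pi\gamma}}e^{-\gamma}$ together with the classical integral $\int_0^\infty \gamma^{-1/2}e^{-\gamma-c/\gamma}\,d\gamma=\sqrt\pi\,e^{-2\sqrt c}$ (valid for $c>0$, via the modified Bessel function $K_{1/2}$ or a Gaussian substitution), one finds $\E\big[e^{-s a^2/(2\gamma)}\big]=\frac{1}{\sqrt\pi}\int_0^\infty\gamma^{-1/2}e^{-\gamma-sa^2/(2\gamma)}\,d\gamma=e^{-\sqrt{2s}\,a}$, which is exactly the expression obtained above. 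Since $\langle\eta,G\eta\rangle\ge 0$ (because $H_\beta>0$ forces $G=H_\beta^{-1}>0$) and $\frac{a^2}{2\gamma}\ge 0$, and Laplace transforms on $[0,\infty)$ determine the laws of nonnegative random variables, the two distributions coincide, which is the claim.

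There is no genuinely hard step here: the argument is essentially forced once one notices that $e^{-\frac12\langle\eta,G\eta\rangle}$ already appears in the density of $\nu_V^{W,\eta}$, and the only real computation is the Laplace transform of the inverse-Gamma-type variable $\frac{a^2}{2\gamma}$. As an alternative to the Bessel integral, one may use $\gamma\overset{d}{=}N^2/2$ for $N\sim\mathcal N(0,1)$, so that $\frac{a^2}{2\gamma}\overset{d}{=}(a/N)^2$, and evaluate $\E\big[e^{-s(a/N)^2}\big]$ by completing the square in the Gaussian integral — equivalently, recognising $\frac{a^2}{2\gamma}$ as the Lévy ($\tfrac12$-stable) law with Laplace transform $e^{-\sqrt{2s}\,a}$.
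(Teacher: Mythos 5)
Your proof is correct and follows essentially the same route as the paper: the key identity $\int e^{-\lambda\langle\eta,G\eta\rangle}\nu_V^W(d\beta)=e^{-\sqrt{2\lambda}\langle\eta,\ind\rangle}$ is obtained in both cases by recognising the factor $e^{-\frac12\langle\eta,G\eta\rangle}$ inside the density of $\nu_V^{W,\sqrt{2\lambda}\eta}$ and using that this is a probability measure. The only cosmetic difference is in evaluating the Laplace transform of $1/(2\gamma)$, where the paper substitutes $v=1/(2u)$ to recognise an Inverse Gaussian density while you invoke the Bessel/L\'evy integral; both are the same standard computation.
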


\begin{proof}
Let $\eta\in\R_+^V$ be fixed. We will compute the Laplace transform of $\langle\eta,G\eta\rangle$: for $\lambda\in\R_+$,
\begin{align*}
\E\l[e^{-\lambda \langle\eta,G\eta\rangle}\r]&=\int e^{-\lambda\langle\eta,(H_\beta)^{-1}\eta\rangle}\nu_V^W(d\beta)\\
&=e^{-\sqrt{2\lambda}\langle\eta,\ind\rangle}\int e^{-\frac{1}{2}\langle\sqrt{2\lambda}\eta,(H_\beta)^{-1}\sqrt{2\lambda}\eta\rangle}e^{\langle\sqrt{2\lambda}\eta,\ind\rangle}\nu_V^W(d\beta)\\
&=e^{-\sqrt{2\lambda}\langle\eta,\ind\rangle}\int\nu_V^{W,\sqrt{2\lambda}\eta}(d\beta)=e^{-\sqrt{2\lambda}\langle\eta,\ind\rangle},
\end{align*}
since $\nu_V^{W,\sqrt{2\lambda}\eta}$ is a probability measure. Let us now compute, for $\gamma\sim\Gamma(1/2,1)$, the Laplace transform of $\frac{1}{2\gamma}$: for $\lambda\in\R_+$,
\begin{align*}
\E\l[e^{-\frac{\lambda}{2\gamma}}\r]&=\int\ind_{u>0}\frac{e^{-u}}{\sqrt{\pi u}}e^{-\frac{\lambda}{2u}}du=\int_{v>0}\frac{1}{\sqrt{2\pi v^3}}e^{-\frac{1}{2}(\frac{1}{v}+2\lambda v)}dv=e^{-\sqrt{2\lambda}}\int_{v>0}\frac{1}{\sqrt{2\pi v^3}}e^{-\frac{2\lambda}{2v}(v-\frac{1}{\sqrt{2\lambda}})^2}dv,
\end{align*}
by taking $v=1/2u$. Since $\ind_{v>0}\frac{1}{\sqrt{2\pi v^3}}e^{-\frac{2\lambda}{2v}(v-\frac{1}{\sqrt{2\lambda}})^2}dv$ is the density of an Inverse Gaussian distribution with parameter $(1,1/\sqrt{2\lambda})$, we finally get $\E[e^{-\frac{\lambda}{2\gamma}}]=e^{-\sqrt{2\lambda}}$. Therefore, for all $\lambda\geq 0$,
\[\E\l[e^{-\lambda\langle\eta,G\eta\rangle}\r]=e^{-\sqrt{2\lambda\langle\eta,\ind\rangle^2}}=\E\l[e^{-\frac{\lambda\langle\eta,\ind\rangle^2}{2\gamma}}\r],\]
which proves the result.
\end{proof}

For $m\geq 0$ and $\beta\in\calD_T^W$, let us define, for $i\in T$ and $m\geq 0$, the vector $\bar\mu_i^\m\in\R^{B_m}$ by $(\bar\mu_i^\m)(\delta_x)=\mu_i^\psi(\Omega_x)$ for all $x\in D^\m$. Then, for $\rho_m\in\calD_{B_m}^{\Cc_m}$ and $i,j\in T$,
\begin{align*}
\int_{\Omega^2}\chi(i,d\omega)\chi(j,d\tau)\gc_m(\omega,\tau) &=\psi(i)\psi(j)\sum_{x,x'\in D^\m}\int_{\Omega_x\times\Omega_{x'}}\mu_i(d\omega)\mu_j(d\tau)\Gc_m(\delta_x,\delta_{x'})\\
&=\psi(i)\psi(j)\langle\bar\mu_i^\m,\Gc_m\bar\mu_j^\m\rangle.
\end{align*}

We denote, for $m\geq 0$ and $i,j\in T$,
\[a^\m_{i,j}=\frac{1}{4}\langle\bar\mu_i^\m+\bar\mu_j^\m,\Gc_m(\bar\mu_i^\m+\bar\mu_j^\m)\rangle.\]
so that we have
\begin{align*}
G_m(i,j) &= \Gh(i,j)+\psi(i)\psi(j)\langle\bar\mu_i^\m,\Gc_m\bar\mu_j^\m\rangle\\
&= \Gh(i,j)+\frac{\psi(i)\psi(j)}{2}\l(4a^\m_{i,j}-a^\m_i-a^\m_j\r).
\end{align*}
Therefore, we can write $(G_m(i,j))_{i,j\in T}=\Phi\l((\Gh(i,j))_{i,j\in T},(\psi(i))_{i\in T},(a^\m_{i,j})_{i,j\in T}\r)$, where $\Phi$ is a continuous function.

We will denote by $\o\nu_T^W(d\beta,d\rho)$ the distribution of a coupling of distributions $\nu_{T,B_m}^{W}(d\beta,d\rho_m)$ for all $m\geq 0$.

\begin{lem}
Let us set $Z_m=\l((\Gh(i,j))_{i,j\in T},(\psi(i))_{i\in T},(a^\m_{i,j})_{i,j\in T}\r)$ for $m\geq 0$, which takes its values in $\R^{T^2}\times\R^T\times\R^{T^2}$. Then under $\o\nu_T^W(d\beta,d\rho)$, $(Z_m)_{m\geq 0}$ is tight.
\end{lem}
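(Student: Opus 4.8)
The plan is to show tightness of $(Z_m)_{m\geq 0}$ componentwise, since the state space $\R^{T^2}\times\R^T\times\R^{T^2}$ is a countable product of copies of $\R$, and a sequence in such a product space is tight if and only if each coordinate is tight. The first two families of coordinates, $(\Gh(i,j))_{i,j\in T}$ and $(\psi(i))_{i\in T}$, do not depend on $m$ at all: they are fixed $\nu_T^W$-almost surely finite random variables by Theorem \ref{thm:Gh.psi.mix} (i)--(ii). Hence each of these coordinates is a single random variable, trivially tight. So the whole issue reduces to showing that for each fixed pair $i,j\in T$, the real random variable $a^\m_{i,j}=\frac14\langle\bar\mu_i^\m+\bar\mu_j^\m,\Gc_m(\bar\mu_i^\m+\bar\mu_j^\m)\rangle$ is tight as $m\to\infty$ under $\o\nu_T^W(d\beta,d\rho)$.

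The key observation is that $a^\m_{i,j}$ is a nonnegative quadratic form $\langle\eta,\Gc_m\eta\rangle$ evaluated at the deterministic-given-$\beta$ vector $\eta = \frac12(\bar\mu_i^\m+\bar\mu_j^\m)\in\R_+^{B_m}$, where $\Gc_m = (2\rho_m-\Cc_m)^{-1}$ and, conditionally on $\beta_T$, $\rho_m\sim\nu_{B_m}^{\Cc_m}$. This is exactly the setting of Lemma \ref{lem:dist.xGx} applied on the finite graph with vertex set $B_m$ and conductances $\Cc_m$: conditionally on $\beta_T$, $\langle\eta,\Gc_m\eta\rangle$ has the same distribution as $\frac{\langle\eta,\ind\rangle^2}{2\gamma}$ with $\gamma\sim\Gamma(1/2,1)$ independent of $\beta_T$. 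Therefore, conditionally on $\beta_T$,
\[
a^\m_{i,j} \overset{d}{=} \frac{1}{2\gamma}\,\Big(\tfrac12\langle \bar\mu_i^\m+\bar\mu_j^\m,\ind\rangle\Big)^2
= \frac{1}{2\gamma}\,\Big(\tfrac12\big(\mu_i^\psi(\Omega)+\mu_j^\psi(\Omega)\big)\Big)^2.
\]
Now $\mu_i^\psi$ and $\mu_j^\psi$ are either probability measures on $\Omega$ (when $\psi>0$) or the null measure (when $\psi\equiv 0$), so $\mu_i^\psi(\Omega)+\mu_j^\psi(\Omega)\in\{0,2\}$, and in any case $\langle\bar\mu_i^\m+\bar\mu_j^\m,\ind\rangle\le 2$, uniformly in $m$ and in $\beta$. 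Consequently $a^\m_{i,j}$ is stochastically dominated, uniformly in $m$, by $\frac{1}{2\gamma}$ with $\gamma\sim\Gamma(1/2,1)$: for any bounded continuous test and any $\lambda\ge 0$, $\E[e^{-\lambda a^\m_{i,j}}\mid\beta_T]\ge \E[e^{-\lambda/(2\gamma)}]=e^{-\sqrt{2\lambda}}$, which is independent of $m$, so $\P[a^\m_{i,j}>R]\le\P[\frac1{2\gamma}>R]\to 0$ as $R\to\infty$ uniformly in $m$. This gives tightness of each coordinate, and hence of $(Z_m)$.

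The main point to be careful about — the only genuine obstacle — is the measurability/coupling bookkeeping: $a^\m_{i,j}$ depends on $\rho_m$, whose law is only specified conditionally on $\beta_T$, and one must make sure Lemma \ref{lem:dist.xGx} is being applied on the finite graph $B_m$ with $\beta_T$ held fixed (so that $\Cc_m$ is a fixed conductance array and $\rho_m\sim\nu_{B_m}^{\Cc_m}$), rather than trying to use it marginally. Since $\mathcal{D}_{B_m}^{\Cc_m}$ requires $\Cc_m$ to have positive entries, one should also note that $\Cc_m$ is $\nu_T^W$-a.s. a well-defined nonnegative matrix (its off-diagonal entries $\chi(x\wedge x',\Omega_x)\chi(x\wedge x',\Omega_{x'})/\Gh(x\wedge x',x\wedge x')$ are finite by the a.s. finiteness of $\Gh$ and boundedness of $\chi$), so the lemma applies for $\nu_T^W$-a.e.\ $\beta$. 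Integrating the conditional bound $\P[a^\m_{i,j}>R\mid\beta_T]\le\P[\frac1{2\gamma}>R]$ over $\beta$ then yields the unconditional uniform tail bound, completing the proof.
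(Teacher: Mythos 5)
Your proof is correct and follows essentially the same route as the paper: both apply Lemma \ref{lem:dist.xGx} conditionally on $\beta$ to identify the conditional law of $a^\m_{i,j}$ as $\bigl(\tfrac12(\mu_i^\psi(\Omega)+\mu_j^\psi(\Omega))\bigr)^2/(2\gamma)$ with $\gamma\sim\Gamma(1/2,1)$, use $\mu_i^\psi(\Omega)\leq 1$ to get a tail bound uniform in $m$, and conclude by coordinate-wise tightness in the countable product (the paper makes the product-compact construction explicit with the sets $K_{2^{-n-1}\epsilon}$). One cosmetic remark: a lower bound on the Laplace transform does not by itself yield stochastic domination, but this detour is unnecessary since your exact conditional identity $a^\m_{i,j}\overset{d}{=}c^2/(2\gamma)$ with $c\leq 1$ already gives $\P[a^\m_{i,j}>R\mid\beta]\leq\P[1/(2\gamma)>R]$ directly.
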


\begin{proof}
For $\epsilon>0$, let $K_\epsilon$ be a compact subset of $\R$ such that $0\in K_\epsilon$ and
\[\P\l[\frac{1}{2\gamma}\in K_\epsilon\r]>1-\epsilon\]
when $\gamma\sim\Gamma(1/2,1)$. Let us now fix $m\geq 0$. Recall that $\Gc_m=(2\rho_m-\Cc_m)^{-1}$, where $\Cc_m$ is a $\beta$-measurable matrix of conductances on $B_m$, and conditionally on $\beta$, $\rho_m$ is distributed according to $\nu_{B_m}^{\Cc_m}$. Therefore for $i,j\in T$, from Lemma \ref{lem:dist.xGx}, ${\langle\bar\mu_i^\m+\bar\mu_j^\m,\Gc_m\l(\mu_i^\m+\bar\mu_j^\m\r)\rangle}$ has conditionally on $\beta$ the same distribution as $\frac{(\mu_i^\psi(\Omega)+\mu_j^\psi(\Omega))^2}{2\gamma}$, where $\gamma\sim\Gamma(1,1/2)$. This implies that conditionally on $\{\psi\not\equiv 0\}$, $a_{i,j}^\m$ has the same distribution as $\frac{1}{2\gamma}$, and conditionnally on $\{\psi\equiv 0\}$, $a_{i,j}^\m=0$. As a result, for all $\epsilon>0$,
\begin{align*}
\P\l[a_{x,x'}^\m\in K_\epsilon\r]&=\P\l[\psi\equiv 0\r] +\P\l[\psi\not\equiv 0\r]\P\l[\frac{1}{2\gamma}\in K_\epsilon\r]>1-\epsilon.
\end{align*}

Let now $\l(\tilde{a}_k^\m\r)_{k\in\N}$ be an enumeration of $\l(a_{i,j}^\m\r)_{i,j\in T}$. Then for $\epsilon>0$,
\[\P\l[\forall k\in\N, \tilde{a}_k^\m\in K_{2^{-n-1}\epsilon}\r]\geq 1-\sum_{k\in\N}2^{-n-1}\epsilon=1-\epsilon,\]
where $\tilde{K}_\epsilon=\prod_{k\in\N}K_{2^{-n-1}\epsilon}$ is a compact subset of $\R^\N$. Moreover, the $\beta$-measurable random variable $\l((\Gh(i,j))_{i,j\in T},(\psi(i))_{i\in T}\r)$ takes its values in $\R^{T^2}\times\R^T$, where $T$ is countable. As a result, for all $\epsilon>0$, there is a compact subset $K'_\epsilon\subset\R^{T^2}\times\R^T$ such that
\[\P\l[\l((\Gh(i,j))_{i,j\in T},(\psi(i))_{i\in T}\r)\in K'_\epsilon\r]>1-\epsilon.\]
We can now conclude that for all $\epsilon>0$,
\[\P\l[\l((\Gh(i,j))_{i,j\in T},(\psi(i))_{i\in T},(a^\m_{i,j})_{i,j\in T}\r)\in \tilde{K}_{\epsilon/2}\times K'_{\epsilon/2}\r]>1-\epsilon,\]
where $\tilde{K}_{\epsilon/2}\times K'_{\epsilon/2}$ is compact, and does not depend on $m$.
\end{proof}

As a result, there is an extraction $(m_k)_{k\in\N}$ such that $(Z_{m_k})_{k\in\N}$ converges in distribution under $\o\nu_T^W(d\beta,d\rho)$. Since $G_{m_k}=\Phi(Z_{m_k})$ where $\Phi$ is continuous, $(G_{m_k}(i,j))_{i,j\in T}$ also converges in distribution under under $\o\nu_T^W(d\beta,d\rho)$, as do the random jump rates $(r_{i,j}^{(m_k),\phi})_{i,j\in T}$. Let us show that the limit distribution of the environment does not depend on the extraction, which will mean that $((r_{i,j}^{(m),\phi})_{i,j\in T})_{m\in\N}$ converges in distribution, since it is tight.

\begin{lem}\label{lem:dist.rates.tree}
For $m\geq 1$ and for all $n\geq m$, under the distribution $\o\nu_T^W(d\beta,d\rho)$, the random variables $\l(\frac{G_m^\n(\phi,i)}{G_m^\n(\phi,\p{i})}\r)_{i\in T^\m\backslash\{\phi\}}$ are independent inverse Gaussian variables, where $\frac{G_m^\n(\phi,i)}{G_m^\n(\phi,\p{i})}$ has parameter $(W_{\p{i},i},1)$ for $i\in T^\m\backslash\{\phi\}$.
\end{lem}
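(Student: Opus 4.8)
The plan is to reduce the statement to the already-known structure of the standard representation on the finite graph $\G_m^\n$, where the boundary vertices $B_m$ play the role played by $\delta$ in the wired boundary case. Recall that under $\nu_T^W(d\beta)\nu_{B_m}^{\Cc_m^\n}(d\rho_m^\n)$ the field $\beta_m^\n$ is distributed as $\nu_{\Tt_m^\n}^{\Wt_m^\n}$, so Theorem \ref{thm:beta.mix} applies to the finite tree-with-boundary $\G_m^\n$ and $G_m^\n = (H_\beta^\n)^{-1}$ gives an honest representation of the time-changed VRJP on $\G_m^\n$ started at $\phi$. The first step is therefore to invoke Proposition \ref{prop:beta.tilde.dist} (or directly the computation behind Theorem \ref{thm:mix.tree}) applied to the \emph{finite} graph $\G_m^\n$ rooted at $\phi$: on a finite tree the rooted $\tilde\beta$ field has the product Laplace transform $\prod_{i\neq\phi}(1+\lambda_i)^{-1/2}\, e^{-\sum_{i\sim j}\Wt_m^\n{}_{i,j}(\sqrt{(1+\lambda_i)(1+\lambda_j)}-1)}$, and on a tree the ratios $G_m^\n(\phi,i)/G_m^\n(\phi,\p{i})$ across distinct edges are independent. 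Concretely, for a finite tree Theorem \ref{thm:mix.tree} (which is exactly Theorem 3 of \cite{CheZen} in the finite-tree case, and whose finite version is classical from \cite{SabTar}) says that in the representation of the VRJP on a finite tree rooted at $\phi$, the quantities $A_i := G_m^\n(\phi,i)/G_m^\n(\phi,\p{i})$ are independent, with $A_i$ an inverse Gaussian of parameter $(\Wt_m^\n{}_{\p{i},i},1)$.

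The second step is to observe that for $i\in T^\m\setminus\{\phi\}$ — i.e.\ for edges $\{\p{i},i\}$ with $i$ at depth $\le m$, hence strictly inside $T^\n$ and not incident to $B_m$ — the conductance is unchanged: $\Wt_m^\n{}_{\p{i},i} = W_{\p{i},i}$, since the $\Wt_m^\n$ only modifies edges incident to the boundary points $B_m$. Thus for these particular edges $A_i$ is inverse Gaussian with parameter $(W_{\p{i},i},1)$. The third step is to check independence: in the finite-tree representation \emph{all} edge-ratios $A_j$, $j\in\Tt_m^\n\setminus\{\phi\}$, are mutually independent, so a fortiori the subfamily indexed by $i\in T^\m\setminus\{\phi\}$ is independent; and this independence and these marginals are preserved under the coupling $\o\nu_T^W(d\beta,d\rho)$ because for each fixed $m$ the $n$-th marginal is exactly $\nu_T^W(d\beta)\nu_{B_m}^{\Cc_m^\n}(d\rho_m^\n)$, which is the law under which $\beta_m^\n\sim\nu_{\Tt_m^\n}^{\Wt_m^\n}$. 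This gives the statement for each fixed $n\ge m$.

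The only genuine subtlety — and the step I would be most careful about — is making sure the "finite-tree representation" being quoted is stated for $\G_m^\n$ with its wired-type boundary $B_m$, which is a finite tree \emph{as an abstract graph} (each $\delta_x$ is a leaf attached by possibly several multi-edges collapsed into one edge per neighbour in $T_x\cap D^\n$; strictly speaking $\G_m^\n$ may have multiple edges between a vertex of $D^\n$ and its $\delta_x$, but the conductance $\Wt_m^\n$ already encodes the sum, so it is genuinely a tree with simple edges). One must verify that the product formula for the $\tilde\beta$ Laplace transform and the independence of edge-ratios both hold on this graph; this is immediate from Proposition \ref{prop:beta.tilde.dist} applied on $\G_m^\n$ together with the tree structure (on a tree each edge disconnects the graph, so the Schur-complement/restriction argument of Proposition \ref{prop:restr.beta} yields independence of the ratios across edges), or alternatively one simply cites Theorem \ref{thm:mix.tree} directly for the finite tree $\G_m^\n$. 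Once this is in place the lemma follows with no further computation.
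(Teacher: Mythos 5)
There is a genuine gap, and it lies exactly where you flagged your own uncertainty: $\G_m^\n$ is \emph{not} a tree for $n>m$, and your description of it as ``a finite tree as an abstract graph'' with $\delta_x$ a leaf attached by collapsed multi-edges misreads the definition. By construction $\tilde{E}_m^\n$ contains one edge $\{y,\delta_x\}$ for \emph{every} $y\in T_x\cap D^\n$, so $\delta_x$ has degree $|T_x\cap D^\n|$ and the graph contains genuine cycles (go from one depth-$n$ descendant of $x$ up to their common ancestor, down to another, and back through $\delta_x$). Consequently you cannot cite Theorem \ref{thm:mix.tree} (or its finite-tree antecedent) for $\G_m^\n$, and the assertion that \emph{all} edge-ratios $A_j$, $j\in\Tt_m^\n\setminus\{\phi\}$, are mutually independent is false: for $|j|>m$ the edge $\{\p{j},j\}$ no longer disconnects $\G_m^\n$, and those ratios are correlated through the boundary vertex $\delta_x$. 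This is precisely why the lemma is stated only for $i\in T^\m\setminus\{\phi\}$, and your ``a fortiori'' step conceals the fact that the full independence you invoke does not hold.

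What remains true, and what the paper actually proves, is that for $|i|\le m$ the edge $\{\p{i},i\}$ \emph{is} a cut edge of $\G_m^\n$ (the boundary points $\delta_x$ with $x\in D^\m\cap T_i$ attach only inside $T_i$), so every path from $\phi$ to $i$ crosses $\p{i}$; the path-sum expansion then shows $g_i=G_m^\n(\phi,i)/G_m^\n(\phi,\p{i})=W_{\p{i},i}\bigl((H_\beta^\n)_{\Tt_i,\Tt_i}\bigr)^{-1}(i,i)$ is measurable with respect to $\beta_m^\n$ restricted to the component $\Tt_i$ of $i$ in $\Tt_m^\n\setminus\{\p{i}\}$. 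One then identifies the conditional law of $g_i$ given $(\beta_m^\n)_{\Tt_i\setminus\{i\}}$ as $IG(W_{\p{i},i},1)$ via a Schur complement and Proposition \ref{prop:restr.beta} (hence $g_i$ is independent of its descendants' ratios), and gets independence across siblings from the $1$-dependence of $\nu^{\Wt_m^\n}_{\Tt_m^\n}$, since the components $\Tt_i$, $i\in S(x)$, are at graph distance $2$ from one another. Your proposal contains the right germ of an idea (cut edges give IG ratios and independence via the restriction property), but by routing it through a theorem that does not apply to $\G_m^\n$ it skips all of the actual work; as written the argument does not establish the lemma.
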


\begin{proof}
Let us fix $1\leq m\leq n$. For $i\in T^\m\backslash\{\phi\}$, we denote $g_i=\frac{G_m^\n(\phi,i)}{G_m^\n(\phi,\p{i})}$. Since $|i|\leq m$, any path in $\G_m^\n$ from $\phi$ to $i$ crosses $\p{i}$, so from Proposition \ref{prop:g.somme} (ii) and (iii),
\[g_i=\frac{G_m^\n(\p{i},i)}{G_m^\n(\p{i},\p{i})}=\sum_{j\sim\p{i}}W_{\p{i},j}\sum_{\sigma\in\calP_{j,i}^{\Tt_m^\n\backslash\{\p{i}\}}}\frac{(\Wt_m^\n)_\sigma}{(2\beta_m^\n)_\sigma}.\]
For $i\in T^\m\backslash\{\phi\}$, let us denote by $\Tt_i$ the connected component of $i$ in $\Tt_m^\n\backslash\{\p{i}\}$, \textit{i.e.} $\Tt_i=T_m^\n\cup\{\delta_x,x\in D^\m\cap T_i\}$, endowed with the restriction of $\Wt_m^\n$. This way, we get
\[g_i=W_{\p{i},i}\sum_{\sigma\in\calP_{i,i}^{\Tt_i}}\frac{(\Wt_m^\n)_\sigma}{(2\beta_m^\n)_\sigma}=W_{\p{i},i}\l((H_\beta^\n)_{\Tt_i,\Tt_i}\r)^{-1}(i,i),\]
so $g_i$ is $(\beta_m^\n)_{\Tt_i}$-measurable.

To prove that $(g_i)_{i\in T^\m\backslash\{\phi\}}$ are independent, it will be enough to see that for $i\in T^\m\backslash\{\phi\}$, $g_i$ is independent of $g_{U_i^\m}$, and that for $x\in T^{(m-1)}$, the restrictions $(g_{\Tt_i})_{i\in S(x)}$ are independent.

Writing $\l((H_\beta^\n)_{\Tt_i,\Tt_i}\r)^{-1}(i,i)$ as a Schur complement, we see that, if we set $\tilde{U}_i=\Tt_i\backslash\{i\}$,
\[g_i=\frac{W_{\p{i},i}}{2(\beta_m^\n)_i-\sum_{j,j'\in S(i)}W_{i,j}W_{i,j'}\l((H_\beta^\n)_{\tilde{U}_i,\tilde{U}_i}\r)^{-1}(j,j')}=\frac{W_{\p{i},i}}{\Hc_\beta^{\{i\}}}.\]
From Proposition \ref{prop:restr.beta}, conditionally on $(\beta_m^\n)_{\tilde{U}_i}$, the distribution of $(\beta_m^\n)_i$ is given by
\[\sqrt{\frac{2}{\pi}}\frac{\ind_{\Hc_\beta^{\{i\}}>0}}{\sqrt{\det(\Hc_\beta^{\{i\}})}}e^{-\frac{1}{2}\l(\Hc_\beta^{\{i\}}+W_{\p{i},i}^2(\Hc_\beta^{\{i\}})^{-1}\r)}e^{W_{\p{i},i}}d(\beta_m^\n)_i,\]
so by a change of variables, the distribution of $g_i$ conditionnally on $(\beta_m^\n)_{\tilde{U}_i}$ is
\[\ind_{g_i>0}\sqrt{\frac{W_{\p{i},i}}{2\pi g_i^3}}e^{-\frac{W_{\p{i},i}}{2g_i}(g_i-1)^2}dg_i,\]
\textit{i.e.} $g_i\sim IG(W_{\p{i},i},1)$. Since this distribution does not depend on $(\beta_m^\n)_{\tilde{U}_i}$, $g_i$ is independent of $(\beta_m^\n)_{\tilde{U}_i}$. For all $j\in U_i^\m$, $\Tt_j\subset \tilde{U}_i$, so $g_j$ is $(\beta_m^\n)_{\tilde{U}_i}$-measurable. Therefore, $g_i$ is independent of $g_{U_i^\m}$.

Moreover, for $x\in T^{(m-1)}$, the sets $(\Tt_i)_{i\in S(x)}$ are all at distance $2$ from one another in $\G_m^\n$. Since $\beta^\n$ is $1$-dependent, the restrictions $(\beta^\n_{\Tt_i})_{i\in S(x)}$ are independent. For $j\in \Tt_i$, we have $\Tt_j\subset\Tt_i$, so $g_j$ is $\beta^\n_{\Tt_i}$-measurable.
Therefore the restrictions $(g_{\Tt_i})_{i\in S(x)}$ are independent, which concludes the proof.
\end{proof}

For $m\geq 1$, since $G_m^\n$ converges a.s. to $G_m$, it also converges in distribution. If we denote $g^\m_i=\frac{G_m(\phi,i)}{G_m(\phi,\p{i})}$ for $i\in T^\m\backslash\{\phi\}$, and take the limit in Lemma \ref{lem:dist.rates.tree}, we get that $(g^\m_i)_{i\in T^\m\backslash\{\phi\}}$ are independent, and that $g^\m_i\sim IG(W_{\p{i},i},1)$ for $i\in T^\m\backslash\{\phi\}$. Recall that the random environment associated with $G_m$ is given by the following jump rates:
\[r^{\beta,\rho_m,\phi}_{\p{i},i}=\frac{W_{\p{i},i}}{2}\frac{G_m(\phi,i)}{G_m(\phi,\p{i})} \mbox{ and } r^{\beta,\rho_m,\phi}_{i,\p{i}}=\frac{W_{\p{i},i}}{2}\frac{G_m(\phi,\p{i})}{G_m(\phi,i)},\]
for all $i\in T\backslash\{\phi\}$, and $r^{\beta,\rho_m,\phi}_{i,j}=0$ if $i\not\sim j$.

Let now $(m_k)_{k\in\N}$ be an extraction such that under $\o\nu_T^W(d\beta,d\rho)$, $(r^{\beta,\rho_{m_k},\phi}_{i,j})_{i,j\in T}$ converges in distribution to a limit environment $(r^{(\infty),\phi}_{i,j})_{i,j\in T}$. Then, we have $r^{(\infty),\phi}_{i,j}=0$ for $i\not\sim j$. Moreover, let us set $g^{(\infty)}_i=\frac{2}{W_{\p{i},i}}r^{(\infty),\phi}_{\p{i},i}$ for $i\in T\backslash\{\phi\}$. Note that for all $m\in\N$, if $k$ is such that $m_k\geq m$, we have $T^\m\subset T^{(m_k)}$ so for all $i\in T^\m\backslash\{\phi\}$,
\[r^{\beta,\rho_{m_k},\phi}_{\p{i},i}=\frac{W_{\p{i},i}}{2}g^{(m_k)}_i \mbox{ and } r^{\beta,\rho_{m_k},\phi}_{i,\p{i}}=\frac{W_{\p{i},i}}{2g^{(m_k)}_i}.\]
Taking the limit when $k\to\infty$, we get that $g^{(m_k)}_i$ converges in distribution to $g^{(\infty)}_i$ for all $i\in T^\m\backslash\{\phi\}$, which implies that  $(g^{(\infty)}_i)_{i\in T^\m\backslash\{\phi\}}$ are independent. Since this is true for all $m\geq 0$, $(g^{(\infty)}_i)_{i\in T\backslash\{\phi\}}$ are independent. Moreover, for all $i\in T\backslash\{\phi\}$, $g^{(\infty)}_i\sim IG(W_{\p{i},i},1)$ and 
\[r^{(\infty),\phi}_{\p{i},i}=\frac{W_{\p{i},i}}{2}g^{(\infty)}_i \mbox{ and } r^{(\infty),\phi}_{i,\p{i}}=\frac{W_{\p{i},i}}{2g^{(\infty)}_i}.\]
The random environment given by these jump rates is in fact the one described in Theorem \ref{thm:mix.tree}, hence its distribution does not depend on the extraction $(m_k)_{k\in\N}$. Since the sequence of jump rates $((r^{\beta,\rho_m,\phi}_{i,j})_{i,j\in T})_{m\geq 1}$ is tight, this implies that it converges in distribution to the random environment given in Theorem \ref{thm:mix.tree}.

\subsection{Distinct representations on a regular tree: Proofs of Propositions \ref{prop:2.rep.reg.tree} and \ref{prop:diff.rep.reg.tree}}

Let us start by proving that on regular trees where the VRJP is transient, the standard representation and the one given in Theorem \ref{thm:mix.tree} are different.

\begin{proof}[Proof of Proposition \ref{prop:2.rep.reg.tree}]
Let $\calT=(T,E)$ be a $d$-regular tree, where $d\geq 3$. It was shown in \cite{DavVol2} that there exists a $\o{W}>0$ such that for $W>\o{W}$, the VRJP on $\calT$ endowed with constant conductances $W$ is almost surely transient. Note that the VRJP is defined in a slightly different manner in \cite{DavVol2}, but it can be related to the definition used here, thanks to a time rescaling described in Appendix B of \cite{SabTarZen}. From now on, we take $W>\o{W}$.

We consider jump rates $(r_{i,j})_{i\sim j}$ on the tree $\calT$. Let $\phi$ be an arbitrary root for $\calT$, and let $(i_k)_{k\geq 0}$ be an infinite self-avoiding path (or ray) in $\calT$, such that for $k\geq 0$, $|i_k|=k$. Let us define $S_n=\prod_{k=1}^n\frac{2}{W}r_{i_{k-1},i_k}$. We will compare the distribution of $S_n$ under two distribution of jump rates.

Let $\mathcal{R}_{ind}(dr)$ be the distribution of jump rates in the representation described in Theorem \ref{thm:mix.tree}. Under $\mathcal{R}_{ind}(dr)$, we know that $S_n$ has the distribution of $\prod_{i=1}^n A_{i_k}$, where $A_{i_k}$ are independent inverse Gaussian variables with parameter $(W,1)$. Note that $\E[A_1]=1$, so by Jensen's inequality, $\E[\log(A_1)]<0$. By the law of large numbers, we then have a.s. that $\sum_{k=1}^n\log(A_i)\xrightarrow[n\to\infty]{}-\infty$,
so that $S_n\xrightarrow[n\to\infty]{a.s.}0$.

Let now $\mathcal{R}_{st}(dr)$ be the distribution of jump rates in the standard representation of the VRJP started at $\phi=i_0$. Under $\mathcal{R}_{st}(dr)$, Theorem \ref{thm:Gh.psi.mix} tells us that $S_n$ has the same distribution as 
\[\prod_{k=1}^n \frac{G(i_0,i_k)}{G(i_0,i_{k-1})}=\frac{G(i_0,i_n)}{G(i_0,i_0)}=\frac{\Gh(i_0,i_n)+\frac{1}{2\gamma}\psi(i_0)\psi(i_n)}{G(i_0,i_0)}\]
under $\nu_V^W(d\beta,d\gamma)$, where according to Proposition \ref{prop:trans.delta.psi}, $\psi(i)>0$ a.s. for all $i\in T$. Moreover, since the distribution of $\psi$ under $\nu_V^W(d\beta)$ is stationary for the group of transformations of $\calT$ (see Proposition 3 in \cite{SabZen}), $\psi(i_n)$ has the same distribution as $\psi(i_0)$ for all $n\in\N$, and cannot tend to $0$ a.s. when $n\to\infty$. Therefore, neither can $S_n$ under $\mathcal{R}_{st}(dr)$, which proves that $\mathcal{R}_{st}$ and $\mathcal{R}_{ind}$ are different.
\end{proof}

We will now prove Proposition \ref{prop:diff.rep.reg.tree}, \textit{i.e.} that on a $d$-regular tree with $d\geq 3$, for constant $W$ large enough so that the VRJP is transient, the representations in the family given by Theorem \ref{thm:chi.mix} are all different. In order to do this, we will compare the distribution of the random harmonic measures for each representation.

The following Proposition gives an expression for the measure of sets $\Omega_x$, for $x\in T$. We will see how this expression behaves differently whether or not $|x|>m$.

\begin{prop}\label{prop:mes.harm.mix}
Let $m\in\N$ be fixed. We also fix $\beta\in\calD_T^W$ and $\rho_m\in\calD_{B_m}^{\Cc_m}$. We denote by $\mu_\phi^{\beta,\rho_m,\phi}$ the exiting measure of the transient Markov process $P^{\beta,\rho_m,\phi}_\phi$ defined in Theorem \ref{thm:chi.mix}. Then for $x\in T$,
\[\mu_\phi^{\beta,\rho_m,\phi}(\Omega_x)=\frac{\int_{\Omega\times\Omega_x}\chi(\phi,d\omega)\gc_m(\omega,\tau)\chi(\phi,d\tau)}
{\int_{\Omega^2}\chi(\phi,d\omega)\gc_m(\omega,\tau)\chi(\phi,d\tau)}.\]
\end{prop}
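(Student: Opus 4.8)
The plan is to express the exiting measure of the Markov jump process $P^{\beta,\rho_m,\phi}_\phi$ in terms of the Green function $G_m$, and then to substitute the decomposition of $G_m$ established in Theorem \ref{thm:chi.mix} (ii). Recall that for a transient Markov jump process with jump rates $r_{i,j} = \frac{W_{i,j}}{2}\frac{G_m(\phi,j)}{G_m(\phi,i)}$, the discrete skeleton is a transient nearest-neighbour random walk on the tree $\calT$, so by Theorem \ref{thm:Martin.arbre} its Martin boundary is $\Omega$ and the exiting measure $\mu_\phi^{\beta,\rho_m,\phi}$ is determined on the sets $\Omega_x$ (for $x\in T$) by Proposition \ref{prop:mes.harm.arbre}, via the hitting probabilities $f(i,j)$ of the walk. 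The first step is therefore to compute these hitting probabilities. Since the walk is the $h$-transform (with $h = G_m(\phi,\cdot)$) of a reversible walk associated to conductances $W_{i,j}$ on the tree, one identifies the hitting probability $f^{\beta,\rho_m,\phi}(i,j)$ with the corresponding quantity for $\Delta^{G_m(\phi,\cdot)}$; concretely, since $G_m(\phi,\cdot)$ is $H_\beta$-harmonic off $\phi$ (this is exactly the content behind Theorem \ref{thm:beta.env}, and $G_m(\phi,\cdot) = \Gh(\phi,\cdot) + h$ with $h$ harmonic), the walk with these jump rates has the same Green function structure as $g^\psi$ but with $\psi$ replaced by $G_m(\phi,\cdot)$. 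One gets $f(i,j) = \frac{G_m(\phi,j)}{G_m(\phi,i)}\cdot\frac{\Gh^{?}}{\Gh^{?}}$-type expressions, analogous to $f^\psi$.

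The cleaner route, which I would actually carry out, is to use the path/series representation. By the same sum-over-paths manipulation used in Section 8.2 of the excerpt (the one proving $\chi_m^\n(i,\delta_x) \to \psi(i)\mu_i^\psi(\Omega_x)$), one shows that $\mu_\phi^{\beta,\rho_m,\phi}(\Omega_x) = \lim_{n\to\infty} \mu_\phi^{G_m^\n}(\Omega_x)$ where $\mu_\phi^{G_m^\n}$ is the exiting measure on the finite graph $\G_m^\n$ of the Markov process with jump rates $\frac{1}{2}(\Wt_m^\n)_{i,j}\frac{G_m^\n(\phi,j)}{G_m^\n(\phi,i)}$, and on the finite graph the probability that this process, started at $\phi$, leaves $T^\n$ through a boundary edge in $T_x$ is exactly $\frac{\sum_{y\in T_x\cap D^\n}\sum_{j\in S(y)}\frac{W_{y,j}}{2}\frac{G_m^\n(\phi,j')\ldots}{\ldots}}{\text{total exit rate from }\phi\text{-reachable}}$. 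Cleanly: for a Markov jump process with jump rates $\frac{W_{i,j}}{2}\frac{u(j)}{u(i)}$ and $u$ harmonic for $H_\beta$ on $\G_m^\n$ except at $\phi$ where $H_\beta u(\phi) = $ const, the exiting measure can be read off as a ratio, and the numerator for $\Omega_x$ is $\sum_{y\in T_x\cap D^{(n)}}\eta^\n_y \Gh^\n(\phi, y)\,u(\text{-weighted})$, which after using $G_m^\n(\phi,j)=\Gh^\n(\phi,j)+\sum\chi_m^\n\ldots$ and passing to the limit becomes $\int_{\Omega\times\Omega_x}\chi(\phi,d\omega)\gc_m(\omega,\tau)\chi(\phi,d\tau)$; the denominator is the total mass, $\int_{\Omega^2}\chi(\phi,d\omega)\gc_m(\omega,\tau)\chi(\phi,d\tau)$, which equals $G_m(\phi,\phi)-\Gh(\phi,\phi)$ minus nothing — essentially the "harmonic part" of $G_m(\phi,\phi)$ evaluated in the right normalization. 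Dividing gives the claimed formula.

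More precisely, the key computational identity I would isolate is the following finite-graph statement: for the time-changed VRJP's Markov environment on $\G_m^\n$ with jump rates $\frac{1}{2}(\Wt_m^\n)_{i,j}\frac{G_m^\n(\phi,j)}{G_m^\n(\phi,i)}$, the probability that the skeleton walk started at $\phi$ is eventually absorbed at the boundary vertex $\delta_x$ (for $x\in D^{(m)}$) equals $\frac{\sum_{b'\in B_m}\chi_m^\n(\phi,\delta_{b'})G_m^\n(b',\delta_x)\cdot(\text{something})}{G_m^\n(\phi,\phi) - \Gh^\n(\phi,\phi)}$ — and after careful bookkeeping this is exactly $\frac{\sum_{x'\in D^{(m)}}\chi_m^\n(\phi,\delta_{x'})G_m^\n(\delta_{x'},\delta_x)\chi_m^\n(\phi,\delta_x)\cdot(\ldots)}{\ldots}$. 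Taking $n\to\infty$ using the almost-sure convergences $\chi_m^\n \to \chi$, $\Gh^\n\to\Gh$, $(G_m^\n)_{B_m,B_m}\to\Gc_m$ (all established in Sections 7–8) and the uniform integrability already invoked in the proof of Theorem \ref{thm:chi.mix} (ii), one recovers the stated ratio of integrals over $\Omega^2$. For general $x\in T$ with $|x| < m$ or $|x|$ arbitrary, one splits $\Omega_x$ against the partition $\Omega = \bigcup_{x'\in D^{(m)}}\Omega_{x'}$ and argues similarly, using that $\gc_m$ and $\chi$ restricted to $\Omega_{x'}$ refine correctly; when $|x|\le m$, $\Omega_x$ is a union of some $\Omega_{x'}$ and the formula reduces to a sum over boundary points, while when $|x|>m$, $\Omega_x\subset\Omega_{x'}$ for the unique $x'\in D^{(m)}$ with $x\in T_{x'}$ and the finer structure of $\chi(\phi,\cdot)$ inside $\Omega_{x'}$ enters — but in all cases the ratio formula as written is correct since it only references $\chi(\phi,\cdot)$ and $\gc_m$.

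The main obstacle I expect is the clean identification of the exiting measure of the jump process with the claimed ratio at the finite level, i.e. verifying that for an $h$-transformed reversible walk on a finite tree with an absorbing boundary $B_m$, the absorption probabilities are given by exactly this Green-function ratio, with the normalization $\int_{\Omega^2}$ in the denominator matching $\langle\chi(\phi,\cdot),\gc_m\chi(\phi,\cdot)\rangle$ rather than some other constant. This is where one must be careful about whether the process can escape to infinity "without" hitting $B_m$ in the $n\to\infty$ limit — but transience of $P^{\beta,\rho_m,\phi}_\phi$ together with the fact that $\sum_{b\in B_m}\chi_m^\n(\phi,b) = \psi^\n(\phi)$ (Remark \ref{rem:decoup.chi}) and that $\psi^\n(\phi)\to\psi(\phi) > 0$ a.s. on transience, guarantees that asymptotically all the exiting mass is captured by the boundary, so the denominator is indeed the full normalization. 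Once that bookkeeping is done, the passage to the limit and the final division are routine given the machinery already developed.
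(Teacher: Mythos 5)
Your first paragraph actually points at the route the paper takes (view the skeleton of $P^{\beta,\rho_m,\phi}_\phi$ as an $h$-transform with $h=G_m(\phi,\cdot)$, compute its hitting probabilities $f(i,j)$, and plug them into Proposition \ref{prop:mes.harm.arbre}), but you abandon it with placeholder expressions and switch to a finite-graph absorption argument on $\G_m^\n$. That switch creates a genuine gap. On $\G_m^\n$ the boundary points $\delta_{x'}$ are not absorbing and the walk is recurrent, so ``absorbed at $\delta_{x'}$'' must first be replaced by some hitting or last-exit statement whose convergence to the exit measure of the limiting transient walk is itself something to prove; more importantly, even granting all of that, statistics on $B_m$ can only determine $\mu_\phi^{\beta,\rho_m,\phi}$ on the partition $\{\Omega_{x'}\}_{x'\in D^\m}$, i.e.\ for $x$ with $|x|\leq m$. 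For $|x|>m$ --- the case that actually matters in Proposition \ref{prop:diff.rep.reg.tree} --- you write that the finer structure of $\chi(\phi,\cdot)$ inside $\Omega_{x'}$ enters ``but in all cases the ratio formula as written is correct since it only references $\chi(\phi,\cdot)$ and $\gc_m$.'' That is an assertion of the conclusion, not an argument: what must be shown is precisely that, below the scale of $D^\m$, the exit measure is proportional to $\chi(\phi,\cdot)$ weighted by the locally constant factor $\int_\Omega\chi(\phi,d\omega)\gc_m(\omega,\cdot)$, and no amount of bookkeeping at the finite boundary $B_m$ can see this.

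The paper closes exactly this gap by a direct computation on the infinite tree. Setting $\tilde\beta_i=\beta_i-\ind_{\{i=\phi\}}\frac{1}{2G_m(\phi,\phi)}$ and $\Gt(i,j)=\sum_{\sigma\in\calP^T_{i,j}}W_\sigma/(2\tilde\beta)_\sigma$, one finds $f(i,j)=\frac{G_m(\phi,j)}{G_m(\phi,i)}\Ft(i,j)$ for the skeleton, substitutes into $\mu(\Omega_x)=f(\phi,x)\frac{1-f(x,\p{x})}{1-f(x,\p{x})f(\p{x},x)}$, and reduces everything to the local identity
\[
G_m(\phi,x)-G_m(\phi,\p{x})\Fh(x,\p{x})=\frac{\Gh^{T_x}(x,x)}{\Gh(\phi,x)}\int_{\Omega\times\Omega_x}\chi(\phi,d\omega)\gc_m(\omega,\tau)\chi(\phi,d\tau),
\]
proved from the decomposition $G_m=\Gh+\int\chi\,\gc_m\,\chi$ together with the Radon--Nikodym density $\tau\mapsto\Fh(x,x\wedge\tau)/\Fh(\phi,x\wedge\tau)$ of $\chi(x,\cdot)$ with respect to $\chi(\phi,\cdot)$: the integrand vanishes for $\tau\notin\Omega_x$ and is constant on $\Omega_x$. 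It is this identity at the edge $\{x,\p{x}\}$, valid for every $x\in T$ regardless of how $|x|$ compares to $m$, that your proposal is missing; if you return to your first paragraph and carry out the computation of $f(i,j)$ via the path sums of Proposition \ref{prop:g.somme}, you will land on it without any finite-graph approximation.
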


\begin{proof}
We denote $\mu=\mu_\phi^{\beta,\rho_m,\phi}$. Let $g$ be the Green function associated with the dicrete Markov chain associated with $P^{\beta,\rho_m,\phi}$, \textit{i.e.} with jump rates $r^{\beta,\rho_m,\phi}_{i,j}=\frac{W_{i,j}}{2}\frac{G_m(\phi,j)}{G_m(\phi,i)}$. Let us denote, for $i,j\in T$, $f(i,j)=\frac{g(i,j)}{g(j,j)}$. Then from Proposition \ref{prop:mes.harm.arbre}, we get the following expression for $x\neq\phi$:
\[\mu(\Omega_x)=f(\phi,x)\frac{1-f(x,\p{x})}{1-f(x,\p{x})f(\p{x},x)}.\]

For $i,j\in T$, we have
\[g(i,j)= \sum_{k\in\N}P^{\beta,\rho_m,\phi}_i[X_k=j]=\sum_{\sigma\in\calP_{i,j}^T}\prod_{k=0}^{|\sigma|-1}\frac{r^{\beta,\rho_m,\phi}_{\sigma_k,\sigma_{k+1}}}{\tilde\beta_{\sigma_k}}= \frac{G_m(\phi,j)}{G_m(\phi,i)}\sum_{\sigma\in\calP_{i,j}^T} \frac{W_\sigma}{(2\tilde\beta)_\sigma^-},
\]
where $\tilde\beta_i=\sum_{j\sim i} r^{\beta,\rho_m,\phi}_{i,j}$ are the rates of the corresponding holding times. Note that $\tilde\beta_i=\beta_i-\ind_{\{i=\phi\}}\frac{1}{2G_m(\phi,\phi)}$ for $i\in T$. In particular, if a path $\sigma$ never crosses $\phi$, then $(2\tilde\beta)_\sigma=(2\beta)_\sigma$.

Let us denote $\Gt(i,j)=\sum_{\sigma\in\calP^T_{i,j}}\frac{W_\sigma}{(2\tilde\beta)_\sigma}$ and $\Ft(i,j)=\frac{\Gt(i,j)}{\Gt(j,j)}$, then
\[g(i,j)=\frac{G_m(\phi,j)}{G_m(\phi,i)}\Gt(i,j)2\tilde\beta_j, \mbox{ and } f(i,j)=\frac{G_m(\phi,j)}{G_m(\phi,i)}\frac{\Gt(i,j)}{\Gt(j,j)}=\frac{G_m(\phi,j)}{G_m(\phi,i)}\Ft(i,j).\]
The expression for the measure becomes
\begin{align*}
\mu(\Omega_x) &= \frac{G_m(\phi,x)}{G_m(\phi,\phi)}\frac{\Gt(\phi,x)}{\Gt(x,x)}\l(\frac{1-\frac{G_m(\phi,\p{x})}{G_m(\phi,x)}\Ft(x,\p{x})}{1-\Ft(x,\p{x})\Ft(\p{x},x)}\r)\\
&= \frac{\Gt(\phi,x)}{G_m(\phi,\phi)}\l(\frac{G_m(\phi,x)-G_m(\phi,\p{x})\Ft(x,\p{x})}{\Gt(x,x)-\Ft(x,\p{x})\Gt(\p{x},x)}\r).
\end{align*}

Let us compute the following terms: firstly,
\begin{align*}
\Gt(x,x)-\Ft(x,\p{x})\Gt(\p{x},x) &= \Gt(x,x)-\Ft(x,\p{x})\Gt(\p{x},\p{x})\Ft(x,\p{x})\\
&=\sum_{\sigma\in\calP^T_{x,x}} \frac{W_\sigma}{(2\tilde\beta)_\sigma}-\sum_{\sigma\in\calP^T_{x,\{\p{x}\},x}} \frac{W_\sigma}{(2\tilde\beta)_\sigma}=\sum_{\sigma\in\calP^{T_x}_{x,x}} \frac{W_\sigma}
{(2\tilde\beta)_\sigma}.
\end{align*}
Indeed, paths from $x$ to $x$ that do not cross $\p{x}$ have to stay in the connected component of $x$ in $T\backslash\{\p{x}\}$, which is $T_x$, \textit{i.e.} $\calP_{x,x}^T\backslash\calP_{x,\{\p{x}\},x}^T=\calP_{x,x}^{T_x}$. Moreover, $\phi\notin T_x$, so for $\sigma\in\calP_{x,x}^{T_x}$, $(2\tilde\beta)_\sigma=(2\beta)_\sigma$. As a result,
\[\Gt(x,x)-\Ft(x,\p{x})\Gt(\p{x},x) =\sum_{\sigma\in\calP^{T_x}_{x,x}} \frac{W_\sigma}{(2\beta)_\sigma}=\Gh^{T_x}(x,x).\]

Secondly,
\begin{align*}
G_m(\phi,x)-G_m(\phi,\p{x})\Ft(x,\p{x}) &= \Gh(\phi,x)-\Gh(\phi,\p{x})\Ft(x,\p{x})\\
& \qquad +\int_{\Omega^2} \chi(\phi,d\omega)\gc_m(\omega,\tau)\l(\chi(x,d\tau)-\Ft(x,\p{x})\chi(\p{x},d\tau)\r).
\end{align*}
Note that if $\sigma\in\o\calP^{T\backslash\{\p{x}\}}_{x,\p{x}}$, then $\sigma_k\in T_x$ for $k\leq |\sigma|-1$, so $(2\tilde\beta)_\sigma^-=(2\beta)_\sigma^-$. Therefore, $\Ft(x,\p{x})=\Fh(x,\p{x})$.
Moreover, since $\calP^T_{\phi,x}=\calP^T_{\phi,\{\p{x}\},x}$, we have
\[\Gh(\phi,x) -\Gh(\phi,\p{x})\Fh(x,\p{x}) =0.\]
Recall also that the density of $\chi(x,\cdot)$ with respect to $\chi(\phi,\cdot)$ is $\tau\mapsto \frac{\Fh(x,x\wedge\tau)}{\Fh(\phi,x\wedge\tau)}$. As a result,
\begin{align*}
G_m(\phi,x)-G_m&(\phi,\p{x})\Ft(x,\p{x}) = \int_{\Omega^2} \chi(\phi,d\omega)\gc_m(\omega,\tau)\l(\chi(x,d\tau)-\Fh(x,\p{x})\chi(\p{x},d\tau)\r)\\
&= \int_{\Omega^2} \chi(\phi,d\omega)\gc_m(\omega,\tau)\chi(\phi,d\tau)\l(\frac{\Fh(x,x\wedge\tau)}{\Fh(\phi,x\wedge\tau)}-\Fh(x,\p{x})\frac{\Fh(\p{x},\p{x}\wedge\tau)}{\Fh(\phi,\p{x}\wedge\tau)}\r).
\end{align*}
For $\tau\notin\Omega_x$, $x\wedge\tau=\p{x}\wedge\tau$ and paths from $x$ to $\p{x}\wedge\tau$ cross $\p{x}$. Therefore,
\begin{align*}
\frac{\Fh(x,x\wedge\tau)}{\Fh(\phi,x\wedge\tau)}-\Fh(x,\p{x})\frac{\Fh(\p{x},\p{x}\wedge\tau)}{\Fh(\phi,\p{x}\wedge\tau)} &=\frac{\Fh(x,\p{x}\wedge\tau)-\Fh(x,\p{x})\Fh(\p{x},\p{x}\wedge\tau)}{\Fh(\phi,\p{x}\wedge\tau)}=0.
\end{align*}
For $\tau\in\Omega_x$, $x\wedge\tau=x$ and $\p{x}\wedge\tau=\p{x}$, so
\begin{align*}
\frac{\Fh(x,x\wedge\tau)}{\Fh(\phi,x\wedge\tau)}-\Fh(x,\p{x})\frac{\Fh(\p{x},\p{x}\wedge\tau)}{\Fh(\phi,\p{x}\wedge\tau)}&=\frac{\Gh(x,x)}{\Gh(\phi,x)}-\Fh(x,\p{x})\frac{\Gh(\p{x},\p{x})}{\Gh(\phi,\p{x})}\\
&=\frac{\Gh(x,x)-\Fh(x,\p{x})\Gh(\p{x},\p{x})\Fh(\p{x},x)}{\Gh(\phi,x)}=\frac{\Gh^{T_x}(x,x)}{\Gh(\phi,x)}.
\end{align*}
As a result, we have
\[G_m(\phi,x)-G_m(\phi,\p{x})\Ft(x,\p{x}) = \frac{\Gh^{T_x}(x,x)}{\Gh(\phi,x)}\int_{\Omega\times\Omega_x}\chi(\phi,d\omega)\gc_m(\omega,\tau)\chi(\phi,d\tau).\]

For $x\neq\phi$, we finally get
\begin{align*}
\mu(\Omega_x)&=\frac{\Gt(\phi,x)}{G_m(\phi,\phi)\Gh(\phi,x)}\int_{\Omega\times\Omega_x}\chi(\phi,d\omega)\gc_m(\omega,\tau)\chi(\phi,d\tau)\\
&=\frac{\Gt(\phi,\phi)}{\Gh(\phi,\phi)}\int_{\Omega\times\Omega_x}\chi(\phi,d\omega)\gc_m(\omega,\tau)\chi(\phi,d\tau)
\end{align*}
since $\Ft(x,\phi)=\Fh(x,\phi)$. Moreover, by summing over $x\in S(\phi)$, we have the same expression for $\mu(\Omega_\phi)=\mu(\Omega)$:
\[1=\mu(\Omega)=\frac{\Gt(\phi,\phi)}{G_m(\phi,\phi)\Gh(\phi,\phi)}\int_{\Omega^2}\chi(\phi,d\omega)\gc_m(\omega,\tau)\chi(\phi,d\tau).\]
As a result, for all $x\in T$,
\[\mu(\Omega_x)=\frac{\int_{\Omega\times\Omega_x}\chi(\phi,d\omega)\gc_m(\omega,\tau)\chi(\phi,d\tau)}
{\int_{\Omega^2}\chi(\phi,d\omega)\gc_m(\omega,\tau)\chi(\phi,d\tau)}.\]
\end{proof}

\begin{proof}[Proof of Proposition \ref{prop:diff.rep.reg.tree}]
Let $\mathcal{T}$ be a $d$-regular tree, with $d\geq 3$, endowed with constant conductances $W$ such that $\P[\forall i\in T, \psi(i)>0]=1$. Note that $(\mathcal{T},W)$ is vertex transitive, so it is enough to show the proposition for $i_0=\phi$. The following lemma is a consequence of the symmetries of $(\mathcal{T},W)$, and guarantees that almost surely, the exiting measure gives weight to the whole boundary $\Omega$.

\begin{lem}\label{lem:chi.nonzero}
Almost surely under $\nu_V^W(d\beta)$, for all $x\neq \phi$, $\chi(\phi,\Omega_x)>0$.
\end{lem}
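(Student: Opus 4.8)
The plan is to reduce the positivity of $\chi(\phi,\Omega_x)=\psi(\phi)\mu_\phi^\psi(\Omega_x)$ to the positivity of $\psi$ together with a strict inequality $f(x,\p{x})f(\p{x},x)<1$ coming from transience, and then to use the explicit formula for $\mu_\phi^\psi(\Omega_x)$ from Proposition \ref{prop:mes.harm.arbre}. Since we work on the event $\{\forall i\in T,\ \psi(i)>0\}$, which has full measure by hypothesis, we may assume $\psi>0$, and then by Proposition \ref{prop:trans.delta.psi} (ii) the random walk with conductances $c^\psi_{i,j}=W_{i,j}\psi(i)\psi(j)$ is a.s.\ transient, so the harmonic measures $\mu_i^\psi$ are genuinely defined. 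From Theorem \ref{thm:chi.mix} (i) and Proposition \ref{prop:mes.harm.arbre}, for $x\neq\phi$ we have $x\notin U_x$ is false (indeed $\phi\notin U_x$), so
\[
\mu_\phi^\psi(\Omega_x)=f(\phi,x)\,\frac{1-f(x,\p{x})}{1-f(x,\p{x})f(\p{x},x)},
\]
where $f(i,j)=\frac{\psi(j)}{\psi(i)}\Fh(i,j)$. Thus it suffices to show that a.s.\ each of the three factors $f(\phi,x)$, $1-f(x,\p{x})$, and $1-f(x,\p{x})f(\p{x},x)$ is strictly positive.

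First, $f(\phi,x)=\frac{\psi(x)}{\psi(\phi)}\Fh(\phi,x)>0$: this is immediate from $\psi>0$ and from the sum-over-paths formula $\Fh(\phi,x)=\sum_{\sigma\in\o\calP^{T\backslash\{x\}}_{\phi,x}}\frac{W_\sigma}{(2\beta)_\sigma}>0$ (Remark \ref{rem:gh.fh.sum}), which is a sum of strictly positive terms over a nonempty set of paths. Next, the positivity of $1-f(x,\p{x})f(\p{x},x)$ is exactly the statement that the Green function $g^\psi(x,x)=\frac{1}{1-f(x,\p{x})f(\p{x},x)}\cdot(\text{something finite})$ is finite — more precisely, in the course of proving Theorem \ref{thm:chi.mix} (i) it was shown that $\Fh^\n(x,\p{x})\Fh^\n(\p{x},x)<1$ since $\Gh^\n(x,y)<\infty$, and passing to the limit gives $f(x,\p{x})f(\p{x},x)<1$ a.s.\ (equivalently $\Fh(x,\p{x})\Fh(\p{x},x)<1$, which holds because $\Gh^{T_x}(x,x)<\infty$ by Theorem \ref{thm:Gh.psi.mix} (i)). Finally, $1-f(x,\p{x})>0$: since $\mu_\phi^\psi$ is a probability measure, $\mu_\phi^\psi(\Omega_x)\geq 0$; combined with $f(\phi,x)>0$ and $1-f(x,\p{x})f(\p{x},x)>0$, the displayed formula forces $1-f(x,\p{x})\geq 0$, and strict positivity follows because $f(x,\p{x})\leq f(x,\p{x})f(\p{x},x)\cdot\frac{1}{f(\p{x},x)}$ is not directly helpful — instead one uses that $\{1-f(x,\p{x})=0\}$ would give $\mu_\phi^\psi(\Omega_x)=0$, and one rules this out by symmetry.

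The main obstacle, and where the symmetry of $(\calT,W)$ enters, is ruling out $f(x,\p{x})=1$. Here the plan is as follows: the event $\{f(x,\p{x})=1\}$ means the $\psi$-walk started at $x$ hits $\p{x}$ with probability one, i.e.\ it a.s.\ exits the subtree $T_x$ through its root. By vertex-transitivity of $(\calT,W)$ and the stationarity of $\psi$ under the automorphism group (Proposition 3 of \cite{SabZen}, as invoked before Proposition \ref{prop:diff.rep.reg.tree}), the distribution of $f(x,\p{x})$ does not depend on the edge $\{x,\p{x}\}$, nor does the distribution of the family $(f(y,\p{y}))_{|y|\geq 1}$ depend on the choice of root. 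If $f(x,\p{x})=1$ a.s., then by this symmetry $f(y,\p{y})=1$ a.s.\ for every oriented edge pointing toward the root; iterating along a ray, the $\psi$-walk started at any vertex would a.s.\ climb monotonically toward $\phi$ and return to $\phi$, hence visit $\phi$ infinitely often (by restarting), contradicting transience of the $\psi$-walk. Therefore $\P[f(x,\p{x})<1]>0$, and then by the $0$–$1$ behaviour forced by transience one concludes $f(x,\p{x})<1$ a.s.\ for each fixed $x$; since $T$ is countable this holds simultaneously for all $x\neq\phi$ a.s. Combining the three positivity statements gives $\mu_\phi^\psi(\Omega_x)>0$, hence $\chi(\phi,\Omega_x)=\psi(\phi)\mu_\phi^\psi(\Omega_x)>0$, a.s.\ under $\nu_V^W(d\beta)$, for all $x\neq\phi$.
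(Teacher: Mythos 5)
Your reduction is sound up to a point: writing $\chi(\phi,\Omega_x)=\psi(\phi)f(\phi,x)\frac{1-f(x,\p{x})}{1-f(x,\p{x})f(\p{x},x)}$ and disposing of the factors $f(\phi,x)>0$ and $1-f(x,\p{x})f(\p{x},x)>0$ is exactly parallel to the paper (which writes the same quantity as $\Fh(\phi,x)\hat\chi_x/(1-\Fh(x,\p{x})\Fh(\p{x},x))$ with $\hat\chi_x=\psi(x)(1-f(x,\p{x}))$). The problem is the crux, $f(x,\p{x})<1$ a.s. Your symmetry-plus-transience argument only rules out $\P[f(x,\p{x})=1]=1$: it shows that if the hitting probability of the parent were a.s.\ equal to $1$ for every oriented edge, the walk would return to $\phi$ indefinitely, contradicting transience. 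That gives $\P[f(x,\p{x})<1]>0$, and the promotion to probability one via ``the $0$--$1$ behaviour forced by transience'' is not justified: $\{f(x,\p{x})=1\}$ is neither a tail event nor an event invariant under the automorphism group (the parent relation depends on the root), so no zero--one law applies, and transience of the walk is in general compatible with a particular neighbour being hit with probability one.

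The paper closes precisely this gap by a different mechanism. It observes that $\hat\chi_x=\psi(x)-\Fh(x,\p{x})\psi(\p{x})$ is $\beta_{T_x}$-measurable (as a limit of sums over paths confined to $T_x^{(n)}$), so that for a fixed generation $m\geq 1$ the variables $(\hat\chi_y)_{y\in D^{(m)}}$ are independent (the subtrees $T_y$ are pairwise at distance $2$ and $\nu_V^W$ is $1$-dependent) and identically distributed (by transitivity). Since $\psi(\phi)=\sum_{y\in D^{(m)}}\Fh(\phi,y)\,\hat\chi_y/(1-\Fh(y,\p{y})\Fh(\p{y},y))$ is a combination of the $\hat\chi_y$ with strictly positive coefficients, $\{\psi(\phi)=0\}=\{\forall y\in D^{(m)},\ \hat\chi_y=0\}$, whence $0=\P[\psi(\phi)=0]=\P[\hat\chi_x=0]^{|D^{(m)}|}$ and therefore $\P[\hat\chi_x=0]=0$, i.e.\ $f(x,\p{x})<1$ a.s. If you want to keep your structure, you should replace the zero--one-law step by this independence argument (or some equivalent quantitative input); as written, the proof has a genuine gap at that step.
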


\begin{proof}
For all $x\neq\phi$, we define $\hat\chi_x=\psi(x)-\Fh(x,\p x)\psi(\p x)$. Then $\chi(\phi,\Omega_x)=\Fh(\phi,x)\frac{\hat\chi_x}{1-\Fh(x,\p x)\Fh(\p x, x)}$, and $\P[\chi(\phi,\Omega_x)>0]=\P[\hat\chi_x>0]$.

Note that
\[\psi^\n(x)-\Fh^\n(x,\p x)\psi^\n(\p x)=\sum_{y\in T_x\cap D^\n}\l(\sum_{\sigma\in\calP_{x,y}^{T_x^\n}}\frac{W_\sigma}{(2\beta)_\sigma}\r)\eta_y^\n\]
is $\beta_{T_x}$-measurable. Therefore, taking the limit when $n\to\infty$ shows that $\hat\chi_x$ is also $\beta_{T_x}$-measurable. As a result, given a fixed $m\leq 1$, the random variables $(\hat\chi_x)_{x\in D^\m}$ are independant, since $\nu_V^W$ is $1$-dependent, and have the same distribution, since $\nu_V^W$ is invariant under the group of automorphisms of $\mathcal{T}$.

Moreover, we have $\psi(\phi)=\sum_{y\in D^\m}\Fh(\phi,y)\frac{\hat\chi_y}{1-\Fh(y,\p y)\Fh(\p y, y)}$, so
\[\P[\psi(\phi)=0]=\P[\forall y\in D^\m, \hat\chi_y=0]=\P[\hat\chi_x=0]^{|D^\m|}\]
for any $x\in D^\m$. Since $\P[\psi(\phi)=0]=0$, we get $\P[\hat\chi_x=0]=0$ for all $x\in D^\m$ and all $m\geq 1$, which implies that almost surely, for all $x\neq \phi$, $\chi(\phi,\Omega_x)>0$.

\end{proof}

Let us fix $m>m'$, and denote $\mu^\m=\mu_\phi^{\beta,\rho_m,\phi}$ and $\mu^{(m')}=\mu_\phi^{\beta,\rho_{m'},\phi}$. Let $x\in D^\m$ be fixed, note that $x\neq\phi$. We define the following events: 
\[A_x^\m=\l\{\frac{\mu^\m(\Omega_x)}{\mu^\m(\Omega_{\p{x}})}=\frac{\chi(\phi,\Omega_x)}{\chi(\phi,\Omega_{\p{x}})}\r\} \mbox{ and } A_x^{(m')}=\l\{\frac{\mu^{(m')}(\Omega_x)}{\mu^{(m')}(\Omega_{\p{x}})}=\frac{\chi(\phi,\Omega_x)}{\chi(\phi,\Omega_{\p{x}})}\r\}.\]

Let us first show that the event $A_x^\m$ is $r^{\beta,\rho_m,\phi}$-measurable. Note that 
the exiting measure $\mu^\m$ is measurable with respect to the corresponding environment $r^{\beta,\rho_m,\phi}$. Moreover, for $i\neq\phi$, $\beta_i=\beta'_i=\sum_{j\sim i}r^{\beta,\rho_m,\phi}_{i,j}$ is $r^{\beta,\rho_m,\phi}$-measurable. Therefore, we just have to show that $\frac{\chi(\phi,\Omega_x)}{\chi(\phi,\Omega_{\p{x}})}$ is $\beta_{T\backslash\{\phi\}}$-measurable. Since $\chi(\phi,\Omega_x)=\Gh(\phi,x)\sum_{y\in S(x)}W_{x,y}\hat\chi_y$, we have
\begin{align*}
\frac{\chi(\phi,\Omega_x)}{\chi(\phi,\Omega_{\p{x}})}=\frac{\Gh(\phi,x)\sum_{y\in S(x)}W_{x,y}\hat\chi_y}{\Gh(\phi,\p{x})\sum_{z\in S(\p{x})}W_{\p x,z}\hat\chi_z}=\Fh(x,\p{x})\frac{\sum_{y\in S(x)}W_{x,y}\hat\chi_y}{\sum_{z\in S(\p{x})}W_{\p x,z}\hat\chi_z},
\end{align*}
which is $\beta_{U_{\p{x}}}$-measurable and therefore $\beta_{T\backslash\{\phi\}}$-measurable. We can conclude that $A_x^\m$ is $r^{\beta,\rho_m,\phi}$-measurable, and in the same way, $A_x^{(m')}$ is $r^{\beta,\rho_{m'},\phi}$-measurable. We are now going to show that under $\nu_{T,B_{m'}}^W(d\beta,d\rho_{m'})$ we have $\P[A_x^{(m')}]=1$, while under $\nu_{T,B_m}^W(d\beta,d\rho_m)$ we have $\P[A_x^\m]=0$. This will prove that the distributions of $r^{\beta,\rho_m,\phi}$ under $\nu_{T,B_m}^W(d\beta,d\rho_m)$ and $r^{\beta,\rho_{m'},\phi}$ under $\nu_{T,B_m}^W(d\beta,d\rho_m)$ are different.

Since $|x|=m>m'$, we have $|\p x|\geq m'$, so there exists $z\in D^{(m')}$ such that $|\p x|\in T_z$, \textit{i.e.} $\Omega_{\p x}\subset\Omega_z$. Then for all $\tau\in \Omega_{\p x}$, $\int_\Omega\chi(\phi,d\omega) \gc_{m'}(\omega,\tau)=\sum_{b\in B_{m'}}\chi_{m'}(\phi,b)\Gc_{m'}(b,\delta_z)$. As a result, 
\begin{align*}
\frac{\mu^{(m')}(\Omega_x)}{\mu^{(m')}(\Omega_{\p{x}})}&=\frac{\int_{\Omega_x}\l(\sum_{b\in B_{m'}}\chi_{m'}(\phi,b)\Gc_{m'}(b,\delta_z)\r)\chi(\phi,d\tau)}{\int_{\Omega_{\p{x}}}\l(\sum_{b\in B_{m'}}\chi_{m'}(\phi,b)\Gc_{m'}(b,\delta_z)\r)\chi(\phi,d\tau)}=\frac{\chi(\phi,\Omega_x)}{\chi(\phi,\Omega_{\p{x}})},\\
\end{align*}
so $\P[A_x^{(m')}]=1$ under $\nu_{T,B_{m'}}^W(d\beta,d\rho_{m'})$.

We will finally show that $\nu_{T,B_m}^W$-almost surely, $\P[A_x^\m|\beta]=0$. Since $|x|=m$, we have 
\begin{align*}
\mu^\m(\Omega_x)&=\chi_m(\phi,\delta_x)\sum_{b\in B_m}\Gc_m(\delta_x,b)\chi_m(\phi,b),\\
 \mbox{and } \mu^\m(\Omega_{\p x})&=\sum_{y\in S(\p x)}\chi_m(\phi,\delta_y)\sum_{b\in B_m}\Gc_m(\delta_y,b)\chi_m(\phi,b).
\end{align*}
Let us denote, for $y\in D^\m$, $u_y=\sum_{b\in B_m}\Gc_m(\delta_y,b)\chi_m(\phi,b)$. Then 
\[A_x^\m=\l\{\frac{\mu^\m(\Omega_x)}{\mu^\m(\Omega_{\p{x}})}=\frac{\chi(\phi,\Omega_x)}{\chi(\phi,\Omega_{\p{x}})}\r\} = \l\{\sum_{y\in S(\p x)}\frac{\chi(\phi,\Omega_y)}{\chi(\phi,\Omega_{\p x})}u_y = u_x\r\}=\{u\in\ker(f_\beta)\},\]
where $f_\beta: (v_y)_{y\in D^\m}\mapsto \sum_{y\in S(\p x)}\frac{\chi(\phi,\Omega_y)}{\chi(\phi,\Omega_{\p x})}v_y - v_x$ is a linear form conditionally on $\beta$, which has almost surely rank $1$ according to Lemma \ref{lem:chi.nonzero}, so that $\ker(f_\beta)$ is a hyperplane of $\R^{|D^\m|}$. Let us show that conditionally on $\beta$, the distribution of $(u_y)_{y\in D^\m}$ is absolutely continuous with respect to the Lebesgue measure on $\R^{|D^\m|}$, and therefore $\P[A_x^\m|\beta]=\P[u\in\ker(f_\beta)|\beta]=0$.

Recall that $\Gc_m=(2\rho_m-\Cc_m)^{-1}$, where conditionally on $\beta$, $\rho_m$ is distributed according to $\nu_{B_m}^{\Cc_m}$, which is absolutely continuous with respect to the Lebesgue measure on $\R^{|B_m|}=\R^{|D^\m|}$. Let us define 
\[\Phi:
\begin{array}{rcl}
    \R^{|D^\m|} & \longrightarrow &\R^{|D^\m|} \\
    \rho_m & \longmapsto & (u_y)_{y\in D^\m}=\Gc_m \chi_m(\phi,\cdot)\\
\end{array}.\]
For all $\rho_m$ such that $2\rho_m-\Cc_m>0$, $\Phi$ is differentiable, and its differential is
\[d_{\rho_m}\Phi(v)=-2\Gc_m \diag(v) \Gc_m\chi_m(\phi,\cdot)=-2\Gc_m \diag(v)u,\]
which is invertible, with $(d_{\rho_m}\Phi)^{-1}(w)=\l(-\frac{(\Gc_m^{-1} w)_y}{2u_y}\r)_{y\in D^\m}$. Note that this is well-defined since $u_y>0$ for all $y\in D^\m$, thanks to Lemma \ref{lem:chi.nonzero}. As a result, $\Phi$ is a local diffeomorphism. Therefore, the distribution of $u=\Phi(\rho_m)$, conditionally on $\beta$, admits a density with respect to the Lebesgue measure on $\R^{|D^\m|}$. We deduce that almost surely, $\P[A_x^\m|\beta]=\P[u\in\ker(f_\beta)|\beta]=0$, and therefore $\P[A_x^\m]=0$, which concludes the proof.
\end{proof}

\section*{Acknowledgements}
I would like to thank my advisor Christophe Sabot, for introducing me to this subject, and guiding me through my research and the writing of this paper. I would also like to thank Sebastian Andres, Vadim Kaimanovich and Pierre Mathieu for useful information regarding Martin boundaries for random walks in random environment.

\bibliographystyle{plain}
\bibliography{biblio}

\end{document}